\newtheorem{thm}{Theorem}[section]
\newtheorem{cor}[thm]{Corollary}
\newtheorem{lem}[thm]{Lemma}
\newtheorem{prop}[thm]{Proposition}
\newtheorem*{prop*}{Proposition}
\newtheorem*{thm*}{Main Theorem}
\newtheorem*{cor*}{Corollary}
\newtheorem*{conj*}{Conjecture}
\newtheorem*{move1'}{Move $\mathbf{1^*}$}
\newtheorem*{clcopieri}{Classical co-Pieri rule}
\theoremstyle{remark}
\newtheorem{rmk}[thm]{Remark}
\newtheorem*{Acknowledgements*}{Acknowledgements}
\theoremstyle{definition}
\newtheorem{defn}[thm]{Definition}
\newtheorem{eg}[thm]{Example}
\crefname{defn}{Definition}{Definitions}
\crefname{thm}{Theorem}{Theorems}
\crefname{prop}{Proposition}{Propositions}
\crefname{lem}{Lemma}{Lemmas}
\crefname{cor}{Corollary}{Corollaries}
\crefname{conj}{Conjecture}{Conjectures}
\crefname{section}{Section}{Sections}
\crefname{subsection}{Subsection}{Subsections}
\crefname{eg}{Example}{Examples}
\crefname{figure}{Figure}{Figures}
\crefname{rem}{Remark}{Remarks}
\crefname{rmk}{Remark}{Remarks}
\crefname{equation}{equation}{equation}
\Crefname{defn}{Definition}{Definitions}
\Crefname{thm}{Theorem}{Theorems}
\Crefname{prop}{Proposition}{Propositions}
\Crefname{lem}{Lemma}{Lemmas}
\Crefname{cor}{Corollary}{Corollaries}
\Crefname{conj}{Conjecture}{Conjectures}
\Crefname{section}{Section}{Sections}
\Crefname{subsection}{Subsection}{Subsections}
\Crefname{eg}{Example}{Examples}
\Crefname{figure}{Figure}{Figures}
\Crefname{rem}{Remark}{Remarks}
\Crefname{rmk}{Remark}{Remarks}
\numberwithin{equation}{section}
\newcommand{\flux}{\mathsf{t}}  
\newcommand{\stw}{\mathsf{w}}  
\newcommand{\sts}{\mathsf{s}}  
\newcommand{\stt}{\mathsf{t}}  
\newcommand{\stu}{\mathsf{u}}  
\newcommand{\stv}{\mathsf{v}}  
 \newcommand{\SSTS}{\mathsf{S}}  
\newcommand{\SSTT}{\mathsf{T}}  
\newcommand{\SSTU}{\mathsf{U}}  
\renewcommand{\Vec}{V}  
\newcommand{\down}{m{\downarrow}}
\newcommand{\up}{m{\uparrow}}
\newcommand{\Hom}{\operatorname{Hom}}
\newcommand{\suchthat}{\;\ifnum\currentgrouptype=16 \middle\fi|\;} 
\newcommand{\ZZ}{{\mathbb Z}}
\newcommand{\NN}{{{\mathbb Z}_{\geq0}}}
\newcommand{\Specht}{\mathbf{ \textbf  \rm S}}
\newcommand{\CC}{\mathbb{Q}}
\newcommand{\minmax}{\operatorname{minmax}}
\newcommand{\eleven}{11}
\newcommand\mptn[1]{\mathscr{P}_{#1}}
\mathchardef\mhyphen="2D
\newcommand{\half}{\frac{1}{2}}
\newcommand{\thalf}{\textstyle\frac{1}{2}}
\newcommand{\Std}{\mathrm{Std}}
\newcommand{\SStd}{\mathrm{SStd}}
\newcommand{\Latt}{\mathrm{Latt}}
\let\originalleft\left
\let\originalright\right
\def\left#1{\mathopen{}\originalleft#1}
\def\right#1{\originalright#1\mathclose{}}
\renewcommand{\geq}{\geqslant}
\renewcommand{\leq}{\leqslant}
\def\ignore#1{\relax}
\def\ignore#1{\relax}
 \newcommand{\ik}{k}
 \title{{The co-Pieri rule for stable Kronecker coefficients}}
\author{C. Bowman}
\author{M. De Visscher}
\author{J. Enyang}
\def\@maketitle{%
  \newpage
  \begin{center}%
  \let \footnote \thanks
    {\LARGE \@title \par}%
    \vskip 1.5em%
    {\large
      \lineskip .5em%
      \begin{tabular}[t]{c}%
        \@author C. Bowman, M. De Visscher, and J. Enyang
      \end{tabular}\par}%
    \vskip 1em%
    {\large \@date}%
  \end{center}%
  \par
  \vskip 1.5em}
\begin{document}

\!\!\!\!\!\!\!\!\!\!\!\!\!\!\!\!\!\!\!
 \maketitle

 \maketitle

\!\!\!\!\!\!\!\!\!\!\!\! \section*{Introduction}

Perhaps the last major open problem in  the  complex representation theory of symmetric groups 
is to describe the decomposition of a tensor product of two simple representations.  
  The  coefficients describing the decomposition of these tensor products  are known 
   as the {\sf Kronecker coefficients} and they have been described as 
    `perhaps the most challenging, deep and mysterious objects in algebraic combinatorics' \cite{PP1}. 
 More recently, these coefficients have provided the centrepiece of Geometric Complexity Theory (GCT),  a { ``new hope"} \cite{newhope} for settling the {\sf P} versus {\sf NP} problem \cite{MR2927658}.  It was recently shown that GCT requires not only to understand the positivity, but also   precise information on the explicit values of these coefficients \cite{zeroes}. 
   The positivity of Kronecker coefficients is equivalent to the existence of certain quantum systems  \cite{ky,MR2197548,MR2276458}  and  they have been used to understand entanglement entropy \cite{MR3748296}.    Much recent progress has  focussed on   the stability properties enjoyed by  Kronecker coefficients \cite{BDO15,stab3,MR3461556,stab1,stem}.

  Whilst a complete understanding of the  Kronecker coefficients seems out of reach, 
the purpose of this paper is to attempt to understand
 the  {\em stable} Kronecker coefficients in terms of oscillating tableaux.
 Oscillating tableaux  hold a distinguished position in the study of 
  tensor product decompositions   \cite{MR1035496,MR3090983,MR2264927}  but surprisingly they  have never before been 
  used to calculate Kronecker coefficients of symmetric groups.  
 In this work, we  see that the oscillating tableaux defined as paths on the graph given in   \cref{brancher} (which we call Kronecker tableaux) provide
  bases of certain modules for the partition algebra, $P_s(n)$, which is closely related to the symmetric group.
 We hence add a new level of structure to the classical picture --- this extra structure is the key to our main result: the co-Pieri rule for stable Kronecker coefficients.

 \!\!\!\!\!   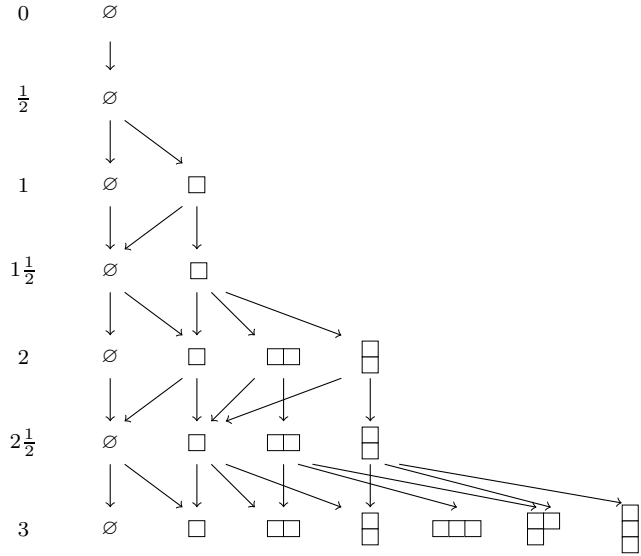
\begin{figure}[ht!]$$  \scalefont{0.8}\begin{tikzpicture}[scale=0.38]
           \begin{scope}     \draw (0,3) node {  $\varnothing$  };   
  \draw (-3,0) node {   $ \half$  };     \draw (-3,3) node {   $ 0$  };   
    \draw (-3,-3) node {   $ 1$  };     \draw (-3,-6) node {   $ 1\half$  };   
    \draw (-3,-9) node {   $ 2$  };         \draw (-3,-12) node {   $ 2\half$  };     
    \draw (-3,-15) node {   $ 3$  };     
              \draw (0,0) node { $\varnothing$  };   
    \draw (0,-3) node   {   \text{	$\varnothing$	}}		;    \draw (+3,-3) node   {  
     $ \Yboxdim{6pt}\scalefont{0.9}
  \Yvcentermath1  \gyoung(;) $	
    }		;
      \draw (0,-6) node   {   \text{	$\varnothing$	}};
     \draw (3,-6) node   {  $       \,\Yboxdim{6pt}\scalefont{0.9}
  \Yvcentermath1  \gyoung(;) $	 };
    \draw[<-] (0.0,1) -- (0,2);     \draw[->] (0.0,-0.75) -- (0,-2.25);        
                \draw[->] (0.5,-0.75) -- (2.5,-2.25);    \draw[->] (2.5,-3.75) -- (0.5,-5.25);       \draw[->] (0,-3.75) -- (0,-5.25);   \draw[->] (3,-3.75) -- (3,-5.25);
  \draw[->] (0,-6.75) -- (0,-8.25);   \draw[->] (03,-6.75) -- (3,-8.25); 
    \draw[->] (0.5,-6.75) -- (2.5,-8.25);   \draw[->] (4,-6.75) -- (8,-8.25);   \draw[->] (3.5,-6.75) -- (5,-8.25); 
     \draw (+0,-9) node   {   	  $\varnothing$  };                 
          \draw (+3,-9) node   {  $ \Yboxdim{6pt}\scalefont{0.9}
  \Yvcentermath1  \gyoung(;)  $	 }		;
             \draw (+6,-9) node              {    $\Yboxdim{6pt}\scalefont{0.9}
  \Yvcentermath1  \gyoung(;;)$	 	 }		;
                          \draw (+9,-9) node            {    $\Yboxdim{6pt}\scalefont{0.9}
  \Yvcentermath1  \gyoung(;,;)$	 	 }			;
   \draw[<-] (0,-11.25) -- (0,-9.75);   \draw[<-] (03,-11.25) -- (3,-9.75);  \draw[<-] (06,-11.25) -- (6,-9.75); \draw[<-] (9,-11.25) -- (9,-9.75); 
    \draw[<-] (0.5,-11.25) -- (2.5,-9.75);   \draw[<-] (4,-11.25) -- (8,-9.75);   \draw[<-] (3.5,-11.25) -- (5,-9.75); 
     \draw (+0,-12) node   {   	$\varnothing$  };                 
          \draw (+3,-12) node   {  $ \Yboxdim{6pt}\scalefont{0.9}
  \Yvcentermath1  \gyoung(;) $	 }		;
             \draw (+6,-12) node              {    $\Yboxdim{6pt}\scalefont{0.9}
  \Yvcentermath1  \gyoung(;;)$	 	 }		;
                          \draw (+9,-12) node            {    $\Yboxdim{6pt}\scalefont{0.9}
  \Yvcentermath1  \gyoung(;,;)$	 	 }			;
   \draw[->] (0,-12.75) -- (0,-14.25);   \draw[->] (03,-12.75) -- (3,-14.25);  \draw[->] (06,-12.75) -- (6,-14.25); \draw[->] (9,-12.75) -- (9,-14.25); 
    \draw[->] (0.5,-12.75) -- (2.5,-14.25);   \draw[->] (4,-12.75) -- (8,-14.25);   \draw[->] (3.5,-12.75) -- (5,-14.25); 
     \draw (+0,-15) node   {   	$\varnothing$  };                 
          \draw (+3,-15) node   {  $ \Yboxdim{6pt}\scalefont{0.9}
  \Yvcentermath1  \gyoung(;) $	 }		;
             \draw (+6,-15) node              {    $\Yboxdim{6pt}\scalefont{0.9}
  \Yvcentermath1  \gyoung(;;)$	 	 }		;
                          \draw (+9,-15) node            {    $\Yboxdim{6pt}\scalefont{0.9}
  \Yvcentermath1  \gyoung(;,;)$	 	 }			;
  \draw[->] (6.5,-12.75) -- (12,-14.25);    \draw[->] (7,-12.75) -- (14.75,-14.25);  
    \draw[->] (9.5,-12.75) -- (15.25,-14.25);    \draw[->] (10,-12.75) -- (17.7,-14.1);  
     \draw (12,-15) node            {    $\Yboxdim{6pt}\scalefont{0.9}
  \Yvcentermath1  \gyoung(;;;)$	 	 }			;
       \draw (15,-15) node            {    $\Yboxdim{6pt}\scalefont{0.9}
  \Yvcentermath1  \gyoung(;;,;)$	 	 }			;
       \draw (18,-15) node            {    $\Yboxdim{6pt}\scalefont{0.9}
  \Yvcentermath1  \gyoung(;,;,;)$	 	 }			;
    \end{scope}\end{tikzpicture} 
     $$
  
\!\!\!\!\!\!     \caption{The first three layers of the branching graph $\mathcal{Y}$}
     \label{oscillate}\label{brancher}
\end{figure}

\!\!\!
A momentary glance at the   graph given in \cref{oscillate}
  reveals a very familiar subgraph: namely Young's graph (with each level doubled up).   
 The stable Kronecker coefficients labelled by triples from  this subgraph are  well-understood  --- 
  the values of these coefficients
    can be calculated via a tableaux counting algorithm known as the Littlewood--Richardson rule \cite{Littlewood} (see \cref{LRclassic,LitMurn}).   
  This rule    has long served as the hallmark for our understanding (or lack thereof)  of Kronecker coefficients.  
The Littlewood--Richardson  rule was discovered as a rule of two halves (as we explain below).  
In this  paper  we succeed in  generalising one half of  this rule to all Kronecker tableaux,  and thus solve one half of the stable Kronecker problem.  
 Our main result unifies and vastly generalises the work of Littlewood--Richardson \cite{LR34} and many other authors \cite{RW94,Rosas01,ROSAANDCO,BWZ10,MR2550164}.  
 Most promisingly, our   result counts  explicit   homomorphisms 
and thus  works on a structural level above any description of a family of Kronecker coefficients   since those first considered by Littlewood--Richardson over eighty years ago  \cite{LR34}.

In more detail,  given a triple of partitions $(\lambda,\nu, \mu)$  and with $|\mu|=s$, we have an associated 
  skew $P_s(n)$-module spanned by the Kronecker  tableaux  from $\lambda$ to $\nu$  of length $s$, which
  we denote by $\Delta_s(\nu \setminus\lambda )   $.  
  For $\lambda=\varnothing$ and $n\geq 2s$  these 
  modules provide a complete set of non-isomorphic $P_s(n)$-modules (and we drop the 
  partition $\varnothing$ from the notation).    
  The stable Kronecker coefficients are then interpreted as the dimensions, 
\begin{equation}\label{dagger}\tag{$\dagger$}
\overline{g}(\lambda,\nu,\mu)
=  \dim_\CC( \Hom_{  P_{s}(n)}( \Delta_{s}(\mu), \Delta_s(\nu \setminus\lambda )    ) )    
\end{equation}for $n\geq 2s$.  
Restricting to the Young subgraph, or equivalently  to a triple     $(\lambda,\nu,\mu)$  of so-called {\sf maximal depth} such that   $|\lambda| + |\mu| = |\nu|$, 
 these modules specialise to be the usual simple and skew modules for the symmetric group and hence the multiplicities $\overline{g}(\lambda,\nu,\mu)$ are the Littlewood--Richardson coefficients $c(\lambda, \nu,\mu)$. Thus we naturally recover, in this context, the well-known fact that the Littlewood--Richardson coefficients appear as the subfamily of stable Kronecker coefficients labelled by triples of maximal depth.
 The tableaux   counted by the Littlewood--Richardson  rule satisfy two conditions: the {\sf semistandard} and the {\sf lattice permutation} conditions  \cite[(16.4)]{JAMBOOK}. Specialising the triple of partitions so that the latter, respectively former, condition is  satisfied for all tableaux,
  we obtain the two halves of the Littlewood--Richardson rule, namely the Pieri, respectively co-Pieri, rule.  

\begin{clcopieri}
Let $(\lambda, \nu,\mu)$ be a triple of partitions such that $\lambda\subseteq \nu$, $|\mu| = |\nu|-|\lambda|$ and the skew partition $\nu \ominus \lambda$ has no two boxes in the same column. Then the Littlewood--Richardson coefficient $c(\lambda,\nu,\mu)$ is given by the number of Young tableaux of shape $\nu\ominus \lambda$ and weight $\mu$ whose reverse reading word is a lattice permutation.
\end{clcopieri}
 \noindent The main purpose of 
 this article is to generalise the classical co-Pieri rule to 
 the stable  Kronecker coefficients.
\begin{thm*}
Let $(\lambda,\nu,\mu)$ be a    {co-Pieri} triple     or a triple of maximal depth. Then the stable Kronecker coefficient $\overline{g}(\lambda, \nu, \mu)$ is given by the number of  
 semistandard  Kronecker tableaux of shape $\nu\setminus\lambda$ and weight $\mu$ whose reverse reading word is a lattice permutation.   
 \end{thm*}
The observant reader will notice that the statement above describes the Littlewood--Richardson coefficients uniformly as part of a far broader family of stable Kronecker coefficients (and is the first result in the literature to do so). Whilst the classical Pieri rule is elementary, it served as a first step towards 
   understanding the full Littlewood--Richardson rule; indeed 
   Knutson--Tao--Woodward  have shown that  the  Littlewood--Richardson rule follows from the Pieri rule by associativity  \cite{taoandco}.  
    We  hope that our  generalisation of the   co-Pieri rule  will prove equally useful in the study of  stable   Kronecker coefficients. 

 The definition of {\sf semistandard Kronecker tableaux} naturally generalises the classical notion of semistandard Young tableaux as certain ``orbits" of paths on the branching graph given in \cref{brancher} (see Section 1.2 and Definition \ref{semistandard}). The {\sf lattice permutation condition} is identical to the classical case once we generalise the dominance order to all steps in the branching graph $\mathcal{Y}$ to define the reverse reading word of a semistandard Kronecker tableau  (see \cref{ordering} and Section 6).

\medskip

\noindent \textbf{Special cases of co-Pieri triples.} The definition of {\sf co-Pieri triples } is given in \cref{{co-Pieri}triple} and can appear quite technical at first reading and so we present a few special cases here.   We have included a further wealth of  examples of both {\em stable} Kronecker and {\em non-stable} Kronecker coefficients   in Section 7.

\begin{itemize}
\item[$(i)$] $\lambda$ and $\mu$ are one-row partitions and $\mu$ is arbitrary.
This family has been extensively studied over the past thirty years and 
there are   many 
 distinct  combinatorial descriptions of some or all of these coefficients 
 \cite{MR3338303,RW94,Rosas01,ROSAANDCO,BWZ10,MR2550164}, none of which generalises.

\item[$(ii)$] the two skew partitions $\lambda \ominus (\lambda \cap \nu)$ and $\nu \ominus (\lambda \cap \nu)$ have no two boxes in the same column and \newline $|\mu| = \max \{|\lambda \ominus (\lambda \cap \nu)| , |\nu \ominus (\lambda \cap \nu)|\}$. 
 It is easy to see that if, in addition, $(\lambda, \nu, \mu)$ is a triple of maximal depth, then this case specialises to the classical co-Pieri triples.

\item[$(iii)$] $\lambda = \nu =  (dl,d(l-1), \ldots , 2d,d)$ for any $l,d\geq 1$ and $|\mu| \leq d$.
\end{itemize}

As already pointed out, our description covers the family
  of stable Kronecker  coefficients labelled by co-Pieri triples uniformly along with the Littlewood--Richardson coefficients.
   In order to demonstrate the uniformity of our approach, we  now illustrate how to calculate 
 $\overline{g}((2,1),(3,3,2), (2,2,1))=1$ and 
  $\overline{g}((4),(5), (2,2,1))=1$.  The former is an example of a triple of maximal depth (and so is calculated by the Littlewood--Richardson rule) and the latter is an example of a coefficient indexed by two one-row partitions.  In both cases, there is a unique semistandard Kronecker tableau whose reverse reading word is a lattice permutation (under the dominance ordering on Kronecker tableaux). 
Each of these semistandard tableaux is an orbit consisting of four individual 
standard Kronecker tableaux.
These tableaux are pictured in \cref{anewfigforintro}: notice that 
$\lambda$ and $\nu$ appear at the top and bottom of the diagram in
 \cref{anewfigforintro}
and that the partition $\mu$ determines the   orbit --- which we depict as a dashed series of rectangular frames.  
This is explained in detail  
Sections  1, 2, 5 and 6 of the paper (but 
we hope this lightly sketched   example helps the reader).   
We have    included a third example in \cref{anewfigforintro} of a co-Pieri triple  as in $(ii)$, to help the reader get a more general picture (the corresponding stable Kronecker coefficient is calculated in Section 7).

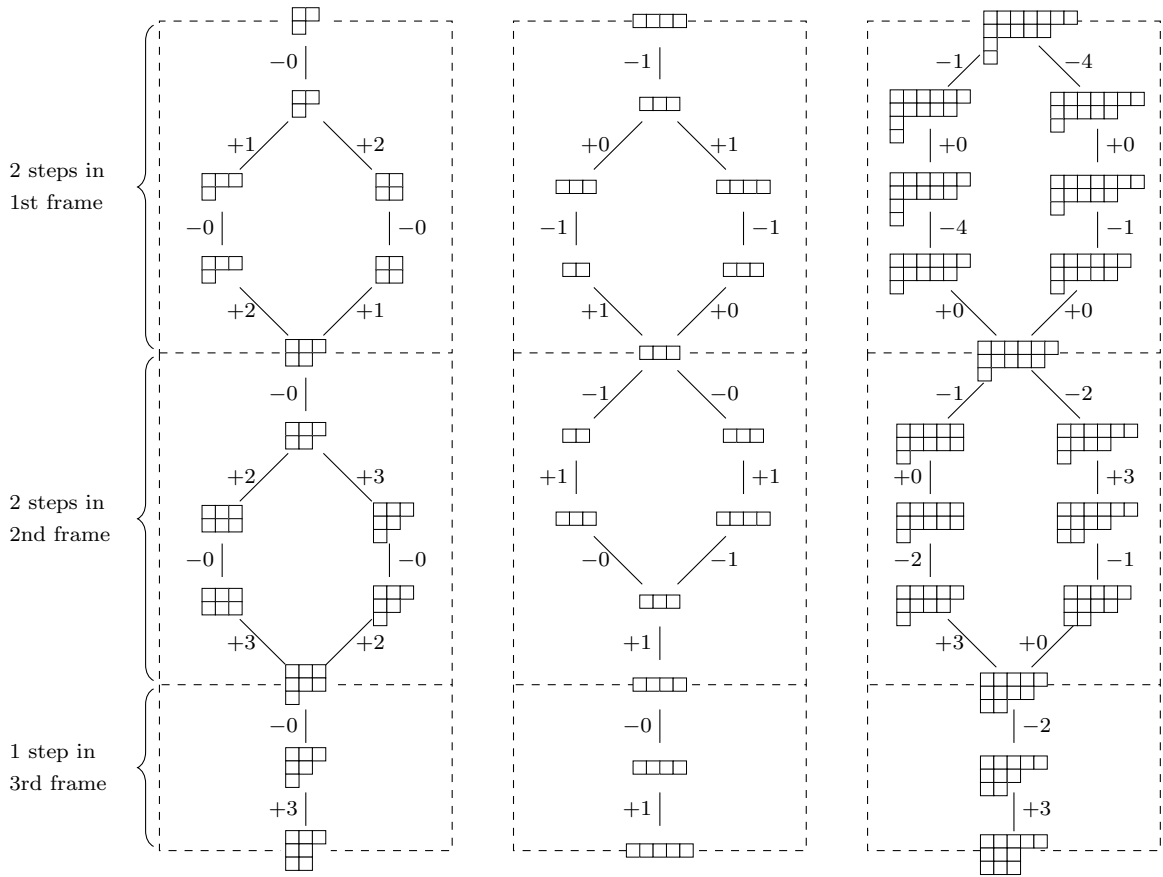
\begin{figure}[ht!]
\scalefont{0.8}  
    \begin{tikzpicture}[scale=0.55]
  \draw [decorate,decoration={brace,amplitude=6pt},xshift=6pt,yshift=0pt]   (-3.85,-7.9)--   (-3.85,-0.1)  node [right,black,midway,yshift=-0.2cm,xshift=-2cm]{\text{1st frame}} 
  node [right,black,midway,yshift=0.2cm,xshift=-2cm]{\text{2 steps in}} ; 
  
  \draw [decorate,decoration={brace,amplitude=6pt},xshift=6pt,yshift=0pt]   (-3.85,-7.9-8)--   (-3.85,-0.1-8)  node [right,black,midway,yshift=-0.2cm,xshift=-2cm]{\text{2nd frame}} 
  node [right,black,midway,yshift=0.2cm,xshift=-2cm]{\text{2 steps in}} ; 

  \draw [decorate,decoration={brace,amplitude=6pt},xshift=6pt,yshift=0pt]   (-3.85,-7.9-8-4)--   (-3.85,-0.1-8-8)  node [right,black,midway,yshift=-0.2cm,xshift=-2cm]{\text{3rd frame}} 
  node [right,black,midway,yshift=0.2cm,xshift=-2cm]{\text{1 step in}} ; 

  \clip(-3.6,0.5) rectangle (3.6,-20.6);
    \path   (0,0)edge[decorate]  node[left] {$-0$}  (0,-2);
          \path   (0,-2)edge[decorate]  node[right] {$+2$}  (2,-4);
                    \path   (0,-2)edge[decorate]  node[left] {$+1$}  (-2,-4);

                    \path   (-2,-6)edge[decorate]  node[left] {$-0$}  (-2,-4);                         
          \path   (2,-6)edge[decorate]  node[right] {$-0$}  (2,-4);
                                       
                  \path   (-2,-6)edge[decorate]  node[left] {$+2$}  (0,-8);                         
          \path   (2,-6)edge[decorate]  node[right] {$+1$}  (0,-8);

       \path   (0,0-8)edge[decorate]  node[left] {$-0$}  (0,-2-8);
          \path   (0,-2-8)edge[decorate]  node[right] {$+3$}  (2,-4-8);
                    \path   (0,-2-8)edge[decorate]  node[left] {$+2$}  (-2,-4-8);

                    \path   (-2,-6-8)edge[decorate]  node[left] {$-0$}  (-2,-4-8);                         
          \path   (2,-6-8)edge[decorate]  node[right] {$-0$}  (2,-4-8);
                                       
                  \path   (-2,-6-8)edge[decorate]  node[left] {$+3$}  (0,-8-8);                         
          \path   (2,-6-8)edge[decorate]  node[right] {$+2$}  (0,-8-8);

       \path   (0,-16)edge[decorate]  node[left] {$-0$}  (0,-18);
       \path   (0,-20)edge[decorate]  node[left] {$+3$}  (0,-18);

              \draw[dashed] (-3.5,0) rectangle (3.5,-8);                   
                \draw[dashed] (-3.5,-8)--(-3.5,-16)--(3.5,-16)--(3.5,-8) ;  
                
                      \draw[dashed] (-3.5,-16)--(-3.5,-20)--(3.5,-20)--(3.5,-16) ;  
                  \fill[white] (0,0) circle (17pt);   
  \begin{scope}   
 \fill[white] (0,0) circle (17pt);
     \draw (0,0) node {$  \Yboxdim{5pt}\gyoung(;;,;) $  };   
\fill[white] (0,-2) circle (17pt);    \draw (0,-2) node{$  \Yboxdim{5pt}\gyoung(;;,;)$  };
\fill[white] (-2,-4) circle (17pt);  \draw (-2,-4) node{$  \Yboxdim{5pt}\gyoung(;;;,;) $  };
\fill[white] (2,-4) circle (17pt);  \draw (2,-4) node{$  \Yboxdim{5pt} \gyoung(;;,;;) $  };
  \fill[white] (-2,-6) circle (17pt);  \draw (-2,-6) node{$  \Yboxdim{5pt}\gyoung(;;;,;)$  };
\fill[white] (2,-6) circle (17pt);  \draw (2,-6) node{$  \Yboxdim{5pt}\gyoung(;;,;;) $  };
  \fill[white] (0,-8) circle (17pt);  \draw (0,-8) node{$  \Yboxdim{5pt}\gyoung(;;;,;;)$  };
  \fill[white] (0,-16) circle (17pt);  \draw (0,-16) node{$  \Yboxdim{5pt}\gyoung(;;;,;;;,;)$  };
\fill[white] (0,-2-8) circle (17pt);    \draw (0,-2-8) node{$  \Yboxdim{5pt}\gyoung(;;;,;;)$  };
\fill[white] (-2,-4-8) circle (17pt);  \draw (-2,-4-8) node{$  \Yboxdim{5pt}\gyoung(;;;,;;;) $  };
\fill[white] (2,-4-8) circle (17pt);  \draw (2.1,-4-8.1) node{$  \Yboxdim{5pt} \gyoung(;;;,;;,;) $  };
  \fill[white] (-2,-6-8) circle (17pt);  \draw (-2,-6-8) node{$  \Yboxdim{5pt}\gyoung(;;;,;;;)$  };
\fill[white] (2,-6-8) circle (17pt);  \draw (2.1,-6-8.1) node{$  \Yboxdim{5pt} \gyoung(;;;,;;,;)$  };

  \fill[white] (0,-18) circle (17pt);  \draw (0,-18) node{$  \Yboxdim{5pt}\gyoung(;;;,;;,;)$  };
\fill[white] (0,-20) circle (17pt);  \draw (0,-20) node{$  \Yboxdim{5pt} \gyoung(;;;,;;,;;)$  };

      \end{scope}
   \end{tikzpicture} 
\qquad 
    \begin{tikzpicture}[scale=0.55]
      \clip(-3.6,0.5) rectangle (3.6,-20.6);
    \path   (0,0)edge[decorate]  node[left] {$-1$}  (0,-2);
          \path   (0,-2)edge[decorate]  node[right] {$+1$}  (2,-4);
                    \path   (0,-2)edge[decorate]  node[left] {$+0$}  (-2,-4);

                    \path   (-2,-6)edge[decorate]  node[left] {$-1$}  (-2,-4);                         
          \path   (2,-6)edge[decorate]  node[right] {$-1$}  (2,-4);
                                       
                  \path   (-2,-6)edge[decorate]  node[left] {$+1$}  (0,-8);                         
          \path   (2,-6)edge[decorate]  node[right] {$+0$}  (0,-8);

       \path   (0,0-8)edge[decorate]  node[left] {$-1$}  (-2,-2-8);
              \path   (0,0-8)edge[decorate]  node[right] {$-0$}  (2,-2-8);

          \path   (2,-2-8)edge[decorate]  node[right] {$+1$}  (2,-4-8);
                    \path   (-2,-2-8)edge[decorate]  node[left] {$+1$}  (-2,-4-8);

                    \path   (0,-6-8)edge[decorate]  node[left] {$-0$}  (-2,-4-8);                         
          \path   (0,-6-8)edge[decorate]  node[right] {$-1$}  (2,-4-8);
                                       
                  \path   (0,-6-8)edge[decorate]  node[left] {$+1$}  (0,-8-8);

       \path   (0,-16)edge[decorate]  node[left] {$-0$}  (0,-18);
       \path   (0,-20)edge[decorate]  node[left] {$+1$}  (0,-18);

              \draw[dashed] (-3.5,0) rectangle (3.5,-8);                   
                \draw[dashed] (-3.5,-8)--(-3.5,-16)--(3.5,-16)--(3.5,-8) ;  
                
                      \draw[dashed] (-3.5,-16)--(-3.5,-20)--(3.5,-20)--(3.5,-16) ;  
                  \fill[white] (0,0) circle (17pt);   
  \begin{scope}   
 \fill[white] (0,0) circle (18pt);
     \draw (0,0) node {$  \Yboxdim{5pt}\gyoung(;;;;) $  };   
\fill[white] (0,-2) circle (17pt);    \draw (0,-2) node{$  \Yboxdim{5pt}\gyoung(;;;)$  };
\fill[white] (-2,-4) circle (17pt);  \draw (-2,-4) node{$  \Yboxdim{5pt}\gyoung(;;;) $  };
\fill[white] (2,-4) circle (17pt);  \draw (2,-4) node{$  \Yboxdim{5pt} \gyoung(;;;;) $  };
  \fill[white] (-2,-6) circle (17pt);  \draw (-2,-6) node{$  \Yboxdim{5pt}\gyoung(;;)$  };
\fill[white] (2,-6) circle (17pt);  \draw (2,-6) node{$  \Yboxdim{5pt}\gyoung(;;;) $  };
  \fill[white] (0,-8) circle (17pt);  \draw (0,-8) node{$  \Yboxdim{5pt}\gyoung(;;;)$  };
  \fill[white] (0,-16) circle (17pt);  \draw (0,-16) node{$  \Yboxdim{5pt}\gyoung(;;;;)$  };
\fill[white] (-2,-2-8) circle (17pt);    \draw (-2,-2-8) node{$  \Yboxdim{5pt}\gyoung(;;)$  };
\fill[white] (2,-2-8) circle (17pt);    \draw (2,-2-8) node{$  \Yboxdim{5pt}\gyoung(;;;)$  };
\fill[white] (-2,-4-8) circle (17pt);  \draw (-2,-4-8) node{$  \Yboxdim{5pt}\gyoung(;;;) $  };
\fill[white] (2,-4-8) circle (17pt);  \draw (2,-4-8) node{$  \Yboxdim{5pt} \gyoung(;;;;) $  };
  \fill[white] (0,-6-8) circle (17pt);  \draw (0,-6-8) node{$  \Yboxdim{5pt}\gyoung(;;;)$  };

  \fill[white] (0,-18) circle (17pt);  \draw (0,-18) node{$  \Yboxdim{5pt}\gyoung(;;;;)$  };
\fill[white] (0,-20) circle (17pt);  \draw (0,-20) node{$  \Yboxdim{5pt} \gyoung(;;;;;)$  };

      \end{scope}
   \end{tikzpicture} 
   \qquad 
    \begin{tikzpicture}[scale=0.55]
      \clip(-3.6,0.5) rectangle (3.6,-20.6);
    \path   (0,0)edge[decorate]  node[left] {$-1$}  (-2,-2);
    \path   (0,0)edge[decorate]  node[right] {$-4$}  (2,-2);

          \path   (2,-2)edge[decorate]  node[right] {$+0$}  (2,-4);
                    \path   (-2,-2)edge[decorate]  node[right] {$+0$}  (-2,-4);

                    \path   (-2,-6)edge[decorate]  node[right] {$-4$}  (-2,-4);                         
          \path   (2,-6)edge[decorate]  node[right] {$-1$}  (2,-4);
                                       
                  \path   (-2,-6)edge[decorate]  node[left] {$+0$}  (0,-8);                         
          \path   (2,-6)edge[decorate]  node[right] {$+0$}  (0,-8);

       \path   (0,0-8)edge[decorate]  node[left] {$-1$}  (-2,-2-8);
              \path   (0,0-8)edge[decorate]  node[right] {$-2$}  (2,-2-8);

          \path   (2,-2-8)edge[decorate]  node[right] {$+3$}  (2,-4-8);
                    \path   (-2,-2-8)edge[decorate]  node[left] {$+0$}  (-2,-4-8);

                    \path   (-2,-6-8)edge[decorate]  node[left] {$-2$}  (-2,-4-8);                         
          \path   (2,-6-8)edge[decorate]  node[right] {$-1$}  (2,-4-8);
                                       
                  \path   (-2,-6-8)edge[decorate]  node[left] {$+3$}  (0,-8-8);                         
                  \path   (2,-6-8)edge[decorate]  node[left] {$+0$}  (0,-8-8);

       \path   (0,-16)edge[decorate]  node[right] {$-2$}  (0,-18);
       \path   (0,-20)edge[decorate]  node[right] {$+3$}  (0,-18);

              \draw[dashed] (-3.5,0) rectangle (3.5,-8);                   
                \draw[dashed] (-3.5,-8)--(-3.5,-16)--(3.5,-16)--(3.5,-8) ;  
                
                      \draw[dashed] (-3.5,-16)--(-3.5,-20)--(3.5,-20)--(3.5,-16) ;  
                  \fill[white] (0,0) circle (17pt);   
  \begin{scope}   
 \fill[white] (0,0.1) circle (26pt); \fill[red] (-.2,-0.1) circle (7pt);
 \fill[white] (-0.6,-0.7) circle (7pt);    \draw (0.4,-0.4) node {$  \Yboxdim{5pt}\gyoung(;;;;;;;,;;;;;,;,;) $  };   
\fill[white] (2,-2) circle (17pt);    \draw (2,-2.2) node{$  \Yboxdim{5pt}\gyoung(;;;;;;;,;;;;;,;)$  };
\fill[white] (-2,-2) circle (17pt);    \draw (-2,-2.3) node{$  \Yboxdim{5pt}\gyoung(;;;;;;,;;;;;,;,;)$  };
\fill[white] (2,-4) circle (17pt);    \draw (2,-4.2) node{$  \Yboxdim{5pt}\gyoung(;;;;;;;,;;;;;,;)$  };
\fill[white] (-2,-4) circle (17pt);    \draw (-2,-4.3) node{$  \Yboxdim{5pt}\gyoung(;;;;;;,;;;;;,;,;)$  };
  \fill[white] (-2,-6.1) circle (18pt);    \draw (-2,-6.1) node{$  \Yboxdim{5pt}\gyoung(;;;;;;,;;;;;,;)$  };
\fill[white] (2,-6.1) circle (18pt);    \draw (1.84,-6.1) node{$  \Yboxdim{5pt}\gyoung(;;;;;;,;;;;;,;)$  };
  \fill[white] (0,-8) circle (17pt);  
    \fill[white] (0-0.5,-8-0.5) circle (9pt);  \draw (0.1,-8.2) node{$  \Yboxdim{5pt}\gyoung(;;;;;;,;;;;;,;)$  };
  \fill[white] (0,-16) circle (17pt);  \draw (0,-16.2) node{$   \Yboxdim{5pt} \gyoung(;;;;;,;;;;,;;)$  };
\fill[white] (-2,-2-8) circle (17pt);    \draw (-2,-2-8.2) node{$  \Yboxdim{5pt} \gyoung(;;;;;,;;;;;,;)$  };
\fill[white] (2,-2-8) circle (17pt);    \draw (2,-2-8.2) node{$  \Yboxdim{5pt}\gyoung(;;;;;;,;;;;,;)$  };
\fill[white] (-2,-4-8) circle (17pt);  \draw (-2,-4-8.1) node{$  \Yboxdim{5pt} \gyoung(;;;;;,;;;;;,;)$  };
\fill[white] (2,-4-8) circle (17pt);  \draw (2,-4-8.1) node{$    \Yboxdim{5pt}\gyoung(;;;;;;,;;;;,;;) $  };
\fill[white] (-2,-4-8-2) circle (17pt);  \draw (-2,-4-8.1-2) node{$  \Yboxdim{5pt} \gyoung(;;;;;,;;;;,;)$  };
\fill[white] (2,-4-8.1-2) circle (18pt);  \draw (2,-4-8.1-2) node{$    \Yboxdim{5pt}\gyoung(;;;;;,;;;;,;;) $  };

  \fill[white] (0,-18) circle (17pt);  \draw (0,-18.2) node{$    \Yboxdim{5pt}\gyoung(;;;;;,;;;,;;) $  };
\fill[white] (0,-20.2) circle (19pt);  \draw (0,-20.1) node{$    \Yboxdim{5pt}\gyoung(;;;;;,;;;,;;;) $  };

      \end{scope}
   \end{tikzpicture} 
   \qquad  \   \qquad \  \qquad

   \caption{Three examples of semistandard Kronecker tableaux of weight $\mu=(2,2,1)$. 
   The number of steps in the $i$th  frame is   $\mu_i$.  
    The first  is a triple of maximal depth, the latter two are co-Pieri triples.
   }
   \label{anewfigforintro}
\end{figure}

\!\!
 For $\lambda=(2,1)$ and $\nu=(3,2,2)$, the (integral) steps taken in the semistandard tableau on the left of \cref{anewfigforintro} are 
 to add a box in the first row, add two boxes in the second row, and two in the third row
$$a(1)=(-0,+1) \quad a(2)=(-0,+2)  \quad  a(2)=(-0,+2)  \quad  a(3)=(-0,+3)  \quad  a(3)=(-0,+3).$$ 
 We record the  steps according to the dominance ordering for Kronecker tableaux  
 ($a(1)< a(2) < a(3)$)  and then we refine this by recording the  frames   in which these steps  occur in weakly decreasing fashion,  
  as follows 
$$
\left(\begin{array}{c|cc|ccccccccc}
a(1)&a(2)&a(2)&a(3) &a(3) 
\\
1& 2& 1 & 3 &2
\end{array}\right).$$
This should be very familiar to experts, who will also recognise that the resulting word
 is a lattice permutation.  
  For $\lambda=(4)$ and $\nu=(5)$, the steps taken in the semistandard Kronecker tableau
  in the middle  of \cref{anewfigforintro} are to remove a box from the first row, do two  ``dummy" steps  in the first row, and add   two boxes  in the first row
$$r(1)= (-1,+0) \quad  d(1)=(-1,+1)\quad  d(1)=(-1,+1)\quad  a(1)=(-0,+1)\quad  a(1)=(-0,+1).$$
  We record the  steps according to the  dominance ordering for Kronecker tableaux 
 ($r(1)< d(1) < a(1)$)  and we  refine  this by recording the  frames   in which these steps  occur backwards,   
$$
\left(\begin{array}{c|cc|ccccccc}
r(1)&d(1)&d(1)&a(1) &a(1) 
\\
1& 2& 1 & 3 &2
\end{array}\right)$$
 and notice that the second row is again a lattice permutation (and identical to the previous example!).

\medskip

\noindent\textbf{Structure of the paper.} 
  In Section 1 we recall  the classical tableaux combinatorics
 of the Littlewood--Richardson rule; we re-cast the notion of a semistandard tableau  in a manner 
 which will be generalisable from the symmetric group to the partition algebra setting.  
 We then recall some well-known facts concerning Kronecker coefficients which will be used in what follows.  
  In Section 2, we define a {\sf standard Kronecker  tableau} of shape $\nu \setminus \lambda$ 
to be a path from $\lambda$ to $\nu$  in the branching graph of the partition algebra.  
For triples of maximal depth, our definition specialises to the usual definition of (skew) Young tableaux.

    In Sections  3 and 4 we describe the action of the partition algebra on skew cell modules   of shape $\nu \setminus \lambda$ in the case  of co-Pieri triples.  That we can understand the action of the partition algebra in this case is the crux of this paper.   
 This is definitely the most difficult and technical section of the paper and we  {\em strongly} encourage the  reader to skip these two sections on the first reading.  
 The rest of the paper is entirely readable without this material, if one 
 is willing to either   lift the definition of co-Pieri triple from Theorem 4.12 or 
  temporarily  restrict their attention to the examples of co-Pieri triples $(\lambda,\nu,\mu)$  listed above.  
 
In Section 5, we define a {\sf semistandard Kronecker tableau of shape $\nu \setminus \lambda$ and weight $\mu$} to be an orbit of standard Kronecker tableaux under the action of the corresponding Young subgroups $\mathfrak{S}_\mu$.
 For a triple of partitions of {maximal depth},
  our construction specialises to the usual definition of 
 semistandard Young tableaux.  
    In the case that $(\lambda,\nu,\mu)$ is a   {{co-Pieri} triple}   we are able to provide 
 an elegant combinatorial description of these semistandard Kronecker tableaux.  
 
  In Section 6, using an   ordering 
 on the steps in the branching graph of the partition algebra  
we define the  {\sf reverse reading word}  of  a semistandard Kronecker tableau.  
   We hence extend the classical 
  lattice permutation condition to
  semistandard Kronecker tableaux.   
  When
 $(\lambda,\nu,\mu)$ is 
 a co-Pieri triple of partitions,  we show that the corresponding 
 stable   Kronecker coefficient is equal to the number of semistandard Kronecker  tableaux  whose reverse reading word is a lattice permutation, generalising the Littlewood--Richardson rule to give  the co-Pieri rule for stable Kronecker coefficients.
Section 7 is dedicated to providing examples of Kronecker coefficients which can be calculated using our main theorem.

\section{The Littlewood--Richardson   and Kronecker coefficients}  
     The combinatorics underlying the representation theory of the partition algebras and   symmetric 
     groups  is based on compositions and   partitions.  
A {\sf  composition}    $\lambda $ of $n$, denoted $\lambda \vDash n$, is   a  sequence   of non-negative integers which  sum to $n$.  
  If the sequence is weakly decreasing, we write  $\lambda \vdash n$
  and refer to $\lambda$ as a {\sf  partition} of $n$.  We let 
  $\mathscr{P}_n$ denote the set of all partitions of $n$.
      We let $\varnothing$  denote the unique partition of 0.  
 Given a partition, $\lambda=(\lambda _1,\lambda _2,\dots )$, the  associated   {\sf Young diagram}  is the set of nodes
\[[\lambda]=\left\{(i,j)\in\mathbb{Z}_{>0}^2\ \left|\ j\leq \lambda_i\right.\right\}.\]
We define the length, $\ell(\lambda)$, of a partition $\lambda$, to be the number of non-zero parts.
 Given $\lambda =(\lambda _1,\lambda _2,\dots )$,   
we let $[ \lambda]_a = \sum_{i\geq 1}^a \lambda_i$ for $a \in \ZZ_{>0}$ and 
$|\lambda | = 
\sum_{i \geq 1}\lambda _i$.  
We  formally set $[ \lambda]_0=0$.  
Given  two partitions $\lambda, \mu$  we write $\lambda \trianglerighteq \mu$  
  if  $|\lambda|<|\mu|$ or if
   $|\lambda|=|\mu|$  and  $[\lambda]_a \geq [\mu]_a$ for all $a \in \ZZ_{> 0}$.

Given  $\lambda = (\lambda_1,\lambda_2, \ldots,\lambda_{\ell} )$  a partition and $n$   an integer, define 
  $$\lambda_{[n]}=(n-|\lambda|, \lambda_1,\lambda_2, \ldots,\lambda_{\ell}).$$  
 Given $\lambda_{[n]} $ a partition of $n$, we say that the partition has {\sf depth} equal to $|\lambda|$.  
   Given two compositions  $\lambda$ and $\nu$, we write $\lambda \subseteq \nu$ if $\lambda_i \leq \nu_i$ for all $i\geq 1$.  
For $\lambda$ a partition and $\nu$ a partition  (respectively $\nu$ a composition) 
 such that $\lambda \subseteq \nu$, we define   the {\sf skew partition}
(respectively {\sf skew composition})
   denoted $\nu  \ominus \lambda $,  to be  the set
difference between the Young diagrams of $\lambda $ and $\nu $.
  We write  $\nu\ominus \lambda \vdash s$ if $\nu\ominus \lambda$ is a skew partition of $s$.
  More generally, for two arbitrary compositions  $\lambda$ and $\nu$ we have that 
  $\lambda \cap \nu \subseteq \lambda,\nu$ and so we  let 
   $\nu\ominus \lambda$ denote the union of 
  $\nu \ominus (\lambda \cap \nu)$ and   $\lambda \ominus (\lambda \cap \nu)$.

\subsection{Young tableaux combinatorics and Littlewood--Richardson rule}
\label{LRstatement}

  Given   $\lambda \vdash  {r-s} , \nu \vdash  {r}$    such that  $\lambda \subseteq \nu$ we define a {\sf standard 
  Young tableau} of shape $\nu\ominus \lambda$  to be  a filling of the boxes of the Young diagram, $[\nu \ominus\lambda]$, with the entries $1,\dots, s$
 in such a way that   the entries  are  increasing along the rows  and columns of $[\nu \ominus\lambda]$.  

 \begin{eg}\label{frickingheck}
 The six standard  Young tableaux of shape $(5,3,1) \ominus (4,2)$  are depicted in \cref{hello mister}.

\!\!\!\!\begin{figure}[ht!]
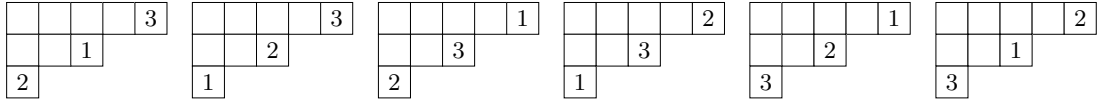
$$
 \Yboxdim{12pt}\scalefont{0.9}
  \Yvcentermath1\gyoung(;;;;;3,;;;1,;2)   \quad
    \Yvcentermath1\gyoung(;;;;;3,;;;2,;1)    \quad
      \Yvcentermath1\gyoung(;;;;;1,;;;3,;2) 
      \quad
      \Yvcentermath1\gyoung(;;;;;2,;;;3,;1)\quad
      \Yvcentermath1\gyoung(;;;;;1,;;;2,;3) 
      \quad
      \Yvcentermath1\gyoung(;;;;;2,;;;1,;3)   $$
      \caption{
The        standard  Young tableaux $\sts_1,\sts_2,\stt_1,\stt_2,\stu_1,\stu_2$  of shape  $(5,3,1) \ominus (4,2)$.
      }
\label{hello mister}
\end{figure}
 \end{eg}
\!\!\!\!

     Given   $\lambda \vdash  {r-s} , \nu \vdash  {r}, \mu = (\mu_1, \mu_2, \ldots , \mu_\ell ) \vDash s $
   such that  $\lambda \subseteq \nu$ we define a {\sf    Young tableau of shape $\nu\ominus \lambda$ and weight $\mu$} to be  a filling of the boxes of 
   $[\nu\ominus \lambda]$  with the entries 
    $$\underbrace{1, \dots, 1}_{\mu_1}, \underbrace{2,\dots, 2}_{\mu_2}, 
  \ldots,   \underbrace{\ell ,\dots, \ell }_{\mu_\ell } $$
   in such a way that    the entries  are weakly increasing along the rows and columns.  
   We say that the  Young tableau is {\sf semistandard} if, in addition, the entries 
   are strictly increasing along the columns of $\nu\ominus \lambda$.    
 In the case that  $\lambda \vdash {r-s}, \nu \vdash {r} $ and   $\mu=(1^s)$, we note that these can be identified with the set of   {\sf standard  Young tableaux} of shape $\nu\ominus\lambda$ in an obvious fashion.

 One should think of a   Young tableau of weight $\mu$ as 
 an $\mathfrak{S}_\mu$-orbit  of   standard  Young tableaux;   we shall now make this idea more  precise.  
  Let $\sts$ be a standard   Young tableau of shape $\nu\ominus \lambda$    and let $\mu$  be a composition. Then define $\mu(\sts)$ to be  the  Young tableau of weight $\mu$
 obtained from $\sts$ by replacing each of the entries
  $[\mu]_{c-1} < i \leq [\mu]_c$ in $\sts$ by the entry $c$  for  $  c \geq  1$.  
We identify a   Young tableau, $\SSTS $, of weight $\mu$ with the set of standard  Young tableaux, $\mu^{-1}(\SSTS)=\{\sts \mid \mu(\sts)=\SSTS\}$. 
  The  set $\mu^{-1}(\SSTS)$ forms the basis of   a cyclic $\mathfrak{S}_\mu$-module with   generator given by any element $\sts \in  \mu^{-1}(\SSTS)$ (see \cite[Chapter 4]{Mathasbook} for more details).

\begin{eg}

The three semistandard  Young tableaux of shape $(5,3,1) \ominus (4,2)$ and weight $(2,1)$ are depicted in \cref{anexampleofLRclassic}.  We have that $\mu(\sts_1)=\mu(\sts_2)=\SSTS$,  
$\mu(\stt_1)=\mu(\stt_2)=\SSTT$, 
and  $\mu(\stu_1)=\mu(\stu_2)=\SSTU$. 
In each case, the non-trivial element $s_1\in \mathfrak{S}_{(2,1)}\leq \mathfrak{S}_3$ acts by permuting these pairs of  Young tableaux (and therefore acts trivially on the orbits sums in each case).  

\!\!\!\!\begin{figure}[ht!]
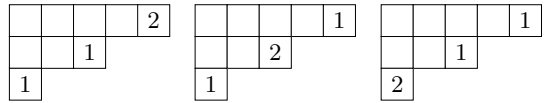
$$
 \Yboxdim{12pt}\scalefont{0.9}
  \Yvcentermath1\gyoung(;;;;;2,;;;1,;1)   \quad
      \Yvcentermath1\gyoung(;;;;;1,;;;2,;1) 
      \quad
      \Yvcentermath1\gyoung(;;;;;1,;;;1,;2) 
      $$
      \caption{
The semistandard  Young tableaux  $\SSTS, \SSTT, \SSTU$ of shape  $(5,3,1) \ominus (4,2)$
and weight $(2,1)$.  
    }
\label{anexampleofLRclassic}
\end{figure}
 \end{eg}

 \begin{eg}\label{readerexample}
 An example of a  semistandard  Young tableaux, $\SSTS$,  of shape $(9,8,6,3) \ominus (6,4,3)$ and weight $(5,5,3)$ 
 is given by the leftmost  Young tableau depicted in \cref{hello mister3}. 
    Two standard Young tableaux,  $\sts$ and  $\stt$, of shape $(9,8,6,3) \ominus (6,4,3)$ are depicted in \cref{hello mister3}.  For $\mu=(5,5,3)$, we have that
 $\mu(\sts)=\mu(\stt)=\SSTS$.  

\!\!\!\!\begin{figure}[ht]
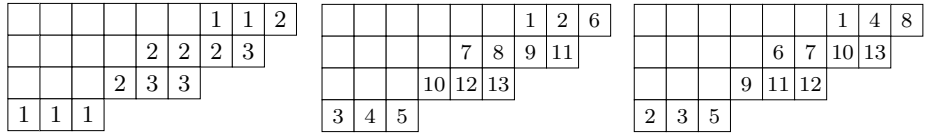

$$
   \Yboxdim{12pt}\scalefont{0.9}  \Yvcentermath1\young(\ \ \ \ \ \ 112,\ \ \ \ 2223,\ \ \ 233,111)     
 \quad 
  \Yboxdim{12pt}\scalefont{0.9}
  \Yvcentermath1\gyoung(;;;;;;;1;2;6,;;;;;7;8;9;{\eleven},;;;;{10};{12};{13},;3;4;5)   \quad
  \Yvcentermath1\gyoung(;;;;;;;1;4;8,;;;;;6;7;{10};{13},;;;;{9};{\eleven};{12},;2;3;5) 
   $$
     \caption{Three semistandard  Young tableau of shape $(9,8,6,3) \ominus (6,4,3)$.
The first, $\SSTS$, is of weight $(5,5,3)$ and the second, $\sts$, and third, $\stt$,    are standard  Young tableaux. 
  }
\label{hello mister3}
\end{figure}
 \end{eg}

\begin{defn}\label{Classical row reading}  Given 
     a  semistandard Young tableau of shape $\nu\ominus \lambda$ and weight $\mu$, we define the {\sf $\mu$-reverse reading word}  to be 
  the sequence of integers  
  obtained by reading the entries of the  Young tableau  from   right-to-left along successive rows (beginning with the first row).
\end{defn}
  
\begin{eg}\label{followreaderexample}
  The  $(1^3)$-reverse reading words  of the standard  Young tableaux in \cref{frickingheck} are
  $$
(3,1,2)\quad (3,2,1)\quad  (1,3,2) \quad (2,3,1) \quad (1,2,3)
\quad (2,1,3)
  $$
  respectively.     The  $(5^2,3)$-reverse reading word  of the semistandard   Young tableau $\SSTS$ in \cref{readerexample} is  
  $
 (2,1,1,3,2,2,2,3,3,2,1,1,1).  
  $

\end{eg} 
 
The representation theory of the symmetric group $\mathfrak{S}_r$ over the rational field $\mathbb{Q}$ is semisimple. 
For each $\nu \vdash r$, we have a corresponding 
{\sf  Specht module}
$\mathbf{S}(\nu )$ which has a basis indexed by all standard Young tableaux of shape $\nu$.
The set $\{\mathbf{S}(\nu ) \mid \nu \in \mathscr{P}_r\}$  forms 
a complete set of non-isomorphic simple   $\CC\mathfrak{S}_{r}$-modules.   
 More generally, for $s\leq r$ and  $ \lambda \vdash r-s$ with $\lambda\subseteq \nu$,  we have a corresponding {\sf skew Specht module}
$\mathbf{S}(\nu\ominus \lambda)$ for $\mathbb{Q}\mathfrak{S}_s$ which has a basis indexed by standard Young tableaux of shape $\nu \ominus \lambda$  \cite{PEEL}.

\begin{thm}\label{LRclassic} \cite{JAM}
Let $\lambda \vdash {r-s}$, $\mu  \vdash {s}$
 and $\nu    \vdash {r}$ and suppose that $\lambda \subseteq \nu$. We define the Littlewood--Richardson coefficients to be the multiplicities,
   \begin{equation} 
  c({\lambda,\nu,\mu})=  
   \dim_\CC\Hom_{    \mathfrak{S}_{r-s}\times    \mathfrak{S}_{s}}
 ( \mathbf{S}(\lambda)\boxtimes \mathbf{S}( \mu) ,
  \mathbf{S}(\nu){\downarrow}_{\mathfrak{S}_{r-s}\times    \mathfrak{S}_{s}}^{\mathfrak{S}_r}     )
  = \dim_\CC\Hom_{    \mathfrak{S}_{s}}
 ( \mathbf{S}(\mu),
  \mathbf{S}(\nu\ominus \lambda)    ). 
  \end{equation}\label{LRcoeff}
   The Littlewood--Richardson coefficient, $c({\lambda,\nu,\mu})$,  is equal to the number of    Young tableaux of shape $\nu\ominus \lambda$ and weight $\mu$ satisfying the following two conditions, 
\begin{enumerate} 
\item    the  Young tableau is  {\sf semistandard};
 \item   the $\mu$-reverse reading word of the  Young tableau is a {\sf lattice permutation}, that is, 
 for each positive integer $j$,
 starting from the first entry of the word 
  to any other place in word, there are at least as many entries equal to   $j$  as there are equal to  $(j+1)$.
\end{enumerate}
\end{thm}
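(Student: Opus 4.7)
The plan is to derive both assertions from the classical theory of Specht modules. The equivalence of the two Hom-space descriptions follows from Frobenius reciprocity combined with Peel's branching rule \cite{PEEL},
$$\mathbf{S}(\nu){\downarrow}_{\mathfrak{S}_{r-s}\times\mathfrak{S}_s}^{\mathfrak{S}_r}\;\cong\;\bigoplus_{\alpha\vdash r-s,\;\alpha\subseteq\nu}\mathbf{S}(\alpha)\boxtimes\mathbf{S}(\nu\ominus\alpha);$$
semisimplicity of $\CC\mathfrak{S}_{r-s}$ together with Schur's lemma then isolate the $\mathbf{S}(\lambda)$-isotypic component, identifying the first Hom space with the second.

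For the combinatorial count I would follow James's semistandard homomorphism method. The principal input is the skew analogue of James's theorem: letting $M^\mu$ denote the Young permutation module, the space $\Hom_{\mathfrak{S}_s}(M^\mu,\mathbf{S}(\nu\ominus\lambda))$ admits an explicit $\CC$-basis $\{\varphi_\SSTS\}$ indexed by \emph{all} semistandard Young tableaux $\SSTS$ of shape $\nu\ominus\lambda$ and weight $\mu$. This follows from the straight-shape statement once $\mathbf{S}(\nu\ominus\lambda)$ is realised inside the skew permutation module as the intersection of kernels of the relevant Garnir operators, via essentially the same kernel-intersection argument as in the straight case treated in \cite{JAM}.

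To pass from this basis to one for $\Hom_{\mathfrak{S}_s}(\mathbf{S}(\mu),\mathbf{S}(\nu\ominus\lambda))$, I would invoke Young's rule $M^\mu\cong\mathbf{S}(\mu)\oplus\bigoplus_{\tau\vartriangleright\mu}K_{\tau\mu}\mathbf{S}(\tau)$ to equate the dimension of the above Hom space with
$$c(\lambda,\nu,\mu)+\sum_{\tau\vartriangleright\mu}K_{\tau\mu}\,c(\lambda,\nu,\tau),$$
while the semistandard basis gives this dimension as the total number of semistandard tableaux of shape $\nu\ominus\lambda$ and weight $\mu$. The identity inverts along the dominance order to express $c(\lambda,\nu,\mu)$ as an alternating sum; equivalently, one should identify those $\varphi_\SSTS$ whose restriction to the Specht summand $\mathbf{S}(\mu)\subseteq M^\mu$ is non-zero as precisely the Littlewood--Richardson tableaux.

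The main obstacle is exactly this last identification. I would argue directly that $\varphi_\SSTS$ annihilates every Garnir relator of type $\mu$ — equivalently, that it restricts non-trivially on $\mathbf{S}(\mu)$ — if and only if the $\mu$-reverse reading word of $\SSTS$ is a lattice permutation. The verification is a local exchange analysis on pairs of adjacent row-values in $\SSTS$: the required cancellations in $\varphi_\SSTS$ of the column-antisymmetriser relations take place exactly when, at every prefix of the reverse reading word, the count of $j$'s dominates the count of $(j+1)$'s. This matches the lattice permutation condition and completes the proof.
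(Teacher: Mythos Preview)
The paper does not prove this theorem; it is stated with the citation \cite{JAM} and treated as classical. However, the paper's Section~6 establishes the generalisation (Theorem~\ref{mainresult}) using precisely James's \emph{pairs of partitions} technique, and restricting that argument to triples of maximal depth recovers the classical proof. So the relevant comparison is between your approach and that one.

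Your steps up to and including the semistandard-basis description of $\Hom_{\mathfrak{S}_s}(M^\mu,\mathbf{S}(\nu\ominus\lambda))$ and the Young-rule decomposition are fine and agree with what both James and the present paper do. The gap is in your final paragraph. You write that $\varphi_\SSTS$ ``annihilates every Garnir relator of type $\mu$ --- equivalently, that it restricts non-trivially on $\mathbf{S}(\mu)$''. These two conditions are \emph{not} equivalent: annihilating the Garnir relations means $\varphi_\SSTS$ factors through the quotient $M^\mu\twoheadrightarrow\mathbf{S}(\mu)$, i.e.\ vanishes on the complementary summand, whereas restricting non-trivially means $\varphi_\SSTS$ is non-zero on the $\mathbf{S}(\mu)$-summand. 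In general neither condition singles out the Littlewood--Richardson tableaux. The semistandard homomorphisms $\varphi_\SSTS$ are not adapted to the Specht decomposition of $M^\mu$: typically none of them factors through $\mathbf{S}(\mu)$, and many non-LR ones restrict non-trivially but with restrictions that are linearly dependent on those of the LR ones. So the proposed ``local exchange analysis'' cannot deliver the claimed if-and-only-if.

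What James (and this paper, in \cref{bijectionSS,bijectionTV,mainbijection}) actually does at this point is bijective rather than homomorphism-by-homomorphism: one shows combinatorially that the set of semistandard tableaux of shape $\nu\ominus\lambda$ and weight $\mu$ is in bijection with $\bigsqcup_{\tau\vdash s}\SStd_s(\tau,\mu)\times\Latt_s(\nu\ominus\lambda,\tau)$ via the ``turn bad $c$'s into $(c{-}1)$'s'' map, and then the downward induction on dominance that you already set up finishes the job. Your sentence ``the identity inverts along the dominance order to express $c(\lambda,\nu,\mu)$ as an alternating sum'' is correct, but passing from that alternating sum to the positive count of LR tableaux is exactly the content of the bijection --- it is not a reformulation of the Garnir-annihilation claim.
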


\begin{eg}
The  Young tableau of shape $(9,8,6,3) \ominus (6,4,3)$ and weight $(5,5,3)$   depicted in \cref{hello mister3} is semistandard but its $(5,5,3)$-reverse reading word is not a lattice permutation. 
\end{eg}

 \begin{eg}\label{copierieg}
The  three  Young tableaux of shape $(5,3,1)\ominus (4,2)$ and 
weight $(2,1)$ are  depicted in \cref{anexampleofLRclassic}. 
Only the latter two of these  Young tableaux satisfy condition $(2)$ of \cref{LRclassic}.  
Therefore $c({(5,3,1)},{(4,2), (2,1)})=2$.  
   \end{eg}

A famous pre-cursor to the full Littlewood--Richardson rule was provided by {\sf Pieri's rule}.  
  In this case, we  assume that the weight partition $\mu = (s)$.  This  
 is equivalent to 
 all  Young tableaux of   weight $\mu$ (and any arbitrary fixed shape) 
  satisfying condition $(2)$ of \cref{LRclassic}. 
 Therefore the following rule, while elementary,   serves as a first step towards understanding 
condition $(1)$ of \cref{LRclassic}.

\begin{thm}[The Pieri  rule for Littlewood--Richardson coefficients] \label{PIERITULE}
Let  $\lambda\vdash {r-s}$ and $\nu\vdash r$ be such that  
$\lambda\subseteq \nu$.  We have that 
  $$
\dim_{\CC}\Hom_{\CC\mathfrak{S}_s}(\mathbf{S}((s)), \mathbf{S}(\nu\ominus \lambda))
 $$
is equal to the number of {\sf semistandard  Young tableaux} of shape $\nu\ominus\lambda$ and weight $(s)$.  
 The number of such  Young tableaux is equal to   1 (respectively 0)  if $\nu$ is (respectively is not) obtained from 
  $\lambda$ by adding a total of  $s$ nodes,
   no two of which appear in the same column.  
 \end{thm}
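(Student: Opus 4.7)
The plan is to deduce this statement directly from the full Littlewood--Richardson rule (\cref{LRclassic}) with $\mu = (s)$, and then verify the resulting combinatorial count by inspection.

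First, I would apply \cref{LRclassic} with weight partition $\mu = (s)$ to identify
\[
\dim_{\CC}\Hom_{\CC\mathfrak{S}_s}(\mathbf{S}((s)), \mathbf{S}(\nu\ominus \lambda))
= c(\lambda,\nu,(s)),
\]
so that the dimension equals the number of Young tableaux of shape $\nu\ominus\lambda$ and weight $(s)$ satisfying the semistandard condition (1) and the lattice permutation condition (2).

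Next, I would observe that condition (2) is automatic when $\mu = (s)$: a tableau of weight $(s)$ has all $s$ of its entries equal to $1$, so its $(s)$-reverse reading word is $(1,1,\ldots,1)$, which is trivially a lattice permutation (for every positive integer $j$ and every initial segment of the word, the number of $j$'s is at least the number of $(j+1)$'s since there are no entries larger than $1$). Thus the count reduces to the number of semistandard Young tableaux of shape $\nu\ominus\lambda$ with all entries equal to $1$.

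Finally, I would analyse the semistandard condition in this setting. The weakly-increasing-along-rows requirement is automatic since all entries are equal. The strictly-increasing-down-columns requirement, applied to a filling in which every entry is $1$, forces each column of $[\nu\ominus\lambda]$ to contain at most one box. Hence, if $\nu\ominus\lambda$ has no two boxes in the same column, the unique all-$1$'s filling is semistandard and contributes $1$; otherwise no semistandard filling of weight $(s)$ exists and the count is $0$. This matches the stated formula, and no further step is needed --- the only mild subtlety is verifying that the lattice permutation condition is indeed vacuous, which is immediate from the definition as noted above.
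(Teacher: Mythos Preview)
Your argument is correct. The paper does not supply its own proof of this statement; it is stated as a classical, elementary result (a precursor to the full Littlewood--Richardson rule) and left without demonstration. Your derivation from \cref{LRclassic} by specialising to $\mu=(s)$, observing that the reverse reading word $(1,1,\dots,1)$ is trivially a lattice permutation, and then noting that an all-$1$'s filling is semistandard precisely when no column contains two boxes, is the standard way to see the Pieri rule as a consequence of the full rule and is entirely sound.
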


We now consider a dual to the above case, which we refer to as the {\sf co-Pieri} rule.  
Here we assume that the Young diagram of 
$\nu \ominus \lambda$ consists of no two nodes 
  in the same column.  
 This is equivalent to 
 all  Young tableaux of  shape $\nu\ominus \lambda$ (and any arbitrary fixed weight) 
  satisfying condition $(1)$ of \cref{LRclassic}. 
 Therefore the following 
 rule  serves   as a first step towards understanding condition $(2)$ of \cref{LRclassic}.  
 
\begin{thm}[The Co-Pieri rule for Littlewood--Richardson coefficients]\label{piere!}
Suppose that $  \lambda\subseteq \nu $ and that 
$\nu \ominus \lambda$ is a skew partition of $s$ with no two nodes in the same column. We have that 
  $$ c(\lambda, \nu, \mu) = 
\dim_{\CC}\Hom_{\CC\mathfrak{S}_s}( \mathbf{S}(\mu) , \mathbf{S}(\nu\ominus \lambda))
 $$
is equal to the number of {\sf  Young tableaux} of shape $\nu\ominus\lambda$ and weight $\mu$ whose reverse reading word is a {\sf lattice permutation}.
  \end{thm}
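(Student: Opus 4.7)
The plan is to derive this statement as an immediate specialisation of the full Littlewood--Richardson rule (\cref{LRclassic}). The first equality in the display of the theorem is simply the definition of $c(\lambda,\nu,\mu)$ as a Hom-space dimension (the equivalence between the induction/restriction description and the skew Specht module description is recalled just above \cref{LRclassic}). So the entire content is to match the enumerative description provided by \cref{LRclassic} with the weaker condition appearing in the statement.

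The key observation I would isolate is that the semistandard condition becomes \emph{vacuous} under the co-Pieri hypothesis. Recall from \cref{LRstatement} that a Young tableau of shape $\nu\ominus\lambda$ and weight $\mu$ is a filling with weakly increasing rows and columns, and is semistandard precisely when its columns are in addition \emph{strictly} increasing. Since $\nu\ominus\lambda$ is assumed to contain no two boxes in the same column, every column of $[\nu\ominus\lambda]$ is a singleton, and hence both the weak and strict column monotonicity conditions hold automatically. Consequently the set of Young tableaux of shape $\nu\ominus\lambda$ and weight $\mu$ coincides with the set of semistandard Young tableaux of the same shape and weight.

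Given this identification, I would conclude by invoking \cref{LRclassic} directly: $c(\lambda,\nu,\mu)$ equals the number of semistandard Young tableaux of shape $\nu\ominus\lambda$ and weight $\mu$ whose $\mu$-reverse reading word is a lattice permutation, which in the co-Pieri setting is precisely the number of Young tableaux of shape $\nu\ominus\lambda$ and weight $\mu$ whose reverse reading word is a lattice permutation. There is no substantive obstacle here: the work has already been done in \cref{LRclassic}, and the only point to verify is the vacuity of the strict-column condition, which is immediate from the hypothesis on $\nu\ominus\lambda$. This is in perfect analogy with how the Pieri rule (\cref{PIERITULE}) is the specialisation in which the lattice permutation condition is vacuous, confirming the dual character of the two rules highlighted in the introduction.
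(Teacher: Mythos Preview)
Your proposal is correct and matches the paper's approach exactly: the paper presents \cref{piere!} as the specialisation of \cref{LRclassic} in which the semistandardness condition (1) is automatically satisfied because $\nu\ominus\lambda$ has no two boxes in the same column, leaving only the lattice permutation condition (2). No separate proof is given in the paper beyond this observation, which is precisely what you have written.
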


To reiterate,  \cref{PIERITULE} describes  precisely the set of Littlewood--Richardson coefficients which can be calculated without 
 mention of  the lattice permutation condition; whilst  
\cref{piere!} describes  precisely the set of Littlewood--Richardson coefficients which can be calculated without 
 mention of  the  semistandardness condition.

\subsection{ Young tableaux combinatorics revisited}
In the next section,  we shall see that the Littlewood--Richardson coefficients appear as a subfamily of the wider class of (stable) Kronecker coefficients.  
The purpose of this paper is to generalise the
   combinatorics  of standard and semistandard  Young tableaux from this subclass 
to the  study of all (stable) Kronecker coefficients.  
In order to illustrate how we shall proceed, we first recast the pictorial  Young tableaux described earlier in the setting of the branching graph of the symmetric groups.

The {\sf branching graph} of the symmetric groups   encodes the induction and restriction of Specht modules for the tower of symmetric groups.
   For $k\in \NN$, the set of vertices on the $k$th   level  are given by 
    the set of partitions of $k$.   There is an edge  $\lambda  \to \mu $ 
if $\mu $ is  obtained from $\lambda $ by adding  a box in the $i$th row for some $i\geq 1$ 
 in which case we write $\mu =\lambda+ \varepsilon_i$.  
The first few levels
of  this graph are given in   Figure \ref{asdhjkdsf}.

\begin{figure}[ht!]
$$\begin{tikzpicture}[scale=0.5]
          \begin{scope}  
     \draw (0,3) node {   $\varnothing $  };   
              \draw (0,0) node { $  \Yboxdim{6pt}\gyoung(;) $  };   
    \draw (-2,-3) node   {  $ \Yboxdim{6pt}\gyoung(;;)	$}		; 
       \draw (+2,-3) node   {   $ \Yboxdim{6pt}\gyoung(;,;)$	}		;
       \draw (0,-6) node    {     \Yboxdim{6pt}\gyoung(;;,;)}		;
      \draw (-4,-6) node    {    \Yboxdim{6pt}\gyoung(;;;)}		;    
        \draw (4,-6) node    {    \Yboxdim{6pt}\gyoung(;,;,;)}		;
     \draw[<-] (0.0,1) -- (0,2);  
                          \draw [<-]  (-1.5,-2) -- (-0.5,-1);   
                                   \draw[->] (0.5,-1)-- (1.5,-2);  
                          \draw[<-] (-0.5,-8.2+3) -- (-1.5,-6.8+3) ;
                         \draw[->] (1.5,-6.8+3) -- (0.5,-8.2+3);
  \draw[<-] (-3.5,-8.2+3)  -- (-2.5,-6.8+3) ;  
     \draw[->] (2.5,-6.8+3) -- (3.5,-8.2+3);   
             \end{scope}\end{tikzpicture}
$$

\!\!\!\!\!\caption{The first few  levels of the branching graph of  the symmetric groups.  }
\label{asdhjkdsf}
\end{figure}
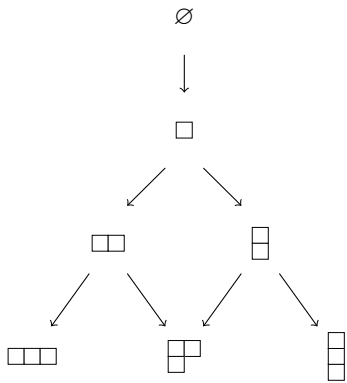

One can then identify any skew standard   Young tableau of shape $\nu\ominus \lambda$  with a  path from $\lambda$ to $\nu$ in the branching graph; this is done simply by adding nodes in the prescribed  order.  This is best illustrated via an example.  

\begin{eg}\label{hello mister2}
 Let   
  $\lambda =(4,2)   $ and  $\nu =(5,3,1)  $.  
    We have six standard   Young tableaux of shape $\nu\ominus \lambda$.  
 Two of   these  Young tableaux  are as  follows:
$$
\sts_1 =   
 \left( \   \Yvcentermath1\Yboxdim{6pt}\gyoung(;;;;,;;) \xrightarrow{\ +\varepsilon_2\ }
   \Yvcentermath1\Yboxdim{6pt}\gyoung(;;;;,;;;)  \xrightarrow{\ +\varepsilon_3\ }
      \Yvcentermath1\Yboxdim{6pt}\gyoung(;;;;,;;;,;)  \xrightarrow{\ +\varepsilon_1\ }
      \Yvcentermath1\Yboxdim{6pt}\gyoung(;;;;;,;;;,;)  \      
 \right)$$
 $$ \sts_2 =   
 \left( \   \Yvcentermath1\Yboxdim{6pt}\gyoung(;;;;,;;)\xrightarrow{\ +\varepsilon_3\ }
   \Yvcentermath1\Yboxdim{6pt}\gyoung(;;;;,;;,;)  \xrightarrow{\ +\varepsilon_2\ }
      \Yvcentermath1\Yboxdim{6pt}\gyoung(;;;;,;;;,;) \xrightarrow{\ +\varepsilon_1\ }
      \Yvcentermath1\Yboxdim{6pt}\gyoung(;;;;;,;;;,;)  \      
 \right)  $$
 These paths correspond with the two leftmost  Young tableaux (also labelled by $\sts_1$ and $\sts_2$)   depicted in  \cref{hello mister}.  
 \end{eg}

We now wish to re-imagine  the notion of a  semistandard  Young tableaux in this setting.  Recall that a 
   Young tableau of weight $\mu$
  is merely a picture which  encodes an $\mathfrak{S}_\mu$-orbit of standard  Young tableaux. 
     We shall  
  picture a   Young tableau, $\SSTS$, of weight $\mu$   simply as the corresponding set of paths $\mu^{-1}(\SSTS)$ in the branching graph.  
 In order to highlight the weight of the    Young tableau, we shall decorate the graph with a corresponding series of frames.  An illustrative example is given in Figure \ref{anexampleofLRclassic2}.  
A  Young tableau is semistandard  (in the classical picture) if and only if the entries are strictly increasing along the columns;  equivalently   the successive differences between partitions on the edges of the frame have no two nodes in the same column.  
 While we have refrained from being too precise here, a more general definition of such a   tableau is made   in \cref{sec:semistandard}.

 \begin{figure}[ht!]
 \scalefont{0.8} 
$$ \begin{tikzpicture}[scale=0.6]
                    \fill[white] (0,0) circle (20pt);   
       \draw[dashed] (-2,0) rectangle (4.5,-7.5);  
                \draw[dashed] (-2,-5) -- (4.5,-5);  
                 \fill[white] (0,0) circle (20pt);   
                                  \fill[white] (0,-5) circle (22pt);   
                  \begin{scope}   
      \draw (0,0) node {$  \Yboxdim{5pt}\gyoung(;;;;,;;) $  };   
   \draw (0,-2.5) node{$ \Yboxdim{5pt}\gyoung(;;;;,;;;)$  };
      \draw (3,-2.5) node{$ \Yboxdim{5pt}\gyoung(;;;;,;;,;)$  };
  \draw (0,-5) node{$ \Yboxdim{5pt}\gyoung(;;;;,;;;,;)$  };
  \draw (0,-7.5) node{ \Yboxdim{5pt}\gyoung(;;;;;,;;;,;) };
      \path[->]   (0,-0.6)edge[decorate]  node[left] {$+2$}  (0,-1.9);
       \path[->]   (0.6,-0.6)edge[decorate]  node[auto] {$+3$}  (3,-1.7);
                    \path[->]   (0,-3.1)edge[decorate]  node[left] {$+3$}  (0,-4.4);
     \path[->]   (3,-3.15)edge[decorate]  node[right] {$\ +2$}  (0.6,-4.4);
                    \path[->]   (0,-5.6)edge[decorate]  node[left] {$+1$}  (0,-6.9);
    \end{scope}
   \end{tikzpicture}
   \quad \quad
\begin{tikzpicture}[scale=0.6]
                    \fill[white] (0,0) circle (20pt);   
       \draw[dashed] (-2,0) rectangle (4.5,-7.5);  
                \draw[dashed] (-2,-5) -- (4.5,-5);  
                 \fill[white] (0,0) circle (20pt);   
                                  \fill[white] (0,-5) circle (22pt);   
                  \begin{scope}   
      \draw (0,0) node {$  \Yboxdim{5pt}\gyoung(;;;;,;;) $  };   
   \draw (0,-2.5) node{$ \Yboxdim{5pt}\gyoung(;;;;;,;;)$  };
      \draw (3,-2.5) node{$ \Yboxdim{5pt}\gyoung(;;;;,;;,;)$  };
  \draw (0,-5) node{$ \Yboxdim{5pt}\gyoung(;;;;;,;;,;)$  };
  \draw (0,-7.5) node{ \Yboxdim{5pt}\gyoung(;;;;;,;;;,;) };
      \path[->]   (0,-0.6)edge[decorate]  node[left] {$+1$}  (0,-1.9);
       \path[->]   (0.6,-0.6)edge[decorate]  node[auto] {$+3$}  (3,-1.7);
                    \path[->]   (0,-3.1)edge[decorate]  node[left] {$+3$}  (0,-4.4);
     \path[->]   (3,-3.15)edge[decorate]  node[right] {$\ +1$}  (0.6,-4.4);
                    \path[->]   (0,-5.6)edge[decorate]  node[left] {$+2$}  (0,-6.9);
    \end{scope}
   \end{tikzpicture}
      \quad \quad
\begin{tikzpicture}[scale=0.6]
                    \fill[white] (0,0) circle (20pt);   
       \draw[dashed] (-2,0) rectangle (4.5,-7.5);  
                \draw[dashed] (-2,-5) -- (4.5,-5);  
                 \fill[white] (0,0) circle (20pt);   
                                  \fill[white] (0,-5) circle (22pt);   
                  \begin{scope}   
      \draw (0,0) node {$  \Yboxdim{5pt}\gyoung(;;;;,;;) $  };   
   \draw (0,-2.5) node{$ \Yboxdim{5pt}\gyoung(;;;;;,;;)$  };
      \draw (3,-2.5) node{$ \Yboxdim{5pt}\gyoung(;;;;,;;;)$  };
  \draw (0,-5) node{$ \Yboxdim{5pt}\gyoung(;;;;;,;;;)$  };
  \draw (0,-7.5) node{ \Yboxdim{5pt}\gyoung(;;;;;,;;;,;) };
      \path[->]   (0,-0.6)edge[decorate]  node[left] {$+1$}  (0,-1.9);
       \path[->]   (0.6,-0.6)edge[decorate]  node[auto] {$+2$}  (3,-1.7);
                    \path[->]   (0,-3.1)edge[decorate]  node[left] {$+2$}  (0,-4.4);
     \path[->]   (3,-3.15)edge[decorate]  node[right] {$\ +1$}  (0.6,-4.4);
                    \path[->]   (0,-5.6)edge[decorate]  node[left] {$+3$}  (0,-6.9);
    \end{scope}
   \end{tikzpicture}
   $$
      \caption{The 3 semistandard   Young tableaux $\SSTS, \SSTT, \SSTU$ of shape $ (4,3,1)\ominus(4,2)$ and weight $(2,1)$. 
      }
   \label{anexampleofLRclassic2}
   \end{figure}
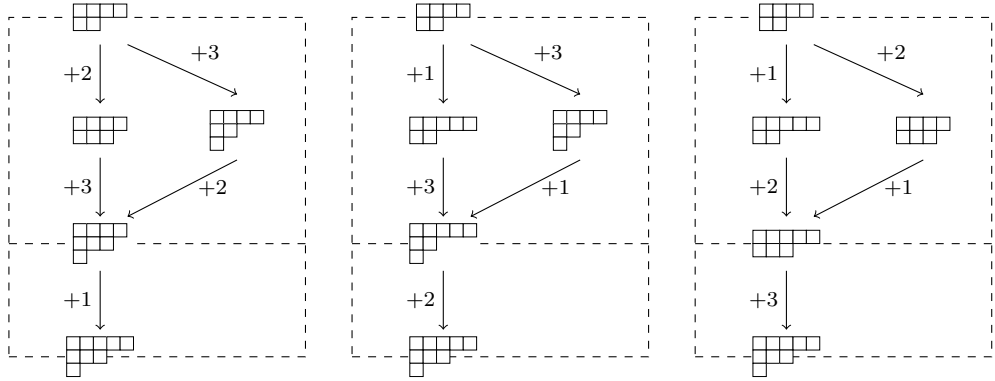

We leave the reinterpretation of the reverse reading word in this setting to Section 6.

\subsection{The Kronecker coefficients}
We  now  introduce the Kronecker coefficients  and illustrate how they generalise the Littlewood--Richardson coefficients discussed above.
Given  $\lambda ,\mu, \nu    \vdash  r$ we define the associated  {\sf Kronecker coefficient} to be the multiplicity
 \begin{align*} 
  g({\lambda,\nu},{\mu})  
&= \dim_\CC(\Hom_{\mathfrak{S}_r}(\Specht(\nu ), \Specht(\lambda )\otimes \Specht(\mu ))).   
 \end{align*} 
  For  $\alpha \subset \lambda$  ,$\beta \subset \mu$ with $|\lambda \ominus \alpha|=|\mu\ominus \beta| = s$ and  $ \nu    \vdash s $  we extend this notation to skew Specht modules in the obvious way,
 \begin{align*} 
  g( \lambda\ominus \alpha  ,\nu,\mu \ominus \beta)  
&= \dim_\CC(\Hom_{\mathfrak{S}_s}(\Specht(\nu ), \Specht(\lambda \ominus \alpha  )\otimes \Specht(\mu \ominus \beta))).  
  \end{align*}
Given $\lambda=(\lambda_1,\lambda_2,\ldots) $ a partition and $n$ sufficiently large, we set $\lambda_{[n]}:=(n-|\lambda|,\lambda_1,\lambda_2,\ldots)$.   
It was discovered  by Murnaghan  in \cite{Mur38} that the sequence  of integers $ \{{g}(\lambda_{[n]} , \mu_{[n]},\nu_{[n]})\}_{n \in \NN} $ 
stabilises as $n\gg 0$    with stable limit $ \overline{g}(\lambda ,  \nu,  \mu)$.  
The  multiplicities 
 \begin{align*} 
   \overline{g}(\lambda ,  \nu,  \mu)   
&= \dim_\CC(\Hom_{\mathfrak{S}_n}(\Specht(\nu_{[n]} ), \Specht(\lambda_{[n]} )\otimes \Specht(\mu_{[n]} ))) 
 \end{align*}
 for $n\gg0$   are known as the    {\sf stable Kronecker coefficients}. Murnaghan also observed that 
\begin{equation}\label{Mbound}
\overline{g}(\lambda, \nu, \mu)\neq 0 \quad \mbox{ implies  $|\mu|\leq |\lambda| + |\nu|$, $|\nu|\leq |\lambda|+|\mu|$  and $|\lambda|\leq |\mu|+|\nu|$.}
\end{equation}
 The (stable) Kronecker coefficients have been studied extensively  (see for example \cite{Mur38,Mur55,brion,ky,MR1725703}).
 Recent work   \cite{ROSAANDCO,stab3,BDO15} has shown that the
 stable  Kronecker coefficients 
 can serve as an important stepping stone towards
  understanding the general case.

  The search for a positive combinatorial formula of the Kronecker coefficients 
 has been described by Richard  Stanley as  `one of the main problems in the combinatorial representation theory of the symmetric group', \cite{Sta99}. 
  While this is a very difficult problem,  there are many useful descriptions of the Kronecker coefficients which \emph{do} involve cancellations; chief among these is the following recursive description.   
%

\begin{thm}  \cite[2.3]{Dvir}\label{sadfadsffdssdffsdfsdsfdsfafdsfdsasfdfdasdfsdfsafsadfsadfdsafsda}.
Given $  \lambda_{[n]},  \mu_{[n]}, \nu_{[n]} \vdash n$ such that $|\mu|=s$, we have that
\begin{equation}\label{ThDvir2}     g({\lambda_{[n]}, {\nu _{[n]}},\mu_{[n]}})
=
 \sum_{
 \begin{subarray}c \alpha \vdash n-s \\ \alpha \subseteq \lambda_{[n]} \cap \nu_{[n]}
 \end{subarray}}
g({\lambda_{[n]} \ominus \alpha, \nu_{[n]}\ominus\alpha},{ {\mu}})
 -  \sum_{
  \begin{subarray}c 
  \beta \in P(n, \mu) \\ 
  \beta \neq \mu_{[n]}
\end{subarray}
  }
  g({\lambda_{[n]}},{\nu_{[n]}},\beta)   \end{equation}
   where $P(n, \mu) $ is the set of partitions of $n$ obtained by adding 
  a total of $n-s$ boxes   to $\mu$ so that no two of which are in the same column.  In particular, if 
  $s<|\lambda_{[n]}\ominus (\lambda_{[n]}\cap \nu_{[n]})|$ then 
  $ g({\lambda_{[n]}, {\nu _{[n]}},\mu_{[n]}})=0$ 
  and if 
$s=  |\lambda_{[n]}\ominus (\lambda_{[n]}\cap \nu_{[n]})|$ then
\begin{equation} \label{ThDvir1}
 g({\lambda_{[n]}, {\nu _{[n]}},\mu_{[n]}})
=  g(\lambda_{[n]}\ominus (\lambda_{[n]}\cap \nu_{[n]}), \nu_{[n]}\ominus (\lambda_{[n]}\cap \nu_{[n]}),  \mu ).
  \end{equation}
\end{thm}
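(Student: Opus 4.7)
My plan is to derive the recursive identity (\ref{ThDvir2}) via a twofold computation using Frobenius reciprocity and the classical Pieri rule, and then to deduce the vanishing statement and the base case (\ref{ThDvir1}) as consequences using the non-negativity of Kronecker coefficients.

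\textbf{Step 1 (The recursion).} Consider the $\mathfrak{S}_n$-module
$$M \;=\; \mathrm{Ind}_{\mathfrak{S}_{n-s} \times \mathfrak{S}_{s}}^{\mathfrak{S}_n}\bigl(\Specht((n-s)) \boxtimes \Specht(\mu)\bigr),$$
and compute $\dim \Hom_{\mathfrak{S}_n}\bigl(\Specht(\nu_{[n]}), \Specht(\lambda_{[n]}) \otimes M\bigr)$ in two ways. The classical Pieri rule gives $M \cong \bigoplus_{\beta \in P(n,\mu)} \Specht(\beta)$, whence this dimension equals $\sum_{\beta \in P(n,\mu)} g(\lambda_{[n]},\nu_{[n]},\beta)$; observe that $\mu_{[n]}\ominus\mu$ is a horizontal strip (its row-$i$ boxes occupy columns $\mu_i+1,\ldots,\mu_{i-1}$) so $\mu_{[n]} \in P(n,\mu)$. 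Alternatively, combining the projection formula $X \otimes \mathrm{Ind}(Y) \cong \mathrm{Ind}(\mathrm{Res}(X) \otimes Y)$ with Frobenius reciprocity and two applications of the classical branching rule (to restrict $\Specht(\lambda_{[n]})$ and $\Specht(\nu_{[n]})$ along $\mathfrak{S}_{n-s} \times \mathfrak{S}_s \leq \mathfrak{S}_n$), the orthogonality of the Specht modules over $\mathfrak{S}_{n-s}$ collapses the resulting double sum to
$$\sum_{\alpha \vdash n-s,\, \alpha \subseteq \lambda_{[n]} \cap \nu_{[n]}} g(\lambda_{[n]}\ominus\alpha,\nu_{[n]}\ominus\alpha,\mu).$$
Equating these and isolating the $\beta = \mu_{[n]}$ summand on the left yields (\ref{ThDvir2}).

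\textbf{Step 2 (Vanishing).} If $s < |\lambda_{[n]} \ominus (\lambda_{[n]} \cap \nu_{[n]})| = n - |\lambda_{[n]} \cap \nu_{[n]}|$, then no $\alpha$ of size $n-s$ can be contained in $\lambda_{[n]} \cap \nu_{[n]}$, so the first sum on the right of (\ref{ThDvir2}) is empty. Hence the right-hand side is non-positive, while $g(\lambda_{[n]},\nu_{[n]},\mu_{[n]}) \geq 0$, forcing $g(\lambda_{[n]},\nu_{[n]},\mu_{[n]}) = 0$ and moreover $g(\lambda_{[n]},\nu_{[n]},\beta) = 0$ for every $\beta \in P(n,\mu) \setminus \{\mu_{[n]}\}$.

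\textbf{Step 3 (Base case).} When $s = |\lambda_{[n]} \ominus (\lambda_{[n]} \cap \nu_{[n]})|$, the only $\alpha \vdash n-s$ contained in $\lambda_{[n]} \cap \nu_{[n]}$ is $\lambda_{[n]} \cap \nu_{[n]}$ itself, so the first sum in (\ref{ThDvir2}) reduces to the single term on the right of (\ref{ThDvir1}). To see the second sum vanishes, note that for any $\beta \in P(n,\mu)$ the skew shape $\beta \ominus \mu$ is a horizontal strip, giving $\beta_i \leq \mu_{i-1}$ for $i \geq 2$ and hence $|\beta| - \beta_1 \leq |\mu| = s$, with equality if and only if $\beta = \mu_{[n]}$. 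Consequently any $\beta \in P(n,\mu)\setminus\{\mu_{[n]}\}$ equals $\gamma_{[n]}$ for some $\gamma$ with $|\gamma| < s = |\lambda_{[n]} \ominus (\lambda_{[n]} \cap \nu_{[n]})|$, and Step 2 applied to the triple $(\lambda,\nu,\gamma)$ kills the corresponding term; this gives (\ref{ThDvir1}).

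\textbf{Main obstacle.} The essential technical point lies in Step 1: the two simultaneous applications of the branching rule produce a double sum over pairs $(\alpha, \alpha')$ of partitions of $n-s$ which must collapse via orthogonality of Specht modules over $\mathfrak{S}_{n-s}$, leaving a single sum indexed by $\alpha \subseteq \lambda_{[n]} \cap \nu_{[n]}$. Once this bookkeeping is handled cleanly, the remaining arguments are elementary manipulations exploiting only non-negativity.
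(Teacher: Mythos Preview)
The paper does not supply a proof of this theorem; it is quoted directly from Dvir's paper \cite[2.3]{Dvir} and used as a black box. Your argument is correct and is essentially the standard derivation: the key identity in Step~1 is obtained by computing $\dim_\CC\Hom_{\mathfrak{S}_n}(\Specht(\nu_{[n]}),\Specht(\lambda_{[n]})\otimes\mathrm{Ind}_{\mathfrak{S}_{n-s}\times\mathfrak{S}_s}^{\mathfrak{S}_n}(\Specht((n-s))\boxtimes\Specht(\mu)))$ once via Pieri and once via Frobenius reciprocity plus the branching rule, with the diagonal collapse over $\alpha=\alpha'$ coming from $\Specht(\alpha)\otimes\Specht((n-s))\cong\Specht(\alpha)$ and Schur orthogonality. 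Steps~2 and~3 are then the expected positivity/depth arguments; your observation in Step~3 that every $\beta\in P(n,\mu)\setminus\{\mu_{[n]}\}$ satisfies $n-\beta_1<s$ (so that Step~2 applies to $\gamma=(\beta_2,\beta_3,\dots)$) is exactly the point needed to kill the second sum.

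One small remark on presentation: what you call the ``classical branching rule'' in Step~1 is really the skew decomposition $\Specht(\lambda_{[n]}){\downarrow}_{\mathfrak{S}_{n-s}\times\mathfrak{S}_s}\cong\bigoplus_{\alpha\subseteq\lambda_{[n]},\,|\alpha|=n-s}\Specht(\alpha)\boxtimes\Specht(\lambda_{[n]}\ominus\alpha)$ rather than the one-box branching rule; it may be worth naming it explicitly to avoid ambiguity.
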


\begin{cor}\label{boundsons} Let $\lambda, \nu, \mu$ be partitions with $\overline{g}(\lambda, \nu, \mu) \neq 0$. Then we have
$$\max\{ |\lambda \ominus (\lambda \cap \nu)| , |\nu \ominus (\lambda \cap \nu)|\} \leq |\mu| \leq |\lambda| + |\nu|.$$
\end{cor}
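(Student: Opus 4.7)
The upper bound $|\mu|\le |\lambda|+|\nu|$ is immediate: by definition $\overline{g}(\lambda,\nu,\mu) = g(\lambda_{[n]},\nu_{[n]},\mu_{[n]})$ for all sufficiently large $n$, and Murnaghan's inequalities \eqref{Mbound} applied to the triple $\lambda_{[n]},\nu_{[n]},\mu_{[n]}$ specialise (after cancelling the common factor of $n$) directly to $|\mu|\le |\lambda|+|\nu|$. So the content of the corollary lies entirely in the lower bound.

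For the lower bound, the plan is to invoke the vanishing statement in the second half of Dvir's recursion \cref{sadfadsffdssdffsdfsdsfdsfafdsfdsasfdfdasdfsdfsafsadfsadfdsafsda}, which asserts that $g(\lambda_{[n]},\nu_{[n]},\mu_{[n]})=0$ whenever $|\mu| < |\lambda_{[n]}\ominus(\lambda_{[n]}\cap \nu_{[n]})|$. Thus the proof reduces to a purely combinatorial identification: I need to show that, for $n$ large, this intersection-deficit equals the maximum on the left-hand side of the desired inequality.

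To carry out the identification, observe that for $n \ge |\lambda|+|\nu|$ the first parts of $\lambda_{[n]}$ and $\nu_{[n]}$ dominate all subsequent entries, so the componentwise minimum is
\[ \lambda_{[n]}\cap \nu_{[n]} \;=\; \bigl(n-\max\{|\lambda|,|\nu|\},\;\min(\lambda_1,\nu_1),\,\min(\lambda_2,\nu_2),\,\ldots\bigr), \]
whose size is $n-\max\{|\lambda|,|\nu|\}+|\lambda\cap \nu|$. Subtracting from $|\lambda_{[n]}|=n$ yields
\[ \bigl|\lambda_{[n]}\ominus (\lambda_{[n]}\cap \nu_{[n]})\bigr| \;=\; \max\{|\lambda|,|\nu|\} - |\lambda\cap \nu| \;=\; \max\bigl\{|\lambda\ominus(\lambda\cap \nu)|,\,|\nu\ominus(\lambda\cap \nu)|\bigr\}. \]
If $|\mu|$ were strictly less than this common value, Dvir's theorem would therefore force $g(\lambda_{[n]},\nu_{[n]},\mu_{[n]})=0$ for every large $n$, and hence $\overline{g}(\lambda,\nu,\mu)=0$, contradicting the hypothesis.

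The only potentially delicate step is the intersection calculation above, and even this is essentially automatic once $n$ is taken large enough that the padded first parts dominate; so the main obstacle is purely notational and the proof is little more than a bookkeeping exercise combined with the two cited vanishing statements.
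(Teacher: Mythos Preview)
Your proposal is correct and follows essentially the same approach as the paper's proof: the upper bound comes from \eqref{Mbound} and the lower bound from the vanishing statement in Dvir's recursion, together with the combinatorial identity $|\lambda_{[n]}\ominus(\lambda_{[n]}\cap \nu_{[n]})| = \max\{|\lambda\ominus(\lambda\cap \nu)|,\,|\nu\ominus(\lambda\cap \nu)|\}$, which you verify explicitly whereas the paper merely asserts it. One small notational slip: in the paper \eqref{Mbound} is already stated for the \emph{stable} coefficients $\overline{g}(\lambda,\nu,\mu)$, so the upper bound $|\mu|\le|\lambda|+|\nu|$ is literally immediate and there is no need to pass through $\lambda_{[n]},\nu_{[n]},\mu_{[n]}$ or cancel any factor of $n$.
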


\begin{proof}
This follows directly from (\ref{Mbound}) and \cref{sadfadsffdssdffsdfsdsfdsfafdsfdsasfdfdasdfsdfsafsadfsadfdsafsda}, noting that $\max\{ |\lambda \ominus (\lambda \cap \nu)| , |\nu \ominus (\lambda \cap \nu)|\} = |\lambda_{[n]}\ominus (\lambda_{[n]}\cap \nu_{[n]})|$.
\end{proof}
  
  Finally  we conclude this section by realising the Littlewood--Richardson coefficients as a subset of the wider family of stable Kronecker coefficients.

  \begin{defn} Let $\lambda, \nu, \mu$ be partitions. We say that $(\lambda, \nu, \mu)$ is a {\sf triple of partitions of maximal depth} if $|\nu| = |\lambda| + |\mu|$.
      We also call
       $(\lambda,\nu,s)$   a triple of
 of  maximal depth if $|\nu| = |\lambda| + s$.
 \end{defn}
 
\begin{thm}\label{LitMurn} \cite{Littlewood, Mur55}
For  $(\lambda,\nu,\mu)$  a triple of partitions   of maximal depth,  
 $  \overline{g}( \lambda, \nu, \mu)=
c( \lambda, \nu, \mu)$. 
 \end{thm}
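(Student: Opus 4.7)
My plan is to derive the identity directly from Dvir's recursion (Theorem \ref{sadfadsffdssdffsdfsdsfdsfafdsfdsasfdfdasdfsdfsafsadfsadfdsafsda}) by showing that the maximal-depth hypothesis $|\nu|=|\lambda|+|\mu|$ forces the equality case of Dvir's bound, so that the Kronecker computation collapses to a single skew Kronecker coefficient which is then identified with the Littlewood--Richardson coefficient via Theorem \ref{LRclassic}.

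First I would dispose of the degenerate case: if $\lambda\not\subseteq\nu$ then $|\lambda\cap\nu|<|\lambda|$, hence $|\nu\ominus(\lambda\cap\nu)|=|\nu|-|\lambda\cap\nu|>|\nu|-|\lambda|=|\mu|$, so Corollary \ref{boundsons} gives $\overline{g}(\lambda,\nu,\mu)=0$, matching the corresponding vanishing of $c(\lambda,\nu,\mu)$. So I may assume $\lambda\subseteq\nu$. Now fix $n$ large enough that stability holds and that $n-|\nu|\geq\nu_1$, so the first parts of all relevant partitions are determined by $n$. Under $\lambda\subseteq\nu$, a direct computation of the intersection in each row yields
\[
\lambda_{[n]}\cap\nu_{[n]}=(n-|\nu|,\lambda_1,\lambda_2,\ldots),
\]
so $|\lambda_{[n]}\cap\nu_{[n]}|=n-|\mu|$, and hence $|\lambda_{[n]}\ominus(\lambda_{[n]}\cap\nu_{[n]})|=|\mu|$. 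This is exactly the equality case in Theorem \ref{sadfadsffdssdffsdfsdsfdsfafdsfdsasfdfdasdfsdfsafsadfsadfdsafsda}, and (\ref{ThDvir1}) reduces the computation to a single skew Kronecker coefficient.

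Next I would identify the two skew shapes appearing in (\ref{ThDvir1}). The shape $\lambda_{[n]}\ominus(\lambda_{[n]}\cap\nu_{[n]})$ lives entirely in the top row and consists of $|\mu|$ boxes, so as an (unshifted) partition it is the one-row partition $(|\mu|)$. The shape $\nu_{[n]}\ominus(\lambda_{[n]}\cap\nu_{[n]})$ has an empty top row (both first parts equal $n-|\nu|$), and coincides with $\nu\ominus\lambda$ up to a harmless vertical shift that does not affect the skew Specht module. Since the Specht module of a single row is the trivial $\mathfrak{S}_{|\mu|}$-representation, the right-hand side of (\ref{ThDvir1}) becomes
\[
\dim_{\CC}\Hom_{\mathfrak{S}_{|\mu|}}\bigl(\mathbf{S}(\mu),\mathbf{S}((|\mu|))\otimes\mathbf{S}(\nu\ominus\lambda)\bigr)=\dim_{\CC}\Hom_{\mathfrak{S}_{|\mu|}}\bigl(\mathbf{S}(\mu),\mathbf{S}(\nu\ominus\lambda)\bigr),
\]
which equals $c(\lambda,\nu,\mu)$ by Theorem \ref{LRclassic}. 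Passing to the stable limit gives $\overline{g}(\lambda,\nu,\mu)=c(\lambda,\nu,\mu)$.

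There is no genuinely hard step: the only care required is in the first-row bookkeeping that identifies $\lambda_{[n]}\cap\nu_{[n]}$ and produces a one-row skew partition on the $\lambda$-side, for which the assumption $\lambda\subseteq\nu$ and the choice $n\gg 0$ are both essential. The work that Dvir's reduction already performs is precisely the stabilisation step, so no direct manipulation of characters of $\mathfrak{S}_n$ for varying $n$ is needed in this argument.
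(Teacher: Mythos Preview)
Your argument is correct. The reduction via Dvir's equality case \eqref{ThDvir1} is valid: once $\lambda\subseteq\nu$, your computation $\lambda_{[n]}\cap\nu_{[n]}=(n-|\nu|,\lambda_1,\lambda_2,\dots)$ is right, the $\lambda$-side skew is a single row of $|\mu|$ boxes giving the trivial module, and the $\nu$-side skew is $\nu\ominus\lambda$ up to a row shift. The degenerate case $\lambda\not\subseteq\nu$ is handled cleanly by Corollary~\ref{boundsons}.

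The paper treats Theorem~\ref{LitMurn} as a classical fact with a citation, but then recovers it by a different route in the Remark after Theorem~2.11: there one uses the partition-algebra interpretation $\overline{g}(\lambda,\nu,\mu)=\dim_\CC\Hom_{P_s(n)}(\Delta_s(\mu),\Delta_s(\nu\setminus\lambda))$ together with Remark~\ref{specialsym}, which says that for a maximal-depth triple the skew cell module $\Delta_s(\nu\setminus\lambda)$ is literally the lifted skew Specht module $\mathbf{S}(\nu\ominus\lambda)$, so the Hom reduces to $\dim_\CC\Hom_{\mathfrak{S}_s}(\mathbf{S}(\mu),\mathbf{S}(\nu\ominus\lambda))=c(\lambda,\nu,\mu)$. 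Your approach is more self-contained in that it needs only the symmetric-group statement of Dvir and nothing about partition algebras; the paper's approach has the virtue of making transparent why the maximal-depth subfamily sits inside their general skew-cell-module framework, which is the structural point they want to emphasise.
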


   %

\section{The partition algebra and   Kronecker tableaux }\label{sec2} We  now define the partition algebra ${P}_r(n)$ for  $r,n \in \mathbb{N}$. Although it can be defined over any field, in this paper we consider $P_r(n)$ over the rational field $\mathbb{Q}$.  As a vector space, it has a basis given by  all set-partitions of  $\{1,2,\dots, r, \overline{1},\overline{2}, \dots, \overline{r}\}.$  
We call a part of a set-partition a {\sf  block}.
For example,
$$d=\{\{\overline1, \overline2, \overline4, {2}, {5}\}, \{\overline3\}, \{\overline5, \overline6, \overline7, {3}, {4}, {6}, {7}\}, \{\overline8, {8}\}, \{{1}\}\},$$
is a set-partition (for $r=8$) with 5  blocks.
To define the multiplication on $P_r(n)$, it is helpful to represent a  set-partition 
 by an  {\sf partition diagram} consisting of a frame with $r$ distinguished points on the northern and southern boundaries, which we call vertices.  We number the northern vertices from left to right by $\overline{1},\overline{2},\dots, \overline{r}$ and the southern vertices similarly by $1,2,\dots, r$  and connect two vertices by an edge if they belong to the same block.  Note that such a diagram is not uniquely defined, two diagrams representing the set-partition $d$ above are given in Figure \ref{2diag}.   
 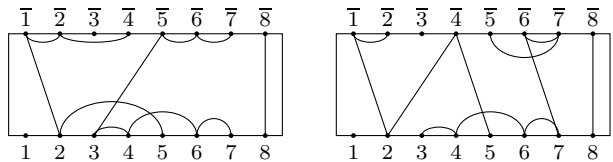
\begin{figure}[ht]\scalefont{0.8}
\begin{tikzpicture}[scale=0.45]
  \draw (0,0) rectangle (8,3);
  \foreach \x in {0.5,1.5,...,7.5}
    {\fill (\x,3) circle (2pt);
     \fill (\x,0) circle (2pt);}
  \begin{scope}
    \draw (0.5,3) -- (1.5,0);
    \draw (7.5,3) -- (7.5,0);
    \draw (4.5,3) -- (2.5,0);
    \draw (0.5,3) arc (180:360:0.5 and 0.25);
    \draw (1.5,3) arc (180:360:1 and 0.25);
    \draw (4.5,0) arc (0:180:1.5 and 1);
    \draw (5.5,0) arc (0:180:1 and .7);
    \draw (3.5,0) arc (0:180:.5 and .25);
    \draw (6.5,0) arc (0:180:0.5 and 0.5);
    \draw (4.5,3) arc (180:360:0.5 and 0.25);
    \draw (5.5,3) arc (180:360:0.5 and 0.25);
      \draw (2.5,-0.49) node {$3$};   \draw (2.5,+3.5) node {$\overline{3}$};
                  \draw (1.5,-0.49) node {$2$};   \draw (1.5,+3.5) node {$\overline{2}$};
                           \draw (0.5,-0.49) node {$1$};   \draw (0.5,+3.5) node {$\overline{1}$};
         \draw (3.5,-0.49) node {$4$};   \draw (3.5,+3.5) node {$\overline{4}$};
                  \draw (4.5,-0.49) node {$5$};   \draw (4.5,+3.5) node {$\overline{5}$};
                           \draw (5.5,-0.49) node {$6$};   \draw (5.5,+3.5) node {$\overline{6}$};
         \draw (6.5,-0.49) node {$7$};   \draw (6.5,+3.5) node {$\overline{7}$};         \draw (7.5,-0.49) node {$8$};   \draw (7.5,+3.5) node {$\overline{8}$};
  \end{scope}
\end{tikzpicture}
\quad \quad 
\begin{tikzpicture}[scale=0.45]
  \draw (0,0) rectangle (8,3);
  \foreach \x in {0.5,1.5,...,7.5}
    {\fill (\x,3) circle (2pt);
     \fill (\x,0) circle (2pt);}
  \begin{scope}
    \draw (0.5,3) -- (1.5,0);
    \draw (7.5,3) -- (7.5,0);
    \draw (5.5,3) -- (6.5,0);  \draw (1.5,0) -- (3.5,3);\draw (3.5,3) -- (4.5,0);
    \draw (0.5,3) arc (180:360:0.5 and 0.25);
    \draw (5.5,0) arc (0:180:1 and .7);
    \draw (3.5,0) arc (0:180:.5 and .25);
    \draw (6.5,0) arc (0:180:0.5 and 0.5);
    \draw (4.5,3) arc (180:360:1 and 0.7);
    \draw (5.5,3) arc (180:360:0.5 and 0.25);
         \draw (2.5,-0.49) node {$3$};   \draw (2.5,+3.5) node {$\overline{3}$};
                  \draw (1.5,-0.49) node {$2$};   \draw (1.5,+3.5) node {$\overline{2}$};
                           \draw (0.5,-0.49) node {$1$};   \draw (0.5,+3.5) node {$\overline{1}$};
         \draw (3.5,-0.49) node {$4$};   \draw (3.5,+3.5) node {$\overline{4}$};
                  \draw (4.5,-0.49) node {$5$};   \draw (4.5,+3.5) node {$\overline{5}$};
                           \draw (5.5,-0.49) node {$6$};   \draw (5.5,+3.5) node {$\overline{6}$};
         \draw (6.5,-0.49) node {$7$};   \draw (6.5,+3.5) node {$\overline{7}$};         \draw (7.5,-0.49) node {$8$};   \draw (7.5,+3.5) node {$\overline{8}$};
  \end{scope}
\end{tikzpicture}
  \caption{Two representatives of the set-partition $d$.}
\label{2diag}
\end{figure}

We define the product $x \cdot y$ of two diagrams $x$
and $y$ using the concatenation of $x$ above $y$, where we identify
the southern vertices of $x$ with the northern vertices of $y$.   
If there are $t$ connected components consisting only of  middle vertices, then the product is set equal to $n^t$ times the diagram  
with the middle components removed. Extending this by linearity defines the multiplication on ${P}_r(n)$.
It is easy to see that $P_r(n)$ is generated (as an algebra) by the elements $s_{k,k+1}$, $p_{k+\half}$ ($1\leq k\leq r-1$) and $p_k$ ($1\leq k\leq r$) depicted in Figure \ref{generators}.

\begin{figure}[ht!]
$$\begin{array}{ll}
{s}_{k,k+1}=
\begin{minipage}{34mm}\scalefont{0.8}\begin{tikzpicture}[scale=0.45]
  \draw (0,0) rectangle (6,3);
  \foreach \x in {0.5,1.5,...,5.5}
    {\fill (\x,3) circle (2pt);
     \fill (\x,0) circle (2pt);}
    \draw (2.5,-0.49) node {$k$};
    \draw (2.5,+3.5) node {$\overline{k}$};
  \begin{scope}
    \draw (0.5,3) -- (0.5,0);
    \draw (5.5,3) -- (5.5,0);
        \draw (2.5,3) -- (3.5,0);
            \draw (4.5,3) -- (4.5,0);
    \draw (3.5,3) -- (2.5,0);
    \draw (1.5,3) -- (1.5,0);
   \end{scope}
\end{tikzpicture}\end{minipage}    
p_k 
=\begin{minipage}{34mm}\scalefont{0.8}\begin{tikzpicture}[scale=0.45]
  \draw (0,0) rectangle (6,3);
  \foreach \x in {0.5,1.5,...,5.5}
    {\fill (\x,3) circle (2pt);
     \fill (\x,0) circle (2pt);}
   
    \draw (2.5,-0.49) node {$k$};
    \draw (2.5,+3.5) node {$\overline{k}$};
  \begin{scope}
    \draw (0.5,3) -- (0.5,0);
    \draw (5.5,3) -- (5.5,0);
     \draw (4.5,0) -- (4.5,3); 
       
    \draw (1.5,3) -- (1.5,0);
    \draw (3.5,3) -- (3.5,0);
   \end{scope}
\end{tikzpicture}\end{minipage}  
p_{k+\half}
=\begin{minipage}{34mm}\scalefont{0.8}\begin{tikzpicture}[scale=0.45]
  \draw (0,0) rectangle (6,3);
  \foreach \x in {0.5,1.5,...,5.5}
    {\fill (\x,3) circle (2pt);
     \fill (\x,0) circle (2pt);}
    \draw (2.5,-0.49) node {$k$};
    \draw (2.5,+3.5) node {$\overline{k}$};
  \begin{scope}
    \draw (0.5,3) -- (0.5,0);
    \draw (5.5,3) -- (5.5,0);
     \draw (1.5,3) -- (1.5,0);
\draw (3.5,0) arc (0:180:.5 and 0.5);
\draw (2.5,3) arc (180:360:.5 and 0.5);
    \draw (2.5,3) -- (2.5,0);
    \draw (4.5,3) -- (4.5,0);
   \end{scope}
\end{tikzpicture}\end{minipage}  
\end{array}$$

\!\!\!\!\!\caption{Generators of ${P}_r(n)$}
\label{generators}
\end{figure}
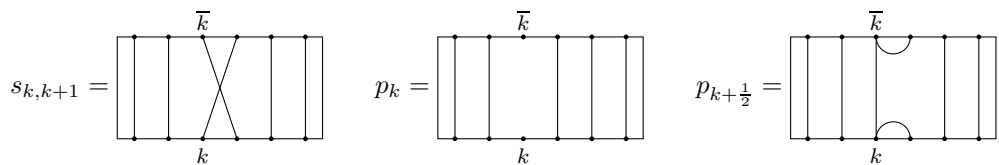

  \subsection{Standard Kronecker tableaux 
  }
  \label{sec:standard}

The {\sf branching graph}, $\mathcal{Y}$, of the partition algebras  encodes the induction and restriction of cell modules for the tower of partition algebras. We will construct the cell modules explicitely later in this section.

   For $k\in \NN$, we denote by $\mathscr{P}_{\leq k}$ the set of partitions of degree less or equal to $k$. Now the set of vertices on the $k$th and $(k+\half)$th levels of $\mathcal{Y}$ are given by 
    $$  
   {\mathcal{Y}}_{k}=  \{ (\lambda,k-|\lambda|) \mid \lambda \in \mathscr{P}_{\leq k}\}
   \qquad
   {\mathcal{Y}}_{k+\half} = 
\{ (\lambda,k-|\lambda|) \mid \lambda \in \mathscr{P}_{\leq k}\}.$$   The edges of $\mathcal{Y}$ are   as follows,
\begin{itemize}
\item for $(\lambda,l) \in {\mathcal{Y}}_k$ and $(\mu,m) \in {\mathcal{Y}}_{k+\half}$ there is an edge $(\lambda,l) \to(\mu,m)$ 
 if $\mu = \lambda$,  or   
if $\mu $ is  obtained from $\lambda $ by removing a box in the $i$th row for some $i\geq 1$;
  we write $\mu =\lambda- \varepsilon_0$ or $\mu =\lambda- \varepsilon_i$, respectively. 
\item for $(\lambda,l)  \in  {\mathcal{Y}}_{k+\half}$ and $(\mu,m)  \in {\mathcal{Y}}_{k+1}$ there is an edge  $(\lambda,l) \to(\mu,m)$ 
if $\mu = \lambda$,  or   
if    $\mu $ is   obtained from $\lambda $ by adding a box in the $i$th row for some $i\geq 1$;
  we write $\mu =\lambda+ \varepsilon_0$ or $\mu =\lambda+ \varepsilon_i$, respectively. 
 \end{itemize}
When it is convenient, we  decorate each edge with the index of the node that is added or removed when reading down the diagram.  
The first few levels
of $\mathcal{Y}$  are given in   Figure \ref{brancher}.  
 When no confusion is possible, we identify $(\lambda,l) \in \mathcal{Y}_{k}$ with the partition $\lambda$.  

 \begin{defn}
Given   $\lambda  \in \mathscr{P}_{r-s} \subseteq \mathcal{Y}_{r-s}$ and $\nu \in \mathscr{P}_ {\leq r} \subseteq \mathcal{Y}_{r}$, we  
define a  {\sf  standard Kronecker tableau} of shape $ \nu \setminus  \lambda $ and degree $s$  to be a path $\stt$  of the form 
\begin{equation}\label{genericpath} 
\lambda = \stt(0) \to \stt(\tfrac{1}{2}) \to    \stt(1)\to  \dots \to  \stt(s-\tfrac{1}{2})\to \stt(s) = \nu, 
 \end{equation}
 in other words $\stt$ is  a path in the branching graph which begins at $\lambda$ and terminates at $\nu$.  
We let $\Std_s(\nu \setminus  \lambda)$ denote the set of all such paths.  If $\lambda = \emptyset \in \mathcal{Y}_0$ then we write $\Std_r(\nu)$ instead of $\Std_r(\nu \setminus \emptyset)$.

For $\lambda \in  \mathcal{Y}_{r-s}$, 
 $\nu  \in  \mathcal{Y}_r$, $\sts \in \Std_{r-s}(\lambda)$ and $\stt \in \Std_s(\nu\setminus\lambda)$, we denote the composition of these paths by  $\sts\circ \stt \in \Std_{r }(\nu)$. 
Also, for $\stt\in \Std_s(\nu\setminus \lambda)$ as in (\ref{genericpath}) and $0\leq m<m'\leq s$ we denote by $\stt[m,m']$ the truncation $\stt(m) \rightarrow \stt(m+\half) \rightarrow \dots \rightarrow \stt(m')$.

\end{defn}

Note that we have used the notation $\nu\setminus \lambda$, instead of $\nu \ominus \lambda$, as we do not have $\lambda \subseteq \nu$ in general.

\begin{rmk}
For  $(\lambda,\nu,s)$  a triple of maximal depth, the 
set $\Std_s(\nu\setminus\lambda)$ can be identified with the set of  standard skew Young tableau of shape $\nu \ominus \lambda$  for the symmetric group (see \cref{murphyexam1}  below).  
\end{rmk}

We now extend the dominance order on partitions to the set of standard Kronecker tableaux.

 \begin{defn}  
For $\sts,\stt \in \Std_s(\nu\setminus \lambda)$,  we write $\sts \trianglerighteq\stt$ if $ \sts(k) \trianglerighteq \stt(k) $  for $k=1,\ldots, s$.   
\end{defn}

\begin{eg}\label{murphyexam1}
 Let   
  $\lambda =(4,2)   $ and  $\nu =(5,3,1)  $.  
    We have six standard  Kronecker tableaux of shape $\nu\setminus \lambda$ and degree 3.   
 Two of   these tableaux  are as  follows:
$$
\sts_1 =   
 \left( \   \Yvcentermath1\Yboxdim{6pt}\gyoung(;;;;,;;) \xrightarrow{\ -\varepsilon_0\ }
  \Yvcentermath1\Yboxdim{6pt}\gyoung(;;;;,;;) 
  \xrightarrow{\ +\varepsilon_2\ }
   \Yvcentermath1\Yboxdim{6pt}\gyoung(;;;;,;;;) \xrightarrow{\ -\varepsilon_0\ }
   \Yvcentermath1\Yboxdim{6pt}\gyoung(;;;;,;;;) 
     \xrightarrow{\ +\varepsilon_3\ }
      \Yvcentermath1\Yboxdim{6pt}\gyoung(;;;;,;;;,;) \xrightarrow{\ -\varepsilon_0\ }
      \Yvcentermath1\Yboxdim{6pt}\gyoung(;;;;,;;;,;)    \xrightarrow{\ +\varepsilon_1\ }
      \Yvcentermath1\Yboxdim{6pt}\gyoung(;;;;;,;;;,;)  \      
 \right) $$
$$\sts_2 =   
 \left( \   \Yvcentermath1\Yboxdim{6pt}\gyoung(;;;;,;;) 
  \xrightarrow{\ -\varepsilon_0\ }
  \Yvcentermath1\Yboxdim{6pt}\gyoung(;;;;,;;)
   \xrightarrow{\ +\varepsilon_3\ }
   \Yvcentermath1\Yboxdim{6pt}\gyoung(;;;;,;;,;)  \xrightarrow{\ -\varepsilon_0\ }
   \Yvcentermath1\Yboxdim{6pt}\gyoung(;;;;,;;,;) 
      \xrightarrow{\ +\varepsilon_2\ }
      \Yvcentermath1\Yboxdim{6pt}\gyoung(;;;;,;;;,;)   \xrightarrow{\ -\varepsilon_0\ }
      \Yvcentermath1\Yboxdim{6pt}\gyoung(;;;;,;;;,;)  
         \xrightarrow{\ +\varepsilon_1\ }
      \Yvcentermath1\Yboxdim{6pt}\gyoung(;;;;;,;;;,;)  \      
 \right)  $$
We remark that $\sts_1 \vartriangleright \sts_2  $.
 These paths correspond with the two leftmost Young tableaux (also labelled by $\sts_1$ and $\sts_2$)   depicted in  \cref{hello mister,hello mister2}.  
 \end{eg}

One can think of a   path $\stt \in \Std_s(\nu\setminus\lambda)$ as a sequence of partitions; or equivalently, as the 
  sequence of  
    boxes added 
 and removed.   We shall   refer to a  pair of steps, $(-\varepsilon_a,+\varepsilon_b)$,   between consecutive  integral levels of the branching graph    as an {\sf integral step} in the branching graph.
We place an ordering  on integral steps as follows.  
\begin{defn}\label{ordering}    
  We define {\sf  types} of integral step  (move-up, dummy, move-down) 
 in the branching graph of $P_r(n)$ and order them as follows, 
 $$\begin{array}{ccccccccc}
 	 &\text{move-up }			&  		 &\text{dummy }		&  		 &\text{move-down }	
&  		 
									\\
									
  		& (-\varepsilon_p, + \varepsilon_q)&< 		& (-\varepsilon_t, + \varepsilon_t)
&< 		&(-\varepsilon_u, + \varepsilon_v)  		 
\end{array}$$
for $p>q$ and $u< v$;   we refine this to a total  order as follows,
\begin{itemize}
\item[$({\up })$]  we order $(-\varepsilon_p, + \varepsilon_q)< (-\varepsilon_{p'}, + \varepsilon_{q'}) $ if $q<q'$ or $q=q'$ and $p>p'$; 
 \item[$(d)$]  we order $(-\varepsilon_t, + \varepsilon_t) < (-\varepsilon_{t'}, + \varepsilon_{t'}) $ if $t>t'$;
\item[$({\down })$]  we order $(-\varepsilon_u, + \varepsilon_v)< (-\varepsilon_{u'}, + \varepsilon_{v'})$  if $u>u'$ or $u=u'$ and $v<v'$. 
\end{itemize}
    We sometimes let $a(i):=\down(0,i)$ (respectively $r(i):=\up(i,0)$) and think of this as  {\sf adding} (respectively  {\sf removing}) a box.  

\end{defn}

  \subsection{The Murphy basis}  We shall now recall from \cite{EG}  the construction of an integral   basis of the partition algebra   indexed by (pairs of) paths in the branching graph.  
  This basis captures much of the representation theoretic structure of $P_r(n)$ and  naturally generalises  Murphy's basis of $\mathbb{Z}\mathfrak{S}_r$ \cite{Murphy}.
  
\begin{defn}\label{coefficientsforthepartitionalgebra1}For $1\leq l \leq k\leq r$, we define elements of $P_r(n)$ as follows
\[
 {e}_{k}^{(l)}=\underbrace{{p}_{k-l+1}\cdots {p}_{k-1}{p}_{k}}_{\text{$l$ factors}}
 \qquad
{e}_{k+\half}^{(l)}=\underbrace{{p}_{k-l+\frac{3}{2}} \cdots {p}_{k-\half} \  {p}_{k+\half}}_{\text{$l$ factors}}
\qquad
s_{l,k}=  \underbrace{ {s}_{l } \cdots   {s}_{k-1 } }_{\text{$k-l$ factors}}.
\]
 If $k=0$ or $l= 0$, we let $e_{k}^{(l)}=e_{k-\half}^{(l)}=1$.   

  For $1\leq k< l \leq r$ we  let $s_{l,k}=s_{k,l}^{-1}$.  
  If $l=0$ or $k=0$, we let $s_{l,k}=1$ and if $l<0$ or $k<0$ we let $s_{l,k}=0$.
These elements are depicted  in Figure \ref{idem}.    \end{defn}

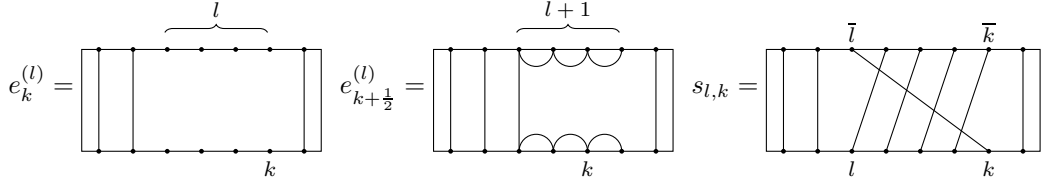
\begin{figure}[h!]  $$ 
 {e}_{k}^{(l)}
={\scalefont{0.8}
\begin{minipage}{34mm}\begin{tikzpicture}[scale=0.45]
\draw [decorate,decoration={brace,amplitude=3pt},xshift=-2pt,yshift=0pt] (2.5,3.5) -- (5.5,3.5) node [black,midway,yshift=0.3cm]{$l$} ; 
  \draw[white] (0,-1) rectangle (7,4.8);
  \draw (0,0) rectangle (7,3);
  \foreach \x in {0.5,1.5,...,6.5}
    {\fill (\x,3) circle (2pt);
     \fill (\x,0) circle (2pt);}
    \draw (5.5,-0.5) node { $ {k}$};
  \begin{scope}
    \draw (0.5,3) -- (0.5,0);
          \draw (6.5,3) -- (6.5,0);
    \draw (1.5,3) -- (1.5,0);
   \end{scope}
\end{tikzpicture}\end{minipage}  }
{e}_{k+\half}^{(l)}
={\scalefont{0.8}
\begin{minipage}{34mm}\begin{tikzpicture}[scale=0.45]
\draw [decorate,decoration={brace,amplitude=3pt},xshift=-2pt,yshift=0pt] (2.5,3.5) -- (5.5,3.5) node [black,midway,yshift=0.3cm]{$l+1$} ; 
  \draw[white] (0,-1) rectangle (7,4.8);  \draw (0,0) rectangle (7,3);
  \foreach \x in {0.5,1.5,...,6.5}
    {\fill (\x,3) circle (2pt);
     \fill (\x,0) circle (2pt);}
    \draw (4.5,-0.5) node { $ {k}$};
  \begin{scope}
    \draw (0.5,3) -- (0.5,0);
    \draw (6.5,3) -- (6.5,0);
    \draw (1.5,3) -- (1.5,0);
    \draw (2.5,3) -- (2.5,0);
 \draw (2.5,3) arc (180:360:0.5 and 0.5); \draw (3.5,3) arc (180:360:0.5 and 0.5);
 \draw (4.5,3) arc (180:360:0.5 and 0.5);
 \draw (2.5,0) arc (180:360:0.5 and -0.5);
 \draw (3.5,0) arc (180:360:0.5 and -0.5);
  \draw (4.5,0) arc (180:360:0.5 and -0.5);
   \end{scope}
\end{tikzpicture}\end{minipage}}
s_{l,k}=
{
\scalefont{0.8}
\begin{minipage}{34mm}\begin{tikzpicture}
[scale=0.45]
  \draw[white] (0,-1) rectangle (7,4.8);  \draw (0,0) rectangle (8,3);
  \foreach \x in {0.5,1.5,...,7.5}
    {\fill (\x,3) circle (2pt);
     \fill (\x,0) circle (2pt);}
  \begin{scope}
    \draw (0.5,0) -- (0.5,3);
    \draw (1.5,0) -- (1.5,3);   
     \draw (7.5,0) -- (7.5,3);   
      \draw (5.5,3) -- (4.5,0);  
      \draw (3.5,3) -- (2.5,0); 
       \draw (4.5,3) -- (3.5,0);  
       \draw (6.5,3) -- (5.5,0);  
    \draw (2.5,3) -- (6.5,0);   
         \draw (2.5,3.5) node {$\overline l$};   \draw (2.5,-0.5) node {$ {l}$};
                  \draw (6.5,3.5) node {$\overline k$};   \draw (6.5,-0.5) node {$ {k}$};
  \end{scope} 
\end{tikzpicture}\end{minipage}}
 $$

 \!\!\!\!\!
 \caption{The elements  $e_k^{(l)}$ and $e_{k+\frac{1}{2}}^{(l)}$ and  $s_{l,k}$.}
\label{idem}\label{sij}
\end{figure}

\begin{defn}\label{coefficientsforthepartitionalgebra}
Let $1 \leq k \leq r$ and  $\stt$ be  a standard Kronecker tableau of degree $s$  such that  
$$\stt(k) \xrightarrow{-a} \stt(k+\thalf)\xrightarrow{+b} \stt(k+1).$$
We set   $\stt(k)=\lambda$,  $\stt(k+\thalf)=\mu$, $\stt(k+1)=\nu$ and we  define
 the up  branching coefficients, 
$$\begin{array}{lll}
 u_{ \stt(k) \to  \stt(k+\half)} =
e_{k+\half}^{(k-|\mu|)}
s_{    |\lambda| , [\lambda]_a}
& \mbox{and} &
u_{ \stt(k+\half) \to  \stt(k+1)} = 
  e_{k+1}^{(k+1-|\nu|)}  \displaystyle \left(   \sum_{i=0}^{ \nu_b-1}s_{[\nu]_{b}-i, [\nu]_{b}}\right)
  s_{	 [\nu]_{b},|\nu|  	} 
\end{array}$$
and the down branching coefficients, 

$$\begin{array}{lll}
  d_{ \stt(k) \to  \stt(k+\half)} =
 e_{k}^{(k-|\lambda|)} 
\displaystyle 
\left(\sum_{i=0}^{ \lambda_a-1}s_{[\lambda]_{a}-i, [\lambda]_{a}}\right) s_{ [\lambda]_a,|\lambda|}
& \mbox{and} & \displaystyle 
d_{ \stt(k+\half) \to  \stt(k+1)} =e_{k+\half}^{(k-|\mu|)} 
 s_{|\nu|, [\nu]_b         }
.
\end{array}$$
 
\end{defn}

 \begin{defn}
 Given   $\nu \in \mathcal{Y}_r$   and 
   $\stt \in \Std_r(    \nu) $
 we    let 
\begin{align*}
d_\stt= 
d_{\stt(0) \to \stt(\half)} d_{\stt(\half) \to \stt(1)} 
\cdots
d_{ \stt(r-\half)\to \stt(r) }
\quad \mbox{and} \quad
u_\stt= 
u_{ \stt(r-\half)\to\stt(r)}
\cdots
u_{\stt(\half) \to \stt(1)} 
u_{\stt(0)\to\stt(\half)} . 
\end{align*}
\end{defn}

 \begin{thm}\cite{EG}\label{partition algebra basis}
  The algebra   $P_r(n)$   has an integral  basis  $$  \left\{
d_{\sts} u_{\stt} \mid 
 \sts, \stt \in \Std_r(   \nu),  
 \nu \in \mathscr{P}_{\leq r}
\right\} .$$ 
Moreover, if $\sts,\stt \in \Std_r(\nu)$  for some
      $\nu  \in\mathscr{P}_{\leq r}$, and $a\in  P_r(n) $ then 
    there exist scalars $r_{\stt\stu}(a)$, which do not depend on
    $\sts$, such that 
\begin{equation}\label{constants}
d_{\sts} u_{\stt}  a =\sum_{\stu\in
      \Std_r(\nu)}r_{\stt\stu}(a)d_{\sts} u_{\stu} \pmod 
      { P^{\vartriangleright \nu }_r(n) 	},
      \end{equation}
      where $P^{\vartriangleright \nu }_r(n) $ is the $\CC$-submodule of $P_r(n)$ spanned by
      \[\{d_{\sf q} u_{\sf r}\mid\mu \vartriangleright  \nu\text{ and }{\sf q , r}\in \Std_r(\mu )\}.\]
Finally,  we have that 
      $(d_{\sts} u_{\stt} )^*=d_{\stt} u_{\sts} $, for all $ \nu \in\mathscr{P}_{\leq r}$ and
      all $\sts,\stt\in\Std_r(\nu)$.
 Therefore the algebra is cellular, in the sense of \cite{GL96}.  
 \end{thm}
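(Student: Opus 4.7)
The plan is to verify the three axioms of a cellular algebra in the sense of Graham--Lehrer. I would proceed in three steps: establishing the anti-involution property, confirming that the proposed set is indeed an integral basis, and finally verifying the multiplication formula modulo higher terms.

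First, the anti-involution. The algebra $P_r(n)$ carries a natural anti-involution $*$ obtained by flipping diagrams upside-down (reflecting across a horizontal axis). Each generator $s_{k,k+1}$, $p_k$, $p_{k+\half}$ is visibly $*$-fixed, and from \cref{coefficientsforthepartitionalgebra1} and \cref{coefficientsforthepartitionalgebra} one verifies that $(e^{(l)}_k)^* = e^{(l)}_k$, $s_{l,k}^* = s_{k,l}$, and consequently that $u^*_{\stt(k)\to\stt(k+\half)} = d_{\stt(k+\half)\to\stt(k)}$ at each step. Reversing the order of the factors in the definitions of $u_\stt$ and $d_\stt$ then gives $(d_\sts u_\stt)^* = d_\stt u_\sts$ by inspection.

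Next, the basis property. A dimension count shows that $\dim P_r(n) = B(2r)$, the Bell number, equals $\sum_{\nu} |\Std_r(\nu)|^2$ via the path-counting identity for the Bratteli diagram of the tower $P_0(n) \subset P_{\half}(n) \subset P_1(n) \subset \cdots$, so it suffices to prove that the proposed set spans $P_r(n)$. I would argue by induction on $r$: every diagram in $P_r(n)$ can be factored as $d_\sts \cdot a \cdot u_\stt$ for some central portion $a$ lying in a smaller partition algebra, and then the inductive hypothesis together with the multiplication formula (proved in the next step) rewrites the product in the desired Murphy form.

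The main technical work is the multiplication formula \eqref{constants}. I would prove this by induction on the length of a minimal expression of $a$ as a product of generators, reducing to the case $a \in \{s_{k,k+1}, p_k, p_{k+\half}\}$. For each such generator, a case analysis describes how right multiplication by $a$ interacts with the tail of $\stt$: either $a$ acts as a permutation or straightening operation on paths with the same terminal shape $\nu$ (contributing the coefficients $r_{\stt\stu}(a)$), or it introduces an extra idempotent factor that forces the resulting path to terminate at a strictly dominant partition $\mu \vartriangleright \nu$, landing in $P_r^{\vartriangleright\nu}(n)$. The independence of $r_{\stt\stu}(a)$ from $\sts$ is automatic from the associativity of multiplication, since $a$ interacts only with $u_\stt$ and not with $d_\sts$. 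The chief obstacle here will be the diagrammatic bookkeeping: one must track carefully how each generator commutes past the nested idempotents $e^{(l)}_k$ and the permutation elements $s_{l,k}$ appearing in the branching coefficients, and systematically identify which terms produce strictly dominant partitions. A disciplined induction on the level $k$, combined with the commutation relations among the $e^{(l)}_k$ and known braid-like identities for the $s_{l,k}$, should reduce this to a finite and verifiable list of local computations.
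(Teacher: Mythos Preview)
The paper does not prove this theorem at all: it is quoted from \cite{EG} and no argument is given. So there is no ``paper's own proof'' to compare against; the authors are simply importing the Murphy-type cellular basis of the partition algebra as a known result.

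Your outline is the standard route to cellularity and is broadly correct in shape, but one step is not right as stated. You assert that $u^*_{\stt(k)\to\stt(k+\half)} = d_{\stt(k+\half)\to\stt(k)}$ at each half-step. Look again at \cref{coefficientsforthepartitionalgebra}: the symmetriser $\sum_{i} s_{[\lambda]_a-i,[\lambda]_a}$ appears in $d_{\stt(k)\to\stt(k+\half)}$ and in $u_{\stt(k+\half)\to\stt(k+1)}$, but \emph{not} in their counterparts at the other half-integer. So the up and down branching factors are not $*$-dual level by level; the duality $(d_\sts u_\stt)^* = d_\stt u_\sts$ only emerges after assembling the full products and using commutation of the idempotents with the symmetrisers (this is where the factorisation $u_\stt = x_{(\lambda,r)} d_\stt^*$ in \cref{partition algebra basis2} comes from). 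The rest of your sketch --- dimension count via the Bratteli diagram, and reduction of \eqref{constants} to the generators --- is the right strategy, though ``a disciplined induction \ldots\ should reduce this'' hides essentially all of the work: the actual case analysis for $s_{k,k+1}$ alone occupies a large part of \cite{EG} (and indeed of Section~\ref{ACTION!} of the present paper, which only handles a special case).
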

 
  \begin{rmk}
 The subalgebra spanned by $\left\{
d_{\sts} u_{\stt} \mid 
 \sts, \stt \in \Std_r(   \alpha),  
 \alpha \in \mathscr{P}_{\leq r-1} \subset   \mathscr{P}_{\leq r}
\right\} $ is equal to the 2-sided ideal generated by the element 
$p_{r}\in P_r(n)$ depicted in \cref{generators}.  
The resulting integral cellular structure on the quotient 
 $\mathbb{Q} \mathfrak{S}_r \cong  P_r(n) / P_r(n) p_r P_r(n)$  
 is the   basis of \cite{Murphy}.  
 \end{rmk}
  
  \begin{lem}\label{partition algebra basis2}
For any $\nu =(\nu_1,\ldots, \nu_\ell) \in \mathscr{P}_{\leq r}$, if we  take $\sts$ to be the Kronecker tableau of the form 
$$
 \underbrace{
a(1)
\circ 
\dots\circ 
a(1)}_{\nu_1}\circ
 \underbrace{
a(2)
\circ 
\dots\circ 
a(2)}_{\nu_2}\circ 
\dots \circ
\underbrace{
a(\ell)
\circ 
\dots\circ 
a(\ell)}_{\nu_\ell}\circ 
 \circ \underbrace{
d(0) \circ d(0) \circ 
\dots
\circ d(0) }_{r-|\nu|}
$$
then  for any $\stt \in \Std_r(\nu)$, we have that 
$$d_\sts u_\stt  = x_{(\lambda,r)} d_\stt^\ast =u_\stt 
$$ where    $ x_{(\lambda,r)}=
e^{(r-|\nu|)}_{r}  \sum_
{
g\in 
\mathfrak{S}_{\nu
}
}g.$ 
\end{lem}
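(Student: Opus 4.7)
The plan is to establish the two equalities by direct manipulation using the formulas of \cref{coefficientsforthepartitionalgebra}, exploiting the special shape of the tableau $\sts$ in the statement.

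First we simplify $d_\sts$. In each of the first $|\nu|$ integral steps of $\sts$ (the adding portion), the integer-level partition $\sts(k)$ has full size $k$, so $k-|\sts(k)|=0$ and both $e_k^{(0)}$ and $e_{k-\half}^{(0)}$ reduce to the identity. Because $\sts$ fills the rows of $\nu$ in order, the rows of $\sts(k+1)$ beyond the row being added to are empty, so $[\sts(k+1)]_i=|\sts(k+1)|$ and the $s$-factor $s_{|\sts(k+1)|,\,[\sts(k+1)]_i}$ collapses to the identity; combined with the convention that the inner sum indexed by the virtual row $0$ reduces to $s_{0,0}=1$, every down branching coefficient in the adding portion is trivial. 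Hence
\begin{equation*}
d_\sts \;=\; \prod_{j=1}^{r-|\nu|} e_{|\nu|+j-1}^{(j-1)} \, e_{|\nu|+j-\half}^{(j-1)},
\end{equation*}
which consists entirely of $p_i$-generators at positions strictly above $|\nu|$.

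Next we establish the middle equality $x_{(\nu,r)} d_\stt^* = u_\stt$. Applying the anti-involution $*$ and using that $x_{(\nu,r)}$ is self-dual (both $e_r^{(r-|\nu|)}$ and the sum $\sum_{g\in\mathfrak{S}_\nu} g$ are $*$-invariant), this reduces to $u_\stt^* = d_\stt\, x_{(\nu,r)}$. Writing out both sides we match the $e$-idempotents directly, and what remains is the combinatorial identity that the product of the inner permutation sums $\sum_{i=0}^{\nu_b-1} s_{[\nu]_b-i,\,[\nu]_b}$ appearing in each $u$-coefficient along $\stt$, together with the conjugating outer $s_{[\nu]_b,|\nu|}$ factors, telescopes to $\sum_{g\in\mathfrak{S}_\nu} g$. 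This is the classical Murphy identity for the symmetric subquotient $\mathbb{Q}\mathfrak{S}_{|\nu|}$ of $P_r(n)/P_r(n)\,p_r\,P_r(n)$ and can be handled by induction on the number of rows of $\nu$.

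For the outer equality $d_\sts u_\stt = x_{(\nu,r)} d_\stt^*$, we substitute the expression for $u_\stt$ just obtained and show that left multiplication by $d_\sts$ is absorbed into the $e_r^{(r-|\nu|)}$ factor of $x_{(\nu,r)}$: the integer-indexed $p_{|\nu|+1},\ldots,p_{r-1}$ appearing in $d_\sts$ already occur as factors of $e_r^{(r-|\nu|)}$ and are absorbed using $p_i^2 = n p_i$ against the built-in normalization of the $e$'s, while the half-integer-indexed $p_{|\nu|+\half},\ldots,p_{r-\half}$ commute past $e_r^{(r-|\nu|)}$ and collapse similarly against the leading half-integer $e$-factors at the top of $d_\stt^*$. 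The principal obstacle will be the combinatorial verification that the inner permutation sums along any $\stt\in\Std_r(\nu)$ telescope (independently of $\stt$) to $\sum_{g\in\mathfrak{S}_\nu} g$; once the trivial $e$-factors are stripped off this reduces to the classical fact that acting by a standard Young tableau of shape $\nu$ on the row-reading Murphy element recovers the full Young symmetrizer of $\nu$.
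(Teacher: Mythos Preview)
Your overall architecture matches the paper's: compute $d_\sts$, invoke the identity $u_\stt = x_{(\nu,r)}\,d_\stt^*$, then show left multiplication by $d_\sts$ is absorbed.  But both of the nontrivial steps have real gaps.

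\textbf{The absorption (your step 3) is argued incorrectly.}  There is no ``built-in normalization'' in the elements $e_k^{(l)}$: they are literal products $p_{k-l+1}\cdots p_k$, so invoking $p_i^2 = n p_i$ produces unwanted powers of $n$, not cancellation.  The mechanism that actually makes this work is the Temperley--Lieb--type relation $p_i\,p_{i\pm\half}\,p_i = p_i$.  Using it, your product for $d_\sts$ telescopes to the single expression $e_{r-1}^{(r-1-|\nu|)}\,e_{r-\half}^{(r-1-|\nu|)}$ (this is what the paper writes), and then the diagrammatic concatenation
\[
e_{r-1}^{(r-1-|\nu|)}\;e_{r-\half}^{(r-1-|\nu|)}\;e_{r}^{(r-|\nu|)} \;=\; e_{r}^{(r-|\nu|)}
\]
holds with no floating components, hence no factor of $n$.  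Your proposal to push the half-integer $p$'s past $e_r^{(r-|\nu|)}$ and collapse them against ``leading half-integer $e$-factors at the top of $d_\stt^*$'' cannot work uniformly in $\stt$: for a general $\stt$ those factors need not be present.

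\textbf{The middle identity (your step 2) is where the content lies, and your sketch does not prove it.}  The paper obtains $u_\stt = e_r^{(r-|\nu|)}\bigl(\sum_{g\in\mathfrak S_\nu} g\bigr)\,d_\stt^*$ by quoting \cite[Lemma~A.1 and \S6]{EG}; this is a genuine structural result about the Murphy-type basis of $P_r(n)$.  Your claim that after ``matching the $e$-idempotents'' one is left with a classical Murphy identity in the ``symmetric subquotient $\mathbb{Q}\mathfrak S_{|\nu|}$ of $P_r(n)/P_r(n)p_rP_r(n)$'' is not well-posed: that quotient is $\mathbb{Q}\mathfrak S_r$, not $\mathbb{Q}\mathfrak S_{|\nu|}$, and when $|\nu|<r$ the path $\stt$ passes through partitions of varying sizes, so the $e$-factors in the branching coefficients differ from step to step and cannot simply be ``stripped off''.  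The asserted telescoping of the inner permutation sums to $\sum_{g\in\mathfrak S_\nu} g$, independently of $\stt$, is exactly the statement being cited from \cite{EG}; it does not follow from a routine induction on the number of rows.
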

\begin{proof}
We have that  $d_\sts= e_{r-1}^{(r-1-|\nu|)}
e_{r-\half}^{(r-1-|\nu|)}$.  Now, for any $\stt \in \Std_r(\nu)$, we have  
$$u_\stt = e_r^{r-|\nu|} 
  \sum_{g\in 
\mathfrak{S}_{\nu}}
g  d_\stt^\ast$$
by \cite[Lemma A.1]{EG} and \cite[Section 6]{EG}.  So we have
\begin{align*}
d_\sts u_\stt = e_{r-1}^{(r-1-|\nu|)}
e_{r-\half}^{(r-1-|\nu|)}
 e_r^{r-|\nu|} 
 \sum_{g\in 
\mathfrak{S}_{\nu}}
 g  d_\sts^\ast
 =
  e_r^{r-|\nu|} 
 \sum_{g\in 
\mathfrak{S}_{\nu}}
 g  d_\sts^\ast
 = u_\stt
\end{align*}
as required.  \end{proof}

 Thus, using  \cref{partition algebra basis,partition algebra basis2}  we can make the following definition.  
 \begin{defn}\label{murphytypebasisdefn}
     Given any $\nu\in\mathscr{P}_{\leq r}$, the    {\sf cell module} $\Delta_r (\nu)$ is the right $P_r(n)$-module
  with basis
    $\{ m_\stt = u_{\stt } + P^{\rhd \nu}_r(n) \mid \stt\in \Std_r(\nu )\}$.
    The action of $P_r(n)$ on $\Delta_r(\nu)$ is given by
    \[  m_{ \stt  }a  =\sum_{ \stu \in \Std_r(\nu)}r_{\stt\stu}(a) m_{ \stu  },\]
    where the scalars $r_{\stt\stu}(a)$ are the scalars appearing in  \cref{constants}.

 \end{defn}
 
 \begin{rmk}
 For $\nu \in \mathscr{P}_r \subseteq \mathcal{Y}_r$ the module $\Delta_r(\nu)$ is isomorphic to the Specht module $\mathbf{S}(\nu)$ of $\mathfrak{S}_r$ lifted to $P_r(n)$ via the isomorphism  $\mathbb{Q} \mathfrak{S}_r \cong  P_r(n) / P_r(n) p_r P_r(n)$.
 \end{rmk}
 
 \subsection{Skew cell modules} \label{fdasjkhfdsakjhsadfkjhdfsakjhdfs}  In what follows, we view  $P_s(n)$ as a subalgebra of $P_r(n)$ via the embedding 
$$P_s(n)\cong \mathbb{Q}\otimes P_s(n) \hookrightarrow P_{r-s}(n) \otimes  P_s(n) \hookrightarrow  P_r(n).  $$  
We now recall the definition of skew modules for  $P_s(n)$.  
This  family of   modules   were first  introduced (in the more general context of diagram algebras) in \cite{BE}.   
Given $ \nu  \in   \mathscr{P}_{\leq r}$, we  let $\stt^\nu\in\Std_r(\nu)$  denote the Kronecker tableau of the form 
$$
\underbrace{
d(0) \circ d(0) \circ 
\dots
\circ d(0) }_{r-|\nu|}
\circ 
\underbrace{
a(1)
\circ 
\dots\circ 
a(1)}_{\nu_1}\circ 
\underbrace{
a(2)
\circ 
\dots\circ 
a(2)}_{\nu_2}\circ 
\cdots
 $$ 
which is maximal in the dominance ordering on $\Std_r(\nu)$.

\begin{eg}For $\nu=(2,1)  \in \mathscr{P}_{\leq 5}\subseteq \mathcal{Y}_5$,  the
Kronecker tableau $\stt^\nu$ is equal to
$$
\left(\Yvcentermath1 \varnothing,  \varnothing,  \varnothing,  \varnothing,
\Yboxdim{6pt}\gyoung(;) \; , \; \Yboxdim{6pt}\gyoung(;)\; , \; \Yboxdim{6pt}\gyoung(;;)\; , \; \Yboxdim{6pt}\gyoung(;;)\; , \; \Yboxdim{6pt}\gyoung(;;,;)\; , \;  \Yboxdim{6pt}\gyoung(;;,;) \right)
$$

 \end{eg}

\begin{defn}
 Given $\lambda
\in\mathscr{P}_{r-s} \subseteq \mathcal{Y}_{r-s}$ and  $\nu \in\mathscr{P}_{\leq r} \subseteq \mathcal{Y}_r$, define
 $$
\Delta_r(\nu; \rhd \lambda) = {\rm span}_\CC\{ m_{\stt }  \mid \stt(r-s) \rhd \lambda\}
\qquad \Delta_r(\nu;  \stt^\lambda) = {\rm span}_\CC\{ m_{\stt }  \mid \stt\in \Std_r(\nu), \stt[0,r-s]=\stt^ \lambda\}
$$
then $\Delta_r(\nu; \rhd \lambda)$ and $\Delta_r(\nu;\stt^ \lambda)+ \Delta_r(\nu; \rhd \lambda)$ are $P_s(n)$-submodules of 
$\Delta_r(\nu){\downarrow}_{P_s(n)}$.  We define  the {\sf skew cell module} 
 $$
\Delta_s(\nu\setminus\lambda) = 
(\Delta_r(\nu; \stt^ \lambda) + \Delta_r(\nu; \rhd \lambda) )/ \Delta_r(\nu; \rhd \lambda).
$$
  \end{defn}
 
\begin{rmk}      It follows from \cref{murphytypebasisdefn} that we can realise the skew cell module as a subquotient of the algebra $P_r(n)$ as follows.  Define
$$P_{r,s}^{\rhd \nu \setminus\lambda}
=
P_r(\nu) + {\rm span}_\mathbb{Q}\{	u_\stt \mid \stt \in \Std_r(\nu), \stt(r-s)\rhd \lambda	\},$$
then 
 $$
\Delta_s(\nu\setminus\lambda) = 
 {\rm span}_\mathbb{Q}\{	u_{\stt^\lambda\circ \sts}  + P_{r,s}^{\rhd \nu \setminus\lambda}\mid \sts \in \Std_s(\nu\setminus\lambda) 	\}.
 $$
\end{rmk}
\begin{rmk}\label{specialsym} The  basis of   $\Delta_s(\nu\setminus\lambda)$ 
is  indexed by the elements of  $\Std_s(\nu\setminus\lambda)$ and 
if $(\lambda,\nu, s)$ is triple of maximal depth, this module is isomorphic to $\mathbf{S}(\nu\ominus\lambda)$, the    skew Specht module  for $\mathfrak{S}_s$, lifted to $P_s(n)$.
 \end{rmk}

We can now reinterpret of stable Kronecker coefficients in the context of the partition algebra as follows.

 \begin{thm}\cite{BDO15,BE} Let $\lambda\in \mathscr{P}_{r-s}$, $\mu\in \mathscr{P}_s$ and $\nu \in \mathscr{P}_{\leq r}$. Then we have 
$$
\overline{g}(\lambda,\nu,\mu)
=  \dim_\CC( \Hom_{  P_{r-s}(n)\times P_{s}(n)}( \Delta_{r-s}(\lambda) \boxtimes \Delta_{s}(\mu), \Delta_r(\nu   ){\downarrow}    ) ) 
=  \dim_\CC( \Hom_{  P_{s}(n)}( \Delta_{s}(\mu), \Delta_s(\nu \setminus\lambda )    ) ) 
  $$
  for all $n\gg 0$.
  \end{thm}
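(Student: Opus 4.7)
The plan is to derive both equalities from Schur--Weyl duality between $P_r(n)$ and $\mathfrak{S}_n$ on the tensor space $V^{\otimes r}$ (where $V = \mathbb{Q}^n$ is the natural permutation module), exploiting the fact that for $n\geq 2r$ the algebra $P_r(n)$ is semisimple with simple modules $\{\Delta_r(\nu) : \nu \in \mathscr{P}_{\leq r}\}$, so that dimensions of $\Hom$ spaces compute multiplicities.

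First I would establish the Schur--Weyl decomposition
\[
V^{\otimes r} \;\cong\; \bigoplus_{\nu \in \mathscr{P}_{\leq r}} \Delta_r(\nu) \boxtimes \mathbf{S}(\nu_{[n]})
\]
as a $(P_r(n), \mathfrak{S}_n)$-bimodule, valid for $n \geq 2r$. Applying $\Hom_{\mathfrak{S}_n}(\mathbf{S}(\nu_{[n]}), -)$ to $V^{\otimes r}$ recovers $\Delta_r(\nu)$ as a $P_r(n)$-module. Crucially, the same tensor space can be written as $V^{\otimes(r-s)} \otimes V^{\otimes s}$, whose structure as a $(P_{r-s}(n) \otimes P_s(n), \mathfrak{S}_n)$-bimodule (with the diagonal $\mathfrak{S}_n$-action) decomposes as
\[
\bigoplus_{\alpha,\beta} \Delta_{r-s}(\alpha) \boxtimes \Delta_s(\beta) \boxtimes \bigl(\mathbf{S}(\alpha_{[n]}) \otimes \mathbf{S}(\beta_{[n]})\bigr).
\]
Taking $\Hom_{\mathfrak{S}_n}(\mathbf{S}(\nu_{[n]}), -)$ and comparing the two computations yields, as $P_{r-s}(n) \otimes P_s(n)$-modules,
\[
\Delta_r(\nu)\!\downarrow \;\cong\; \bigoplus_{\alpha,\beta} g(\alpha_{[n]}, \beta_{[n]}, \nu_{[n]}) \cdot \Delta_{r-s}(\alpha) \boxtimes \Delta_s(\beta),
\]
and for $n$ sufficiently large the Kronecker coefficient stabilises to $\overline{g}(\alpha,\nu,\beta)$. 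Since the cell modules on each side are simple in the semisimple range, extracting the multiplicity of $\Delta_{r-s}(\lambda) \boxtimes \Delta_s(\mu)$ as a Hom dimension gives the first equality.

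For the second equality, I would invoke the skew cell module filtration: using the cellular basis from \cref{partition algebra basis} and the definition via $\Delta_s(\nu\setminus\lambda) = \bigl(\Delta_r(\nu;\stt^\lambda) + \Delta_r(\nu;\rhd\lambda)\bigr)/\Delta_r(\nu;\rhd\lambda)$, one shows (essentially by the branching content of the basis $\{m_\stt\}$ and how $P_{r-s}(n) \otimes P_s(n)$ acts with respect to the truncation $\stt[0,r-s]$) that $\Delta_r(\nu)\!\downarrow$ admits a filtration whose successive quotients are the tensor products $\Delta_{r-s}(\lambda) \boxtimes \Delta_s(\nu\setminus\lambda)$, ranging over $\lambda \in \mathscr{P}_{r-s}$. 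In the semisimple range this filtration splits, so combining with the previous paragraph forces $\Delta_s(\nu\setminus\lambda)$ to be the $\Delta_{r-s}(\lambda)$-isotypic component of $\Delta_r(\nu)\!\downarrow$ over $P_s(n)$. Then semisimplicity of $P_{r-s}(n)$ and simplicity of $\Delta_{r-s}(\lambda)$ allow one to strip off the left factor via
\[
\Hom_{P_{r-s}(n)\otimes P_s(n)}\bigl(\Delta_{r-s}(\lambda)\boxtimes\Delta_s(\mu),\, \Delta_r(\nu)\!\downarrow\bigr) \;\cong\; \Hom_{P_s(n)}\bigl(\Delta_s(\mu),\,\Delta_s(\nu\setminus\lambda)\bigr),
\]
which is the desired second identification.

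The routine part is the Schur--Weyl duality and the Kronecker stabilisation bookkeeping; the substantive obstacle is verifying that the restriction of $\Delta_r(\nu)$ to $P_{r-s}(n) \otimes P_s(n)$ carries a filtration by the skew cell modules $\Delta_{r-s}(\alpha) \boxtimes \Delta_s(\nu\setminus\alpha)$ with compatible cellular structure. This is precisely the content of the cellular restriction result of \cite{BE} (specialised here to the partition algebra with the basis of \cite{EG}), so I would either cite it or re-derive it by checking that the action of $P_{r-s}(n) \otimes P_s(n)$ on basis elements $m_\stt$ respects the dominance filtration on $\stt(r-s)$, which follows from the triangularity statement \eqref{constants}.
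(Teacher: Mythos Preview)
The paper does not give its own proof of this theorem: it is stated with citations to \cite{BDO15} and \cite{BE} and no argument is supplied. So there is no in-paper proof to compare against; your outline is in fact a reasonable reconstruction of the arguments in those cited sources. The first equality is exactly the Schur--Weyl duality computation carried out in \cite{BDO15}, and the second is the cellular restriction/skew-module filtration result of \cite{BE}, specialised to the partition algebra via the basis of \cite{EG}.

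One small correction: in your filtration of $\Delta_r(\nu)\!\downarrow$ the index $\alpha$ should run over all of $\mathscr{P}_{\leq r-s}$ (the full labelling set for cell modules of $P_{r-s}(n)$), not just $\mathscr{P}_{r-s}$, since every possible value of $\stt(r-s)$ occurs. This does not affect the conclusion, because you then pick out the single layer $\alpha=\lambda$. Also, the identification in your last displayed isomorphism implicitly uses that $\Delta_{r-s}(\lambda)$ is absolutely simple with $\End_{P_{r-s}(n)}(\Delta_{r-s}(\lambda))\cong\CC$ in the semisimple range; this is standard but worth stating. With these tweaks your sketch is correct.
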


\begin{rmk} Using Remark \ref{specialsym} and (\ref{LRcoeff}) we recover Theorem \ref{LitMurn}. So the Littlewood--Richardson coefficients appear naturally as a subclass of the stable Kronecker coefficients in the context of the partition algebra.
\end{rmk}

      \section{The action of the partition algebra on the Murphy basis} \label{ACTION!}
To describe the action of the generators of the partition algebra   on the   Murphy  basis  
is  very difficult in general. 
 In this section, we shall solve this problem for  the Coxeter 
generators   on the basis elements indexed by   a certain class of paths.  
 This section along with Section 4  provide 
 the most difficult and technical calculations  of the paper;   we encourage the  reader 
 to skip these two sections on the first reading and head to Section 5, where   these calculations are used to prove our main results.  

\begin{defn}\label{sdhjkasfghjfahljfahjdflkhmnsv}Fix  $\stt \in   {\Std}_r( \nu   )$ and $1\leq k \leq r$  and suppose that
    $$ \stt{(k-1)}    \xrightarrow{-t}  \stt(k-\thalf)    \xrightarrow{+u}   \stt(k+1) \xrightarrow{-v}      \stt(k+\thalf)  \xrightarrow{+w}  \stt(k+1).$$
We define  $ \stt_{k \leftrightarrow k+1}\in \Std_r(\nu)$ to be the tableau, if it exists, determined by  $  \stt_{k \leftrightarrow k+1}(l) =\stt(l) $ for $l\neq k, k \pm \tfrac{1}{2}  $ and 
 $$  \stt_{k \leftrightarrow k+1} {(k-1)}    \xrightarrow{-v}     \stt_{k \leftrightarrow k+1}{(k-\thalf)}   \xrightarrow{+w}  
   \stt_{k \leftrightarrow k+1}{(k)}  \xrightarrow{-t}       \stt_{k \leftrightarrow k+1}(k+\thalf)  \xrightarrow{+u}  \stt_{k \leftrightarrow k+1}(k+1).$$
  \end{defn}

In this section, we will discuss explicitly the action of $s_{k,k+1}$ on $u_\stt$ for all paths $\stt\in \Std(\nu)$ 
such that the path $\stt_{k\leftrightarrow k+1}$ exists.  
 
\begin{figure}[ht!]
        $$
   \scalefont{0.8} \begin{tikzpicture}[scale=0.5]
  \draw[white] (0,1.2) rectangle (1,-13.4);      \begin{scope}   
\path[<-]   (-3,-2)edge[decorate]  node[auto] {$-2$}  (-0.5,0.5);
 \draw[->] (-0.5,0.5) -- (-3,-2);  \draw[->] (0.5,0.5) -- (3,-2); 
\path[<-] (-3,-6) edge[decorate]  node[auto] {$+5$}  (-3,-2.5); 
\draw[<-] (3,-6) -- (3,-2.5);  
 \path[<-] (-3,-9.5) edge[decorate]  node[auto] {$-3$}  (-3,-6);  
\draw[<-] (3,-9.5) -- (3,-6); 
\draw[<-] (0.5,-11.5)   --(3,-9) ; 
\fill[white] (0,0.2) circle (37pt);
\fill[white] (-3,-2.5) circle (37pt);
\fill[white] (3,-2.5) circle (37pt);
\fill[white] (-3,-6) circle (37pt);
\fill[white] (3,-6) circle (37pt);
 \fill[white] (-3,-9.5) circle (37pt);
   \path[<-] (-0.5,-11.5) edge[decorate] node[auto] {$+1$}   (-3,-9);  
\fill[white] (-3.5,-9.5) circle (34pt); 
\fill[white] (3,-9.5) circle (37pt);
 \fill[white] (0,-12) circle (39pt);
           \draw (0,0.2) node {$  \Yboxdim{5pt}\gyoung(;;;;,;;;,;;,;) $  };   
    \draw (-3,-2.5) node   { $\Yboxdim{5pt}\gyoung(;;;;,;;,;;,;)  $ }		;
    \draw (3,-2.5) node   { $\Yboxdim{5pt}\gyoung(;;;;,;;;,;,;)  $ }		;
        \draw (-3,-6) node   { $\Yboxdim{5pt}\gyoung(;;;;,;;,;;,;,;)  $ }		;
    \draw (3,-6) node   { $\Yboxdim{5pt}\gyoung(;;;;;,;;;,;,;)  $ }		;
            \draw (-3,-9.5) node   { $\Yboxdim{5pt}\gyoung(;;;;,;;,;,;,;)  $ }	;
    \draw (3,-9.5) node   { $\Yboxdim{5pt}\gyoung(;;;;;,;;,;,;)  $ }			;
            \draw (0,-12.2) node    { $\Yboxdim{5pt}\gyoung(;;;;;,;;,;,;,;)  $ }		;
             \end{scope}\end{tikzpicture} 
     \quad      \quad      \quad
                           \begin{tikzpicture}[scale=0.5]
      \draw[white] (0,1.2) rectangle (1,-13.4);       \begin{scope}   
\path[<-]   (-3,-2)edge[decorate]  node[auto] {$-2$}  (-0.5,0.5);
 \draw[->] (-0.5,0.5) -- (-3,-2);  \draw[->] (0.5,0.5) -- (3,-2); 
\path[<-] (-3,-6) edge[decorate]  node[auto] {$+5$}  (-3,-2.5); 
\draw[<-] (3,-6) -- (3,-2.5);  
 \path[<-] (-3,-9.5) edge[decorate]  node[auto] {$-0$}  (-3,-6);  
\draw[<-] (3,-9.5) -- (3,-6); 
\draw[<-] (0.5,-11.5)   --(3,-9) ; 
\fill[white] (0,0.2) circle (37pt);
\fill[white] (-3,-2.5) circle (37pt);
\fill[white] (3,-2.5) circle (37pt);
\fill[white] (-3,-6) circle (37pt);
\fill[white] (3,-6) circle (37pt);
 \fill[white] (-3,-9.5) circle (37pt);
   \path[<-] (-0.5,-11.5) edge[decorate] node[auto] {$+4$}   (-3,-9);  
\fill[white] (-3.5,-9.5) circle (34pt); 
\fill[white] (3,-9.5) circle (37pt);
 \fill[white] (0,-12) circle (39pt);
           \draw (0,0.2) node {$  \Yboxdim{5pt}\gyoung(;;;;,;;;,;;,;) $  };   
    \draw (-3,-2.5) node   { $ \Yboxdim{5pt}\gyoung(;;;;,;;,;;,;) $ }		;
    \draw (3,-2.5) node   { $ \Yboxdim{5pt}\gyoung(;;;;,;;;,;;,;) $ }		;
        \draw (-3,-6) node   { $ \Yboxdim{5pt}\gyoung(;;;;,;;,;;,;,;) $ }		;
    \draw (3,-6) node   { $\Yboxdim{5pt}  \Yboxdim{5pt}\gyoung(;;;;,;;;,;;,;;)$ }		;
            \draw (-3,-9.5) node   { $\Yboxdim{5pt}\gyoung(;;;;,;;,;;,;,;) $ }	;
    \draw (3,-9.5) node   { $\Yboxdim{5pt} \Yboxdim{5pt}\gyoung(;;;;,;;,;;,;;)  $ }			;
            \draw (0,-12.2) node    { $\Yboxdim{5pt} \gyoung(;;;;,;;,;;,;;,;)  $ }		;
              \end{scope}\end{tikzpicture}  
  \quad      \quad      \quad
                           \begin{tikzpicture}[scale=0.5]
      \draw[white] (0,1.2) rectangle (1,-13.4);       \begin{scope}   
\path[<-]   (-3,-2.5)edge[decorate]  node[auto] {$-0$}  (-3,0.2);
\path[<-] (-3,-6) edge[decorate]  node[auto] {$+2$}  (-3,-2.5); 
\draw[<-] (0.9,-5.8) -- (-3,-3);  
 \path[<-] (-3,-9.5) edge[decorate]  node[auto] {$-0$}  (-3,-6);  
\draw[<-] (1,-9.5) -- (1,-6); 
\draw[<-] (-3,-12.2)   --(1,-9.5) ; 
\fill[white] (-3,0.2) circle (26pt);
\fill[white] (-3,-2.5) circle (26pt);
\fill[white] (1,-2.5) circle (26pt);
\fill[white] (-3,-6) circle (26pt);
\fill[white] (1,-6) circle (26pt);
 \fill[white] (-3,-9.5) circle (26pt);
   \path[<-] (-3,-12.2) edge[decorate] node[auto] {$+3$}   (-3,-9.5);  
\fill[white] (-3.5,-9.5) circle (26pt); 
\fill[white] (1,-9.5) circle (26pt);
 \fill[white] (-3,-12) circle (26pt);
           \draw (-3,0.2) node {$  \Yboxdim{5pt}\gyoung(;;;,;;,;) $  };   
    \draw (-3,-2.5) node   { $ \Yboxdim{5pt}\gyoung(;;;,;;,;) $ }		;
        \draw (-3,-6) node   { $ \Yboxdim{5pt}\gyoung(;;;,;;;,;) $ }		;
    \draw (1,-6) node   { $\Yboxdim{5pt}  \Yboxdim{5pt}\gyoung(;;;,;;,;;)$ }		;
            \draw (-3,-9.5) node   { $\Yboxdim{5pt}\gyoung(;;;,;;;,;) $ }	;
    \draw (1,-9.5) node   { $\Yboxdim{5pt} \Yboxdim{5pt}\gyoung(;;;,;;,;;)  $ }			;
            \draw (-3,-12.2) node    { $\Yboxdim{5pt} \gyoung(;;;,;;;,;;)  $ }		;
              \end{scope}\end{tikzpicture}  
 $$

\!\!\!\!\!\caption{ 
  Examples of the pairs of paths  $\stt$ and  $\stt_{k \leftrightarrow k+1}$ in $\mathcal{Y}$.  
   }
 \label{parmoves}
\end{figure}
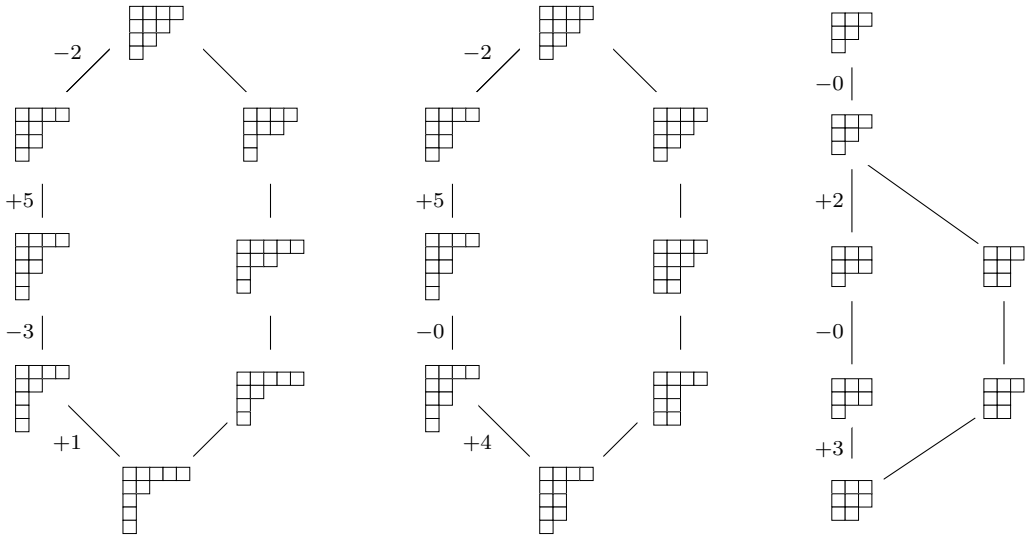

Before stating the main result,  we need one more piece of notation.  
 
\begin{defn}\label{defn:error} For  $\stt \in \Std_r( \nu   )$ and $1\leq k \leq r$ with 
\[
\stt{(k-\thalf)}      \xrightarrow{+u}  \stt{(k )}      \xrightarrow{-u}    \stt(k+\thalf) 
\]
for $u>0$, we define $\sts={\sf e}_k({\stt})\in\Std_r(\nu)$ 
by  $\sts(l)=\stt(l)$ for $l\neq k$ and 
\begin{align}\label{definition of e term}
\sts{(k-\thalf)}   
    \xrightarrow{+L}   \sts{(k)}
     \xrightarrow{-L}     \sts(k+\thalf)
\end{align}
 where $L=\ell(\stt{(k-\thalf)})+1$.  If  $\stt(k-\thalf)\neq \stt(k+\half)$, then ${\sf e}_k({\stt})$ is  undefined. 
\end{defn}

 \begin{thm}\label{MURPHYPrN}
Fix   $1\leq k \leq r$ and let $\stt  \in \Std_r(\nu)$.  If $\stt _{k \leftrightarrow k+1}$ exists, then 
 \begin{align*}
( u_{\stt})  s_{k,k+1} = 
 u_{\stt_{k\leftrightarrow k+1}} 	+  u_{{\sf e}_k(\stt)}
 - u_{{\sf e}_k(\stt_{k \leftrightarrow k+1})},
 \end{align*}
where we take the convention that $u_{e_k(\mathsf{v})}=0$ whenever the path  ${\sf e}_k({\stv})$ is undefined for $\stv \in \Std_r(\nu)$. 
\end{thm}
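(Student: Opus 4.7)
The plan is to establish the identity by direct computation in $P_r(n)$, using the cellular structure from \cref{partition algebra basis} to reduce the problem modulo $P_r^{\rhd\nu}(n)$. First I would observe that $u_\stt$ factorises as a product of branching coefficients $u_{\stt(j)\to\stt(j+\half)}u_{\stt(j+\half)\to\stt(j+1)}$, each built out of $e$-idempotents and $s$-transpositions on an explicit range of strands. Inspecting these ranges shows that every factor coming from a level $j+1\leq k-1$ involves only strands strictly less than $k$, and hence commutes with $s_{k,k+1}$ on the right. The same is true for the leftmost factor $u_{\stt(k-1)\to\stt(k-\half)}$ together with any part of the subsequent factors that is supported on strands $<k$. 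This reduces the theorem to a local identity involving only the three branching factors at levels $k-\half\to k$, $k\to k+\half$, $k+\half\to k+1$ multiplied on the right by $s_{k,k+1}$.

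Next I would run the case analysis on the four indices $t,u,v,w$ appearing in \cref{sdhjkasfghjfahljfahjdflkhmnsv}. In each case, the heart of the argument is a Coxeter-type identity in $P_r(n)$: the basic commutations $s_{k,k+1} s_{l,k}=s_{l,k+1}s_{k,k+1}$ for $l<k$, together with the interplay $s_{k,k+1}p_{k}s_{k,k+1}=p_{k+1}$ and the relation $p_{k+\half}s_{k,k+1}=p_{k+\half}p_{k+1}$, etc. Rewriting the local product using these identities places all the transpositions into the shape required by the branching coefficients of $\stt_{k\leftrightarrow k+1}$. In the generic case—where the swap $\stt_{k\leftrightarrow k+1}$ differs from $\stt$ through the independent motion of two boxes—this yields exactly the single term $u_{\stt_{k\leftrightarrow k+1}}$ on the nose, and the asserted identity holds without correction terms.

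The error terms appear precisely when the middle intermediate shape collapses, that is, when $\stt(k-\half)=\stt(k+\half)$ with the integral step of $\stt$ at position $k$ being a non-dummy up-then-down (so $\mathsf{e}_k(\stt)$ is defined). In this regime the sum $\sum_{i=0}^{\nu_b-1}s_{[\nu]_b-i,[\nu]_b}$ inside the branching coefficient $u_{\stt(k+\half)\to\stt(k+1)}$ interacts with $s_{k,k+1}$ to produce a telescoping: one of the terms in the sum, after rewriting using $s_{k,k+1}$, coincides modulo $P_r^{\rhd\nu}(n)$ with the contribution coming from the path in which the offending add-then-remove step has been replaced by a step adding and removing the "virtual" row $L=\ell(\stt(k-\half))+1$ at the bottom of the diagram. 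Tracking this telescoping against its counterpart for $\stt_{k\leftrightarrow k+1}$ yields the signed pair $+u_{\mathsf{e}_k(\stt)}-u_{\mathsf{e}_k(\stt_{k\leftrightarrow k+1})}$.

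The main obstacle will be the bookkeeping in the second and third paragraphs, not any single conceptual point. Several sub-cases (up/up, up/down, down/up, down/down, with the various possibilities of $u,v$ being zero giving dummies) arise, and one must verify each that the unwanted contributions live in $P_r^{\rhd\nu}(n)$ and so vanish modulo the ideal. The dominance ordering on $\Std_r(\nu)$ introduced just before the theorem, together with the explicit labelling of rows in the definitions of $\dd{}{}{}$ and $\uu{}{}{}$, is exactly strong enough to control this: any straightening step which attempts to move a box into a row strictly higher (in dominance) than prescribed is absorbed into $P_r^{\rhd\nu}(n)$, leaving only the three tableaux named in the statement.
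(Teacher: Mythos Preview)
Your overall shape---factor $u_\stt$ and isolate a local computation around levels $k-1,\dots,k+1$---matches the paper, but there is a genuine gap in the execution.

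First and most important: the theorem is an \emph{exact} identity in $P_r(n)$, not an identity modulo $P_r^{\rhd\nu}(n)$. Your plan hinges on ``unwanted contributions living in $P_r^{\rhd\nu}(n)$'' and being ``absorbed'' there; that would at best yield the formula in the cell module $\Delta_r(\nu)$, which is strictly weaker than what is asserted. The paper never invokes the cellular ideal at any stage of this argument. Instead it proves the equality on the nose by reducing to an explicit diagram identity (Propositions~3.7 and~3.8): one writes $u_{\stt[0,k-1]}=c_{\stt(k-1)}d^\ast_{\stt[0,k-1]}$, then shows that $P_k(\stt)e_{k-1}^{(a)}$ is a single partition diagram $\widehat{S_k(\stt)}$, and checks that $\widehat{S_k(\stt)}s_{k,k+1}$ differs from $\widehat{S_k(\stt_{k\leftrightarrow k+1})}$ only by a permutation absorbed by the row-symmetriser $\sum_{\sigma\in\mathfrak{S}_\alpha}\sigma$ and the factor $m_{\zeta,u,w}$. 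No straightening into a higher cell is needed---or available.

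Second, your account of the error terms is off. The $+u_{\mathsf{e}_k(\stt)}$ does not come from a telescoping after multiplying by $s_{k,k+1}$; it is already present in a decomposition of $u_{\stt[k-1,k+1]}$ itself (this is Proposition~3.4, whose Claim~B splits the product $s_{|\gamma|,[\gamma]_v}m_{\beta\to\gamma}$ into a main term and a residual when $u=v>0$). One then checks separately that $u_{\mathsf{e}_k(\stt)[k-1,k+1]}$ is fixed by $s_{k,k+1}$ (Lemma~3.5), so this extra piece simply survives unchanged. The $-u_{\mathsf{e}_k(\stt_{k\leftrightarrow k+1})}$ appears symmetrically when one applies the same decomposition to $\stt_{k\leftrightarrow k+1}$ at the end. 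There is no interaction of $s_{k,k+1}$ with the sum $\sum_{i}s_{[\nu]_b-i,[\nu]_b}$ producing the correction.
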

   
The remainder of this section is dedicated 
to   proving this result.  
Fix $\stt \in\Std_r(\nu)$ and $1\leq k \leq r$.  First note that we can factorise $u_\stt$ as follows,
$$
u_\stt =u_{\stt[k+1,r]}u_{\stt[k-1,k+1]} u_{\stt[0,k-1]}.
$$
Now as $u_{\stt[0,k-1]}\in P_{k-1}(n)$, it commutes with $s_{k,k+1}$ and so we have 
\begin{equation}\label{passingthrough}
u_\stt =u_{\stt[k+1,r]}u_{\stt[k-1,k+1]}s_{k,k+1} u_{\stt[0,k-1]}.
\end{equation}
So let us first consider $u_{\stt[k-1,k+1]}$.  We fix the following notation. 
Given a fixed  $1\leq k \leq r$ and $\flux  \in \Std_r(\nu)$ for some $\nu \in \mathcal{Y}_{r}$, we set 
$$
\flux (k-1) = (\alpha,a)
\quad
\flux (k-\thalf) = (\beta,b)
\quad
\flux (k) = (\gamma,c)
\quad
\flux (k+\thalf) = (\delta,d)
\quad
\flux (k+1) = (\zeta,z).
$$
As in \cref{defn:error}, if $u=v$ we let $\sts:=e_{k}(\stt)$.  
Given $\nu$ a partition and $u,w>0$ we set
 $$
m_{\nu -\varepsilon_w \to \nu}
=     \sum_{i=0}^{ \nu_w-1}s_{[\nu]_{w}-i, [\nu]_{w}} 
\qquad  m_{\nu,u,w}= 
 \begin{cases}
m_{\nu -\varepsilon_w \to \nu}
m_{\nu -\varepsilon_u \to \nu} &\text{if }u\neq w 
\\
m_{\nu -\varepsilon_w \to \nu}
m_{\nu -2\varepsilon_w \to \nu- \varepsilon_w} &\text{if }u= w.
\end{cases}
$$

(Note that $m_{\nu,u,w}= m_{\nu,w,u}$.)

\begin{prop}\label{4.4}
We have 
$$u_{\stt[k-1,k+1]}
= m_{\zeta,u,w} P_k(\stt)+ (1-\delta_{u,0})\delta_{u,v}u_{\sts[k-1,k+1]}
$$
(for $\sts=e_k(\stt)$ as in \cref{definition of e term}) where  $$
P_k(\stt) =
\begin{cases}
e_{k+1}^{(z)}s_{[\zeta]_w,|\zeta|}e_{k+\frac{1}{2}}^{(d)}
s_{|\gamma|,[\gamma]_u-1}e_{k }^{(c)}
s_{[\gamma]_u, |\gamma| }e_{k-\frac{1}{2}}^{(b)}
s_{|\alpha|, [\alpha]_t}     
&\text{if }u= v>0 
\\
e_{k+1}^{(z)}s_{[\zeta]_w,|\zeta|}
e_{k+\frac{1}{2}}^{(d)}
s_{|\gamma|,[\gamma]_v}
e_{k }^{(c)}
s_{[\gamma]_u, |\gamma| }e_{k-\frac{1}{2}}^{(b)}
s_{|\alpha|, [\alpha]_t}     
&\text{otherwise.}
\end{cases}
$$
\end{prop}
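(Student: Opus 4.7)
The plan is to expand $u_{\stt[k-1,k+1]}$ using Definition 2.10 as the ordered product
\[
u_{\stt[k-1,k+1]} = u_{\stt(k+\half)\to\stt(k+1)}\cdot u_{\stt(k)\to\stt(k+\half)}\cdot u_{\stt(k-\half)\to\stt(k)}\cdot u_{\stt(k-1)\to\stt(k-\half)},
\]
substitute the explicit formulas from Definition 2.8 for each factor, and then rearrange the resulting word in $P_r(n)$ to match the stated form. After substitution the product contains three kinds of letters: the idempotent-like elements $e_j^{(\ell)}$, the permutation blocks $s_{i,j}$, and the ``Jucys--Murphy-like'' sums $m_{\nu-\varepsilon_b\to\nu}$. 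Only two of the latter appear: $m_{\zeta-\varepsilon_w\to\zeta}$, already on the far left, and $m_{\gamma-\varepsilon_u\to\gamma}$, sandwiched between the blocks for the steps $\stt(k-\half)\to\stt(k)$ and $\stt(k)\to\stt(k+\half)$. The entire task is therefore to slide $m_{\gamma-\varepsilon_u\to\gamma}$ to the left past these intervening blocks, combining it with $m_{\zeta-\varepsilon_w\to\zeta}$ to produce the prefactor $m_{\zeta,u,w}$ and leaving behind the pure $e,s$-word $P_k(\stt)$.

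The commutation is carried out summand by summand: each term of $m_{\gamma-\varepsilon_u\to\gamma}=\sum_{i=0}^{\gamma_u-1}s_{[\gamma]_u-i,[\gamma]_u}$ is a cycle acting on strands strictly to the left of position $|\gamma|$. Using the relations between the $s_{i,j}$ and the $e_{j}^{(\ell)}$ recalled from \cite{EG} (in particular the relations gathered in their Lemma~A.1 and Section~6), each such cycle can be pushed past $e_{k+\half}^{(d)}$, $s_{|\gamma|,[\gamma]_v}$ and $s_{[\gamma]_u,|\gamma|}$ whenever the strands it permutes do not conflict with those moved by the surrounding transpositions. When $u\neq v$, or when $u=v=0$, there is no conflict: $m_{\gamma-\varepsilon_u\to\gamma}$ traverses the block unchanged, and a short check that $\zeta_u=\gamma_u$ and $[\zeta]_u=[\gamma]_u$ (with sub-cases according to whether $w,v$ are less than, equal to, or greater than $u$) identifies the product $m_{\zeta-\varepsilon_w\to\zeta}\cdot m_{\gamma-\varepsilon_u\to\gamma}$ with $m_{\zeta,u,w}$. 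The residual factor is then $P_k(\stt)$ verbatim, using the first (``otherwise'') formula of the proposition. This disposes of the first summand.

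The main obstacle is the case $u=v>0$. Here the two middle blocks both act on position $[\gamma]_u=[\gamma]_v$, reflecting the fact that the step $\stt(k)\to\stt(k+\half)$ removes exactly the box just added by $\stt(k-\half)\to\stt(k)$. Consequently a single summand of $m_{\gamma-\varepsilon_u\to\gamma}$ collides with $s_{|\gamma|,[\gamma]_v}$ and does not commute through. I expect to split the sum as \emph{surviving part} plus \emph{collision part}: the surviving part, after replacing $s_{|\gamma|,[\gamma]_v}$ by the slightly shorter $s_{|\gamma|,[\gamma]_u-1}$ that avoids the clashed strand, produces the principal contribution $m_{\zeta,u,w}P_k(\stt)$ with the first (``$u=v>0$'') formula of the proposition. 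The collision part then has to be recognised as $u_{\sts[k-1,k+1]}$ for $\sts={\sf e}_k(\stt)$. This is plausible because ${\sf e}_k(\stt)$ replaces the offending middle pair $+u,-u$ by $+L,-L$ with $L=\ell(\beta)+1$, i.e.\ a brand-new strand at the far right added and immediately removed, which should match precisely the residual permutation generated by the clashing $m$-summand.

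The bulk of the technical work will be a careful term-by-term matching between the collision residue and the factorisation of $u_{\sts[k-1,k+1]}$ obtained directly from Definitions 2.8 and 2.10 applied to $\sts$. The factor $(1-\delta_{u,0})\delta_{u,v}$ in the statement is then just the indicator that this correction is present exactly when $u=v>0$, which is also precisely when ${\sf e}_k(\stt)$ is defined. With the two cases combined, the stated identity follows.
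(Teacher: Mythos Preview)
Your plan is essentially the paper's proof: it too expands the product from Definition~2.8, then establishes three commutation claims (Claim~A: $e_k^{(\ell)}$ commutes with the relevant $m$- and $s$-factors; Claim~B: the product $s_{|\gamma|,[\gamma]_v}\,m_{\gamma-\varepsilon_u\to\gamma}$, producing the extra term $s_{|\gamma|,[\gamma]_u}$ exactly when $u=v>0$; Claim~C: the product $s_{[\zeta]_w,|\zeta|}\,m_{\delta-\varepsilon_u\to\delta}$) to slide the inner $m$-factor left, and then recognises the residue as $u_{\sts[k-1,k+1]}$ by direct computation. One caution on your wording: the $m$-factor does \emph{not} ``traverse unchanged'' --- as it passes $s_{|\gamma|,[\gamma]_v}$ it becomes $m_{\delta-\varepsilon_u\to\delta}$, and after $s_{[\zeta]_w,|\zeta|}$ it becomes $m_{\zeta-\varepsilon_u\to\zeta}$ (or $m_{\zeta-2\varepsilon_w\to\zeta-\varepsilon_w}$ when $u=w$), so the identification with $m_{\zeta,u,w}$ is by definition rather than via a check that $\zeta_u=\gamma_u$, $[\zeta]_u=[\gamma]_u$ (which can fail, e.g.\ when $v>u>w$).
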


 \begin{proof}
 By definition \ref{coefficientsforthepartitionalgebra}, we have 
 $$
 u_{\stt[k-1,k+1]}
 =
 e_{k+1}^{(z)}
 m_{\delta \to \zeta}
 s_{[\zeta]_w, |\zeta| }
 e_{k+\half}^{(d)} 
 s_{|\gamma|, [\gamma]_v }
 e_{k}^{(c)} 
   m_{\beta \to \gamma}
   s_{[\gamma]_u,|\gamma|}
    e_{k-\half}^{(b)} 
     s_{|\alpha|, [\alpha]_t }.
 $$

\smallskip

 \noindent {\bf Claim A.}   
  If $k\geq 1$, $(\lambda,l)\in \mathcal{Y}_k$ and
   $(\mu,m)\to(\lambda,l)$ is an edge in $\mathcal{Y}$ then we have 
   \begin{itemize}
\item $e_k^{(l)} m_{\mu \to \lambda}=m_{\mu \to \lambda}e_k^{(l)}   $,
\item  $e_k^{(l)}s_{[\lambda]_a,|\lambda|}=
s_{[\lambda]_a,|\lambda|}e_k^{(l)}$ for any $a \geq 0$.
   \end{itemize}
   We have that $|\lambda|=k-l$ and 
   $e_k^{(l)}$ is the identity on the first $k-l$ strands and so commutes with $ m_{\mu \to \lambda}$ and $s_{[\lambda]_a,|\lambda|}$.  
   Therefore Claim A follows.  
 
 \smallskip
   
    \noindent {\bf Claim B.}   
    We have that
    $$
    s_{|\gamma|,[\gamma]_v } m_{\beta \to \gamma}
    =
    \begin{cases}
    m_{\delta-\varepsilon_u\to \delta}s_{|\gamma|,[\gamma]_u-1}+s_{|\gamma|,[\gamma]_u} &\text{if }u=v>0, \\
    m_{\delta-\varepsilon_u\to \delta}s_{|\gamma|,[\gamma]_v}&\text{otherwise.}
    \end{cases} 
    $$
    (We note that $\beta= \gamma-\varepsilon_u$.)  
    If $v=0$, then $     s_{|\gamma|,[\gamma]_v }=1$ and $\delta=\gamma$ and so the result holds trivially.
    If $u=0$, then $m_{\beta \to \gamma}= 1=m_{\delta-\varepsilon_u\to \delta}$ and so the result also holds trivially.  
 We now assume that $u,v>0$.  
If $u<v$ then $\gamma_u=\delta_u$ and $[\delta]_u=[\gamma]_u<[\gamma]_v$ and so 
\begin{align*}
s_{|\gamma|,[\gamma]_v}m_{\gamma-\varepsilon_u \to \gamma}
= m_{\gamma-\varepsilon_u \to \gamma} s_{|\gamma|,[\gamma]_v}=
 m_{\delta-\varepsilon_u \to \gamma} s_{|\gamma|,[\gamma]_v},
\end{align*}as required.  
If $v<u$ then $[\gamma]_v<[\gamma]_u - i\leq |\gamma|$ for all $0\leq i \leq \gamma_u-1$ and so 
\begin{align*}
s_{|\gamma|,[\gamma]_v}m_{\gamma-\varepsilon_u \to \gamma}
=
s_{|\gamma|,[\gamma]_v} \sum_{  i=0}^{  \gamma_u-1}
s_{ [\gamma]_u-i, [\gamma]_u}  =
\left( \sum_{  i=0}^{  \gamma_u-1}
s_{ [\gamma]_u-i-1, [\gamma]_u-1}\right)s_{|\gamma|,[\gamma]_v}
 =
\left( \sum_{  i=0}^{  \delta_u-1}
s_{ [\delta]_u-i, [\delta]_u}\right)s_{|\gamma|,[\gamma]_v}
 \end{align*}
 (where the final equality follows as $\delta_u=\gamma_u$ and $[\delta]_u = [\gamma]_u-1$)  
 and the final term is equal to $m_{\delta-\varepsilon_u\to \delta}s_{|\gamma|,[\gamma]_v}$ by definition.   
Finally if $u=v>0$ then 
\begin{align*}
  s_{|\gamma|,[\gamma]_v } m_{\beta \to \gamma}
=
 s_{|\gamma|,[\gamma]_u } m_{\gamma-\varepsilon_u \to \gamma}
 = s_{|\gamma|,[\gamma]_u } \sum_{i=0}^{\gamma_u-1}
 s_{|\gamma|_u-i,[\gamma]_u }  
 =  s_{|\gamma|,[\gamma]_u }  \left(1+ \sum_{i=1}^{\gamma_u-1}
 s_{|\gamma|_u-i,[\gamma]_u } \right). 
\end{align*}
Expanding out the brackets and shifting the indices, we obtain 
\begin{align*}
  s_{|\gamma|,[\gamma]_u } + \sum_{i=1}^{\gamma_u-1}
 s_{|\gamma|,[\gamma]_u }  s_{|\gamma|_u-i,[\gamma]_u } 
 &=  s_{|\gamma|,[\gamma]_u } + 
  \sum_{i=1}^{\gamma_u-1}
  s_{|\gamma|_u-i,[\gamma]_u-1 } s_{|\gamma|,[\gamma]_u -1} \\ 
  & =  s_{|\gamma|,[\gamma]_u } + 
  \sum_{i=0}^{\gamma_u-2}
  s_{|\gamma|_u-1-i,[\gamma]_u-1 } s_{|\gamma|,[\gamma]_u -1} 
 \\ & =  s_{|\gamma|,[\gamma]_u } + 
  \sum_{i=1}^{\delta_u-1}
  s_{|\delta|_u-i,[\delta]_u } s_{|\gamma|,[\gamma]_u -1} \\
 &= m_{\delta-\varepsilon_u\to \delta}s_{|\gamma|,[\gamma]_u-1}+s_{|\gamma|,[\gamma]_u} 
\end{align*}
where the penultimate equality follows as  $[\gamma]_u-1=[\delta]_u$ and $\gamma_u-2=\delta_u-1$.  Therefore   Claim B follows.

 \smallskip
   
    \noindent {\bf Claim C.}   
We have that 
$$
s_{[\zeta]_w,|\zeta|}m_{\delta-\varepsilon_u\to \delta} = 
\begin{cases}
m_{\zeta-\varepsilon_u\to \zeta}  
s_{[\zeta]_w,|\zeta|} &\text{if } w\neq u
\\
m_{\zeta-2\varepsilon_w\to \zeta- \varepsilon_w} 
s_{[\zeta]_w,|\zeta|} &\text{otherwise.}
\end{cases}
$$
If $u=0$ or $w=0$ the result holds trivially.  We assume $u,w>0$.  If $u<w$ then 
$[\delta]_u=[\zeta]_u < [\zeta]_w$ so we get
$$
s_{[\zeta]_w,|\zeta|}m_{\delta-\varepsilon_u\to \delta}= m_{\delta-\varepsilon_u \to \delta}
s_{[\zeta]_w,|\zeta|}
=
 m_{\zeta-\varepsilon_u \to \zeta}
s_{[\zeta]_w,|\zeta|},
$$as required.  
If $u>w$ then $[\delta]_u = [\zeta]_u-1\geq [\zeta]_w$ and so we get
\begin{align*}
s_{[\zeta]_w,|\zeta|}m_{\delta-\varepsilon_u\to \delta} 
=
s_{[\zeta]_w,|\zeta|}\sum_{i=0}^{\delta_u-1}s_{[\delta]_u-i,[\delta]_u}
=
\sum_{i=0}^{\delta_u-1}s_{[\delta]_u-i+1,[\delta]_u+1}s_{[\zeta]_w,|\zeta|}
=
\sum_{i=0}^{\zeta_u-1}s_{[\zeta]_u-i,[\zeta]_u}s_{[\zeta]_w,|\zeta|} 
\end{align*} which is equal to $m_{\zeta-\varepsilon_u \to \zeta}s_{[\zeta]_w,|\zeta|}$, as required.  
Finally, if $u=w>0$ then $[\zeta]_w=[\zeta]_u=[\delta]_u+1$ and 
$$
s_{[\zeta]_w,|\zeta|}m_{\delta-\varepsilon_u\to \delta} =
m_{\delta-\varepsilon_u\to \delta} s_{[\zeta]_w,|\zeta|}
=
m_{\zeta-2\varepsilon_w\to \zeta-\varepsilon_w} s_{[\zeta]_w,|\zeta|},
$$as required.  
Therefore Claim C follows.

Applying Claim A and Claim B (and noting that $s_{|\gamma|,[\gamma]_u}s_{[\gamma]_u,|\gamma|}=1$) we deduce that 
$$
u_{\stt[k-1,k+1]}=
\begin{cases}
m_{\delta\to \zeta} e_{k+1}^{(z)}s_{[\zeta]_w,|\zeta|}e_{k+\half}^{(d)}m_{\delta-\varepsilon_u\to \delta}s_{|\gamma|,[\gamma]_u-1}
e_k^{(c)}s_{[\gamma]_u,|\gamma| } e_{k-\half}^{(b)}s_{|\alpha|,[\alpha]_t}		 
\\
\quad + e^{(z)}_{k+1}m_{\delta\to \zeta}s_{[\zeta]_w,|\zeta|}e_{k+\half}^{(d)}e_k^{(c)}e_{k-\half}^{(b)}s_{|\alpha|,[\alpha]_t}&\text{if }u=v>0
\\
m_{\delta\to \zeta} e_{k+1}^{(z)}s_{[\zeta]_w,|\zeta|}e_{k+\half}^{(d)}m_{\delta-\varepsilon_u\to \delta}s_{|\gamma|,[\gamma]_v}
e_k^{(c)}s_{[\gamma]_u,|\gamma| } e_{k-\half}^{(b)}s_{|\alpha|,[\alpha]_t}		&\text{otherwise.}
\end{cases}
$$
Applying Claim A and Claim C to the above equation, we deduce that 
 $$
u_{\stt[k-1,k+1]}=
\begin{cases}
m_{\zeta,u,w} e_{k+1}^{(z)}s_{[\zeta]_w,|\zeta|}e_{k+\half}^{(d)}
s_{|\gamma|,[\gamma]_u-1}
e_k^{(c)}
s_{[\gamma]_u,|\gamma| }
 e_{k-\half}^{(b)}s_{|\alpha|,[\alpha]_t}		 
\\
\quad 
+ e^{(z)}_{k+1}
m_{\zeta-\varepsilon_w \to \zeta}  
s_{[\zeta]_w,|\zeta|}e_{k+\half}^{(d)}e_k^{(c)}e_{k-\half}^{(b)}s_{|\alpha|,[\alpha]_t} &\text{if }u=v>0
\\
m_{\zeta,u,w}  e_{k+1}^{(z)}s_{[\zeta]_w,|\zeta|}e_{k+\half}^{(d)}
s_{|\gamma|,[\gamma]_v}
e_k^{(c)}s_{[\gamma]_u,|\gamma| } e_{k-\half}^{(b)}s_{|\alpha|,[\alpha]_t}		&\text{otherwise.}
\end{cases}
$$
Finally, note that 
$$u_{\sts[k-1,k+1]}
=
 e^{(z)}_{k+1}m_{\zeta-\varepsilon_w\to \zeta}s_{[\zeta]_w,|\zeta|}
 e_{k+\half}^{(d)}
 s_{|\beta+\varepsilon_L|,|\beta+\varepsilon_L|}
 e_k^{(c)}
  m_{ \beta \to \beta+\varepsilon_L}
   s_{|\beta+\varepsilon_L|,|\beta+\varepsilon_L|}
 e_{k-\half}^{(b)}s_{|\alpha|,[\alpha]_t}
$$
and as $ s_{|\beta+\varepsilon_L|,|\beta+\varepsilon_L|}=1=   m_{ \beta \to \beta+\varepsilon_L}$ we have 
\begin{equation}\label{doubledaggerdouble}
u_{\sts[k-1,k+1]}
= e^{(z)}_{k+1}
m_{\zeta-\varepsilon_w \to \zeta}   
s_{[\zeta]_w,|\zeta|}e_{k+\half}^{(d)}e_k^{(c)}e_{k-\half}^{(b)}s_{|\alpha|,[\alpha]_t}.
\end{equation}
This completes the proof of \cref{4.4}. 
\end{proof}

Using \cref{4.4} and   \cref{passingthrough} we have 
\begin{align*}
u_\stt s_{k,k+1}
&=
u_{\stt[k+1,r]}
 m_{\zeta,u,w} P_k(\stt)s_{k,k+1}
u_{\stt[0,k-1]}+ 
 (1-\delta_{u,0})\delta_{u,v}
 u_{\stt[k+1,r]}u_{\sts[k-1,k+1]} s_{k,k+1}
u_{\stt[0,k-1]}.
\end{align*}

\begin{lem}\label{alittlelemma}
For $\sts=e_k(\stt)$ as in \cref{definition of e term}, we have that $u_{\sts[k-1,k+1]}s_{k,k+1}=u_{\sts[k-1,k+1]}$.  
\end{lem}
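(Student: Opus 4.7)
The plan is to start with the explicit form of $u_{\sts[k-1,k+1]}$ derived in equation \eqref{doubledaggerdouble} during the proof of \cref{4.4}, namely
\[
u_{\sts[k-1,k+1]} = e^{(z)}_{k+1}\, m_{\zeta-\varepsilon_w \to \zeta}\, s_{[\zeta]_w,|\zeta|}\, e_{k+\half}^{(d)}\, e_k^{(c)}\, e_{k-\half}^{(b)}\, s_{|\alpha|,[\alpha]_t}.
\]
First, since $|\alpha|\leqslant k-1$, the permutation $s_{|\alpha|,[\alpha]_t}$ lies in $P_{k-1}(n)$ and therefore commutes with $s_{k,k+1}$; this lets us move $s_{k,k+1}$ past it and reduces the desired identity to
\[
e^{(z)}_{k+1}\, m_{\zeta-\varepsilon_w \to \zeta}\, s_{[\zeta]_w,|\zeta|}\, e_{k+\half}^{(d)}\, e_k^{(c)}\, e_{k-\half}^{(b)}\, s_{k,k+1} = e^{(z)}_{k+1}\, m_{\zeta-\varepsilon_w \to \zeta}\, s_{[\zeta]_w,|\zeta|}\, e_{k+\half}^{(d)}\, e_k^{(c)}\, e_{k-\half}^{(b)}.
\]

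The central ingredient is the partition-algebra relation $p_{k+\half}\, s_{k,k+1} = p_{k+\half}$, which holds because the bottom horizontal block $\{k,k+1\}$ of $p_{k+\half}$ is invariant under the transposition of strands $k$ and $k+1$. Since the condition $(\beta,b)\in\mathcal{Y}_{k-\half}$ forces $|\beta|\leqslant k-1$, we have $d = k - |\beta|\geqslant 1$, so $p_{k+\half}$ is present as the rightmost factor of $e_{k+\half}^{(d)}$.

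The main obstacle is that $s_{k,k+1}$ is not adjacent to $p_{k+\half}$: it sits to the right of $e_k^{(c)}\, e_{k-\half}^{(b)}$, and it does not simply commute with $e_{k-\half}^{(b)}$ (as $p_{k-\half}$ shares strand $k$ with $s_{k,k+1}$). To handle this, I plan to analyse the composite $e_{k+\half}^{(d)}\, e_k^{(c)}\, e_{k-\half}^{(b)}$ diagrammatically as a set-partition on $r$ strands. The critical point is that the rightmost factor $p_k$ inside $e_k^{(c)}$ severs strand $k$, decoupling the top and bottom portions of the composite along that strand; as a consequence, the swap imposed by multiplying on the right by $s_{k,k+1}$ only affects a localised region at the bottom of the diagram. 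Combined with the severing of strand $k+1$ at the top by the factor $p_{k+1}$ inside $e^{(z)}_{k+1}$ (available whenever $z\geqslant 1$), the residual asymmetry between strands $k$ and $k+1$ is absorbed into the horizontal block $\{k,k+1\}$ supplied by $p_{k+\half}$, which is fixed under $s_{k,k+1}$; this yields the required equality.

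The hardest aspect is to verify the absorption rigorously, which requires careful bookkeeping organised by the values of $z$, $b$, and $w$. The most delicate case is $z=0$ (equivalently $|\zeta|=k+1$), where $p_{k+1}$ is absent from $e^{(z)}_{k+1}$; here the missing symmetrisation must instead be extracted from the explicit form of $m_{\zeta-\varepsilon_w \to \zeta}$ and the permutation $s_{[\zeta]_w,|\zeta|}$, using that the latter brings a transposition involving strand $k+1$ into a position where it can be absorbed by $p_{k+\half}$ via the same relation $p_{k+\half}\, s_{k,k+1}=p_{k+\half}$.
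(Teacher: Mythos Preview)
Your starting point and your key relation $p_{k+\half}\,s_{k,k+1}=p_{k+\half}$ are exactly right, but you have missed the one observation that makes the lemma a three-line computation rather than a delicate diagrammatic case analysis. For the path $\sts=e_k(\stt)$ (and indeed already for $\stt$ itself whenever $u=v>0$) the second components satisfy
\[
c \;=\; k-|\beta+\varepsilon_L| \;=\; (k-1)-|\beta| \;=\; b,
\qquad
d \;=\; k-|\delta| \;=\; k-|\beta| \;=\; b+1,
\]
because $\beta=\delta$ and $|\sts(k)|=|\beta|+1$. Once you use these specific values, the middle block collapses algebraically:
\[
e_{k+\half}^{(d)}\,e_k^{(c)}\,e_{k-\half}^{(b)}
\;=\;
e_{k+\half}^{(b+1)}\,e_k^{(b)}\,e_{k-\half}^{(b)}
\;=\;
e_{k+\half}^{(b+1)},
\]
since $e_{k+\half}^{(b+1)}$ already merges all of the strands $k-b,\dots,k+1$ into a single propagating block, which absorbs the idempotents $e_k^{(b)}$ and $e_{k-\half}^{(b)}$ supported on the sub-range $k-b,\dots,k$. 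The lemma is then immediate from $e_{k+\half}^{(b+1)}\,s_{k,k+1}=e_{k+\half}^{(b+1)}$, with no reference whatsoever to $z$, $w$, $m_{\zeta-\varepsilon_w\to\zeta}$, or $e_{k+1}^{(z)}$.

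By contrast, your plan treats $b,c,d$ as unrelated parameters and therefore has to manufacture a symmetry between strands $k$ and $k+1$ from scratch. This forces you into the ``delicate'' case $z=0$, where your sketch --- extracting a transposition from $s_{[\zeta]_w,|\zeta|}$ and absorbing it into $p_{k+\half}$ --- is too vague to be a proof: when $z=0$ one has $|\zeta|=k+1$, and it is not clear from what you wrote which transposition you intend to isolate or why the remaining factors of $m_{\zeta-\varepsilon_w\to\zeta}$ do not interfere. All of this is avoided by noting $c=b$, $d=b+1$ at the outset.
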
 
 
 \begin{proof}
 As we have seen in \cref{doubledaggerdouble}, 
$$ u_{\sts[k-1,k+1]}
= e^{(z)}_{k+1}
m_{\zeta-\varepsilon_w \to \zeta}   
s_{[\zeta]_w,|\zeta|}e_{k+\half}^{(d)}e_k^{(c)}e_{k-\half}^{(b)}s_{|\alpha|,[\alpha]_t},
$$
with $b=c$ and $d=b+1$.  Now 
 $s_{|\alpha|,[\alpha]_t} \in P_{k-1}(n)$ and so it commutes with $s_{k,k+1}$.  Moreover, we have 
 $$
 e_{k+\half}^{(b+1)}e_k^{(b)}e_{k-\half}^{(b)}
 =
  e_{k+\half}^{(b+1)}
 $$
 and $ e_{k+\half}^{(b+1)}s_{k,k+1}= e_{k+\half}^{(b+1)}$.  Hence  $u_{\sts[k-1,k+1]}s_{k,k+1}=u_{\sts[k-1,k+1]}$ as required.  
 \end{proof}

Applying \cref{alittlelemma} and noting that 
$$
u_{\stt[k+1,r]}u_{\sts[k-1,k+1]}u_{\stt[0,k-1]}= u_\sts
$$
we get 
\begin{align}\label{tripledagger}
u_\stt s_{k,k+1}
&=
u_{\stt[k+1,r]}
 m_{\zeta,u,w} P_k(\stt)s_{k,k+1}
u_{\stt[0,k-1]}+ 
 (1-\delta_{u,0})\delta_{u,v}
 u_{\sts}.
\end{align}
It remains to consider the first term in this sum.  
Note that $P_k(\stt)$ is a single partition diagram and so we should, in theory, be able to describe both this set-partition and  the set-partition $P_k(\stt)s_{k,k+1}$.  This calculation can, however, be much simplified by making the following observation.   Using \cite{EG}, we have 
$$u_{\stt[0,k-1]}=c_{\stt(k-1)}d^\ast_{\stt[0,k-1]}$$
where  $c_{\stt(k-1)  } =  e^{(a)}_{k-1} \sum_{\sigma \in \mathfrak{S}_{\alpha }} \sigma   \in P_{k-1}(n)$.  
So the first term in the sum \cref{tripledagger} can be rewritten as follows, 
 \begin{align} \label{triplestar}
u_{\stt[k+1,r]}
 m_{\zeta,u,w} P_k(\stt)s_{k,k+1}
u_{\stt[0,k-1]}  
&=
u_{\stt[k+1,r]}
 m_{\zeta,u,w} P_k(\stt)s_{k,k+1}
e^{(a)}_{k}  \textstyle \sum_{\sigma \in \mathfrak{S}_{\alpha  }} \sigma d^\ast_{\stt[0,k-1]} \nonumber
\\  
 &=
u_{\stt[k+1,r]}
 m_{\zeta,u,w}\left(  P_k(\stt) 
e^{(a)}_{k} \right)
s_{k,k+1}
 \textstyle \sum_{\sigma \in \mathfrak{S}_{\alpha }} \sigma d^\ast_{\stt[0,k-1]}  \end{align}
Now $  P_k(\stt) 
e^{(a)}_{k} $ is also a single partition diagram and can be described (more simply than $P_k(\stt)$)  as follows.

\begin{defn}
 Let $S=\lbrace S_1,S_2,\ldots, S_j\rbrace$  be a set of pairwise disjoint subsets of $$\big\lbrace 1,\ldots,k+1,\overline{1},\ldots, \overline{k+1}\big\rbrace$$
 such that there is a bijection between the barred and unbarred elements of 
 $$\big\lbrace 1,\ldots,k+1,\overline{1},\ldots, \overline{k+1}\big\rbrace \setminus ( S_1\cup S_2\cup\cdots\cup S_j).$$ 
Write 
 $$\big\lbrace 1,\ldots,k+1,\overline{1},\ldots, \overline{k+1}\big\rbrace \setminus ( S_1\cup S_2\cup\cdots\cup S_j)
 =
 \{i_1 < i_2 \dots < i_\ell\} \cup \{\bar j_1<\bar j_2 < \dots <\bar j_\ell\}.$$ 
   We define $\widehat{S}\in P_{k+1}(n)$ to be the set partition 
 $$\widehat{S}
 =
 S \bigcup_{1\leq m \leq \ell}\{\{i_m,\bar j_m\}\}. 
 $$ 
 In other words, $\widehat{S}$ contains the blocks $S_1, S_2, \dots S_j$ and determined an order preserving bijection between the barred and unbarred elements of 
$ \big\lbrace 1,\ldots,k+1,\overline{1},\ldots, \overline{k+1}\big\rbrace \setminus ( S_1\cup S_2\cup\cdots\cup S_j)$.
  \end{defn}

\begin{eg}\label{maudexample}
Let $k+1=10$ and 
 $$S=\{\{4,9,\bar6\}, \{6,10,\bar4\}, \{\bar9\},\{\bar10\}\},$$ 
then  $$\widehat S=\{\{4,9,\bar6\}, \{6,10,\bar4\}, \{\bar9\},\{\bar10\}, \{1,\bar1\},
 \{2,\bar2\},
  \{3,\bar3\},
   \{5,\bar5\},
    \{7,\bar7\},    \{8,\bar8\}
\}.$$ 
\end{eg}

\begin{prop}\label{thefinal2}
We  have  that $$
P_k(\flux)e_{k-1}^{(a)}= \widehat{S_k(\flux)}$$
where   $S_{k}(\flux)$ is the set of pairwise disjoint subsets
of 
$ \big\lbrace 1,\ldots,k+1,\overline{1},\ldots, \overline{k+1}\big\rbrace$ 
 obtained by omitting all occurrences of $0$ and $\overline 0$ from 
\begin{align*} 
 \left\{\big\lbrace 
\vphantom{\overline{[\alpha]_t}}
[\alpha]_t,k, \overline{[\zeta-\delta_{w,u}\varepsilon_u]_u}
\big\rbrace,
\big\lbrace 
[\alpha-\delta_{t,v}\varepsilon_v]_v,k+1,\overline{[\zeta]_w}
\big\rbrace,
 \big\lbrace 
k-1-i
\big\rbrace _{0\leq i \leq a-1},
 \big\lbrace 
\overline{k+1-j}
\big\rbrace _{0\leq j \leq z-1}\right\}.
 \end{align*}
  
\end{prop}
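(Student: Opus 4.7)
The plan is to prove the identity by direct diagrammatic computation, viewing each factor on the left--hand side as a partition diagram and tracking how the strands compose. Each generator has a simple diagrammatic form: $e_j^{(l)}$ is the identity diagram with the $l$ rightmost through--strands broken into $l$ isolated northern and $l$ isolated southern vertices, while each $s_{i,j}$ is a pure permutation diagram (no singletons). Thus $P_k(\flux) e_{k-1}^{(a)}$ is a product of permutation--type diagrams interlaced with cap/cup--creating idempotents, and the resulting set--partition has (i) certain isolated blocks produced by the $e$--factors, and (ii) through--strands whose endpoints can be read off by following the permutations.

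First I would simplify from the right. Note that $e_{k-1}^{(a)} s_{|\alpha|,[\alpha]_t} = s_{|\alpha|,[\alpha]_t} e_{k-1}^{(a)}$ only when the indices cooperate; but for our purposes it is cleaner to fold $e_{k-1}^{(a)}$ into $e_{k-\half}^{(b)}$ using that $e_{k-\half}^{(b)} e_{k-1}^{(a)}$ isolates the positions $k-a,\ldots,k-1$ on the southern boundary and the positions $k-b,\ldots,k-1$ on the middle intermediate boundary, while the unique through--strand in $e_{k-\half}^{(b)}$ from vertex $k$ survives. Applying $s_{|\alpha|,[\alpha]_t}$ on the right then cyclically permutes southern vertices so that the strand emanating from southern vertex $[\alpha]_t$ is routed to the intermediate vertex $k$. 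At this stage the bottom half of the diagram has precisely one propagating strand with southern endpoint $[\alpha]_t$ meeting the intermediate boundary at $k$, together with $a$ isolated southern vertices $\{k-1-i\}_{0\leq i \leq a-1}$ (the first bracketed set in the definition of $S_k(\flux)$).

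Next I would perform the analogous analysis for the upper half: the factor $e_k^{(c)}$ preserves the intermediate vertex at position $k$ and makes vertex $k+1$ the unique through--strand above it in the middle region, while the combination $s_{|\gamma|,[\gamma]_v} e_k^{(c)} s_{[\gamma]_u,|\gamma|}$ routes the strand entering from below at $k$ up to position $[\gamma]_u$ through the $e_k^{(c)}$ idempotent and then (via $s_{|\gamma|,[\gamma]_v}$) pushes it back to position $|\gamma|$, so that when crossed with $e_{k+\half}^{(d)}$ it emerges in the upper half at vertex $k$ of the second middle boundary; a parallel argument places the second through--strand at $k+1$. Finally $s_{[\zeta]_w,|\zeta|} e_{k+1}^{(z)}$ on top sends vertex $k$ to the northern vertex $\overline{[\zeta-\delta_{w,u}\varepsilon_u]_u}$ and vertex $k+1$ to $\overline{[\zeta]_w}$, while isolating the $z$ rightmost northern vertices $\{\overline{k+1-j}\}_{0\leq j \leq z-1}$. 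The case $u=v>0$ is handled by the same argument with the alternative formula for $P_k(\flux)$, which only shifts the internal intermediate index by one and therefore does not affect the endpoints.

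Putting the two halves together, the only non--trivial blocks of the composed diagram are the two propagating strands $\{[\alpha]_t, k, \overline{[\zeta-\delta_{w,u}\varepsilon_u]_u}\}$ and $\{[\alpha-\delta_{t,v}\varepsilon_v]_v, k+1, \overline{[\zeta]_w}\}$ (where the $\delta_{t,v}\varepsilon_v$ correction comes from tracking the second factor $s_{|\alpha|,[\alpha]_t}$ through $s_{[\gamma]_u,|\gamma|}$ in the coincident case), together with the $a$ isolated southern vertices and $z$ isolated northern vertices listed above. Omitting entries with index $0$ (which correspond to dummy factors $s_{0,\cdot}=1$ and carry no actual vertex), one obtains exactly the set $S_k(\flux)$, and the remaining vertices of $\{1,\ldots,k+1,\overline{1},\ldots,\overline{k+1}\}$ form order--preserving through--strands because all the $s$--cycles have been absorbed into the endpoints of the two distinguished blocks. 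This is precisely $\widehat{S_k(\flux)}$.

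The main obstacle is bookkeeping: several indices shift by $\pm 1$ depending on whether $u = w$, $t = v$, or $u = v$, and one must be careful that a shift arising from an $s$--factor is correctly absorbed by the adjacent $e$--factor. I would handle this by first doing the generic case (all of $t,u,v,w$ distinct and non--zero) diagrammatically, and then verifying each coincidence case separately using the two identities proved in Claims B and C of the previous proposition, which already isolate exactly the combinatorial corrections needed.
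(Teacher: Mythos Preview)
Your approach is essentially the same as the paper's: a direct diagrammatic computation of the product, handled first in the generic case (all of $t,u,v,w$ nonzero and distinct) and then case-by-case for the degenerate coincidences. The paper organizes the bookkeeping more cleanly than your sketch, however, and this difference is worth knowing about. Rather than tracking strands through all seven factors at once, the paper first collapses the three middle-upper factors into a single $\widehat{S}$-diagram,
\[
s_{[\zeta]_w,|\zeta|}\,e_{k+\half}^{(d)}\,s_{|\gamma|,x}=\widehat{S_{k+1}},
\]
with $S_{k+1}$ consisting of a \emph{single} block, and similarly collapses the three middle-lower factors into $\widehat{S_{k-1}}$. The full product then becomes the five-term concatenation
\[
P_k(\stt)\,e_{k-1}^{(a)} \;=\; e_{k+1}^{(z)}\,\widehat{S_{k+1}}\,e_k^{(c)}\,\widehat{S_{k-1}}\,e_{k-1}^{(a)},
\]
and the only nontrivial check reduces to verifying that $\{\overline{x},\,[\alpha-\delta_{t,v}\varepsilon_v]_v\}$ is a block of $\widehat{S_{k-1}}$ and that $\{[\gamma]_u,\,\overline{[\zeta-\delta_{u,w}\varepsilon_u]_u}\}$ is a block of $\widehat{S_{k+1}}$; both follow because the propagating strands of these diagrams realize the natural bijections between the nodes of $\alpha-\varepsilon_t=\gamma-\varepsilon_u$ and of $\zeta-\varepsilon_w=\gamma-\varepsilon_v$ respectively.

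This two-stage factorization absorbs most of the $\pm 1$ index shifts into the description of $S_{k\pm 1}$, so the final concatenation is much easier to read off. Your phrasing ``precisely one propagating strand'' in the bottom half is also imprecise: the $\widehat{S}$-diagrams have many propagating strands (the order-preserving bijection on the complement), with one \emph{distinguished} block; keeping that distinction explicit is exactly what the $\widehat{S}$ notation buys you.
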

 

\begin{eg}\label{maudexample2}
Let $k+1=14$.  Let $\stt$ be any tableau such that 
$$
\stt(12)=(4,2,1^2)  \xrightarrow{\ -1\ }  (3,2,1^2) \xrightarrow{\ +2\ }  (3^2,1^2) \xrightarrow{\ -2\ } (3,2,1^2) \xrightarrow{\ +1\ } (4,2,1^2)=\stt(14).
$$ So that 
$\alpha=\zeta=(4,2,1^2)$,  $\beta=\delta=(3,2,1^2)$, $\gamma=(3^2,1^2)$ (so that $t=w=1$ and $u=v=2$).  
Then $\widehat S_{13}(\stt)=\widehat S$ from \cref{maudexample}.   
\end{eg}

\begin{proof}[Proof of \cref{thefinal2}]
By the definition of $P_k(\stt)$ given  in \cref{4.4}, we have  that 
  $$
P_k(\stt)e_{k-1}^{(a)} = 
e_{k+1}^{(z)}
\left(s_{[\zeta]_w,|\zeta|}e_{k+\frac{1}{2}}^{(d)}
s_{|\gamma|,x }\right)
e_{k }^{(c)}
\left(s_{[\gamma]_u, |\gamma| }e_{k-\frac{1}{2}}^{(b)}
s_{|\alpha|, [\alpha]_t}     \right)e_{k-1}^{(a)} 
$$
where 
$$
x=\begin{cases}
[\gamma]_u -1		&\text{if $u=v>0$ }\\
[\gamma]_v		&\text{otherwise. }
\end{cases}
$$
By concatenating diagrams, it is easy to see that
$$
s_{[\zeta]_w,|\zeta|}e_{k+\half}^{(d)}s_{|\gamma|,x}=\widehat {S_{k+1}}
$$
where 
\begin{equation}\label{eqSk+1}
 {S_{k+1}}
=\left\{
\{k+1,k,\dots, k-d+2, \overline{k+1},  \overline{k}, \dots ,  \overline{k-d+2},  \overline{[\zeta]_w}, x\}
\right\}
\end{equation}
if $v, w >0$.  If $w=0$, $S_{k+1}$ is obtained by replacing $ \overline{[\zeta]_w}$ with $ \overline{k-d+1}$ in \cref{eqSk+1}
 above.
  If $v=0$, $S_{k+1}$ is obtained by replacing $x$ with $ {k-d+1}$ in \cref{eqSk+1} above.
 Similarly, we have 
\begin{equation}\label{eqSk-1}
 s_{[\gamma]_u, |\gamma| }e_{k-\frac{1}{2}}^{(b)}
s_{|\alpha|, [\alpha]_t}      =\widehat{S_{k-1}}
\end{equation}
where
$$
 {S_{k-1}}=\left\{
 \{k,k-1,\dots, k-b+1, \overline{k}, \overline{k-1}, \dots, \overline{k-b+1}, \overline{[\gamma]_u}, [\alpha]_t\}
\right\}
$$
if $u, t >0$.  If $u=0$, then $S_{k-1}$ is obtained by replacing $\overline{[\gamma]_u}$ by $\overline{k-b}$ in 
\cref{eqSk-1} above.
 If $t=0$, then $S_{k-1}$ is obtained by replacing $\overline{[\alpha]_t}$ by ${k-b}$ in 
\cref{eqSk-1} above.
Now we have 
$$
P_k(\stt) e_{k-1}^{(a)}
=
e_{k+1}^{(z)}\widehat{S_{k+1}}
e_{k}^{(c)}\widehat{S_{k-1}}
e_{k-1}^{(a)}, 
$$
 which for $u,v,t,w \neq 0$ can be represented by the  concatenation of diagrams of the form depicted in \cref{BIG-diagram}, below.  
 This diagram is meant to be seen as a generic example of such a concatenation of diagrams; 
 however, it can also be seen to be the diagram obtained from the path $\stt$ in  \cref{maudexample}.


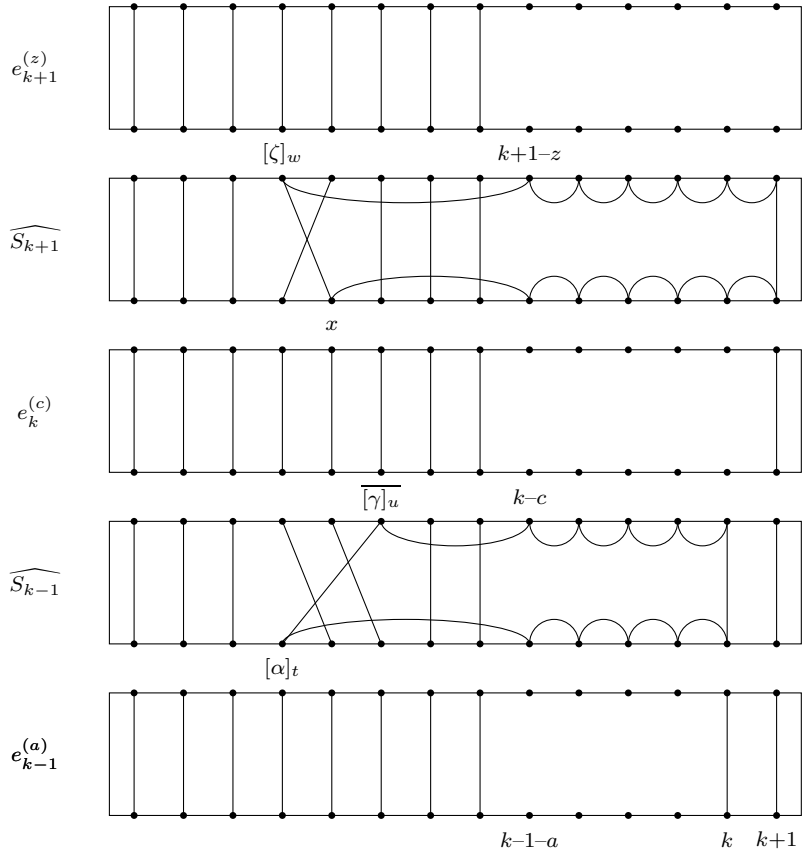
\begin{figure}[ht!]
$$\begin{tikzpicture}[scale=0.65]\scalefont{0.8}
  \begin{scope}  \draw (0,0) rectangle (14,2.5);
  \foreach \x in {0.5,1.5,...,13.5}
    {\fill (\x,2.5) circle (2pt);
     \fill (\x,0) circle (2pt);}
  \foreach \x in {0.5,1.5,...,7.5}{     \draw(\x,0)--(\x,2.5);}
    \foreach \x in {12.5,13.5}{     \draw(\x,0)--(\x,2.5);}
                \draw (-1.5,1.25) node {\scalefont{1}$e_{k-1}^{(a)}$};
           \draw (12.5,-0.49) node {$k$};            \draw (8.5,-0.49) node {$k$--$1$--$a$}; 
             \draw (8.5,6.5) node {$k$--$c$};              \draw (8.5,13.5) node {$k$+$1$--$z$}; 
     \draw (13.5,-0.49) node {$k$+$1$};      \end{scope}
     \draw (3.5,3) node {$[\alpha]_t$}; 
  \begin{scope}  \draw (0,3.5) rectangle (14,6);
  \foreach \x in {0.5,1.5,...,13.5}
    {\fill (\x,6) circle (2pt);
     \fill (\x,3.5) circle (2pt);
 }
     \draw (0.5,6) -- (0.5,3.5);
        \draw (1.5,6) -- (1.5,3.5);
    \draw (2.5,6) -- (2.5,3.5);    \draw (7.5,6) -- (7.5,3.5);
        \draw (6.5,6) -- (6.5,3.5);\draw (13.5,6) -- (13.5,3.5);
        \draw (12.5,6) -- (12.5,3.5);
    \draw (4.5,3.5) -- (3.5,6);    \draw (5.5,3.5) -- (4.5,6);
  \draw(5.5,6)--(3.5,3.5); 
    \draw (3.5,3.5) arc (0:180:-2.5 and 0.5);    
        \draw (8.5,3.5) arc (0:180:-0.5 and 0.5);    
        \draw (9.5,3.5) arc (0:180:-0.5 and 0.5);    
        \draw (10.5,3.5) arc (0:180:-0.5 and 0.5); 
        \draw (11.5,3.5) arc (0:180:-0.5 and 0.5);           
 \draw (5.5,6) arc (0:180:-1.5 and -0.5);    
  \draw (8.5,6) arc (0:180:-0.5 and -0.5);    
  \draw (11.5,6) arc (0:180:-0.5 and -0.5);   \draw (10.5,6) arc (0:180:-0.5 and -0.5);   \draw (9.5,6) arc (0:180:-0.5 and -0.5);   
                  \draw (-1.5,4.75) node {\scalefont{1}$\widehat{S_{k-1}}$};   \end{scope}
 \begin{scope}  \draw (0,7) rectangle (14,9.5);
  \foreach \x in {0.5,1.5,...,13.5}
    {\fill (\x,9.5) circle (2pt);
     \fill (\x,7) circle (2pt);}
  \foreach \x in {0.5,1.5,...,7.5}{     \draw(\x,7)--(\x,9.5);}
    \foreach \x in {13.5}{     \draw(\x,7)--(\x,9.5);}
                \draw (-1.5,1.25) node {\scalefont{1}$e_{k-1}^{(a)}$};
                   \draw (-1.5,8.25) node {\scalefont{1}$e_{k}^{(c)}$};
     \end{scope}
     \draw (5.5,6.5) node {$\overline{[\gamma]_u}$}; 
      \begin{scope}  \draw (0,13) rectangle (14,10.5);
  \foreach \x in {0.5,1.5,...,13.5}
    {\fill (\x,10.5) circle (2pt);
     \fill (\x,13) circle (2pt);
 }
     \draw (0.5,10.5) -- (0.5,13);
        \draw (1.5,10.5) -- (1.5,13);
    \draw (2.5,10.5) -- (2.5,13);    \draw (7.5,10.5) -- (7.5,13);
        \draw (6.5,10.5) -- (6.5,13);\draw (13.5,10.5) -- (13.5,13);
    \draw (5.5,13) -- (5.5,10.5);  
  \draw(3.5,10.5)--(4.5,13);   \draw(4.5,10.5)--(3.5,13); 
    \draw (3.5,13) arc (0:180:-2.5 and -0.5);    
        \draw (8.5,13) arc (0:180:-0.5 and -0.5);    
        \draw (9.5,13) arc (0:180:-0.5 and -0.5);    
        \draw (10.5,13) arc (0:180:-0.5 and -0.5); 
        \draw (11.5,13) arc (0:180:-0.5 and -0.5);           
 \draw (4.5,10.5) arc (0:180:-2 and 0.5);    
  \draw (8.5,10.5) arc (0:180:-0.5 and 0.5);    
  \draw (11.5,10.5) arc (0:180:-0.5 and 0.5);   \draw (10.5,10.5) arc (0:180:-0.5 and 0.5);   \draw (9.5,10.5) arc (0:180:-0.5 and 0.5);   
\draw (12.5,13) arc (0:180:-0.5 and -0.5);  \draw (12.5,10.5) arc (0:180:-0.5 and 0.5);                    \draw (-1.5,11.75) node {\scalefont{1}$\widehat{S_{k+1}}$}; 
\draw (4.5,10) node {$x$};
\draw (3.5,13.5) node {$[\zeta]_w$};
  \end{scope}
 \begin{scope}  \draw (0,14) rectangle (14,16.5);
  \foreach \x in {0.5,1.5,...,13.5}
    {\fill (\x,16.5) circle (2pt);
     \fill (\x,14) circle (2pt);}
  \foreach \x in {0.5,1.5,...,7.5}{     \draw(\x,14)--(\x,16.5);}
                \draw (-1.5,15.25) node {\scalefont{1}$e_{k+1}^{(z)}$};
      \end{scope}
 \end{tikzpicture}
$$

\!\!\!\!\!\caption{An example of the product $P_k(\stt) e_{k-1}^{(a)}
=
e_{k+1}^{(z)}\widehat{S_{k+1}}
e_{k}^{(c)}\widehat{S_{k-1}}
e_{k-1}^{(a)}.
$}
\label{BIG-diagram}
\end{figure}

For $u,v,t,w \neq 0$  the result would follow if we can show that 
\begin{itemize}
\item[$(1)$] $\{\overline{x}, [\alpha-\delta_{t,v}\varepsilon_v]_v\}$ is a block of $\widehat{S_{k-1}}$;
\item[$(2)$] $\{[\gamma]_u, \overline{[\zeta-\delta_{u,w}\varepsilon_u]_u}\}$ is a block of $\widehat{S_{k+1}}$.  
\end{itemize}
To prove $(1)$, note that $\alpha-\varepsilon_t=\gamma-\varepsilon_u$  and the propagating lines in $\widehat{S_{k-1}}$  give a bijection between the nodes of these two partitions 
(reading along successive rows starting with the top row).  
So for $v\neq u$ we have that $\{\overline{[\gamma]_v}, [\alpha]_v\}$ is a block of $\widehat{S_{k-1}}$ unless $v=t$, in which case 
$\{\overline{[\gamma]_v}, [\alpha]_v-1\}$ is a block of $\widehat{S_{k-1}}$.  
Similarly, $\{\overline{[\gamma]_u-1},[\alpha]_u\}$ is a a block of $\widehat{S_{k-1}}$ unless $u=t$, 
in which case $[\gamma]_u=[\alpha]_u$ and $\{\overline{[\gamma]_u-1}, [\alpha]_u-1\}$ is a block of $\widehat{S_{k-1}}$.  

The proof of  $(2)$ follows similarly by noting that $\zeta-\varepsilon_w=\gamma-\varepsilon_v$ and that the propagating
 lines in $\widehat{S_{k+1}}$ give a bijection between the nodes of these partitions. 
  So we have that 
$ \{[\gamma]_u,\overline{[\zeta]_u}\} $ is a block of $\widehat{S_{k+1}}$ unless $u=w$, in which case 
$ \{[\gamma]_u,\overline{[\zeta]_u-1}\} $
is a block of 
$\widehat{S_{k+1}}$. 
 For $u=v$, note that $\{[\gamma]_u,\overline{[\gamma]_u}\}$ is a block of $\widehat{S _{k+1}}$ unless $[\gamma]_u \leq [\zeta]_w$, in which case 
$\{[\gamma]_u,\overline{[\gamma]_u-1}\}$ is a block of $\widehat{S _{k+1}}$.     
If $w<u$ then $[\zeta]_w = [\gamma]_w-1<[\gamma]_u$ and $[\gamma]_u=[\zeta]_u$ 
so $\{[\gamma]_u,\overline{[\zeta]_u}\}$ is a block of $\widehat{S _{k+1}}$, as required.  
If $u<w$ then $[\gamma]_u-1=[\zeta]_u<[\zeta]_w$ so $[\gamma]_u\leq [\zeta]_w$ and 
$\{[\gamma]_u,\overline{[\zeta]_u}\}$ is a block of $\widehat{S _{k+1}}$, as required.  
Finally, if $u=w$ then $\gamma=\zeta$ and $[\gamma]_u=[\zeta]_u=[\zeta]_w$ and 
$\{[\gamma]_u,\overline{[\zeta]_u-1}\}$ is a block of $\widehat{S _{k+1}}$, as required.  
This completes the proof for $t,u,v,w \neq 0$. 

We now consider the cases in which some of  $t,u,v,w $ are equal to zero.  We treat these as degenerate versions  of the above.

Let $w=0$.   This is the simplest degenerate case to describe, however the other cases only differ by superficial book-keeping.  
  If $w=0$, then $z=d+1$ and $\gamma-\varepsilon_v=\zeta$.  We replace the top two diagrams in \cref{BIG-diagram} by the two diagrams in \cref{BIG-diagram1} (which 
  establish the bijection between the nodes of  $\gamma-\varepsilon_v$ and $\zeta$).  
 The values of $a,b,c,d$, $[\alpha]_t$, $[\gamma]_u$ and $x$   go through unchanged.  
    Thus the block containing $k+1$ in $\widehat{S_{k}{(\stt)}}$ collapses to 
  $\{[\alpha-\delta_{t,v}\varepsilon_v]_v, k+1\}$ as required.

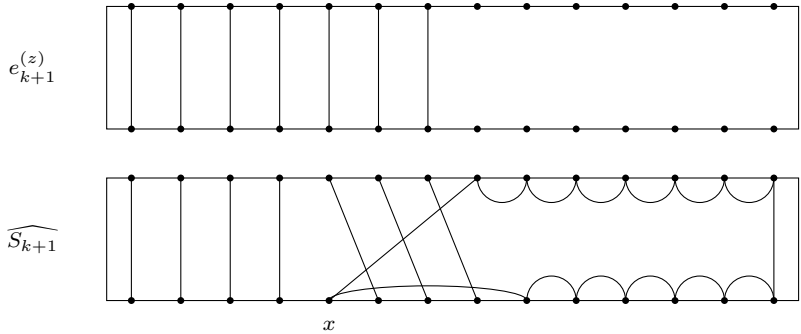
\begin{figure}[ht!]
  $$\begin{tikzpicture}[scale=0.65]\scalefont{0.8}
      \begin{scope}  \draw (0,13) rectangle (14,10.5);
  \foreach \x in {0.5,1.5,...,13.5}
    {\fill (\x,10.5) circle (2pt);
     \fill (\x,13) circle (2pt);
 }
     \draw (0.5,10.5) -- (0.5,13);
        \draw (1.5,10.5) -- (1.5,13);
    \draw (2.5,10.5) -- (2.5,13);    \draw (7.5,10.5) -- (6.5,13);
        \draw (6.5,10.5) -- (5.5,13);\draw (13.5,10.5) -- (13.5,13);
    \draw (4.5,13) -- (5.5,10.5);  
  \draw(3.5,10.5)--(3.5,13);    \draw(4.5,10.5)--(7.5,13); 
\draw (7.5,13) arc (0:180:-0.5 and -0.5);        \draw (8.5,13) arc (0:180:-0.5 and -0.5);    
        \draw (9.5,13) arc (0:180:-0.5 and -0.5);    
        \draw (10.5,13) arc (0:180:-0.5 and -0.5); 
        \draw (11.5,13) arc (0:180:-0.5 and -0.5);           
 \draw (4.5,10.5) arc (0:180:-2 and 0.3);    
  \draw (8.5,10.5) arc (0:180:-0.5 and 0.5);    
  \draw (11.5,10.5) arc (0:180:-0.5 and 0.5);   \draw (10.5,10.5) arc (0:180:-0.5 and 0.5);   \draw (9.5,10.5) arc (0:180:-0.5 and 0.5);   
\draw (12.5,13) arc (0:180:-0.5 and -0.5);  \draw (12.5,10.5) arc (0:180:-0.5 and 0.5);                    \draw (-1.5,11.75) node {\scalefont{1}$\widehat{S_{k+1}}$}; 
\draw (4.5,10) node {$x$};
  \end{scope}
 \begin{scope}  \draw (0,14) rectangle (14,16.5);
  \foreach \x in {0.5,1.5,...,13.5}
    {\fill (\x,16.5) circle (2pt);
     \fill (\x,14) circle (2pt);}
  \foreach \x in {0.5,1.5,...,6.5}{     \draw(\x,14)--(\x,16.5);}
                \draw (-1.5,15.25) node {\scalefont{1}$e_{k+1}^{(z)}$};
      \end{scope}
 \end{tikzpicture}
$$

\!\!\!\!\!  

\caption{The $w=0$ case.}
\label{BIG-diagram1}
\end{figure}

If $v=0$, then $c=d$ and $\gamma=\zeta-\varepsilon_w$.  
We replace $\widehat{S_{k+1}}e_k^{(c)}$ in \cref{BIG-diagram} by the two diagrams in \cref{BIG-diagram3.5} (which 
  establish the bijection between $\gamma $ and $\zeta-\varepsilon_w$).  
  The values of $a,b,c$, $[\alpha]_t$, and $[\gamma]_u$  go through unchanged and so the bottom two diagrams of  \cref{BIG-diagram} go through unchanged.  
 Therefore the block containing $k+1$ collapses to 
$\{k+1,\overline{[\zeta]_w}\}$ as required.  
The value of   $[\zeta]_w$ will either decrease by 1 (if   $ w \geq v$)
  or go through unchanged (if   $ w < t$ as in the case depicted in \cref{BIG-diagram3.5}).   
This results in the necessary superficial edits  to the propagating lines in $ \widehat{S_{k+1}} $ in order to obtain  the required 
   bijection between the nodes of   $\gamma $ and $\zeta-\varepsilon_w$; hence all the blocks of $\widehat{S_k(\stt)}$ which do not contain
    $k+1$ remain unchanged.

%
%

\begin{figure}[ht!]
 $$\begin{tikzpicture}[scale=0.65]\scalefont{0.8}
  \begin{scope}  \draw (0,7) rectangle (14,9.5);
  \foreach \x in {0.5,1.5,...,13.5}
    {\fill (\x,9.5) circle (2pt);
     \fill (\x,7) circle (2pt);}
  \foreach \x in {0.5,1.5,...,6.5}{     \draw(\x,7)--(\x,9.5);}
    \foreach \x in {13.5}{     \draw(\x,7)--(\x,9.5);}
      \end{scope}
        \begin{scope}  \draw (0,13) rectangle (14,10.5);
  \foreach \x in {0.5,1.5,...,13.5}
    {\fill (\x,10.5) circle (2pt);
     \fill (\x,13) circle (2pt);
 }
     \draw (0.5,10.5) -- (0.5,13);
        \draw (1.5,10.5) -- (1.5,13);
    \draw (2.5,10.5) -- (2.5,13);             \draw (3.5,10.5) -- (4.5,13);        \draw (6.5,10.5) -- (7.5,13);
        \draw (5.5,10.5) -- (6.5,13);\draw (13.5,10.5) -- (13.5,13);
    \draw (5.5,13) -- (4.5,10.5);  
  \draw(7.5,10.5)--(3.5,13);   
    \draw (3.5,13) arc (0:180:-2.5 and -0.3);    
        \draw (8.5,13) arc (0:180:-0.5 and -0.5);    
        \draw (9.5,13) arc (0:180:-0.5 and -0.5);    
        \draw (10.5,13) arc (0:180:-0.5 and -0.5); 
        \draw (11.5,13) arc (0:180:-0.5 and -0.5);           
  \draw (8.5,10.5) arc (0:180:-0.5 and 0.5);      \draw (7.5,10.5) arc (0:180:-0.5 and 0.5);    
  \draw (11.5,10.5) arc (0:180:-0.5 and 0.5);   \draw (10.5,10.5) arc (0:180:-0.5 and 0.5);   \draw (9.5,10.5) arc (0:180:-0.5 and 0.5);   
   \end{scope} \draw (-1.5,8.25) node {\scalefont{1}$e_{k}^{(c)}$};       \draw (-1.5,11.75) node {\scalefont{1}$\widehat{S_{k+1}}$}; 
\end{tikzpicture}
$$

\!\!\!\!\!

\caption{The $v=0$ case.}
\label{BIG-diagram3.5}
\end{figure}
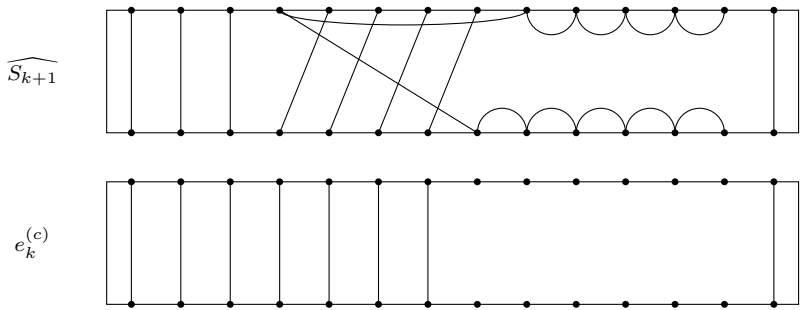

Similarly if $t=0$, then $a=b$ and $\alpha= \gamma-\varepsilon_u$.   
We replace the bottom two diagrams in \cref{BIG-diagram} by the two diagrams in \cref{BIG-diagram2} which 
  establish the bijection between $\alpha$ and $\gamma-\varepsilon_u$.   
         Thus the block containing $k$ in $\widehat{S_{k}{(\stt)}}$ collapses to 
  $\{k,[\zeta-\delta_{w,u}\varepsilon_u]_u\}$ as required.  
 As above, one can verify that all other blocks of $S_k(\stt)$ remain the same, as required.  
%

\begin{figure}[ht!]
$$\begin{tikzpicture}[scale=0.65]\scalefont{0.8}
  \begin{scope}  \draw (0,0) rectangle (14,2.5);
  \foreach \x in {0.5,1.5,...,13.5}
    {\fill (\x,2.5) circle (2pt);
     \fill (\x,0) circle (2pt);}
  \foreach \x in {0.5,1.5,...,7.5}{     \draw(\x,0)--(\x,2.5);}
    \foreach \x in {12.5,13.5}{     \draw(\x,0)--(\x,2.5);}
                \draw (-1.5,1.25) node {\scalefont{1}$e_{k-1}^{(a)}$};
             \draw (12.5,-0.49) node {$k$}; 
     \draw (13.5,-0.49) node {$k$+$1$};      \end{scope}
   \begin{scope}  \draw (0,3.5) rectangle (14,6);
  \foreach \x in {0.5,1.5,...,13.5}
    {\fill (\x,6) circle (2pt);
     \fill (\x,3.5) circle (2pt);
 }
     \draw (0.5,6) -- (0.5,3.5);
        \draw (1.5,6) -- (1.5,3.5);    \draw (3.5,6) -- (3.5,3.5);     \draw (4.5,6) -- (4.5,3.5);  \draw (5.5,6) -- (5.5,3.5); 
    \draw (2.5,6) -- (2.5,3.5);    \draw (8.5,6) -- (7.5,3.5);
        \draw (7.5,6) -- (6.5,3.5);\draw (13.5,6) -- (13.5,3.5);
        \draw (12.5,6) -- (12.5,3.5);
   \draw(6.5,6)--(8.5,3.5); 
        \draw (8.5,3.5) arc (0:180:-0.5 and 0.5);    
        \draw (9.5,3.5) arc (0:180:-0.5 and 0.5);    
        \draw (10.5,3.5) arc (0:180:-0.5 and 0.5); 
        \draw (11.5,3.5) arc (0:180:-0.5 and 0.5);           
 \draw (6.5,6) arc (0:180:-1.5 and -0.3);    
  \draw (11.5,6) arc (0:180:-0.5 and -0.5);   \draw (10.5,6) arc (0:180:-0.5 and -0.5);   \draw (9.5,6) arc (0:180:-0.5 and -0.5);   
                  \draw (-1.5,4.75) node {\scalefont{1}$\widehat{S_{k-1}}$};   \end{scope}
\end{tikzpicture}
$$

\!\!\!\!\!

\caption{The $t=0$ case.}
\label{BIG-diagram2}
\end{figure}
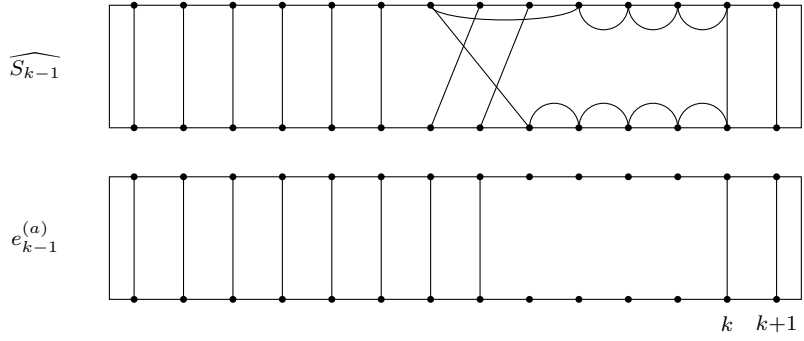
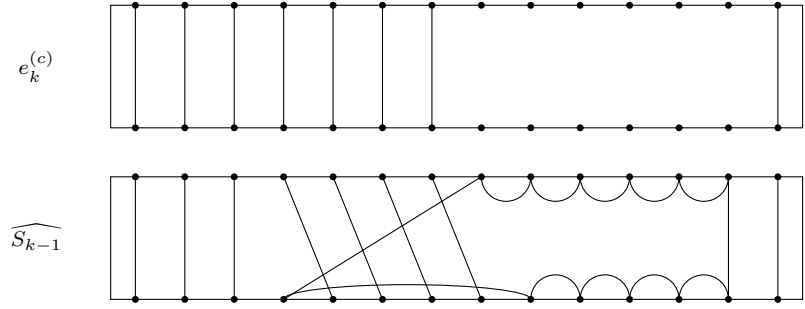
\begin{figure}[ht!]
  $$\begin{tikzpicture}[scale=0.65]\scalefont{0.8}
  \draw (-1.5,8.25) node {\scalefont{1}$e_{k}^{(c)}$};                    \draw (-1.5,4.75) node {\scalefont{1}$\widehat{S_{k-1}}$};   \begin{scope}  \draw (0,3.5) rectangle (14,6);
  \foreach \x in {0.5,1.5,...,13.5}
    {\fill (\x,6) circle (2pt);
     \fill (\x,3.5) circle (2pt);
 }
     \draw (0.5,6) -- (0.5,3.5);
        \draw (1.5,6) -- (1.5,3.5);
    \draw (2.5,6) -- (2.5,3.5);    \draw (6.5,6) -- (7.5,3.5);
        \draw (5.5,6) -- (6.5,3.5);\draw (13.5,6) -- (13.5,3.5);
        \draw (12.5,6) -- (12.5,3.5);        \draw (7.5,6) -- (3.5,3.5);
    \draw (4.5,3.5) -- (3.5,6);    \draw (5.5,3.5) -- (4.5,6);
     \draw (3.5,3.5) arc (0:180:-2.5 and 0.3);    
        \draw (8.5,3.5) arc (0:180:-0.5 and 0.5);    
        \draw (9.5,3.5) arc (0:180:-0.5 and 0.5);    
        \draw (10.5,3.5) arc (0:180:-0.5 and 0.5); 
        \draw (11.5,3.5) arc (0:180:-0.5 and 0.5);           
   \draw (8.5,6) arc (0:180:-0.5 and -0.5);       \draw (7.5,6) arc (0:180:-0.5 and -0.5);  
  \draw (11.5,6) arc (0:180:-0.5 and -0.5);   \draw (10.5,6) arc (0:180:-0.5 and -0.5);   \draw (9.5,6) arc (0:180:-0.5 and -0.5);   
  \end{scope}
 \begin{scope}  \draw (0,7) rectangle (14,9.5);
  \foreach \x in {0.5,1.5,...,13.5}
    {\fill (\x,9.5) circle (2pt);
     \fill (\x,7) circle (2pt);}
  \foreach \x in {0.5,1.5,...,6.5}{     \draw(\x,7)--(\x,9.5);}
    \foreach \x in {13.5}{     \draw(\x,7)--(\x,9.5);}
     \end{scope}
 \end{tikzpicture} $$

\!\!\!\!\!\caption{The $u=0$  case.}
\label{lgshdfhjkls}
\end{figure}
Finally, if $u=0$ then $c=b+1$ and we have $e_k^{(c)}\widehat{S_{k-1}}$ is given  
by the leftmost diagram in \cref{lgshdfhjkls}.  Arguing as above, the block of $S_k(\stt)$ containing $k$ collapses to 
$\{[\alpha]_t,k\}$ and all other blocks in $S_k(\stt)$  remain the same, as required.   
 \end{proof}

\begin{prop}\label{proposition8}
Assume that $  \stt_{k \leftrightarrow k+1}$ exists.  Then we have 
$$
m_{\zeta,u,w}P_k(\stt) e_{k-1}^{(a)}s_{k,k+1}\sum_{\sigma \in \mathfrak{S}_\alpha }\sigma
=
m_{\zeta,u,w}P_k(\stt_{k \leftrightarrow k+1}) e_{k-1}^{(a)} \sum_{\sigma \in \mathfrak{S}_\alpha }\sigma.  
$$
\end{prop}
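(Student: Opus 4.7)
The plan is to reduce the identity to a manipulation of set-partition diagrams via \cref{thefinal2}, and then argue that the symmetrising factors $\sum_{\sigma\in\mathfrak{S}_\alpha}\sigma$ on the right and $m_{\zeta,u,w}$ on the left equalise the two sides despite superficial differences between the individual diagrams they act upon.

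First I will apply \cref{thefinal2} to rewrite $P_k(\stt)e_{k-1}^{(a)}=\widehat{S_k(\stt)}$ and $P_k(\stt_{k\leftrightarrow k+1})e_{k-1}^{(a)}=\widehat{S_k(\stt_{k\leftrightarrow k+1})}$. Right-multiplication by the permutation diagram $s_{k,k+1}$ transposes the unbarred labels $k$ and $k+1$ and fixes everything else, so the singleton blocks $\{k-1-i\}$ and $\{\overline{k+1-j}\}$ are unaffected, while the two essential blocks of $S_k(\stt)$ swap their occurrences of $k$ and $k+1$. For $\stt_{k\leftrightarrow k+1}$, the sequence of integral steps is $(-v,+w,-t,+u)$, so applying \cref{thefinal2} with $(t',u',v',w')=(v,w,t,u)$ and unchanged endpoints $\alpha,\zeta$ shows that both diagrams $\widehat{S_k(\stt)s_{k,k+1}}$ and $\widehat{S_k(\stt_{k\leftrightarrow k+1})}$ have identical singleton blocks and essential blocks of the same shape: one essential block joins $k+1$ to a specific position in row $t$ of $\alpha$ and a specific position in row $u$ of $\zeta$, and the other joins $k$ to positions in row $v$ of $\alpha$ and row $w$ of $\zeta$. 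The two diagrams differ only in the exact positions occupied by these labelled endpoints within their respective rows.

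Next I verify that these discrepancies vanish after the symmetrising factors act. The right factor $\sum_{\sigma\in\mathfrak{S}_\alpha}\sigma$ averages each unbarred label over all positions in its row of $\alpha$: when $t\neq v$ the endpoints in rows $t$ and $v$ get averaged independently and agreement is immediate, whereas when $t=v$ the two diagrams are related by an interchange of the ordered pair of endpoints in row $t$ and the sum is invariant under this interchange because for every ordered pair $(q_1,q_2)$ of distinct positions in row $t$ the multiplicity equals that of $(q_2,q_1)$. For the barred side, if $u\neq w$ then $m_{\zeta,u,w}=m_{\zeta-\varepsilon_w\to\zeta}m_{\zeta-\varepsilon_u\to\zeta}$ and the Kronecker delta factors in \cref{thefinal2} vanish, so both diagrams already have the same barred endpoints $\overline{[\zeta]_u}$ and $\overline{[\zeta]_w}$ and equality is immediate.

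The main obstacle is the case $u=w$: here one diagram has the $(k+1)$-block ending at $\overline{[\zeta]_u-1}$ and the $k$-block ending at $\overline{[\zeta]_u}$, while the other has these two barred positions swapped between the blocks. One must show by explicit calculation, using the cycle decomposition $s_{l,k}=(l,l+1,\dots,k)$, that the left action of $m_{\zeta,u,u}=m_{\zeta-\varepsilon_u\to\zeta}m_{\zeta-2\varepsilon_u\to\zeta-\varepsilon_u}$ produces a sum over all ordered pairs $(p,q)$ of distinct positions in row $u$ of $\zeta$, with one block ending at $\overline{p}$ and the other at $\overline{q}$ and each pair appearing with the same multiplicity. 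This follows from a bijection between the $\zeta_u(\zeta_u-1)$ terms in the product $m_{\zeta,u,u}$ and the set of such ordered pairs, proved by a case split on whether the index from the outer sum is zero and on whether the starting position from the inner sum is moved by the outer cycle. The resulting sum is symmetric in the swap $(p,q)\leftrightarrow(q,p)$, which is precisely the distinction between the two diagrams, so equality follows. Finally, the degenerate situations where some of $t,u,v,w$ equal zero are handled by parallel but simpler arguments with bookkeeping modifications that match those in the proof of \cref{thefinal2}.
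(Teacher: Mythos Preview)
Your overall strategy matches the paper's: apply \cref{thefinal2}, compute $\widehat{S_k(\stt)}s_{k,k+1}$, compare with $\widehat{S_k(\stt_{k\leftrightarrow k+1})}$, and absorb the discrepancy into the symmetrisers $m_{\zeta,u,w}$ on the left and $\sum_{\sigma\in\mathfrak{S}_\alpha}\sigma$ on the right. The treatment of the unbarred side and of the case $u\neq w$ is the same.

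The only substantive difference is in the $u=w$ case. The paper does not argue that $m_{\zeta,u,u}$ produces a full symmetric sum over all ordered pairs of positions in row~$u$; instead it exhibits the precise transposition relating the two diagrams, namely $s_{[\zeta-\varepsilon_w]_w,[\zeta]_w}$, and then factorises
\[
m_{\zeta,w,w}=X\,(1+s_{[\zeta-\varepsilon_w]_w,[\zeta]_w})
\]
for an explicit element $X$, whence $m_{\zeta,w,w}\,s_{[\zeta-\varepsilon_w]_w,[\zeta]_w}=m_{\zeta,w,w}$ immediately. This is quicker than your proposed bijection over $\zeta_u(\zeta_u-1)$ terms with its case split, and it sidesteps the need to verify the stronger ``all ordered pairs'' claim (which is more than the proposition requires). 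Your argument would go through, but the paper's factorisation is the cleaner route and I would recommend adopting it.
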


\begin{proof}
Using \cref{thefinal2} and the fact that $s_{k,k+1}$ swaps $k$ and $k+1$, we have 
$$
P_k(\stt) e_{k-1}^{(a)}s_{k,k+1}= \widehat{S_k(\stt)}s_{k,k+1}= \widehat{S'_{k}(\stt)}
$$
where $ {S'_{k}(\stt)}$ is 
obtained by omitting all occurrences of $0$ and $\overline 0$ from 
\begin{align*} 
 \left\{\big\lbrace 
\vphantom{\overline{[\alpha]_t}}
[\alpha]_t,k+1, \overline{[\zeta-\delta_{w,u}\varepsilon_u]_u}
\big\rbrace,
\big\lbrace 
[\alpha-\delta_{t,v}\varepsilon_v]_v,k,\overline{[\zeta]_w}
\big\rbrace,
 \big\lbrace 
k-1-i
\big\rbrace _{0\leq i \leq a-1},
 \big\lbrace 
\overline{k+1-j}
\big\rbrace _{0\leq j \leq z-1}\right\}.
 \end{align*}
Now, we observe (simply by   definition) that 
 $ {S_k(\stt_{k\leftrightarrow k+1})}$ is obtained by  omitting all occurrences of $0$ and $\overline 0$ from 
\begin{align*} 
 \left\{\big\lbrace 
\vphantom{\overline{[\alpha]_v}}
[\alpha]_v,k+1, \overline{[\zeta-\delta_{w,u}\varepsilon_w]_w}
\big\rbrace,
\big\lbrace 
[\alpha-\delta_{t,v}\varepsilon_t]_t,k,\overline{[\zeta]_u}
\big\rbrace,
 \big\lbrace 
k-1-i
\big\rbrace _{0\leq i \leq a-1},
 \big\lbrace 
\overline{k+1-j}
\big\rbrace _{0\leq j \leq z-1}\right\}.
 \end{align*}
 So we get
 $$
\widehat{ S_k'(\stt)}
=
s_{[\zeta-\delta_{u,w}\varepsilon_w]_w, [\zeta]_w	}
\widehat{ S_k(\stt_{k\leftrightarrow k+1})} 
s_{[\alpha-\delta_{t,v}\varepsilon_t]_t,[\alpha]_t}.$$
If     $t\neq v$, then $s_{[\alpha-\delta_{t,v}\varepsilon_t]_t,[\alpha]_t}=1$ and if 
$t=v$, then 
$$
s_{[\alpha-\varepsilon_t]_t,[\alpha]_t}  \sum_{\sigma \in \mathfrak{S}_\alpha }\sigma = \sum_{\sigma \in \mathfrak{S}_\alpha }\sigma,  
$$ as required.  
 If $u\neq w$ then 
$s_{[\zeta-\varepsilon_w]_w, [\zeta]_w	}=1$.     Finally, if $u=w$, then 
$$
m_{\zeta,w,w} 
=
\sum_{1\leq j < i \leq  [\zeta]_w}
s_{[\zeta]_w-j, [\zeta]_w}
 s_{ [\zeta-\varepsilon_w]_w-i, [\zeta-\varepsilon_w]_w}
\left(1+s_{[\zeta-\varepsilon_w]_w, [\zeta]_w	}\right).
$$
Clearly, we have that  $$\left(1+s_{[\zeta-\varepsilon_w]_w, [\zeta]_w	}\right)
 s_{[\zeta-\varepsilon_w]_w, [\zeta]_w	}= \left(1+s_{[\zeta-\varepsilon_w]_w, [\zeta]_w	}\right)$$  and therefore  $(m_{\zeta,w,w} )s_{[\zeta-\varepsilon_w]_w, [\zeta]_w	}   =  m_{\zeta,w,w} $.  
The result follows.  
 \end{proof}

\noindent Finally, we let $\stt':=\stt_{k\leftrightarrow k+1}$ and $\sts'=e_k(\stt')$.    
Combining \cref{triplestar,tripledagger}  and \cref{proposition8}, we get 
\begin{align*}
u_\stt s_{k,k+1}
&=u_{\stt[k+1,r]}m_{\zeta,u,w} P_k(\stt') e_{k-1}^{(a)}
 (\textstyle   \sum_{\sigma \in \mathfrak{S}_\alpha }\sigma) d_{\stt[0,k-1]}^\ast  + (1-\delta_{u,0})\delta_{u,v}  u_\sts \\
 &=  u_{\stt[k+1,r]}m_{\zeta,u,w} P_k(\stt')u_{\stt[0,k-1]} + (1-\delta_{u,0})\delta_{u,v} u_\sts \\
 &=u_{\stt[k+1,r]}
 \left(
 u_{\stt'_{[k-1,k+1]}} - (1- \delta_{w,0})\delta_{w,t}u_{\sts'[k-1,k+1]}\right)
 u_{\stt[0,k-1]}
 + (1-\delta_{u,0})\delta_{u,v})u_\sts \\
 &= u_{\stt'} +
 (1- \delta_{u,0})\delta_{u,v} u_\sts 
 - 
 (1- \delta_{w,0})\delta_{w,t}u_{\sts'}
\end{align*}
which completes the proof of \cref{MURPHYPrN}.

\section{ Skew cell modules for co-Pieri triples   }\label{QUOTIENT!}

We continue with the in-depth partition algebra calculation necessary for our main proofs in Sections 5 and 6.  
 As before, we identify $P_s(n) $ as a subalgebra of $P_r(n)$ via the embedding $P_s(n) \cong \CC \otimes P_s(n)\subseteq P_{r-s}(n) \otimes P_s(n) \subseteq P_r(n)$, that is we view each partition diagram in $P_s(n)$ as a set-partition of $\{r-s+1,\dots, r,\overline{r-s+1},\dots, \overline{r}\} $.   We also assume throughout this section that $n\gg r$.
 We have seen in \cref{sec2} that 
 $$
 g(\lambda_{[n]},\nu_{[n]}, \mu_{[n]}) = \overline{ g}( \lambda, \nu,\mu)
 =
 \dim_\CC(\Hom_{P_s(n)}(\Delta_s(\mu), \Delta_s(\nu\setminus\lambda)))
 $$
for any triple of partitions $(\lambda,\nu,\mu) \in \mathscr{P}_{r-s}\times \mathscr{P}_{\leq r}\times 
\mathscr{P}_{s}$.  Now, as $|\mu|=s$ we have that the ideal $P_s(n)p_r P_s(n)\subset P_s(n)$  annihilates $\Delta_s(\mu)$ and so 
$$
 \overline{ g}( \lambda, \nu,\mu)
 =
 \dim_\CC(\Hom_{P_s(n)}(\Delta_s(\mu), \Delta_s(\nu\setminus\lambda)
 / 
( \Delta_s(\nu\setminus\lambda)P_s(n)p_rP_s(n) ) 
)).
 $$
 
 \begin{defn} We define the {\sf Dvir radical} of the  skew module $\Delta_s(\nu\setminus\lambda)$ by 
$$  {\sf DR}_s(\nu\setminus\lambda)  =  \Delta_s(\nu\setminus\lambda)P_s(n)p_rP_s(n) 
\subseteq \Delta_s(\nu\setminus\lambda) $$
and set $$\Delta^0_s(\nu\setminus\lambda)= 
 \Delta_s(\nu\setminus\lambda) /{\sf DR}_s(\nu\setminus\lambda). $$
 
 \end{defn}

\noindent By definition, we have that  
\begin{equation}\label{depthlayer2}
\overline{ g}( \lambda, \nu,\mu)
 =
 \dim_\CC(\Hom_{P_s(n)}(\Delta_s(\mu), \Delta_s(\nu\setminus\lambda)))
 =
  \dim_\CC(\Hom_{P_s(n)}(\Delta_s(\mu), \Delta^0_s(\nu\setminus\lambda)))  
\end{equation} 
 for any  $\mu \in \mathscr{P}_{s} $.
Thus, in order to understand  the coefficients $\overline{ g}( \lambda, \nu,\mu)
$, we need to construct  a basis for the  modules $\Delta_s^0(\nu\setminus\lambda)$ and to describe the $P_s(n)$-action on this basis.  
    Towards that end, we make the following defintion.  
    
    \begin{defn}For $(\lambda, \nu, s)\in  \mathscr{P}_{r-s} \times \mathscr{P}_{\leq r} \times \mathbb{Z}_{>0}$ we define 
   $$ {\rm  DR}^0\mhyphen\Std_s(\nu\setminus\lambda)   = \{ \stt\in  \Std_s(\nu\setminus\lambda)
\mid
 \sharp \{ \text{integral steps of the form
 $(-\varepsilon_0,+\varepsilon_0)$ in } \stt\} \geq 1\},
 $$
and for   $i \geq1$, we define 
$$
 {\rm  DR}^i\mhyphen\Std_s(\nu\setminus\lambda)  = \{ \stt\in \Std_s(\nu\setminus\lambda)
\mid
\sharp\{ \text{steps of the form $-\varepsilon_i$ in $\stt$} \}  > \lambda_i \}.
$$
 and we set    ${\rm  DR}\mhyphen\Std_s(\nu\setminus\lambda)= \bigcup_{i\geq0}{\rm  DR}^i\mhyphen\Std_s(\nu\setminus\lambda)$. 
 \end{defn} 
 
Note that for $i\geq 1$ we can also define ${\rm DR}^i\mhyphen \Std_s(\nu\setminus\lambda)$ as  $${\rm DR}^i\mhyphen\Std_s(\nu\setminus \lambda) = \{\stt \in \Std_s(\nu \setminus \lambda)\mid
\sharp\{ \text{steps of the form $+\varepsilon_i$ in $\stt$} \}  > \nu_i \}.$$ This follows from the fact that $\lambda_i - \sharp\{ \text{steps of the form $-\varepsilon_i$ in $\stt$}\} + \sharp\{ \text{steps of the form $+\varepsilon_i$ in $\stt$}\} = \nu_i$.  
 
We will prove the following result.   
\begin{prop}\label{Dvirinclusion}

If $\stt \in {\rm  DR}\mhyphen\Std_s(\nu\setminus\lambda)$ then $u_{\stt^\lambda\circ \stt}+P_{r,s}^{\rhd\nu\setminus\lambda}(n)\in {\sf DR}_s(\nu\setminus\lambda)$.
\end{prop}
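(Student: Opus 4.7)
The plan is to split the argument according to which $\mathrm{DR}^i$ contains $\stt$, handling the case $i=0$ directly via factorization and reducing the case $i\ge 1$ to $i=0$ via Theorem~\ref{MURPHYPrN}.

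For $\stt \in \mathrm{DR}^0\mhyphen\Std_s(\nu\setminus\lambda)$, fix a position $k\in \{1,\dots,s\}$ at which $\stt$ has a dummy step $(-\varepsilon_0,+\varepsilon_0)$, so that $\stt(r-s+k-1)=\stt(r-s+k-\thalf)=\stt(r-s+k)=:\alpha$ with $|\alpha| < r-s+k$. By Definition~\ref{coefficientsforthepartitionalgebra1}, the branching coefficient $u_{\stt(r-s+k-\frac{1}{2})\to \stt(r-s+k)} = e_{r-s+k}^{(r-s+k-|\alpha|)}$ factors as $p_{|\alpha|+1}p_{|\alpha|+2}\cdots p_{r-s+k}$ and so carries $p_{r-s+k}$ as a rightmost factor. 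The remaining factors of $u_{\stt^\lambda\circ \stt[0,k-1]}$ are supported on strands $1,\dots,r-s+k-1$ and hence commute with $p_{r-s+k}$, so $u_{\stt^\lambda\circ\stt}$ can be rewritten with $p_{r-s+k}$ as its rightmost factor. Since $r-s+k\in\{r-s+1,\dots,r\}$ and every such $p_j$ is conjugate to $p_r$ in $P_s(n)$ by a suitable symmetric group element $s_{j,r}$, we obtain $p_{r-s+k}\in P_s(n)p_rP_s(n)$, placing $u_{\stt^\lambda\circ\stt} + P_{r,s}^{\rhd\nu\setminus\lambda}(n)$ inside $\mathsf{DR}_s(\nu\setminus\lambda)$.

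For $\stt \in \mathrm{DR}^i\mhyphen\Std_s(\nu\setminus\lambda)$ with $i\ge 1$, I would induct downward on $\stt$ in the dominance order. At a non-maximal $\stt$, I would locate a pair of adjacent integral steps whose swap $\stt_{k\leftrightarrow k+1}$ is valid, is strictly more dominant than $\stt$, and remains in $\mathrm{DR}^i\mhyphen\Std$ (since the multiset of integral step-types is preserved under the swap). Rearranging the identity of Theorem~\ref{MURPHYPrN} expresses $u_{\stt^\lambda\circ\stt_{k\leftrightarrow k+1}}$ as $u_{\stt^\lambda\circ\stt}\cdot s_{k,k+1}$ plus error terms $u_{{\sf e}_k(\cdot)}$ that are strictly less dominant (the error-term paths add then remove a box in a brand-new row $\ell(\stt(k-\thalf))+1$, which sits below all existing rows). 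By the induction hypothesis these error terms lie in $\mathsf{DR}_s(\nu\setminus\lambda)$, while the $u_{\stt^\lambda\circ\stt_{k\leftrightarrow k+1}}$ term is handled by the inductive hypothesis applied to the more dominant tableau. The base case is a local maximum of $\mathrm{DR}^i\mhyphen\Std_s(\nu\setminus\lambda)$ in dominance; here one argues that no valid dominance-increasing swap exists, and this obstruction combined with the constraint of having more than $\lambda_i$ steps of type $-\varepsilon_i$ forces the path to attempt a removal from an empty row $i$ at some point, which can only be realised as a $(-\varepsilon_0,+\varepsilon_0)$ dummy step. This reduces the base case to the $i=0$ case already handled.

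The main obstacle lies in the $i\ge 1$ case. Specifically, one must (i) verify that the error terms ${\sf e}_k(\stt)$ produced by the swaps still lie in $\mathrm{DR}\mhyphen\Std_s(\nu\setminus\lambda)$ (perhaps in a different $\mathrm{DR}^j$) so that the downward induction closes, and (ii) prove the combinatorial statement that every local maximum in $\mathrm{DR}^i\mhyphen\Std_s(\nu\setminus\lambda)$ with $i\ge 1$ contains a $(-\varepsilon_0,+\varepsilon_0)$ dummy step. The subtlety in (ii) is that the swap $\stt_{k\leftrightarrow k+1}$ is only defined when the resulting intermediate partition remains a partition, so one must analyse precisely which swaps are obstructed and argue that those obstructions, together with the excess of row-$i$ removals, force the dummy step configuration.
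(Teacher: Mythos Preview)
Your approach for $i\ge 1$ does not go through. The base-case claim (your obstacle~(ii)) --- that every dominance-local-maximum in ${\rm DR}^i\mhyphen\Std_s(\nu\setminus\lambda)$ contains a $(-\varepsilon_0,+\varepsilon_0)$ step --- is false. Take $\lambda=\nu=(1)$, $s=4$, and $\stt = r(1)\circ a(1)\circ r(1)\circ a(1)$, which has two $-\varepsilon_1$ steps (so $\stt\in{\rm DR}^1$) and no $(-\varepsilon_0,+\varepsilon_0)$ step. With the paper's convention that smaller partitions are more dominant, $\stt$ visits $(1),\varnothing,(1),\varnothing,(1)$ at integer levels; the swap $\stt_{2\leftrightarrow 3}$ does not exist (one cannot remove a box from row~$1$ of $\varnothing$), while $\stt_{1\leftrightarrow 2}$ and $\stt_{3\leftrightarrow 4}$ are strictly \emph{less} dominant. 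So $\stt$ is a local maximum in ${\rm DR}^1$ with no dummy step, and your reduction to the $i=0$ case cannot begin. There is also a direction problem in the induction itself: you correctly observe that the error terms $u_{{\sf e}_k(\cdot)}$ are strictly less dominant than $\stt$, but then invoke the downward induction hypothesis, which only covers tableaux \emph{more} dominant than $\stt$. Since $\stt_{k\leftrightarrow k+1}$ is more dominant and the error terms are less dominant, neither direction of induction controls both at once. (A smaller issue in the $i=0$ case: when $m=r-s+k-1-|\alpha|\ge 1$, the half-integer factor $e^{(m)}_{r-s+k-\frac{1}{2}}$ touches strand $r-s+k$, so $p_{r-s+k}$ cannot simply be commuted past it as you claim.)

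The paper's proof avoids all of this by a direct diagram analysis rather than induction on dominance. Lemmas~\ref{MaudLemmaA}--\ref{MaudLemmaC} track how the southern nodes of any partition diagram $d$ occurring in $u_{\stt^\lambda\circ\stt}$ are connected, and the proof shows that each such $d$ has at most $s-1$ blocks meeting both $\{r-s+1,\dots,r\}$ and $\{\overline 1,\dots,\overline r,1,\dots,r-s\}$: for $\stt\in{\rm DR}^0$ one southern node is forced to be a singleton, while for $\stt\in{\rm DR}^i$ with $i\ge 1$ a pigeonhole argument (too many $-\varepsilon_i$ steps versus only $\lambda_i$ targets in $\{1,\dots,r-s\}$) forces two southern nodes in $\{r-s+1,\dots,r\}$ into a common block. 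This places every such $d$ in $P_r(n)p_rP_s(n)$ without any appeal to Theorem~\ref{MURPHYPrN}.
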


We can write $u_{\stt^\lambda\circ \stt}$ as a sum of partition  diagrams in $P_r(n)$.  In order to prove the above proposition we need to understand some properties of the diagrams that can occur in this sum.

\begin{lem} \label{MaudLemmaA}Let  $\stt=(-\varepsilon_{i_1}, + \varepsilon_{j_1},  \ldots ,-\varepsilon_{i_{r}}, + \varepsilon_{j_{r}})\in \Std_r(\nu).$    Write 
$u_\stt= u_{\stt[r-1,r]} u_{\stt[0,r-1]} $
where \newline $\stt[0,r-1] \in \Std_{r-1}(\nu')$ with $\stt(r-1)=\nu'$.  
We have that 
\begin{itemize}
\item[$(i)$] if $i_r, j_r \neq 0$ then 
$u_{\stt[r-1,r]}=\sum_{\ik=0}^{\nu_{j_r}-1}d_{\ik}$ with $d_{\ik}$ as in the first diagram in \cref{dis}.
\item[$(ii)$] if $i_r=0$, $j_r \neq 0$ then 
$u_{\stt[r-1,r]}=\sum_{\ik=0}^{\nu_{j_r}-1}d_{\ik}$ with $d_{\ik}$ as in the second diagram in \cref{dis}.
\item[$(iii)$] if $i_r\neq 0$, $j_r =0$ then 
$u_{\stt[r-1,r]}= d_0$   as in the third  diagram in \cref{dis}.
\item[$(iv)$]  if $i_r = j_r =0$ then 
$u_{\stt[r-1,r]}= d_0= e_{r}^{(1)}$ depicted in \cref{sij}.
\end{itemize}
\end{lem}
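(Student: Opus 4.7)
The argument is a direct substitution into \cref{coefficientsforthepartitionalgebra} followed by a standard product-of-diagrams calculation. Setting $\nu' = \stt(r-1)$, $\mu = \stt(r-\half)$, $\nu = \stt(r)$, the factorisation $u_{\stt[r-1,r]} = u_{\stt(r-\half)\to\stt(r)}\, u_{\stt(r-1)\to\stt(r-\half)}$ expands to
\begin{equation*}
u_{\stt[r-1,r]} = e_r^{(r-|\nu|)}\left(\sum_{\ik=0}^{\nu_{j_r}-1} s_{[\nu]_{j_r}-\ik,\,[\nu]_{j_r}}\right) s_{[\nu]_{j_r},\,|\nu|}\, e_{r-\half}^{(r-1-|\mu|)} \, s_{|\nu'|,\,[\nu']_{i_r}},
\end{equation*}
subject to the degenerate conventions $e_k^{(0)}=1$ and $s_{l,0}=s_{0,k}=1$ recorded in \cref{coefficientsforthepartitionalgebra1}.

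The plan is then to split into the four cases of the statement. In each, I would distribute over the sum and read off each summand as a single partition diagram by concatenating the generators of \cref{idem}. In case~(i), where $i_r,j_r\neq 0$, one has $|\mu|=|\nu'|-1$ and $|\nu|=|\nu'|$, so the parameters of the two idempotents $e_{r-\half}^{(r-|\nu'|)}$ and $e_r^{(r-|\nu'|)}$ agree; the surrounding permutations then position a single non-propagating pair of arcs at columns $[\nu']_{i_r}$ and $[\nu]_{j_r}-\ik$, producing the family $d_\ik$ depicted in the figure.

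For cases~(ii)--(iv) I would simply track which factors collapse. When $i_r=0$, the rightmost factor $s_{|\nu'|,[\nu']_0}=1$ drops out; when $j_r=0$ the summation degenerates to its single $\ik=0$ term (since $s_{0,0}=1$) and the factor $s_{[\nu]_0,|\nu|}=1$ also drops out. This yields in case~(ii) the indicated sum with the first-column strand straightened, in case~(iii) the single diagram $d_0$, and in case~(iv) the bare idempotent $e_r^{(1)}$ since $|\nu|=|\nu'|=r-1$.

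The main obstacle is purely bookkeeping: verifying in each of the four cases that the parameters $r-|\mu|$ and $r-|\nu|$ in the two idempotents differ by exactly the right amount for the propagating strands to splice together cleanly, and that the Coxeter factors sit in the gaps between these idempotents as prescribed by the figure. No conceptually new ingredients beyond \cref{coefficientsforthepartitionalgebra} itself are needed; the entire lemma is a diagrammatic translation of a formula.
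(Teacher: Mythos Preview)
Your proposal is correct and matches the paper's own proof essentially verbatim: the paper also expands $u_{\stt[r-1,r]}$ directly from \cref{coefficientsforthepartitionalgebra}, combines $s_{[\nu]_{j_r}-\ik,[\nu]_{j_r}}s_{[\nu]_{j_r},|\nu|}$ into $s_{[\nu]_{j_r}-\ik,|\nu|}$, and then simply states that the result follows by concatenating the four diagrams in each case. Your case-by-case bookkeeping of the idempotent parameters and the degenerate conventions is exactly what is needed, and if anything is more explicit than the paper's one-line conclusion.
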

\begin{figure}[ht!]
$$
d_k=\begin{minipage}{105mm}\scalefont{0.7}\begin{tikzpicture}[scale=0.7]
  \draw[thick] (-1,0) rectangle (13,3);
  \foreach \x in {-0.5,0.5,2.5,3.5,4.5,7.5,7.5+1,8.5+1,11.5,12.5}
    {\fill (\x,3) circle (3pt);
     \fill (\x,0) circle (3pt);}
   \fill (   5.5+1,0) circle (3pt);   
    \fill (   4.5+1,3) circle (3pt);   
   \begin{scope}
   \draw[thick] (-0.5,3) -- (-0.5,0);
   \draw[thick] (0.5,3) -- (0.5,0);
    \draw[thick] (2.5,3) -- (2.5,0);
        \draw[thick] (4.5,3) -- (3.5,0);
    \draw[thick] (5.5,3) -- (4.5,0);
     \draw[thick] (5.5+2,3) -- (4.5+2,0);
     \draw[thick] (5.5+3,3) -- (4.5+4,0);
     \draw[thick] (5.5+3+1,3) -- (4.5+4+1,0);   
          \draw[thick] (11.5,3) -- (11.5,0);   
\draw[thick] (7.5,0) to [out=30,in=150] (12.5,0);             
\draw[thick] (3.5,3) to [out=-90,in=90] (12.5,0);     
\draw[thick][white,fill](10.1,3.2) rectangle (10.9,-0.1)        ;
\draw[thick][white,fill](1.1,3.2) rectangle (1.9,-0.1)        ;
\draw[thick][white,fill](7.1-1,3.2) -- (7.9-1,3.2) -- (6.9-1,-0.1) -- (6.1-1,-0.1) --(7.1-1,3.2)       ;
  \draw[densely dotted,thick] (-1,0) rectangle (13,3);
\draw[densely dotted,thick] (7.5,0) to [out=30,in=150] (12.5,0);             
\draw[densely dotted,thick] (3.5,3) to [out=-90,in=90] (12.5,0);     
    \end{scope}
    \draw[below](12.5,0)   node {$r$};
        \draw[below](7.5,0)   node {$[\nu']_{i_r}$};
                \draw[above](3.5,3.1)   node {$[\nu]_{j_r}$--$k$};
\end{tikzpicture}\end{minipage} 
$$

$$
d_k=\begin{minipage}{105mm}\scalefont{0.7}\begin{tikzpicture}[scale=0.7]
  \draw[thick] (-1,0) rectangle (13,3);
  \foreach \x in {-0.5,0.5,2.5,3.5,4.5,7.5,7.5+1,8.5+1,11.5,12.5}
    {\fill (\x,3) circle (3pt);
     \fill (\x,0) circle (3pt);}
   \fill (   5.5+1,0) circle (3pt);   
    \fill (   4.5+1,3) circle (3pt);   
    \fill (   10.5,0) circle (3pt);   
   \begin{scope}
   \draw[thick] (-0.5,3) -- (-0.5,0);
   \draw[thick] (0.5,3) -- (0.5,0);
    \draw[thick] (2.5,3) -- (2.5,0);
        \draw[thick] (4.5,3) -- (3.5,0);
    \draw[thick] (5.5,3) -- (4.5,0);
     \draw[thick] (5.5+2,3) -- (4.5+2,0);
     \draw[thick] (5.5+3,3) -- (4.5+3,0);
     \draw[thick] (5.5+3+1,3) -- (4.5+4,0);   
          \draw[thick] (12.5,3) -- (11.5,0);   
                    \draw[thick] (11.5,3) -- (10.5,0);   
                                     \draw[thick] (10.5,3) -- (9.5,0);   
                    

\draw[thick] (3.5,3) to [out=-90,in=90] (12.5,0);     


\draw[thick][white,fill](1.1,3.2) rectangle (1.9,-0.1)        ;
\draw[thick][white,fill](7.1-1,3.2) -- (7.9-1,3.2) -- (6.9-1,-0.1) -- (6.1-1,-0.1) --(7.1-1,3.2)       ;

\draw[thick][white,fill](4+7.1-1,3.2) -- (4+7.9-1,3.2) -- (4+6.9-1,-0.1) -- (4+6.1-1,-0.1) --(4+7.1-1,3.2)       ;

  \draw[densely dotted,thick] (-1,0) rectangle (13,3);

 \draw[densely dotted,thick] (3.5,3) to [out=-90,in=90] (12.5,0);     
    \end{scope}
    \draw[below](12.5,0)   node {$r$};
                 \draw[above](3.5,3.1)   node {$[\nu]_{j_r}$--$k$};

\end{tikzpicture}\end{minipage} 
$$
$$
d_0=\begin{minipage}{105mm}\scalefont{0.7}\begin{tikzpicture}[scale=0.7]
  \draw[thick] (-1,0) rectangle (-1+ (14,3);
  \foreach \x in {-0.5,0.5,1.5,2.5,3.5,4.5,5.5,6.5}
         \draw[thick] (\x,3) -- (\x,0);   

  \foreach \x in {-0.5,0.5,1.5,2.5,3.5,4.5,...,12.5}
    {\fill (\x,3) circle (3pt);
     \fill (\x,0) circle (3pt);}
   \fill (   5.5+1,0) circle (3pt);   
    \fill (   4.5+2,3) circle (3pt);   
    \fill (   10.5,0) circle (3pt);   
   \begin{scope}
         \draw[thick] (3.5,3) -- (3.5,0);        \draw[thick] (4.5,3) -- (4.5,0);
     \draw[thick] (4.5+2,3) -- (4.5+2,0);
                    \draw[thick] (7.5,3) -- (8.5,0);
                          \draw[thick] (8.5,3) -- (9.5,0);

                            \draw[thick] (10.5,3) -- (11.5,0);

 \draw[thick] (7.5,0) to [out=30,in=150] (12.5,0);     

\draw[thick][white,fill](1.1,3.2) rectangle (1.9,-0.1)        ;

    \draw[thick][white,fill](1.1+3+1,3.2) (-1+ (1.9+3+1,-0.1)        ;
\draw[thick][white,fill](-1+4+7.1-1,3.2) -- (-1+4+7.9-1,3.2) -- (-1+4+6.9-1+2,-0.1) -- (-1+4+6.1-1+2,-0.1) --(-1+4+7.1-1,3.2)       ;
 \draw[thick,densely dotted] (7.5,0) to [out=30,in=150] (12.5,0);     

  \draw[thick,densely dotted] (-1,0) rectangle (-1+ (14,3);

\end{scope}
    \draw[below](12.5,0)   node {$r$};
                 
        \draw[below](7.5,0)   node {$[\nu']_{i_r}$};
 
\end{tikzpicture}\end{minipage} 
$$

\!\!\!\!\!\caption{The diagrams $d_k$ of parts $(i)$ to $(iii)$ of \cref{MaudLemmaA} respectively.  The first diagram is drawn under the assumption that $[\nu]_{j_r}-k<[\nu']_{i_r}$, the   cases $[\nu]_{j_r}-k=[\nu']_{i_r}$ and $[\nu]_{j_r}-k>[\nu']_{i_r}$ are similar.}
\label{dis}
\end{figure}
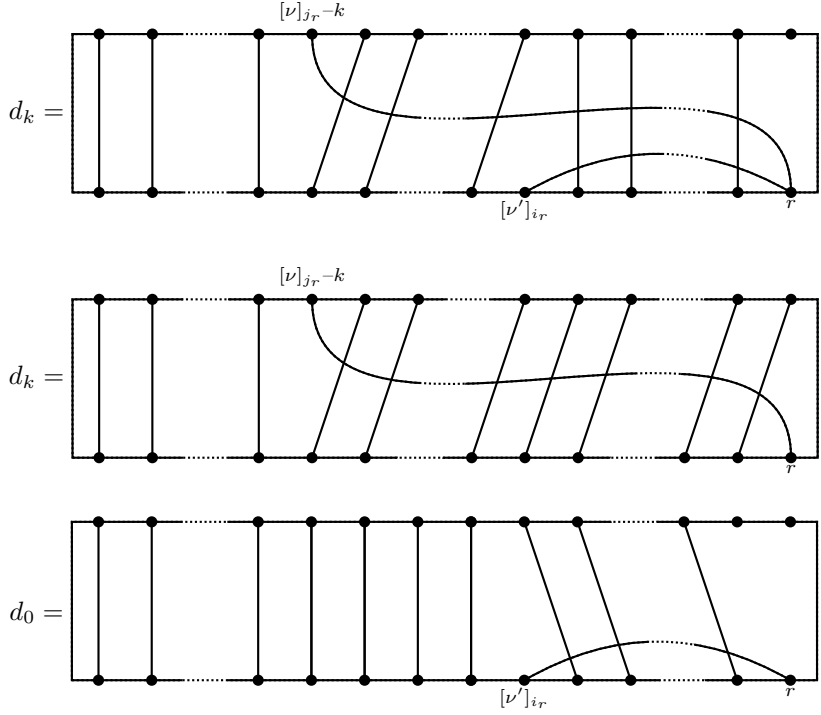
\begin{proof}
By definition, we have \begin{align*}
u_{\stt [r-1,r] } 
&=u_{\stt(r-\half) \to \stt(r ) } u_{\stt(r-1) \to \stt(r-\half ) } 
\\
&=  \sum_{\ik=0}^{   \nu_{j_r}-1}  
  e_{r}^{(r-|\nu|)}     s_{[\nu]_{j_r}-\ik, [\nu]_{j_r}  	}
s_{[\nu]_{j_r}, |\nu|  	}
  e_{r-\half}^{(r-1-|{\stt(r-\half)}|)}
s_{    |\nu'| , [\nu']_{i_r}} \\ 
&=  \sum_{\ik=0}^{   \nu_{j_r}-1}  
  e_{r}^{(r-|\nu|)}   
    s_{[\nu]_{j_r}-\ik,   |\nu|  	}
  e_{r-\half}^{(r-1-|{\stt(r-\half)}|)}
s_{    |\nu'| , [\nu']_{i_r}}. 
 \end{align*}
The result follows by concatenating the four diagrams in each case.  
\end{proof}
\begin{rmk}
Note that in each of cases $(i)$ to $(iv)$ of \cref{MaudLemmaA}, the diagrams in \cref{dis,sij} provide the natural bijection between the nodes of $\nu-(j_r,\nu_{j_r}-\ik)$ and the nodes of $\nu'-(i_r,\nu_{i_r})$.
\end{rmk}

\begin{lem}\label{MaudLemmaB}
Let  $\stt=(-\varepsilon_{i_1}, + \varepsilon_{j_1}, \ldots  ,-\varepsilon_{i_{r}}, + \varepsilon_{j_{r}})\in \Std_r(\nu).$    
Write 
\begin{equation}\label{abigsumofstuff}
u_\stt = \sum_{d} \alpha_{d,\stt}d
\end{equation}
with $\alpha_{d,\stt}\in \ZZ_{>0}$ and $d$  partition diagrams in $P_r(n)$.  Then, for any $d$ appearing in this sum,   we have 
\begin{itemize}[leftmargin=*]
\item[$(1)$] the northern nodes $\{\overline{r}\}$, 
$\{\overline{r-1}\}, \dots,  \{\overline{r-|\nu|}\}$ are singleton blocks of $d$;
\item[$(2)$] for each $1\leq i \leq \ell(\nu)$, any northern nodes in the set 
$\{\overline{[\nu]_{i-1}+1}, \overline{[\nu]_{i-1}+2}, \dots, \overline{[\nu]_{i}} \}$ is connected to some southern node  $k$ satisfying $j_k=i$.  
\end{itemize}
\end{lem}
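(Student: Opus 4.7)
The plan is to induct on $r$, using the factorization $u_\stt = u_{\stt[r-1,r]} \cdot u_{\stt[0,r-1]}$ and the explicit description of the left-hand factor provided by \cref{MaudLemmaA}. The base case $r=0$ is vacuous. For the inductive step, $u_{\stt[0,r-1]}$ lies in the subalgebra $P_{r-1}(n) \hookrightarrow P_r(n)$, so every partition diagram $d$ occurring in the expansion of $u_\stt$ has the form $d = d_k \cdot d'$, where $d_k$ is one of the diagrams in \cref{dis} or \cref{sij} and $d'$ is a partition diagram appearing in $u_{\stt[0,r-1]}$. By the inductive hypothesis applied to $\stt[0,r-1] \in \Std_{r-1}(\nu')$ with $\nu' = \stt(r-1)$, the diagram $d'$ has northern singletons at positions $\overline{|\nu'|+1},\dots,\overline{r-1}$ (in addition to the propagating block $\{\overline r,r\}$ inherited from the embedding), and each northern vertex in the row-$i$ block $\{\overline{[\nu']_{i-1}+1},\dots,\overline{[\nu']_i}\}$ is connected inside $d'$ to some southern vertex $k'\le r-1$ satisfying $j_{k'}=i$.

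The key observation is that the northern structure of $d_k \cdot d'$ is determined entirely by that of $d_k$: singletons at the top of $d_k$ remain singletons of $d_k\cdot d'$, and propagating lines of $d_k$ from north to middle get extended through $d'$ to reach the south. By inspecting the diagrams in \cref{dis} and \cref{sij}, in each of the cases (i)--(iv) of \cref{MaudLemmaA} the diagram $d_k$ has three features. First, the northern vertices $\overline{|\nu|+1},\dots,\overline{r}$ are singletons, which immediately yields property (1). Second, when $j_r \neq 0$, a block of $d_k$ contains both the northern vertex $\overline{[\nu]_{j_r}-k}$ (a row-$j_r$ vertex of $\nu$) and the southern vertex $r$, providing the required connection for the newly added row-$j_r$ node of property (2). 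Third, the remaining propagating lines of $d_k$ implement the natural row-preserving bijection between the remaining northern nodes of $\nu$ and the northern nodes of $\nu'$, as noted in the remark following \cref{MaudLemmaA}.

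For property (2), if $j_r\ne 0$ the row-$j_r$ northern node $\overline{[\nu]_{j_r}-k}$ is handled directly by the second feature, since $j_r$ is by construction the index attached to the $r$-th step of $\stt$. Every other row-$i$ northern vertex of $d_k$ is, by the third feature, routed through the propagating structure of $d_k$ to some row-$i$ northern vertex of $d'$; the inductive hypothesis then supplies a southern vertex $k' \le r-1$ with $j_{k'}=i$ inside $d'$, and this same $k'$ serves as the required southern vertex in $d_k \cdot d'$. When $j_r=0$, there is no newly added row-$i$ node to account for, and the argument is the same using only the first and third features.

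The main technical point is verifying the row-preserving bijection of the third feature in the degenerate cases $i_r=0$, $j_r=0$, or $i_r=j_r$, where the counts $\nu_i$ and $\nu'_i$ differ in rows $i_r$ or $j_r$; however, each of parts (ii)--(iv) of \cref{MaudLemmaA} explicitly adjusts the combinatorics of $d_k$ so that the row-labelling of the remaining propagating lines is preserved, and the inductive step carries through uniformly in every case. Since all coefficients $\alpha_{d,\stt}$ arising from the expansion of \cref{MaudLemmaA} are manifestly positive integers, no cancellation occurs and the claimed properties hold for every diagram actually appearing in the sum \cref{abigsumofstuff}.
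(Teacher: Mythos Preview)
Your proof is correct and follows essentially the same approach as the paper's: induction on $r$ via the factorization $u_\stt = u_{\stt[r-1,r]}\, u_{\stt[0,r-1]}$ from \cref{MaudLemmaA}, using the row-preserving bijection on propagating lines together with the observation that the block of $d_k$ containing $\overline{[\nu]_{j_r}-k}$ meets the southern vertex $r$, which (via the block $\{\overline r,r\}$ of $d'$) yields the required connection. The only cosmetic differences are that the paper dispatches part~(1) directly from $u_\stt = e_r^{(r-|\nu|)}x$ rather than carrying it through the induction, and uses $r=1$ as base case.
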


\begin{proof}
Part $(1)$ follows directly from the fact that $u_\stt=e_r^{(r-|\nu|)}x$ for some $x\in P_r(n)$.   
We prove $(2)$ by induction on $r$.  If $r=1$, then either $\stt=(-\varepsilon_0,+\varepsilon_0)$ or 
$\stt=(-\varepsilon_0,+\varepsilon_1)$.  In the first case, there is nothing to prove.  In the second case, we have that $\overline{1}$ is connected to $1$, which satisfies $j_1=1$, as required.  Now assume the result holds for $r-1$.  Write 
$u_\stt= u_{\stt[r-1,r]} u_{\stt'} $
where $\stt'=\stt[0,r-1] \in \Std_{r-1}(\nu')$ with $\stt(r-1)=\nu'$.  
By induction, we write
$$
u_{\stt'} = \sum_{d'} \alpha_{d',{\stt'}}d'
$$
with $\alpha_{d',{\stt'}}\in \ZZ_{>0}$.  For any $d'$ appearing in this sum and any $1\leq k \leq \ell(\nu')$, we have that any northern node in the set 
$$
\{\overline{	[\nu']_{k-1}+1},
\overline{	[\nu']_{k-1}+2},
\dots,
\overline{	[\nu']_{k}}\}
$$
is connected to some southern nodes $l$ satisfying $j_l=i$.  Now any diagram, $d$, appearing in \cref{abigsumofstuff} is of the form $d= d_{\ik}d'$ 
(for cases $(i)$ and $(ii)$) or $d_0 d'$  
(for cases $(iii)$ and $(iv)$) as in \cref{MaudLemmaA}.  
If $d_0$ is as in case $(iii)$ and $(iv)$, then 
the diagram $d_0$  provides the natural bijection between the nodes of $\nu$ and $\nu'-\varepsilon_{i_r}$ and the result follows.  If $d_{\ik}$ is as in 
case  $(i)$ or  $(ii)$ we must show that $\overline{[\nu]_{j_r}-i}$ is connected to a southern nodes of the required form. (That any other northern node in $d_{\ik}$ is connected to a southern node of the required form is immediate, as in cases $(iii)$ and $(iv)$ above.)  
Now, as $\{\overline{r}, r\}$ is a block of $d'$, we have that 
 $\overline{[\nu]_{j_r}-i}$ is connected to $r$ in $d_{\ik} d'=d$ as required.  
\end{proof}

 \begin{lem}\label{MaudLemmaC}
Let  $\stt=(-\varepsilon_{i_1}, + \varepsilon_{j_1}, \ldots  ,-\varepsilon_{i_{r}}, + \varepsilon_{j_{r}})\in \Std_r(\nu).$    
Write 
\begin{equation}\label{abigsumofstuffC}
u_\stt = \sum_{d} \alpha_{d,\stt}d
\end{equation}
with $\alpha_{d,\stt}\in \ZZ_{>0}$ and $d$   partition diagrams  in $P_r(n)$.  For any diagram $d$ appearing in this sum and any $1\leq k \leq r$,  we have that
\begin{enumerate}[leftmargin=*]
\item[(a)] if $i_k = j_k =0$ then the southern node $k$ in   $d$ is a singleton;
\item[(b)] if $i_k\neq 0$ then the southern node $k$ in  $d$ is connected to a southern node $l<k$ with $j_l = i_k$.
\end{enumerate} 
 
 \end{lem}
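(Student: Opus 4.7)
I would argue by induction on $r$, in parallel with the inductive structure used to establish \cref{MaudLemmaB}; the latter will in fact serve as the key bridge at the inductive step. The base case $r = 1$ is handled by direct inspection: either $\stt = (-\varepsilon_0, +\varepsilon_0)$, in which case $u_\stt = e_1^{(1)} = p_1$ makes south node $1$ a singleton (verifying (a), with (b) vacuous), or $\stt = (-\varepsilon_0, +\varepsilon_1)$, for which both (a) and (b) are vacuous.

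For the inductive step, factor $u_\stt = u_{\stt[r-1,r]}\,u_{\stt'}$ with $\stt' = \stt[0,r-1] \in \Std_{r-1}(\nu')$ and $\stt(r-1) = \nu'$, viewing $u_{\stt'}$ inside $P_r(n)$ via an additional identity strand at position $r$. \cref{MaudLemmaA} furnishes $u_{\stt[r-1,r]}$ as a sum $\sum_\ik d_\ik$ (or simply $d_0$) of the explicit partition diagrams depicted in \cref{dis}. Expanding $u_{\stt'} = \sum_{d'}\alpha_{d',\stt'}d'$ inductively, every diagram $d$ contributing to $u_\stt$ is of the form $d = d_\ik \cdot d'$ for some $\ik$ and some $d'$ to which (a) and (b) apply.

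For $k < r$, the southern nodes of $d$ coincide with those of $d'$, and composition with $d_\ik$ from above can only merge blocks through middle-node identifications -- it can never split or sever a block. Hence a southern singleton of $d'$ remains a singleton in $d$, and any southern-to-southern connection present in $d'$ persists in $d$, so both (a) and (b) follow from the inductive hypothesis at these indices. For $k = r$ the analysis proceeds by case on $d_\ik$. In case $(iv)$, the factor $d_0 = e_r^{(1)} = p_r$ severs the $r$-th strand, cutting the identity strand at $r$ in $d'$ and leaving south node $r$ of $d$ as a singleton, which gives (a). Case $(ii)$ has $i_r = 0$ and so makes both (a) and (b) vacuous at $k = r$. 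In cases $(i)$ and $(iii)$, the diagram $d_\ik$ contains an arc joining south node $r$ to south node $[\nu']_{i_r}$; passing through the identity strand at $r$ in $d'$, this places south node $r$ of $d$ into the same block as the northern node $\overline{[\nu']_{i_r}}$ of $d'$, and \cref{MaudLemmaB}(2) applied to $\stt'$ and $d'$ then shows that this block contains some south node $l \leq r-1$ of $d'$ with $j_l = i_r$. That south node is precisely south node $l$ of $d$, which establishes (b).

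The main obstacle is the bookkeeping in cases $(i)$ and $(iii)$ at $k = r$, where one must chain together the identity strand at $r$ in $d'$, the arc inside $d_\ik$, and the companion statement \cref{MaudLemmaB}(2) in exactly the right order. Once the diagrams of \cref{dis} are unpacked, however, this amounts to a direct visual verification, and no further diagrammatic subtleties arise.
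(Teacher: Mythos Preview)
Your proof is correct and follows essentially the same inductive strategy as the paper: factor $u_\stt = u_{\stt[r-1,r]}\,u_{\stt'}$, invoke \cref{MaudLemmaA} for the top piece, and use \cref{MaudLemmaB}(2) to handle part~(b) at $k=r$. The only notable difference is in part~(a) at $k=r$: the paper additionally invokes \cref{MaudLemmaB}(1) to argue that the northern nodes $\overline{|\nu'|+1},\ldots,\overline{r-1}$ are singletons in $d'$, whereas you rely directly on $d_0=e_r^{(1)}=p_r$ severing the $r$-th strand. Taking \cref{MaudLemmaA}(iv) at face value your shortcut is valid; the paper's extra step makes the argument robust even if $d_0$ has south node $r$ joined to further south nodes in the range $|\nu'|+1,\ldots,r-1$ (those middle positions then meet singleton northern nodes of $d'$ and die).
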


\begin{proof}
We prove this lemma by induction on $r$.  
If $r=1$ then $k=1$ and $i_k=0$. The only path to consider is $\stt = (-\varepsilon_0, +\varepsilon_0)$. In this case we have $u_\stt = d$ with $d = \{\{\bar{1}\}, \{ 1 \} \}$, so the result holds. 

We shall ssume that the result holds for $r-1$ and prove it for $r$. As in Lemma \ref{MaudLemmaB}, we write $u_\stt = u_{\stt[r-1,r]} u_{\stt'}$ with $\stt' = \stt[0,r-1] \in \Std_{r-1}(\nu')$ and $\nu' = \stt'(r-1)$.  Write 
$$u_{\stt'} = \sum_{d'} \alpha_{d',\stt'}d'$$
with $\alpha_{d',\stt'}\in \mathbb{Z}_{>0}$ and $d'$ a partition diagram  in $P_{r-1}(n)\subset P_r(n)$.
By induction, the result holds for all $d'$ in this sum and all $1\leq k\leq r-1$. As any diagram $d$ appearing in \cref{abigsumofstuffC} has the form $d_{\ik}d'$ where the $d_{\ik}$'s are given in Lemma \ref{MaudLemmaA}, we have that the result holds for $d$ and any $1\leq k\leq r-1$. It remains to prove it for $k=r$.

For part $(a)$, note that $d_0$ is as in Lemma \ref{MaudLemmaA}$(iv)$. Now using Lemma \ref{MaudLemmaB}$(1)$ we know that $\{\overline{r-1}\}$, $\{\overline{r-2}\}$, \ldots , $\{\overline{r-1-|\nu'|}\}$ are all singleton blocks in $d'$. As $\{\overline{r},r\}$ is a block in $d'$ we deduce that $\{r\}$ is a singleton block in $d=d_0d'$. 

For part $(b)$, note that $d_{\ik}$ is as in Lemma \ref{MaudLemmaA}$(i)$ or $(iii)$. Thus the southern nodes
 $r$  and  $[\nu']_{i_r}$ are connected  in $d_{\ik}$. But now, using Lemma \ref{MaudLemmaB}$(2)$ we have that  in $d'$ the northern node $\overline{[\nu']_{i_r}}$ is connected to some southern node $k\leq r-1$ with $j_k = i_r$. Moreover, $\{\overline{r}, r\}$ is a block of $d'$. Concatenating $d_{\ik}$ with $d'$ we deduce that in $d$ the node $r$ is connected to some $k<r$ with $j_k = i_r$ as required.
\end{proof}

\begin{proof}[Proof of Proposition \ref{Dvirinclusion}]
Recall that ${\sf DR}_s(\nu\setminus \lambda)=\Delta_s(\nu\setminus \lambda) P_s(n)p_rP_s(n)$. So if $m + P_r^{\rhd \nu\setminus \lambda}(n)\in \Delta_s(\nu\setminus \lambda)$ and $m\in P_r(n)p_rP_s(n)$ then $m+P_r^{\rhd \nu\setminus \lambda}(n)\in {\sf DR}_s(\nu \setminus \lambda)$. Now $P_r(n)p_rP_s(n)$ is spanned by all partition diagrams in $P_r(n)$ having at most $s-1$ distinct blocks containing both an element of the set $\{r-s+1, \ldots r\}$ and an element of the set $\{\bar{1}, \ldots , \bar{r}, 1, \ldots , r-s\}$. We claim that  $u_{\stt^{\lambda} \circ \stt}$ is a sum of such diagrams for any $\stt\in {\rm DR}\mhyphen\Std_s(\nu \setminus \lambda)$.   Thus  $u_{\stt^{\lambda}\circ \stt}\in P_r(n)p_rP_s(n)$ as required.

We now set about proving this claim.   Write $\stt^\lambda \circ \stt = (-\varepsilon_{i_1}, + \varepsilon_{j_1}, \ldots , -\varepsilon_{i_r} , + \varepsilon_{j_r})$ and 
\begin{equation}\label{bigsumprop}
u_{\stt^{\lambda}\circ \stt} = \sum_{d} \alpha_{\stt, d} d
\end{equation}
with $\alpha_{\stt, d}\in \mathbb{Z}_{>0}$ and $d$ a  partition diagram  in $P_r(n)$.   
 First suppose that $\stt\in {\rm DR}^0\mhyphen\Std_s(\nu\setminus \lambda)$. Then there exists $k\geq r-s+1$ such that the $k$-th integral step of $\stt^{\lambda}\circ \stt$ has the form $(-\varepsilon_0, + \varepsilon_0)$. Using Lemma \ref{MaudLemmaC}(a), we deduce that $k$ is a singleton in {\em any} diagram $d$ appearing in \cref{bigsumprop} and hence   $d\in P_r(n)p_rP_s(n)$.  

Now suppose that $\stt\in {\rm DR}^x\mhyphen\Std_s(\nu\setminus \lambda)$ for some $x>0$. Then $M=\{k\, | \, k\geq r-s+1 \, \mbox{and} \, i_k=x\}$ satisfies $|M|>\lambda_x$.  By Lemma \ref{MaudLemmaC}(b)  for any $k\in M$  and any diagram $d$ appearing in \cref{bigsumprop}, we have that the  southern node  $k$  is connected to a  southern  node $l<k$ satisfying $j_l = x$. Now, by definition of $\stt^{\lambda}$, there are precisely $\lambda_x$ such $l$ with $l\leq r-s$. We conclude that there must be at least one $k\in M$ such that the southern node $k$ in $d$ is connected to a southern node from the set $\{r-s+1, \ldots , r\}$. This proves that $d\in P_r(n)p_rP_s(n)$ as required. 
\end{proof}


\begin{eg}\label{GHGHGHG}
Let $\nu =\lambda=(2,1)$ and $s=3$.
The path $\stt \in \Std_3(\nu\setminus\lambda) $   given by
  $$
   \Yvcentermath1\Yboxdim{6pt}\gyoung(;;,;) \xrightarrow{\ -\varepsilon_2 \ }
  \Yvcentermath1\Yboxdim{6pt}\gyoung(;;)  \xrightarrow{\ +\varepsilon_2 \ }
      \Yvcentermath1\Yboxdim{6pt}\gyoung(;;,;) \xrightarrow{\ -\varepsilon_0 \ }
      \Yvcentermath1\Yboxdim{6pt}\gyoung(;;,;) \xrightarrow{\ +\varepsilon_2 \ }
         \Yvcentermath1\Yboxdim{6pt}\gyoung(;;,;;)   \xrightarrow{\ -\varepsilon_2 \ }
   \Yvcentermath1\Yboxdim{6pt}\gyoung(;;,;)   \xrightarrow{\ +\varepsilon_0 \ }
         \Yvcentermath1\Yboxdim{6pt}\gyoung(;;,;) 
  $$ belongs to $ {\rm  DR}^2\mhyphen\Std_3(\nu\setminus\lambda)$.  
To see this note that 
$$  \sharp \{ \text{steps of the form $-\varepsilon_2$ in $\stt$} \}  = 2> 1 = \lambda_2.  
$$
 The element $u_{\stt^{\lambda}\circ \stt}$ is depicted in \cref{newonw2}, below.  
We see that every elementary diagram
  in this sum 
  has  at most 2 blocks  with both an element from $\{4,5,6\}$ and 
  an element from $\{\overline1,\overline2,\dots ,\overline6\}\cup\{1,2,3\}$.  
Therefore   $u_{\stt^{\lambda}\circ\stt}\in  {\sf DR}_s(\nu\setminus\lambda)$.

 \begin{figure}[ht!]
$$
\begin{minipage}{27mm}\scalefont{0.8}\begin{tikzpicture}[scale=0.45]
  \draw (0,0) rectangle (6,3);
  \foreach \x in {0.5,1.5,...,5.5}
    {\fill (\x,3) circle (2pt);
     \fill (\x,0) circle (2pt);}
   \begin{scope}
   \draw (0.5,3) -- (1.5,0);
  \draw (2.5,0) arc (180:360:0.5 and -0.5);
  \draw (4.5,0) --  (2.5,3);
   \draw (3.5,0) arc (180:360:1 and -0.75);
    \draw (1.5,3) -- (0.5,0);
   \end{scope}
\end{tikzpicture}\end{minipage} \  + \ 
\begin{minipage}{27mm}\scalefont{0.8}\begin{tikzpicture}[scale=0.45]
  \draw (0,0) rectangle (6,3);
  \foreach \x in {0.5,1.5,...,5.5}
    {\fill (\x,3) circle (2pt);
     \fill (\x,0) circle (2pt);}
   \begin{scope}
    \draw (1.5,3) -- (1.5,0);
  \draw (2.5,0) arc (180:360:0.5 and -0.5);
  \draw (4.5,0) --  (2.5,3);
   \draw (3.5,0) arc (180:360:1 and -0.75);
    \draw (0.5,3) -- (0.5,0);
   \end{scope}
\end{tikzpicture}\end{minipage}  \ + \
\begin{minipage}{27mm}\scalefont{0.8}\begin{tikzpicture}[scale=0.45]
  \draw (0,0) rectangle (6,3);
  \foreach \x in {0.5,1.5,...,5.5}
    {\fill (\x,3) circle (2pt);
     \fill (\x,0) circle (2pt);}
   \begin{scope}
   \draw (0.5,3) -- (1.5,0);
  \draw (2.5,0) arc (180:360:0.5 and -0.5);
  \draw (2.5,0) --  (2.5,3);
   \draw (4.5,0) arc (180:360:0.5 and -0.5);
    \draw (1.5,3) -- (0.5,0);
   \end{scope}
\end{tikzpicture}\end{minipage} \  + \ 
\begin{minipage}{27mm}\scalefont{0.8}\begin{tikzpicture}[scale=0.45]
  \draw (0,0) rectangle (6,3);
  \foreach \x in {0.5,1.5,...,5.5}
    {\fill (\x,3) circle (2pt);
     \fill (\x,0) circle (2pt);}
   \begin{scope}
    \draw (1.5,3) -- (1.5,0);
  \draw (2.5,0) arc (180:360:0.5 and -0.5);
  \draw (2.5,0) --  (2.5,3);
   \draw (4.5,0) arc (180:360:0.5 and -0.5);
    \draw (0.5,3) -- (0.5,0);
   \end{scope}
\end{tikzpicture}\end{minipage}    
$$

\!\!\!\!\!\caption{The element $u_\stt$ from \cref{GHGHGHG}.}
\label{newonw2}
\end{figure}
 \end{eg}

The following definition is 
motivated by \cref{Dvirinclusion}.
\begin{defn}
Given $(\lambda,\nu,s) \in \mathscr{P}_ { r-s} \times \mathscr{P}_{\leq r} \times \NN$,   
 we let  
 $$  \Std^0_s(\nu\setminus \lambda) = \Std_s(\nu\setminus \lambda)  \setminus  {\rm  DR}\mhyphen\Std_s(\nu\setminus\lambda)  
$$ 
\end{defn}

 \begin{rmk} 
If $(\lambda, \nu,s)$ is a triple of maximal depth then $\Std_s^0(\nu\setminus \lambda) = \Std_s (\nu\setminus \lambda)$.
 \end{rmk}

 \begin{rmk}\label{akjhfdsajhjkhfdsakjhdfasasfdbmnzxvbmnvcbmncvxznmsfadhof}

   By \cref{Dvirinclusion}, we know that $ \Std^0_s(\nu\setminus \lambda)$ indexes a spanning set for the  module $\Delta_s^0(\nu\setminus\lambda)$ for any $(\lambda,\nu,s) \in \mathscr{P}_ { r-s} \times \mathscr{P}_{\leq r} \times \NN$.   
In particular, if  $s>|\lambda|+|\nu|$ or $s<{\rm max}\{|\lambda \ominus (\lambda \cap \nu)|, |\nu \ominus (\lambda \cap \nu)|\}$ then 
 $\Std^0_s(\nu\setminus\lambda)=\emptyset$ 
 and $\overline{g}(\lambda,\nu,\mu)=0$ for all $\mu \vdash s$.  
 \end{rmk}

  \begin{thm} \label{{co-Pieri}triple}\label{hjkldhjklfadshlkjfdshfasd}
Let   $(\lambda,\nu,s) \in \mathscr{P}_{r-s}   \times \mathscr{P}_{\leq r}  \times \NN$  
 be   such that $\Std_s^0(\nu\setminus\lambda)\neq \emptyset$.
 We have that 
 \begin{itemize}
 \item [ {\rm (C1)} ]   $\sts_{k\leftrightarrow k+1}$ exists for all 
     $\sts \in \Std_{s}^0(\nu\setminus\lambda)$ and   $1\leq k \leq s-1 $  
    and 
 \item [ {\rm (C2)} ]  $ \{ u_{\stt^\lambda \circ \stt}+P_{r,s}^{\rhd\nu\setminus\lambda}(n) \mid    \stt \in  {\rm  DR}\mhyphen\Std_s(\nu\setminus \lambda)  \} $     is a basis for ${\sf DR}_s(\nu\setminus \lambda)$
 \end{itemize}
 if and only if

{\rm (coP)}  $\left\{ \begin{array}{l}
  s=1, \, \mbox{or}\\
 s>1\,\, \mbox{and if} \,\,  \max \{\ell(\lambda), \ell(\nu)\} \geq 2 \,\,  \mbox{then}\, \,
 s \leq 
    \max\{|\lambda\ominus (\lambda \cap \nu)|,|\nu\ominus (\lambda \cap \nu)|\} + {\rm minmax}(\lambda,\nu)
\end{array} \right.$

\noindent where $${\rm minmax}(\lambda, \nu) =  \min \{ \min\{\lambda_{i-1},\nu_{i-1}\} - \max\{\lambda_{i},\nu_{i}\} 
  \mid  2\leq  i \leq  \max\{\ell(\lambda), \ell(\nu) \} \}.$$ 
 We refer to such triples,  $(\lambda,\nu,s)$, as {\sf co-Pieri triples}. In this case, we will also refer to any triple of the form $(\lambda, \nu,\mu)$ with $\mu \vdash s$ as a co-Pieri triple.
 \end{thm}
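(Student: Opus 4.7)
The plan is to prove the two implications separately, with most of the effort spent on the combinatorial equivalence (coP)$\Leftrightarrow$(C1); once (C1) is established, (C2) will follow reasonably quickly from the action formula of Theorem 3.3 together with Proposition 4.2.

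First, I would analyse when $\sts_{k\leftrightarrow k+1}$ fails to exist. Fix $\sts\in\Std_s^0(\nu\setminus\lambda)$ and consider two consecutive integral steps
$$\sts(k-1)\xrightarrow{-\varepsilon_t}\sts(k-\thalf)\xrightarrow{+\varepsilon_u}\sts(k)\xrightarrow{-\varepsilon_v}\sts(k+\thalf)\xrightarrow{+\varepsilon_w}\sts(k+1).$$
The swap $\sts_{k\leftrightarrow k+1}$ exists iff $\sts(k-1)-\varepsilon_v$, $\sts(k-1)-\varepsilon_v+\varepsilon_w$, and $\sts(k-1)-\varepsilon_v+\varepsilon_w-\varepsilon_t$ are all valid partitions and the intermediate half-integer shapes lie in the appropriate $\mathcal{Y}_{r-s+k\pm\thalf}$. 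By a case-by-case inspection on the relative values of $t,u,v,w$, the only genuine obstructions arise when a row of $\sts(k-1)$ lies too close (in length) to the next one, so that performing the swap would either destroy the weakly decreasing property or exceed the allowed depth in $\mathcal{Y}$. In the degenerate cases $s=1$ or $\max\{\ell(\lambda),\ell(\nu)\}\leq 1$ no such obstruction can occur, so (C1) holds trivially; this handles the first two clauses of (coP).

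In the remaining case, I would show that if (coP) holds then for every pair of rows $i-1<i$ with $2\leq i\leq\max\{\ell(\lambda),\ell(\nu)\}$, the bound
$s\leq \max\{|\lambda\ominus(\lambda\cap\nu)|,|\nu\ominus(\lambda\cap\nu)|\}+\minmax(\lambda,\nu)$
ensures that for any path $\sts\in\Std_s^0$ the intermediate partition $\sts(k-1)$ satisfies $\sts(k-1)_{i-1}-\sts(k-1)_i\geq 1$ whenever such a gap is required for a swap to go through; the bound on $s$ prevents the path from ``spending'' all of the slack between rows. Conversely, if (coP) fails then $s$ is strictly larger than this bound, and I would construct an explicit $\sts$ and index $k$ for which the swap is blocked — this gives the reverse implication (C1)$\Rightarrow$(coP) and simultaneously shows that failure of (coP) forces failure of (C1), hence of the conjunction (C1)$\wedge$(C2).

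For (C1)$\Rightarrow$(C2), Proposition 4.2 already provides the inclusion
$$\mathrm{span}\bigl\{u_{\stt^\lambda\circ\stt}+P_{r,s}^{\rhd\nu\setminus\lambda}(n)\bigm|\stt\in{\rm DR}\mhyphen\Std_s(\nu\setminus\lambda)\bigr\}\subseteq{\sf DR}_s(\nu\setminus\lambda),$$
and linear independence follows from linear independence of the Murphy basis modulo $P_{r,s}^{\rhd\nu\setminus\lambda}(n)$. For the reverse inclusion, I would argue that under (C1) the complementary span indexed by $\Std_s^0(\nu\setminus\lambda)$ is closed under the $P_s(n)$-action modulo the ${\rm DR}\mhyphen\Std$-span. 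This is where Theorem 3.3 does the work: for $\sts\in\Std_s^0$ and a Coxeter generator $s_{k,k+1}$, the swap $\sts_{k\leftrightarrow k+1}$ exists by (C1) and again lies in $\Std_s^0$, while the error terms $u_{\mathsf{e}_k(\sts)}$ and $u_{\mathsf{e}_k(\sts_{k\leftrightarrow k+1})}$ either vanish or, by the choice $L=\ell(\sts(k-\thalf))+1$ in Definition 3.2, involve a step $+\varepsilon_L$ forcing the corresponding path into ${\rm DR}\mhyphen\Std$. The action of the generators $p_{k+\half}$ and $p_k$ is handled similarly via the branching coefficients. Consequently $\Delta_s(\nu\setminus\lambda)/\mathrm{span}\{u_{\stt^\lambda\circ\stt}:\stt\in{\rm DR}\mhyphen\Std\}$ has a basis indexed by $\Std_s^0$, which forces ${\sf DR}_s(\nu\setminus\lambda)$ to equal this span and establishes (C2).

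The principal obstacle is the combinatorial case analysis in the first step: the $\minmax$ condition together with the bound involving $|\lambda\ominus(\lambda\cap\nu)|$ and $|\nu\ominus(\lambda\cap\nu)|$ is delicate, and the enumeration of obstructions (and the corresponding explicit construction of a blocked swap in the ``only if'' direction) is where the technical bookkeeping concentrates. The representation-theoretic passage from (C1) to (C2) is comparatively formal once Theorem 3.3 is in hand.
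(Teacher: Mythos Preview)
Your overall architecture (treat the two implications separately, and use Theorem~3.3 to link (C1) with the action on the Murphy basis) is reasonable, but the argument for (coP)$\Rightarrow$(C2) has a genuine gap, and this is precisely where the paper's proof diverges from yours.

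You argue that, modulo the ${\rm DR}\mhyphen\Std$-span, the Coxeter generators act on $\{m_\stt:\stt\in\Std_s^0\}$ by $m_\stt\mapsto m_{\stt_{k\leftrightarrow k+1}}$, and then assert that $p_k$ and $p_{k+\half}$ ``are handled similarly''.  But showing that the $\mathfrak S_s$-action descends nicely to the quotient $\Delta_s(\nu\setminus\lambda)/\mathrm{span}\{m_\stt:\stt\in{\rm DR}\mhyphen\Std\}$ says nothing about whether $p_r$ annihilates that quotient, which is exactly what is required for ${\sf DR}_s(\nu\setminus\lambda)\subseteq\mathrm{span}\{m_\stt:\stt\in{\rm DR}\mhyphen\Std\}$.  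The paper never computes the action of $p_k$ or $p_{k+\half}$ on the Murphy basis, and there is no ``similar'' calculation to appeal to.  Instead, the paper proves (C2) by a \emph{dimension count}: it builds a surjection $\overline{\varphi}_s$ from $\bigoplus_{\alpha}\Delta_s(\nu_{[n]}\setminus\alpha)\otimes\Delta_s(\alpha\setminus\lambda_{[n]})$ onto $\Delta_s^0(\nu\setminus\lambda)$, decomposes the source as $\bigoplus_m V_s^m$ according to the number of $(-\varepsilon_0,+\varepsilon_0)$ steps, and then uses induction on $s$ together with Dvir's recursive formula (Theorem~1.8) to prove $\dim V_s^0=\dim\Delta_s^0(\nu\setminus\lambda)$.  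This forces $|\Std_s^0(\nu\setminus\lambda)|=\dim\Delta_s^0(\nu\setminus\lambda)$ and hence (C2).  Dvir's formula is the essential missing ingredient in your plan.

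A second, smaller issue concerns the converse direction.  You claim that whenever (coP) fails one can always exhibit a blocked swap, i.e.\ $\neg$(coP)$\Rightarrow\neg$(C1).  The paper does this in Cases~I and~II and in the top range of Case~III, but for the range $s'+\minmax(\lambda,\nu)+1\le s\le s'+2\minmax(\lambda,\nu)$ with $c=\max\{\ell(\lambda),\ell(\nu)\}$ it instead shows $\neg$(C2), again via the map $\varphi_s$ (now injective but not surjective) and the dimension count.  Your proposal does not supply the blocked-swap construction needed in this range.

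Finally, your direct attack on (coP)$\Rightarrow$(C1) omits the key technical lemma that makes the ``slack between rows'' argument precise: the paper first proves (Lemma~4.18) that for any composition $\alpha=\lambda_{[n]}-\sum\varepsilon_{i_k}$ one has $\alpha_c\ge\beta_{c+1}$ for all $c$, and then deduces (C1) by transporting the swap through the bijection $\varphi_s$ with pairs of classical skew tableaux (Lemma~4.19), where the swap is automatic.
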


\begin{rmk}
Note that if $(\lambda, \nu,s)$ satisfies $\Std_s^0(\nu\setminus \lambda)\neq \emptyset$ and {\rm (coP)} then the skew partitions $\lambda \ominus (\lambda \cap \nu)$ and $\nu \ominus (\lambda \cap \nu)$ contain no two nodes in the same column. To see this, observe that ${\rm minmax}(\lambda, \nu) <0$ precisely when one of these skew partitions has two nodes in the same column. On the other hand, $\Std_s^0(\nu\setminus \lambda)\neq \emptyset$ implies that $s\geq  \max\{|\lambda\ominus (\lambda \cap \nu)|,|\nu\ominus (\lambda \cap \nu)|\}$. Thus we must have ${\rm minmax}(\lambda, \nu) \geq 0$.
\end{rmk}


We will prove this theorem in the rest of the section but first we note that for co-Pieri triples  we are able to completely understand the action of the partition algebra on $\Delta_s^0(\nu\setminus\lambda)$.  
To simplify the notation for the basis elements of the skew module $\Delta_s(\nu\setminus \lambda)$ we set
$$m_{\stt} := u_{\stt^{\lambda} \circ \stt} + P_{r,s}^{\rhd \nu \setminus \lambda}(n)$$
 for all $\stt\in \Std_s(\nu\setminus \lambda)$.

 \begin{cor}
 Let   $(\lambda,\nu,s) \in \mathscr{P}_{r-s}   \times \mathscr{P}_{\leq r}  \times \NN$  be a co-Pieri triple. Then
 we have that 
$$ \{ m_\stt  + {\sf DR}_s(\nu \setminus \lambda)  \mid \stt \in \Std^0_s(\nu\setminus \lambda)  \} 
$$
is a basis for $\Delta_s^0(\nu \setminus \lambda)$
 and the $P_s(n)$-action on $\Delta_s^0(\nu\setminus\lambda)$ is   as follows:
\begin{equation}\label{co-case}
(m_\stt + {\sf DR_s(\nu\setminus\lambda)} ) s_{k,k+1}  =
 m_{\stt_{k\leftrightarrow k+1}}  + {\sf DR_s(\nu\setminus\lambda)}
\end{equation}
for $1\leq k < s$,
$$ (m_\stt + {\sf DR_s(\nu\setminus\lambda)} ) p_{k,k+1}  =
0 \quad \mbox{and} \quad
 (m_\stt + {\sf DR_s(\nu\setminus\lambda)} ) p_{k}  =
0
$$
for all $1\leq k < s$ and $1\leq k \leq s$, respectively.  
    \end{cor}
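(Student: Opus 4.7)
The plan is in three parts, of increasing difficulty. First, the basis claim is immediate from \cref{hjkldhjklfadshlkjfdshfasd}(C2): the cell module $\Delta_s(\nu\setminus\lambda)$ has basis $\{m_\stt \mid \stt \in \Std_s(\nu\setminus\lambda)\}$ by construction, (C2) identifies $\{m_\stt \mid \stt \in {\rm DR}\mhyphen\Std_s(\nu\setminus\lambda)\}$ as a basis of the submodule ${\sf DR}_s(\nu\setminus\lambda)$, and so the residues of the remaining basis vectors, indexed by $\Std_s^0(\nu\setminus\lambda)$, descend to a basis of the quotient $\Delta_s^0(\nu\setminus\lambda)$. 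Second, for the generators $p_k$ and $p_{k,k+1}$: both lie in the two-sided ideal $P_s(n)\,p_r\,P_s(n)$ of $P_s(n)$ (the standard ``non-maximum propagation'' ideal, which is principal at any of the $p_i$ or $p_{i+\frac{1}{2}}$). Consequently $m_\stt \cdot p_k$ and $m_\stt \cdot p_{k,k+1}$ lie in $\Delta_s(\nu\setminus\lambda) \cdot P_s(n)\,p_r\,P_s(n) = {\sf DR}_s(\nu\setminus\lambda)$ and vanish in the quotient.

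The substantive step is the action of $s_{k,k+1}$. The approach is to apply \cref{MURPHYPrN} to the representative $u_{\stt^\lambda \circ \stt}$ with the Coxeter generator acting at the embedded position $r-s+k$; the required swap exists by (C1), and because both the swap and the $e_{r-s+k}$ modification leave the first $r-s$ positions of the path unchanged, the resulting identity descends to
\begin{equation*}
m_\stt \cdot s_{k,k+1} = m_{\stt_{k\leftrightarrow k+1}} + m_{e_k(\stt)} - m_{e_k(\stt_{k\leftrightarrow k+1})}
\end{equation*}
in $\Delta_s(\nu\setminus\lambda)$, with the convention that undefined $e_k(\cdot)$ terms contribute zero. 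The swap preserves the multiset of integral steps, so $\stt_{k\leftrightarrow k+1}$ remains in $\Std_s^0(\nu\setminus\lambda)$ and contributes the desired principal term. The whole matter then reduces, via \cref{Dvirinclusion}, to verifying that $e_k(\stt), e_k(\stt_{k\leftrightarrow k+1}) \in {\rm DR}\mhyphen\Std_s(\nu\setminus\lambda)$ whenever they are defined.

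The hard part will be this last claim, and I would handle it by a direct tableau-theoretic count. Let $\sts$ stand for either $\stt$ or its swap, set $L = \ell(\sts(k-\tfrac{1}{2}))+1$, and recall that $e_k$ replaces the pair $(+\varepsilon_u, -\varepsilon_u)$ at position $k$ with $(+\varepsilon_L, -\varepsilon_L)$, introducing exactly one additional $-\varepsilon_L$ step. If $L > \ell(\lambda)$ then $\lambda_L = 0$ and the single added step already places $e_k(\sts) \in {\rm DR}^L\mhyphen\Std_s(\nu\setminus\lambda)$. Otherwise $L \leq \ell(\lambda)$, so $\lambda_L > 0$ while $\sts(k-\tfrac{1}{2})_L = 0$; a net balance along the sub-path $\sts[0, k-\tfrac{1}{2}]$ forces at least $\lambda_L$ steps of type $-\varepsilon_L$ before position $k$, while $\sts \in \Std_s^0(\nu\setminus\lambda)$ bounds the total number of $-\varepsilon_L$ steps in all of $\sts$ by $\lambda_L$, pinning the count to exactly $\lambda_L$; the extra step introduced by $e_k$ then brings the total strictly above $\lambda_L$, giving $e_k(\sts) \in {\rm DR}^L\mhyphen\Std_s(\nu\setminus\lambda)$, which completes the argument.
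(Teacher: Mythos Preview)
Your proof is correct and follows exactly the approach the paper invokes in its one-line proof (citing \cref{hjkldhjklfadshlkjfdshfasd} and \cref{MURPHYPrN}); you correctly supply the detail the paper omits, namely that the error terms $e_k(\stt)$ and $e_k(\stt_{k\leftrightarrow k+1})$ from \cref{MURPHYPrN} land in ${\rm DR}\mhyphen\Std_s(\nu\setminus\lambda)$ and hence vanish in the quotient via \cref{Dvirinclusion}. One small simplification of your last paragraph: you need not invoke $\sts\in\Std_s^0$ to pin the $-\varepsilon_L$ count in $\sts$ to exactly $\lambda_L$; counting directly in $e_k(\sts)$ suffices, since its sub-path up to level $k-\tfrac12$ agrees with $\sts$ (carrying row $L$ from $\lambda_L$ down to $0$, hence contributing at least $\lambda_L$ steps of type $-\varepsilon_L$) and the explicit $-\varepsilon_L$ at position $k\to k+\tfrac12$ then gives at least $\lambda_L+1$, placing $e_k(\sts)\in{\rm DR}^L\mhyphen\Std_s(\nu\setminus\lambda)$ uniformly in both length cases and without first checking that the swap preserves $\Std_s^0$.
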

    
    \begin{proof}
    This follows immediately from \cref{hjkldhjklfadshlkjfdshfasd,MURPHYPrN}
    \end{proof}

\begin{eg}
Note that any triple $(\lambda, \nu, s)$ with $\ell(\lambda) = \ell(\nu) = 1$ is a co-Pieri triple.  We calculate the corresponding  Kronecker  coefficients labelled by two two-line partitions in Section 7.
\end{eg}

\begin{eg} 
For $d,\ell,m\geq 0$, we define the partition
 $$ 
\rho = d(\ell, \ell-1, \dots ,2,1) + (m^l)
$$ 
As ${\rm minmax}(\rho,\rho) = d$ we have that    $(\rho ,\rho ,s)$ with any $s\leq d$ 
is a co-Pieri triple.  
\end{eg}

\begin{eg}
Let $\lambda$ and $\nu$ be any pair of partitions such that 
$\lambda\ominus (\lambda \cap \nu)$ and $\nu\ominus (\lambda \cap \nu)$ are skew partitions with no two nodes in the same column and 
let $s= \max\{|\lambda\ominus (\lambda \cap \nu)|,|\nu\ominus (\lambda \cap \nu)|\}$.  Then $(\lambda,\nu,s)$ is a co-Pieri triple.  
This clearly includes  the triples of \cref{piere!} as a subcase.  For another example,  $((10,5,2),(8,3,3,2), 4)$ is such a co-Pieri triple.
\end{eg}

 \begin{eg}
Let    $\lambda=(4,2) $ and $\nu=(4,3,1)$.  
We have that  $\max \{|\lambda \ominus (\lambda \cap \nu)|, |\nu \ominus (\lambda \cap \nu)|\} = 2$ 
 and   $\minmax(\lambda,\nu)=1$. 
Therefore 
$(\lambda,\nu,s)$ is a co-Pieri triple for 
 $s=2$ or $3$.  
  \end{eg}

 \begin{lem}\label{LemmaB}
 Let $(\lambda,\nu,s)\in \mathscr{P}_ { r-s} \times \mathscr{P}_{\leq r} \times \NN$   with $\Std_s^0(\nu\setminus \lambda)\neq \emptyset$. Assume that $(\lambda, \nu,s)$ satisfies {\rm (coP)}.   Let $n\gg r$
 and $\alpha \subseteq \lambda_{[n]}\cap \nu_{[n]}$ be any composition of $n-s$, say
 $$
 \alpha= (\alpha_1, \alpha_2, \ldots ) = \lambda_{[n]}-\varepsilon_{i_1}-\varepsilon_{i_2}- \dots-  \varepsilon_{i_s}
 = \nu_{[n]}-\varepsilon_{j_1}-\varepsilon_{j_2}- \dots-  \varepsilon_{j_s}.
 $$
 Define the composition 
  $$
\beta= (\beta_1, \beta_2, \ldots ) = \lambda_{[n]}+\varepsilon_{j_1}+\varepsilon_{j_2}+ \dots+  \varepsilon_{j_s}
 = \nu_{[n]}+\varepsilon_{i_1}+\varepsilon_{i_2}+ \dots+  \varepsilon_{i_s}.
 $$
Then for all $c\geq 1$ we have 
$$
\alpha_c \geq \beta_{c+1}.
$$
In particular, $\alpha\subseteq \lambda_{[n]}\cap \nu_{[n]}$  is a partition and $\lambda_{[n]}\ominus \alpha$ and $\nu_{[n]} \ominus \alpha$ have no two nodes in the same column.
 \end{lem}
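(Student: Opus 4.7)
The plan is to translate the claim into bookkeeping of removal multiplicities and then extract the crucial bound from (coP). For $c \geq 1$, let $p_c$ and $q_c$ be the multiplicities of $c$ in the multisets $\{i_1,\ldots,i_s\}$ and $\{j_1,\ldots,j_s\}$, respectively, so that
$$\alpha_c = \lambda_{[n],c} - p_c = \nu_{[n],c} - q_c, \qquad \beta_c = \lambda_{[n],c} + q_c = \nu_{[n],c} + p_c,$$
in particular $p_c - q_c = \lambda_{[n],c} - \nu_{[n],c}$ and $\sum_{c \geq 1} p_c = \sum_{c \geq 1} q_c = s$. The case $c = 1$ is immediate from $n \gg r$: since $p_1, q_2 \leq s$,
$$\alpha_1 - \beta_2 = (n - |\lambda|) - p_1 - \lambda_1 - q_2 \geq n - |\lambda| - \lambda_1 - 2s > 0.$$
So the substance of the argument lies in the range $c \geq 2$, where $\lambda_{[n],c} = \lambda_{c-1}$ and $\nu_{[n],c} = \nu_{c-1}$.

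For $c \geq 2$ I would introduce the non-negative \emph{excess}
$$e_c := p_c - \max(0, \lambda_{c-1} - \nu_{c-1}) = q_c - \max(0, \nu_{c-1} - \lambda_{c-1})$$
(the two expressions agree by $p_c - q_c = \lambda_{c-1} - \nu_{c-1}$), and set $E := \sum_{c \geq 2} e_c$. Summing yields $\sum_{c \geq 2} p_c = E + L$ and $\sum_{c \geq 2} q_c = E + N$, where $L := |\lambda \ominus (\lambda \cap \nu)|$ and $N := |\nu \ominus (\lambda \cap \nu)|$. The non-negativity $p_1 = s - E - L \geq 0$ (and its $q$-analogue) gives $E \leq s - \max\{L,N\}$, and the hypothesis (coP) then forces $E \leq \minmax(\lambda,\nu)$. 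This excess bound is the crux of the argument.

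For $c \geq 2$ with $c \leq \max\{\ell(\lambda),\ell(\nu)\}$, I would compute
\begin{align*}
\alpha_c - \beta_{c+1} &= \lambda_{c-1} - p_c - \nu_c - p_{c+1} \\
&= \bigl(\lambda_{c-1} - \max(0, \lambda_{c-1} - \nu_{c-1})\bigr) - \bigl(\nu_c + \max(0, \lambda_c - \nu_c)\bigr) - (e_c + e_{c+1}) \\
&= \min(\lambda_{c-1}, \nu_{c-1}) - \max(\lambda_c, \nu_c) - (e_c + e_{c+1}).
\end{align*}
Taking $i = c$ in the definition of $\minmax$ gives $\min(\lambda_{c-1}, \nu_{c-1}) - \max(\lambda_c, \nu_c) \geq \minmax(\lambda,\nu) \geq E \geq e_c + e_{c+1}$, whence $\alpha_c \geq \beta_{c+1}$. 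For $c > \max\{\ell(\lambda), \ell(\nu)\}$ both $\lambda_c, \nu_c$ and $\lambda_{c+1}, \nu_{c+1}$ vanish (forcing $p_{c+1} = q_{c+1} = 0$), so $\beta_{c+1} = 0 \leq \alpha_c$.

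For the ``In particular'' clause, the chain $\alpha_c \geq \beta_{c+1} = \lambda_{[n],c+1} + q_{c+1} \geq \lambda_{[n],c+1} \geq \alpha_{c+1}$ shows that $\alpha$ is a partition, and the inequalities $\alpha_c \geq \lambda_{[n],c+1}$ and $\alpha_c \geq \nu_{[n],c+1}$ (both subsumed by $\alpha_c \geq \beta_{c+1}$) translate directly into the statement that neither $\lambda_{[n]} \ominus \alpha$ nor $\nu_{[n]} \ominus \alpha$ contains two boxes in the same column. The main obstacle throughout is the algebraic bookkeeping that converts (coP) into the bound $E \leq \minmax(\lambda,\nu)$; everything afterwards is routine manipulation.
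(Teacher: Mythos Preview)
Your proof is essentially the same as the paper's: your multiplicities $p_c,q_c$ are the paper's $|I|_c,|J|_c$, your excess $e_c=\min(p_c,q_c)$ is the paper's $|I\cap J|_c$, and both arguments reduce to the identity $\alpha_c-\beta_{c+1}=\min(\lambda_{c-1},\nu_{c-1})-\max(\lambda_c,\nu_c)-(e_c+e_{c+1})$ followed by bounding $e_c+e_{c+1}\leq s-\max\{L,N\}\leq\minmax(\lambda,\nu)$ via (coP). The only cosmetic difference is that you obtain the inequality $E\leq s-\max\{L,N\}$ from $p_1,q_1\geq 0$, whereas the paper computes the equality $|I\cap J|=s-\max\{L,N\}$ directly; both suffice.
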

   \begin{proof}

 First note that as $n\gg r$, $\alpha_1 \geq \beta_2$.  If $\ell(\lambda) = \ell(\nu)=1$ then $\alpha_2 \geq \beta_3 = 0$ and for $c \geq 3$ we have $\alpha_c = \beta_{c+1} = 0$ so we are done. Now assume $\max \{\ell(\lambda), \ell(\nu)\} \geq 2$. Define multi-sets
 $$
 I=\{i_1,i_2,\dots, i_s\} \quad \mbox{and} \quad  J=\{j_1,j_2,\dots, j_s\}.
 $$
 For $c\geq 2$, define 
  $|I|_c=\sharp\{i_k \in I \mid i_k=c\}$ and define $|J|_c$ and $|I\cap J|_c$ similarly.  
Now, 
$$
\alpha_c = \lambda_{c-1} - |I|_c = \lambda_{c-1} - |I \setminus (I \cap J)|_c - |I\cap J|_c
$$
$$
\beta_{c+1} = \lambda_{c} + |J|_{c+1} = \lambda_{c} + |J \setminus (I \cap J)|_c + |I\cap J|_{c+1}.
$$
Note that 
$$
|I \setminus I \cap J|_c=
\begin{cases}
 \lambda_{c-1}- \nu_{c-1} &\text{if }\lambda_{c-1} - \nu_{c-1} \geq 0  \\
 0				&\text{otherwise,}
\end{cases}
\quad\quad
|J \setminus I \cap J|_{c+1}=
\begin{cases}
  \nu_{c} -  \lambda_{c} &\text{if } \nu_{c}  - \lambda_{c}\geq 0  \\
 0				&\text{otherwise.}
\end{cases}
$$
Hence 
$$
\lambda_c - |I\setminus I \cap J|_c = \min \{\lambda_{c-1},\nu_{c-1}\},
\qquad\qquad
\lambda_{c}+
 |J\setminus I \cap J|_{c+1} = \max \{\lambda_{c},\nu_{c}\},
$$
and we get 
\begin{align*}\alpha_c-\beta_{c+1}	
&= \min \{\lambda_{c-1},\nu_{c-1}\}- \max\{\lambda_{c},\nu_{c}\}
-|I \cap J|_c - |I \cap J|_{c+1} \\
&\geq 
\min \{\lambda_{c-1},\nu_{c-1}\}- \max\{\lambda_{c},\nu_{c}\}
-| I \cap J| .
\end{align*}
Now, $$| I \cap J|= s-
    \max\{|\lambda\ominus (\lambda \cap \nu)|,|\nu\ominus (\lambda \cap \nu)|\}.  $$  
 So we get 
$$\alpha_c -\beta_{c+1}\geq \min\{\lambda_{c-1},\nu_{c-1}\} - \max \{\lambda_{c},\nu_{c}\} 
 +    \max\{|\lambda\ominus (\lambda \cap \nu)|,|\nu\ominus (\lambda \cap \nu)|\}  -s.$$
Using {\rm (coP)}, we get that $\alpha_c - \beta_{c+1}\geq 0 $ for $2\leq c\leq \max\{\ell(\lambda),\ell(\nu)\}$. Now, if $c>\max\{\ell(\lambda), \ell(\nu)\}$ then $\beta_{c+1} = 0$ and so $\alpha_c\geq \beta_{c+1} = 0$ as required. 
   \end{proof}

 We define $\Std_s^+(\nu\setminus\lambda)=  \Std_s(\nu\setminus\lambda) \setminus (\cup_{i\geq 1}{\rm DR}^i(\nu\setminus\lambda))$.   

\begin{lem} \label{combbijection}
Let  $(\lambda,\nu,s)\in  \mathscr{P}_{r-s}\times\mathscr{P}_{\leq r}\times \ZZ_{\geq 0}$ 
with $\Std^0_s(\nu\setminus\lambda)\neq \emptyset$. Assume that $(\lambda, \nu, s)$ satisfies {\rm (coP)}. Then we have a bijective map
\begin{equation}\label{anequationweshalluse}
\varphi_s\, : \,  \bigsqcup_{\begin{subarray}c \alpha  \vdash n-s \\ \alpha\subseteq  \lambda_{[n]} \cap \nu_{[n]}
\end{subarray}} \Std_s (\nu_{[n]}\setminus \alpha ) \times  \Std_s (\alpha  \setminus \lambda_{[n]})
\rightarrow
 \Std_s^+(\nu\setminus\lambda)
 \end{equation}
 where a given  pair on the lefthand-side is necessarily of the form
\begin{equation*}
 (\sts,\stt)=((-\varepsilon_{0}, + \varepsilon_{j_1},-\varepsilon_{0}, + \varepsilon_{j_2}, \ldots ,-\varepsilon_{0}, + \varepsilon_{j_{s}}),
(-\varepsilon_{i_1}, + \varepsilon_{0}, -\varepsilon_{i_2}, + \varepsilon_{0}, \ldots ,-\varepsilon_{i_{s}}, +\varepsilon_{0})),
\end{equation*}
with $i_l,j_l\neq 0$ for all $1\leq l\leq s$,
and such a pair of tableaux is sent to 
$$\varphi_s(\sts , \stt)=(-\varepsilon_{i_1-1}, + \varepsilon_{j_1-1},-\varepsilon_{i_2-1}, + \varepsilon_{j_2-1}, \ldots ,-\varepsilon_{i_s-1}, + \varepsilon_{j_s-1})
\in   \Std_s^+(\nu\setminus\lambda).
 $$
Moreover, given any
 $\varphi_s(\sts,\stt)=\stu \in \Std^+_s(\nu\setminus\lambda)$ and any $1\leq k\leq s-1$ we have that
  $\varphi(\sts_{k\leftrightarrow k+1},\stt_{k\leftrightarrow k+1})=  
\stu_{k\leftrightarrow k+1}\in \Std_s^+(\nu\setminus \lambda)$ and hence {\rm (C1)} holds.   
 \end{lem}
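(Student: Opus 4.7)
My plan for the proof is to first verify that $\varphi_s$ is well-defined and bijective onto $\Std_s^+(\nu\setminus\lambda)$, and then derive the swap compatibility (C1) as a consequence. The underlying idea of $\varphi_s$ is to reinterpret the pair $(\sts,\stt)$: $\stt$ records a sequence of removals from rows $i_l \ge 1$ of $\lambda_{[n]}$ (hence rows $i_l - 1 \ge 0$ of $\lambda$), while $\sts$ records the additions to rows $j_l - 1$ of $\nu$ (after the index shift from $\nu_{[n]}$); the pair is combined integrally into a single path in the branching graph $\mathcal{Y}$.

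The heart of the well-definedness is to show that every intermediate shape of $\varphi_s(\sts,\stt)$ is a valid partition. After the $k$-th integral step, the $c$-th row of $\varphi_s(\sts,\stt)(k)$ equals $\lambda_c + |J|^{(k)}_{c+1} - |I|^{(k)}_{c+1}$, where $|I|^{(k)}_c$ and $|J|^{(k)}_c$ denote the restrictions of the total counts to the first $k$ integral steps. The partition condition $\varphi_s(\sts,\stt)(k)_c \ge \varphi_s(\sts,\stt)(k)_{c+1}$ simplifies to
$$\lambda_c - \lambda_{c+1} \ge (|I|^{(k)}_{c+1} - |I|^{(k)}_{c+2}) + (|J|^{(k)}_{c+2} - |J|^{(k)}_{c+1}),$$
and this is dominated by the inequality $\lambda_c - \lambda_{c+1} \ge |I|_{c+1} + |J|_{c+2}$ supplied by \cref{LemmaB} (after the shift $c \to c+1$), because each partial count is bounded by its total and the subtracted terms on the right are nonnegative. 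The half-integer shape $\varphi_s(\sts,\stt)(k+\thalf) = \varphi_s(\sts,\stt)(k) - \varepsilon_{i_{k+1}-1}$ needs a strict version of the same inequality at the row of removal; strictness follows because the $(k+1)$-th step of $\stt$ contributes a further $-\varepsilon_{i_{k+1}}$ that has not yet been counted in $|I|^{(k)}$. The endpoint is $\nu$ by a direct row-by-row calculation from $\alpha = \lambda_{[n]} - \sum\varepsilon_{i_l} = \nu_{[n]} - \sum\varepsilon_{j_l}$, and the $\Std_s^+$ condition (no $-\varepsilon_i$ with $i\ge 1$ appearing more than $\lambda_i$ times) follows at once from $|I|_{c+1} = \lambda_{[n],c+1} - \alpha_{c+1} \le \lambda_c$.

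The inverse is easy to exhibit: given $\stu = (-\varepsilon_{a_1}, +\varepsilon_{b_1}, \ldots, -\varepsilon_{a_s}, +\varepsilon_{b_s}) \in \Std_s^+(\nu\setminus\lambda)$, set $i_l = a_l+1$, $j_l = b_l+1$, and $\alpha = \lambda_{[n]} - \sum \varepsilon_{i_l}$. The identity $\alpha = \nu_{[n]} - \sum \varepsilon_{j_l}$ is equivalent to $\nu - \lambda = \sum(\varepsilon_{j_l - 1} - \varepsilon_{i_l - 1})$, which holds because $\stu$ is a path from $\lambda$ to $\nu$. \cref{LemmaB} then guarantees $\alpha$ is a partition contained in $\lambda_{[n]} \cap \nu_{[n]}$, and the tableaux $\sts,\stt$ with the prescribed integral steps are valid standard Kronecker tableaux: each intermediate shape differs from a partition lying between $\alpha$ and $\lambda_{[n]}$ (respectively $\nu_{[n]}$) by a single box addition or removal in a nonzero row, with the long first row absorbing all $-\varepsilon_0$ and $+\varepsilon_0$ bookkeeping. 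A direct check shows $\varphi_s$ and this construction are mutually inverse.

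For (C1), I must show that $\sts_{k\leftrightarrow k+1}$ and $\stt_{k\leftrightarrow k+1}$ both exist under (coP). In $\sts$, reordering two consecutive additions $(-\varepsilon_0,+\varepsilon_{j_k}),(-\varepsilon_0,+\varepsilon_{j_{k+1}})$ is legal precisely when $\sts(k-1)+\varepsilon_{j_{k+1}}$ is a partition; a local check, again using the \cref{LemmaB} gap $\lambda_c - \lambda_{c+1} \ge |I|_{c+1} + |J|_{c+2}$, shows there is enough vertical slack between consecutive rows of the intermediate shape to rearrange any two additions without violating the decreasing-row property. The analogous argument handles $\stt$. Once both swaps exist, the equality $\varphi_s(\sts_{k\leftrightarrow k+1},\stt_{k\leftrightarrow k+1}) = \stu_{k\leftrightarrow k+1}$ is formal, since both sides are obtained from $\stu$ by the same permutation of integral steps and both lie in $\Std_s^+(\nu\setminus\lambda)$ by the well-definedness proved above. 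The main obstacle throughout is the delicate bookkeeping of partial and total multiplicity counts in the second paragraph; once \cref{LemmaB} is invoked, the inequalities reduce to comparisons of nonnegative integers and the argument proceeds cleanly.
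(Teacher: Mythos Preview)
Your proof is correct and follows essentially the same approach as the paper: both arguments rest on the key inequality $\alpha_c \geq \beta_{c+1}$ from \cref{LemmaB}, and both construct the inverse in the same way. The paper's verification that the intermediate shapes $\gamma(l)$, $\gamma'(l)$ are partitions is somewhat slicker---it works in the shifted setting $\gamma(l) = \lambda_{[n]} + \sum_{k\leq l}(-\varepsilon_{i_k}+\varepsilon_{j_k})$ and observes the sandwich $\gamma_i \geq \alpha_i \geq \beta_{i+1} \geq \gamma_{i+1}$ directly, rather than manipulating partial counts $|I|^{(k)}_c$, $|J|^{(k)}_c$---but your computation reaches the same conclusion. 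Similarly, for the existence of $\sts_{k\leftrightarrow k+1}$ and $\stt_{k\leftrightarrow k+1}$ (and for the validity of $\sts,\stt$ in the surjectivity step), the paper simply invokes the conclusion of \cref{LemmaB} that $\lambda_{[n]}\ominus\alpha$ and $\nu_{[n]}\ominus\alpha$ have no two boxes in the same column, from which any reordering of the row-additions or row-removals is automatically a valid tableau; your ``local check via the gap inequality'' amounts to re-deriving this classical fact by hand. One small omission: in your surjectivity paragraph you invoke \cref{LemmaB} to conclude $\alpha$ is a partition, but \cref{LemmaB} takes as hypothesis that $\alpha$ is already a composition---you should note explicitly that $\alpha_i\geq 0$ follows from $\stu\in\Std_s^+(\nu\setminus\lambda)$ (the $\Std_s^+$ condition gives exactly $|I|_{i}\leq (\lambda_{[n]})_i$), which you do understand from your earlier remark but do not restate where it is needed.
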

\begin{proof}
We first show that for any $\alpha\vdash n-s$ with $\alpha \subseteq \lambda_{[n]} \cap \nu_{[n]}$ and $(\sts, \stt)\in \Std_s(\nu_{[n]}\setminus \alpha ) \times \Std_s(\alpha \setminus \lambda_{[n]})$ we have $\varphi_s(\sts, \stt)\in \Std_s(\nu\setminus \lambda)$. Write 
\begin{align*}
  \sts = (-\varepsilon_{0}, + \varepsilon_{j_1},-\varepsilon_{0}, + \varepsilon_{j_2}, \ldots ,-\varepsilon_{0}, + \varepsilon_{j_{s}}) 
\qquad  \stt = (-\varepsilon_{i_1}, + \varepsilon_{0}, -\varepsilon_{i_2}, + \varepsilon_{0}, \ldots ,-\varepsilon_{i_{s}}, +\varepsilon_{0}).
\end{align*}So we have 
$$\alpha = \lambda_{[n]} -\varepsilon_{i_1} - \varepsilon_{i_2} -\ldots - \varepsilon_{i_s} = \nu_{[n]} -\varepsilon_{j_1} - \varepsilon_{j_2} -\ldots - \varepsilon_{j_s}.$$
Setting 
$$\beta = \lambda_{[n]} +\varepsilon_{j_1} + \varepsilon_{j_2} + \ldots + \varepsilon_{j_s}$$
and using Lemma \ref{LemmaB} we get
$$\alpha_i \geq \beta_{i+1} \quad \forall i\geq 1.$$
In order to prove that $\stu  = \varphi_s(\sts, \stt) \in \Std_s(\nu\setminus \lambda)$ we need to show that for all $1\leq l\leq s-1$ we have that $\gamma(l) := \lambda_{[n]} + \sum_{k=1}^l (-\varepsilon_{i_k} + \varepsilon_{j_k})$ and $\gamma'(l) = \lambda_{[n]} + \sum_{k=1}^{l-1} (-\varepsilon_{i_k} + \varepsilon_{j_k}) - \varepsilon_{i_l}$ are partitions. But for $\gamma = \gamma(l)$ or $\gamma'(l)$ we have
$$\gamma_i \geq \alpha_i \geq \beta_{i+1} \geq \gamma_{i+1} \quad \forall i\geq 1.$$
So we are done. Now $\varphi_s(\sts, \stt)\in \Std^+_s(\nu\setminus \lambda)$ follows directly from the fact that $\alpha$ is a partition. Moreover, it is clear that the map $\varphi_s$ is injective and that $\varphi_s(\sts_{k\leftrightarrow k+1}, \stt_{\leftrightarrow k+1}) = \stu_{k \leftrightarrow k+1}$ by definition.

It remains to show that $\varphi_s$ is surjective.
Given  $$\stu =(-\varepsilon_{i_1}, + \varepsilon_{j_1},-\varepsilon_{i_2}, + \varepsilon_{j_2}, \ldots ,-\varepsilon_{i_s}, + \varepsilon_{j_s}) 
 \in \Std^+_s(\nu\setminus\lambda),$$ 
we set  $\alpha = \min_n(\stu):=  \lambda_{[n]} - \varepsilon_{i_1+1}-\varepsilon_{i_2+1} - \dots - \varepsilon_{i_s+1} = \nu_{[n]} - \varepsilon_{j_1+1}-\varepsilon_{j_2+1} - \dots - \varepsilon_{j_s+1}$. As $\stu\in \Std_s^+(\nu\setminus \lambda)$ we have that $\alpha$ must be a composition of $n-s$. 
 Using \cref{LemmaB}, we know that 
 $\alpha \subseteq \lambda_{[n]}\cap \nu_{[n]}$  is in fact a partition and that
$\lambda_{[n]} \ominus  \alpha$ and $\nu_{[n]} \ominus \alpha$ contain no two boxes in the same column.  It follows that 
 $$
\sts := (-\varepsilon_{0}, + \varepsilon_{j_1+1},-\varepsilon_{0}, + \varepsilon_{j_2+1}, \ldots ,-\varepsilon_{0}, + \varepsilon_{j_{s}+1})
\in \Std_s (\nu_{[n]}\setminus \alpha ) \quad \mbox{and} 
$$
$$
\stt := (-\varepsilon_{i_1+1}, + \varepsilon_{0}, -\varepsilon_{i_2+1}, + \varepsilon_{0}, \ldots ,-\varepsilon_{i_{s}+1}, +\varepsilon_{0})
\in 
\Std_s (\alpha  \setminus \lambda_{[n]}) 
 $$
satisfy $\varphi_s(\sts, \stt) = \stu$ as required.  
 \end{proof}

  The next proposition   gives a representation theoretic interpretation (for co-Pieri triples) of Dvir's recursive formula for calculating Kronecker coefficients (and hence justifies the name \lq Dvir radical').

\begin{prop} Let
 $(\lambda,\nu,s)\in \mathscr{P}_{r-s} \times \mathscr{P}_{\leq r} \times \mathbb{Z}_{>0}$ with $\Std_s^0(\nu\setminus \lambda) \neq \emptyset$. Assume that $(\lambda, \nu,s)$ satisfies {\rm (coP)}.   Then  there is a surjective $P_s(n)$-homomorphism
\begin{equation}\label{anequationweshalluse2}
\overline{\varphi}_s \, : \, \bigoplus_{\begin{subarray}c \alpha  \vdash n-s \\ \alpha \subseteq  \lambda_{[n]} \cap \nu_{[n]}
\end{subarray}} \Delta_s(\nu_{[n]}\setminus \alpha ) \otimes  \Delta_s (\alpha \setminus \lambda_{[n]})
\rightarrow
 \Delta_s^0(\nu\setminus\lambda)
 \end{equation}
 given by 
 $$\overline{\varphi}_s(
  m_{\sts} 
\otimes  m_{\stt}
) = m_{\varphi_s(\sts ,\stt)}+{\sf DR}_s(\nu\setminus\lambda)$$ for all $\sts \in \Std_s(\nu_{[n]}\setminus \alpha)$ and $\stt \in \Std_s(\alpha \setminus \lambda_{[n]})$ (where $P_s(n)$ acts diagonally on the   module on the lefthand-side).  
  Furthermore, the kernel of this homomorphism is spanned by
\begin{equation}
\{m_\sts \otimes m_\stt \, | \, \varphi_s(\sts , \stt) \in {\rm DR}^0\mhyphen\Std_s(\nu \setminus \lambda)\}.
\end{equation}
and hence the set
$$\{m_\stu + {\sf DR}_s(\nu \setminus \lambda) \, | \, \stu \in \Std_s^0(\nu \setminus \lambda)\}$$
form a basis for $\Delta_s^0(\nu \setminus \lambda)$, i.e. {\rm (C2)} holds.  

 \end{prop}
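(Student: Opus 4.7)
The plan is to construct $\overline{\varphi}_s$ directly from the combinatorial bijection $\varphi_s$ of Lemma~\ref{combbijection}, verify $P_s(n)$-equivariance on the generators of the algebra, and then extract the basis, surjectivity, and kernel statements from (C2) of Theorem~\ref{hjkldhjklfadshlkjfdshfasd} together with a short dimension count against the cardinality of $\Std_s^+(\nu\setminus\lambda)$.

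First, since $\varphi_s$ is a bijection from the disjoint union indexing the source onto $\Std_s^+(\nu\setminus\lambda)$, the assignment $m_\sts\otimes m_\stt\mapsto m_{\varphi_s(\sts,\stt)}+{\sf DR}_s(\nu\setminus\lambda)$ extends linearly to a well-defined map $\overline{\varphi}_s$ whose image is automatically contained in $\Delta_s^0(\nu\setminus\lambda)$. Moreover, whenever $\varphi_s(\sts,\stt)\in{\rm DR}^0\mhyphen\Std_s(\nu\setminus\lambda)$, Proposition~\ref{Dvirinclusion} already places $m_{\varphi_s(\sts,\stt)}$ inside ${\sf DR}_s(\nu\setminus\lambda)$, so the corresponding $m_\sts\otimes m_\stt$ lies in $\ker\overline{\varphi}_s$; this establishes one inclusion in the kernel description.

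To verify $P_s(n)$-equivariance it suffices to check the Coxeter generators $s_{k,k+1}$ and the generators $p_k$, $p_{k+\frac{1}{2}}$. Each factor $\Delta_s(\nu_{[n]}\setminus\alpha)$ and $\Delta_s(\alpha\setminus\lambda_{[n]})$ is attached to a triple of maximal depth, so by Remark~\ref{specialsym} both are skew Specht modules for $\mathfrak{S}_s$ lifted to $P_s(n)$ through the quotient $P_s(n)/P_s(n)p_sP_s(n)\cong \mathbb{Q}\mathfrak{S}_s$; hence $p_k$ and $p_{k+\frac{1}{2}}$ (which have propagating number $s-1$ and therefore lie in the ideal $P_s(n)p_sP_s(n)$) annihilate both factors and act diagonally as zero on the source. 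They also act as zero on $\Delta_s^0(\nu\setminus\lambda)$ by the Corollary preceding this proposition. For $s_{k,k+1}$, every integral step of $\sts$ has the form $(-\varepsilon_0,+\varepsilon_{j_l})$ with $j_l>0$ and every integral step of $\stt$ has the form $(-\varepsilon_{i_l},+\varepsilon_0)$ with $i_l>0$, so in each case $\sts(k-\tfrac{1}{2})\neq\sts(k+\tfrac{1}{2})$ and the operator ${\sf e}_k$ of Definition~\ref{defn:error} is undefined; Theorem~\ref{MURPHYPrN} then collapses to $m_\sts s_{k,k+1}=m_{\sts_{k\leftrightarrow k+1}}$ and $m_\stt s_{k,k+1}=m_{\stt_{k\leftrightarrow k+1}}$, the swapped tableaux existing by the second assertion of Lemma~\ref{combbijection}. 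Combining this with the identity $\varphi_s(\sts_{k\leftrightarrow k+1},\stt_{k\leftrightarrow k+1})=\varphi_s(\sts,\stt)_{k\leftrightarrow k+1}$ from the same lemma and the action formula (\ref{co-case}) on $\Delta_s^0(\nu\setminus\lambda)$ yields equivariance.

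Finally, (C2) of Theorem~\ref{hjkldhjklfadshlkjfdshfasd} gives a basis $\{m_\stt : \stt\in{\rm DR}\mhyphen\Std_s(\nu\setminus\lambda)\}$ of ${\sf DR}_s(\nu\setminus\lambda)$ inside $\Delta_s(\nu\setminus\lambda)$, so passing to the quotient produces the claimed basis $\{m_\stu+{\sf DR}_s(\nu\setminus\lambda) : \stu\in\Std_s^0(\nu\setminus\lambda)\}$ of $\Delta_s^0(\nu\setminus\lambda)$; this simultaneously proves the basis statement and, since every such $\stu$ lies in $\Std_s^+(\nu\setminus\lambda)$ and hence has a preimage $\varphi_s^{-1}(\stu)$, the surjectivity of $\overline{\varphi}_s$. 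For the kernel, the source has dimension $|\Std_s^+(\nu\setminus\lambda)|$ by the bijection while the image has dimension $|\Std_s^0(\nu\setminus\lambda)|=|\Std_s^+(\nu\setminus\lambda)|-|{\rm DR}^0\mhyphen\Std_s(\nu\setminus\lambda)\cap\Std_s^+(\nu\setminus\lambda)|$, so $\ker\overline{\varphi}_s$ has dimension equal to the number of pairs $(\sts,\stt)$ with $\varphi_s(\sts,\stt)\in{\rm DR}^0\mhyphen\Std_s(\nu\setminus\lambda)$; as these elements are linearly independent (they are distinct basis vectors of the source) and already lie in $\ker\overline{\varphi}_s$, they must exhaust it. The main obstacle in this scheme is the equivariance check: one must recognise that because both factors are genuine skew Specht modules (not merely skew cell modules), the error terms ${\sf e}_k(\cdot)$ coming out of Theorem~\ref{MURPHYPrN} are forced to be undefined on each factor, so that the Murphy-basis action reduces to the classical Coxeter swap and matches (\ref{co-case}) term-by-term under $\overline{\varphi}_s$.
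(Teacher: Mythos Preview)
Your argument is circular. The proposition you are proving is precisely the step in the proof of Theorem~\ref{hjkldhjklfadshlkjfdshfasd} that establishes the implication (coP) $\Rightarrow$ (C2); the Corollary containing (\ref{co-case}) is stated as a \emph{consequence} of Theorem~\ref{hjkldhjklfadshlkjfdshfasd}. So when in your final paragraph you write ``(C2) of Theorem~\ref{hjkldhjklfadshlkjfdshfasd} gives a basis $\{m_\stt : \stt\in{\rm DR}\mhyphen\Std_s(\nu\setminus\lambda)\}$ of ${\sf DR}_s(\nu\setminus\lambda)$'', you are assuming exactly what you must prove. Likewise, invoking (\ref{co-case}) for the $s_{k,k+1}$-action on the target is, as written, circular for the same reason. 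At this stage of the argument all you know about ${\sf DR}_s(\nu\setminus\lambda)$ is the \emph{inclusion} of Proposition~\ref{Dvirinclusion}; you do not yet know that the ${\rm DR}$-tableaux index a basis.

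The paper's proof avoids this by computing $\dim \Delta_s^0(\nu\setminus\lambda)$ externally, via Dvir's recursion (Theorem~\ref{sadfadsffdssdffsdfsdsfdsfafdsfdsasfdfdasdfsdfsafsadfsadfdsafsda}) and induction on $s$. One decomposes the source as $\bigoplus_{m\geq 0} \Vec_s^m$, where $\Vec_s^m$ is spanned by those $m_\sts\otimes m_\stt$ whose image $\varphi_s(\sts,\stt)$ has exactly $m$ integral steps $(-\varepsilon_0,+\varepsilon_0)$. Proposition~\ref{Dvirinclusion} gives $\bigoplus_{m>0}\Vec_s^m\subseteq\ker\overline{\varphi}_s$. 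For $m>0$ there is an $\mathfrak{S}_s$-isomorphism $\Vec_s^m\cong(\Vec_{s-m}^0\boxtimes\mathbf{S}(m))\!\uparrow$, and by induction $\Vec_{s-m}^0\cong\Delta_{s-m}^0(\nu\setminus\lambda)$ is known as an $\mathfrak{S}_{s-m}$-module in terms of Kronecker coefficients. Summing over $m>0$ and comparing with the decomposition of the full source (equation~(\ref{411})) via Dvir's formula (\ref{ThDvir2}) yields $\dim \Vec_s^0=\dim\Delta_s^0(\nu\setminus\lambda)$, hence $\ker\overline{\varphi}_s=\bigoplus_{m>0}\Vec_s^m$, from which (C2) follows. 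Your dimension count tries to replace this entire inductive comparison with a direct appeal to (C2), which is not yet available.
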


 \begin{proof}

By \cref{combbijection} and \cref{Dvirinclusion}, it is clear that $\overline{\varphi}_s$ is a surjective map. The generators $p_k$ and $p_{k,k+1}$ act as zero on both modules. Using Section 3, the action of $\mathfrak{S}_s$ on skew cell modules and \cref{combbijection} we have that the action of $s_{k,k+1}$ also coincide under the map $\overline{\varphi}_s$. Thus $\overline{\varphi}_s$ is a surjective $P_s(n)$-homomorphism. It remains to show that its kernel has the required form. As $p_k$ and $p_{k,k+1}$ act as zero,   we can view $\overline{\varphi}_s$ as a homorphism of $\mathfrak{S}_s$-modules. As such we have
\begin{equation}\label{411}
\Delta^+_s(\nu\setminus \lambda) := \bigoplus_{\begin{subarray}c \alpha  \vdash n-s \\ \alpha \subseteq  \lambda_{[n]} \cap \nu_{[n]}
\end{subarray}} \Delta_s(\nu_{[n]}\setminus \alpha ) \otimes  \Delta_s (\alpha \setminus \lambda_{[n]}) \cong \bigoplus_{\begin{subarray}{c} \alpha \vdash n-s \\ \alpha \subseteq \lambda_{[n]} \cap \nu_{[n]} \\ \mu \vdash s \end{subarray}} g(\lambda_{[n]} \ominus \alpha , \nu_{[n]} \ominus \alpha , \mu) \, {\sf S}(\mu).
\end{equation}
On the other hand, recall that we have
\begin{equation}\label{412}
\Delta^0_s(\nu \setminus \lambda) = \bigoplus_{\mu \vdash s} g(\lambda_{[n]}, \nu_{[n]}, \mu_{[n]}) \, {\sf S}(\mu).
\end{equation}
Now, note that $\Delta^+_s(\nu\setminus \lambda)$ decomposes as 
\begin{equation}\label{413}
\Delta_s^+(\nu\setminus \lambda) = \bigoplus_{0\leq m\leq s} \Vec_s^m
\end{equation}
where
$\Vec_s^m$ is spanned by all $m_\sts \otimes m_\stt$ such that $\varphi_s(\sts, \stt)$ has precisely $m$ integral steps of the form $(-\varepsilon_0 , +\varepsilon_0)$. In particular we have that $\Vec_s^0$ is spanned by all $m_\sts \otimes m_\stt$ with $\varphi_s(\sts, \stt)\in \Std_s^0(\nu \setminus \lambda)$. We claim that
$$\ker  (\overline{\varphi}_s) = \bigoplus_{0<m\leq s}\Vec_s^m.$$
By \cref{Dvirinclusion} we know that 
\begin{equation}\label{414}
\bigoplus_{0<m\leq s}\Vec_s^m \subseteq \ker(\overline{\varphi}_s).
\end{equation} We will prove that in fact we have equality, in other words $\Vec_s^0 \cong \Delta_s^0(\nu \setminus \lambda)$. We proceed by induction on $s$.  
If $s = \max\{ |\lambda \ominus (\lambda \cap \nu)|, |\nu \ominus (\lambda \cap \nu)|\}$ then $\Std^+_s(\nu \setminus \lambda) = \Std^0_s(\nu \setminus \lambda)$ and so $\bigoplus_{1<m\leq s}\Vec^m_s =0$. Moreover, in this case \cref{411} gives
\begin{eqnarray}\label{415}
\Delta_s^+(\nu \setminus \lambda ) &\cong & \sum_{\mu \vdash s} g(\lambda_{[n]} \ominus(\lambda_{[n]}\cap \nu_{[n]}), \nu_{[n]} \ominus (\lambda_{[n]} \cap \nu_{[n]}), \mu_{[n]})  {\sf S}(\mu) \nonumber \\
&=& \sum_{\mu\vdash s} g(\lambda_{[n]}, \nu_{[n]}, \mu_{[n]})  {\sf S}(\mu) \nonumber \\
&\cong &  \Delta_s^0(\nu \setminus \lambda),
\end{eqnarray}
so we are done in this case.
Now let $s>\max\{ |\lambda \ominus (\lambda \cap \nu)|, |\nu \ominus (\lambda \cap \nu)|\}$ and assume that the result holds for all $s'<s$. Note that for $m>0$ we have
\begin{equation}\label{418}
\Vec_s^m \cong (\Vec_{s-m}^0 \boxtimes {\sf S}(m))\uparrow_{\mathfrak{S}_{s-m} \times \mathfrak{S}_m}^{\mathfrak{S}_s},
\end{equation}
and by induction, we have
\begin{equation}\label{419}\Vec_{s-m}^0 \cong \bigoplus_{\beta \vdash s-m} g(\lambda_{[n]}, \nu_{[n]}, \beta_{[n]}) \, {\sf S}(\beta)
\end{equation}
for $m>0$.  
Using the Littlewood--Richardson rule, we have
\begin{eqnarray}\label{420}
\Vec_s^m &\cong& \bigoplus_{\beta\vdash s-m}g(\lambda_{[n]}, \nu_{[n]}, \beta_{[n]}) ({\sf S}(\beta) \boxtimes {\sf S}(m))\uparrow_{\mathfrak{S}_{s-m}\times \mathfrak{S}_m}^{\mathfrak{S}_s} \nonumber \\
&\cong & \bigoplus_{\begin{subarray}{c} \beta\vdash s-m \\  \mu \in P(s,\beta) \end{subarray}} g(\lambda_{[n]}, \nu_{[n]}, \beta_{[n]} ) \, {\sf S}(\mu).
\end{eqnarray}
for $m>0$.  
Note that $\mu\in P(s,\beta)$ if and only if $\beta_{[n]}\in P(n,\mu)$. This follows from the fact that $\mu \in P(s,\beta)$ if and only if $\mu_i \geq \beta_i \geq \mu_{i+1}$ for all $i\geq 1$, the fact that $\beta_{[n]}\in P(n,\mu)$ if and only if $\mu_i \geq (\beta_{[n]})_{i+1} \geq \mu_{i+1}$ for all $i\geq 1$, and noting that $(\beta_{[n]})_{i+1}=\beta_i$.
 Thus we get
\begin{eqnarray} \label{422}
\bigoplus_{0<m\leq s} \Vec^m_s &\cong& \bigoplus_{0<m\leq s} \bigoplus_{\begin{subarray}{c} \beta\vdash s-m \\ \mu \in P(s, \beta)\end{subarray}} g(\lambda_{[n]}, \nu_{[n]}, \beta_{[n]}) \, {\sf S}(\mu) \nonumber \\
&=& 
  \bigoplus_{\begin{subarray}c 0<m\leq s
  \\
  \mu\vdash s
  \end{subarray}} 
 \bigoplus_{\begin{subarray}{c}   \beta \vdash s-m \\ \beta_{[n]}\in P(n,\mu) \end{subarray}} g(\lambda_{[n]}, \nu_{[n]}, \beta_{[n]}) \, {\sf S}(\mu) \nonumber \\
&=& \bigoplus_{\mu\vdash s} \bigoplus_{\begin{subarray}{c} \beta_{[n]}\in P(n,\mu)\\ \beta_{[n]} \neq \mu_{[n]} \end{subarray}} g(\lambda_{[n]}, \nu_{[n]}, \beta_{[n]}) \, {\sf S}(\mu).
\end{eqnarray}
Combining this with \cref{411} we get
\begin{eqnarray*}
\Vec_s^0 &\cong& 
\bigoplus_{\mu \vdash s} \Bigg( [ \Delta^+_s(\nu \setminus \lambda): {\sf S}(\mu)] - 
\sum_{0<m\leq s} [
 \Vec^m_s: {\sf S}(\mu)]\Bigg) {\sf S}(\mu)\\
&=& \bigoplus_{\mu \vdash s} 
\Bigg( 
\sum_{\begin{subarray}{c} \alpha \vdash n-s \\ \alpha \subseteq \lambda_{[n]}\cap \nu_{[n]}\end{subarray}} 
g(\lambda_{[n]} \ominus \alpha, \nu_{[n]} \ominus \alpha, \mu) - \sum_{\begin{subarray}{c} \beta_{[n]}\in P(n,\mu)\\ \beta_{[n]} \neq \mu_{[n]} \end{subarray}} g(\lambda_{[n]}, \mu_{[n]}, \beta_{[n]}) \Bigg) \, {\sf S}(\mu)\\
&=& \bigoplus_{\mu \vdash s} g(\lambda_{[n]}, \nu_{[n]}, \mu_{[n]}) \, {\sf S}(\mu)
\end{eqnarray*}
where the last equality follows by using Dvir's recursive formula.
Finally using \cref{412} we deduce that $\Vec_s^0 \cong \Delta_s^0(\nu \setminus \lambda)$ as required.  
\end{proof}

\begin{lem}\label{diffcolumn}
Suppose that $(\lambda,\nu,s)\in \mathscr{P}_ { r-s} \times \mathscr{P}_{\leq r} \times \NN$ 
satisfies $(C1)$.
Then 
neither of the skew-partitions $ \nu \ominus (\lambda\cap \nu)$ or  $ \lambda \ominus (\lambda\cap \nu)$ contains  two nodes in the same column.
\end{lem}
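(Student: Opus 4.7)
The plan is to argue by contrapositive: we assume that one of the two skew partitions, say $\nu \ominus (\lambda \cap \nu)$ (the case of $\lambda \ominus (\lambda \cap \nu)$ is entirely symmetric by exchanging the roles of $\lambda$ and $\nu$), contains two nodes in the same column. Our goal is then to exhibit some $\sts \in \Std^0_s(\nu\setminus \lambda)$ and some index $k$ such that $\sts_{k \leftrightarrow k+1}$ fails to exist, in violation of (C1).

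First I would record the combinatorial fact that any two nodes of $\nu \ominus (\lambda \cap \nu)$ in a common column $c$ must lie in \emph{consecutive} rows. Indeed, $(r,c) \in \nu \ominus (\lambda \cap \nu)$ iff $\lambda_r < c \leq \nu_r$; a gap row $r_0$ between two such rows would force $\nu_{r_0}\geq c$, hence $\lambda_{r_0}\geq c$, contradicting $\lambda_{r_0}\leq \lambda_{r_1}<c$ for the upper witness $r_1$. I then choose $i$ maximal such that rows $i$ and $i+1$ share a common column of $\nu \ominus (\lambda \cap \nu)$, and $c$ maximal among such columns. These choices guarantee $\lambda_i, \lambda_{i+1}<c$, $\nu_i, \nu_{i+1}\geq c$, $c=\nu_{i+1}$, and $\nu_{i+2}<c$.

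The key object is the partition
$$\mu = (\nu_1,\nu_2,\ldots,\nu_{i-1},\, c-1,\, c-1,\, \nu_{i+2}, \nu_{i+3},\ldots),$$
which, using $\nu_{i-1}\geq \nu_i\geq c$ and $\nu_{i+2}\leq c-1$, is a valid partition with $\mu_i=\mu_{i+1}=c-1$. Consequently both $\mu+\varepsilon_i$ and $\mu+\varepsilon_i+\varepsilon_{i+1}$ are partitions, while $\mu+\varepsilon_{i+1}$ is \emph{not}. I would then construct a path of the form
$$\sts \;=\; \sts^{(1)} \circ (-\varepsilon_0,+\varepsilon_i) \circ (-\varepsilon_0,+\varepsilon_{i+1}) \circ \sts^{(2)} \;\in\; \Std^0_s(\nu\setminus\lambda),$$
where $\sts^{(1)}$ is a path from $\lambda$ to $\mu$ and $\sts^{(2)}$ is a path from $\mu+\varepsilon_i+\varepsilon_{i+1}$ to $\nu$ (built by adding the remaining $\nu_i - c$ boxes to row $i$ one by one via $(-\varepsilon_0,+\varepsilon_i)$). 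Taking $k$ to be the position of the step $(-\varepsilon_0,+\varepsilon_i)$, the putative swap $\sts_{k\leftrightarrow k+1}$ would have to pass through $\mu+\varepsilon_{i+1}$, which is not a partition; hence the swap does not exist, contradicting (C1).

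The main obstacle is the bookkeeping required to produce $\sts^{(1)}$ with the correct total length and subject to the $\Std^0_s$ constraints (no step of the form $(-\varepsilon_0,+\varepsilon_0)$, and at most $\lambda_r$ removals from row $r$ for each $r\geq 1$). The strategy is to first execute the removals dictated by $\lambda\ominus(\lambda\cap\mu)$ using $(-\varepsilon_r,+\varepsilon_0)$ moves, then the additions dictated by $\mu\ominus(\lambda\cap\mu)$ using $(-\varepsilon_0,+\varepsilon_r)$ moves, and finally to pad to length $s-2-(\nu_i-c)$, if necessary, with moves of the form $(-\varepsilon_r,+\varepsilon_r)$ for some $r\geq 1$ whose removal budget $\lambda_r$ has not yet been exhausted. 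The assumption that $\Std^0_s(\nu\setminus\lambda)$ is non-empty (implicit from the setting of \cref{{co-Pieri}triple}) forces $s$ to lie in a feasible range where such padding can always be arranged.
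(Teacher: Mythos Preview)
Your contrapositive strategy is sound and your observation that two same-column nodes of $\nu\ominus(\lambda\cap\nu)$ lie in consecutive rows is correct and useful. However, there is a genuine gap in the construction of $\sts$: you only allow for the possibility that $\sts^{(1)}$ is too \emph{short} (and must be padded), but never address the possibility that it is forced to be too \emph{long}. By insisting that the two critical steps be of the pure-add form $(-\varepsilon_0,+\varepsilon_i)$ and $(-\varepsilon_0,+\varepsilon_{i+1})$, you spend two whole integral steps on additions alone; when $s$ is at or near the minimum value $s'=\max\{|\lambda\ominus(\lambda\cap\nu)|,|\nu\ominus(\lambda\cap\nu)|\}$ and $\lambda\ominus(\lambda\cap\nu)$ is non-trivial, there simply are not enough remaining steps to carry out the required removals. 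Concretely, take $\lambda=(2,2)$, $\nu=(1^4)$, $s=s'=2$: here $\nu\ominus(\lambda\cap\nu)=\{(3,1),(4,1)\}$ has two nodes in column~$1$, your $\mu=(1,1)$, and $\sts^{(1)}$ must have length at least $2$, forcing $|\sts|\geq 4>s$. Yet $\Std_2^0(\nu\setminus\lambda)\neq\emptyset$ (e.g.\ $(-\varepsilon_2,+\varepsilon_3,-\varepsilon_1,+\varepsilon_4)$), so this is a case you must cover.

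The paper's proof circumvents this by working first at $s=s'$ with a path whose critical consecutive steps are of the general form $(-\varepsilon_{i_k},+\varepsilon_{j_k})$, $(-\varepsilon_{i_{k+1}},+\varepsilon_{j_{k+1}})$ with the two adds (or the two removes) in the same column: since the removal parts of these steps can be non-trivial, the construction fits inside the minimal length~$s'$. Larger values of $s$ are then handled by explicit inflation of this base tableau. To repair your argument you would need to allow the critical pair of steps to carry non-zero removal parts as well, which essentially brings you to the paper's construction.
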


\begin{proof}

For $s=1$, the result is clear.  We assume $s>1$.
 We assume that one of the skew partitions  $ \nu \ominus (\lambda\cap \nu)$ or  $ \lambda \ominus (\lambda\cap \nu)$ does contain two nodes in the same column.  
 (Recall that   $	\max\{|\lambda \ominus (\lambda  \cap \nu )|, |\nu\ominus (\lambda  \cap \nu )|\}\leq 	s\leq |\lambda|+|\nu|$ by \cref{akjhfdsajhjkhfdsakjhdfasasfdbmnzxvbmnvcbmncvxznmsfadhof} and our assumption that $\Std_s(\nu\setminus\lambda)\neq \emptyset$).  
 We first assume that  
$s'  =\max\{|\lambda \ominus (\lambda  \cap \nu )|, |\nu\ominus (\lambda  \cap \nu )|\} $. 
 We let  $\stu \in \Std^0 _{s'}(\nu\setminus\lambda)$ be any path of the form 
\begin{equation}\label{jahgfljlkdfgjhdslkjghdsjklhglksdjfhg}
\stu = (-\varepsilon_{i_1}, + \varepsilon_{j_1},-\varepsilon_{i_2}, + \varepsilon_{j_2}, \ldots ,-\varepsilon_{i_{s'}}, + \varepsilon_{j_{s'}})
\end{equation}
 such that   the nodes $ -\varepsilon_{i_k}$ and $ -\varepsilon_{i_{k+1}}$ 
 (respectively $ +\varepsilon_{j_k}$ and $+ \varepsilon_{j_{k+1}}$)  are removed  (respectively added) in the same column for some $1\leq k < s$.
  %
    Such a pair of nodes  exists by our assumption  on $\lambda$ and $\nu$. Note that we can also assume that the tableau $\stu$ given in \cref{jahgfljlkdfgjhdslkjghdsjklhglksdjfhg} satisfies $i_l,j_l\neq 0$ for all $1\leq l \leq \min\{|\lambda \ominus (\lambda \cap \nu)|, |\nu \ominus (\lambda \cap \nu)|\}$ (we will use this fact later in the proof).  Now the  sequence
\[
(-\varepsilon_{i_1}, + \varepsilon_{j_1}, \ldots  , -\varepsilon_{i_{k+1}}, + \varepsilon_{j_{k+1}}, -\varepsilon_{i_k}, + \varepsilon_{j_k}, \ldots ,-\varepsilon_{i_{s'}}, + \varepsilon_{j_{s'}})
\]
 is not an element of $\Std_{s'}(\nu\setminus\lambda)$, and so $\stu_{k \leftrightarrow k+1}$ does not exist.  Therefore $(\lambda,\nu,s')$ is not a co-Pieri triple, as required.    
 
We shall now consider larger values of $s\in \mathbb N$ by {\em inflating} the tableau in \cref{jahgfljlkdfgjhdslkjghdsjklhglksdjfhg}.   
 For $s$ satisfying
$$s'
 < s \leq 
 |\lambda \ominus (\lambda  \cap \nu )|+  |\nu\ominus (\lambda  \cap \nu )|, $$
we have 
$s-s' \leq \min\{   |\lambda \ominus (\lambda  \cap \nu )|,  |\nu\ominus (\lambda  \cap \nu )| \}$, so we can inflate the tableau $\stu$ given in \cref{jahgfljlkdfgjhdslkjghdsjklhglksdjfhg} to get $\bar{\stu}\in \Std_s(\nu \setminus \lambda)$ by setting $\bar{\stu}$ to be the tableau 
 \begin{equation}\label{firsttbar} 
  (\underbrace{-\varepsilon_{i_1}, + \varepsilon_{0},   \ldots  ,
 - \varepsilon_{i_{s-s'}}, + \varepsilon_{0}}_{2(s-s')},  
  \underbrace{ -\varepsilon_{0}, + \varepsilon_{j_1},   \ldots  ,
 - \varepsilon_{0}, + \varepsilon_{j_{s-s'}}}_{ 2(s-s')},
  -\varepsilon_{i_{s-s'+1}}, + \varepsilon_{j_{s-s'+1}},   \ldots ,-\varepsilon_{i_{s'}}, + \varepsilon_{j_{s'}} 
)
\end{equation}
if the nodes  $ -\varepsilon_{i_k}$ and $ -\varepsilon_{i_{k+1}}$ are {\em removed} from   the same column  or $\bar{\stu} $ to be the tableau 
 \begin{equation}\label{secondtbar}
 ( 
  \underbrace{ -\varepsilon_{0}, + \varepsilon_{j_1},   \ldots  ,
 - \varepsilon_{0}, + \varepsilon_{j_{s-s'}}}_{ 2(s-s')},
  \underbrace{-\varepsilon_{i_1}, + \varepsilon_{0},   \ldots  ,
 - \varepsilon_{i_{s-s'}}, + \varepsilon_{0}}_{2(s-s')}, 
  -\varepsilon_{i_{s-s'+1}}, + \varepsilon_{j_{s-s'+1}},   \ldots ,-\varepsilon_{i_{s'}}, + \varepsilon_{j_{s'}} 
)
\end{equation}
if the nodes $ +\varepsilon_{j_k}$ and $+ \varepsilon_{j_{k+1}}$ are   {\em added} in the same column.  
In either case, we have that $\overline{\stu}_{k\leftrightarrow k+1}$ does not exist, as before.  
Finally, assume  
$$ 
  |\lambda \ominus (\lambda  \cap \nu )|+  |\nu\ominus (\lambda  \cap \nu )|
\leq  s \leq 
 |\lambda|+|\nu|.$$   
 We let  $\lambda\cap \nu=(\alpha_1,\alpha_2,\dots,\alpha_\ell)$. 
We let  ${\sf a} $ denote the sequence of steps   obtained from deleting the middle 
 $t=(2|\alpha|+  |\lambda \ominus (\lambda  \cap \nu )| + |\nu\ominus (\lambda  \cap \nu )| -s)	$ 
  integral steps from
\begin{equation}\label{72348987924197821478923489724789279824117982437918241792179842798429278}
\underbrace{  a(1)\circ    a(1) \circ \cdots   a(1)}_{\alpha_1}
\circ 
\dots 
\circ 
\underbrace{  a(\ell)\circ    a(\ell) \circ \cdots   a(\ell)}_{\alpha_\ell}
\circ 
\underbrace{  r(\ell)\circ    r(\ell) \circ \cdots   r(\ell)}_{\alpha_\ell}
\circ 
\dots
\circ 
\underbrace{  r(1)\circ    r(1) \circ \cdots   r(1)}_{\alpha_1}
\end{equation}
or 
\begin{equation}\label{723489879241978214789234897247892798241179824379182417921798427984292783}
\underbrace{  a(1)\circ    a(1) \circ \cdots   a(1)}_{\alpha_1}
\circ 
\dots 
\circ 
\underbrace{  a(\ell)\circ    a(\ell) \circ \cdots   a(\ell)}_{\alpha_\ell-1}
\circ d(\ell) \circ 
\underbrace{  r(\ell)\circ    r(\ell) \circ \cdots   r(\ell)}_{\alpha_\ell-1}
\circ 
\dots
\circ 
\underbrace{  r(1)\circ    r(1) \circ \cdots   r(1)}_{\alpha_1}
\end{equation}
for $t$ even or odd respectively.  
  As $\alpha\subseteq \nu$ is a partition,
we have that $ {\sf a}$ is a standard tableau of degree $ s- |\lambda \ominus (\lambda  \cap \nu )| - |\nu\ominus (\lambda  \cap \nu )|$ 
beginning and terminating at $\nu$.  
  Finally if $\overline\stu$ is the tableau of degree $ |\lambda \ominus (\lambda  \cap \nu )|+  |\nu\ominus (\lambda  \cap \nu )|$ 
    as in \cref{firsttbar} or \cref{secondtbar}, then 
$$\stv=\overline\stu \circ {\sf a} \in \Std_{s}^0(\nu\setminus\lambda) \text{ and } 
 \stv_{  { k}\leftrightarrow  { k}+1} \not \in \Std_{s}^0(\nu\setminus\lambda)$$
for   $1\leq  { k} \leq s' $ as before, as required.      \end{proof}

\begin{prop}
Let  $(\lambda,\nu,s)\in \mathscr{P}_ { r-s} \times \mathscr{P}_{\leq r} \times \NN$ with $\Std_s^0(\nu\setminus \lambda)\neq \emptyset$. If $(\lambda, \nu,s)$ satisfies $(C1)$ and $(C2)$, 
then  $(\lambda,\nu,s)$ satisfies (coP).  

\end{prop}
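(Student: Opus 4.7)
The plan is to prove the contrapositive: if $(\lambda,\nu,s)$ with $\Std_s^0(\nu\setminus\lambda)\neq\emptyset$ fails (coP), then already (C1) fails, which suffices since (C1) and (C2) are hypothesised together. Failure of (coP) forces $s>1$, $\max\{\ell(\lambda),\ell(\nu)\}\ge 2$, and $s > s_0+d$, where $s_0:=\max\{|\lambda\ominus(\lambda\cap\nu)|,|\nu\ominus(\lambda\cap\nu)|\}$ and $d:=\minmax(\lambda,\nu)$. If $d<0$, then one of $\lambda\ominus(\lambda\cap\nu)$ or $\nu\ominus(\lambda\cap\nu)$ contains two nodes in the same column, so the construction in the proof of \cref{diffcolumn} already exhibits a tableau in $\Std_s^0(\nu\setminus\lambda)$ whose swap at some index $k$ does not exist. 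It therefore remains to handle the case $d\ge 0$ and $s>s_0+d$.

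Fix an index $i^*\ge 2$ realising the minimum, i.e.\ with $\min\{\lambda_{i^*-1},\nu_{i^*-1}\}=\max\{\lambda_{i^*},\nu_{i^*}\}+d$. The goal is to construct $\stt\in\Std_s^0(\nu\setminus\lambda)$ passing through a partition $\tilde\eta$ with $\tilde\eta_{i^*-1}=\tilde\eta_{i^*}+1$ (or $\tilde\eta_{i^*-1}=\tilde\eta_{i^*}$ in the edge case $d=0$) and performing, at $\tilde\eta$, two consecutive integral steps
\[A=(-\varepsilon_{i^*},+\varepsilon_{i^*-1})\quad\text{followed by}\quad B=(-\varepsilon_{i^*-1},+\varepsilon_{i^*}).\]
I would build $\stt$ in three stages: (i) traverse from $\lambda$ to $\lambda\cap\nu$ along a near-minimal skew path that pairs removals with additions so as to use essentially $s_0$ integral steps; (ii) from $\lambda\cap\nu$ spend $d$ of the extra $s-s_0>d$ integral steps tightening the row-gap at rows $(i^*-1,i^*)$ from its value at $\lambda\cap\nu$ down to $1$, arriving at $\tilde\eta$, and absorb any further slack as dummy integral steps on a row with a strictly positive gap; (iii) execute the critical pair $A$ then $B$ at $\tilde\eta$ (which returns the path to $\tilde\eta$) and then complete the journey to $\nu$.

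Because $A\,B$ begins and ends at $\tilde\eta$, the attempted swap would first apply $B$ at $\tilde\eta$; however $(\tilde\eta-\varepsilon_{i^*-1})_{i^*-1}=\tilde\eta_{i^*}$, so the subsequent addition of $\varepsilon_{i^*}$ produces a non-partition, and $B$ is not a valid integral step at $\tilde\eta$. Hence $\stt_{k\leftrightarrow k+1}$ fails to exist at the corresponding index $k$, contradicting (C1).

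The main obstacle is the bookkeeping in stage~(ii): one must check that, for every configuration of $\lambda$ and $\nu$ meeting the hypothesis, the tightening moves can be carried out while keeping $\stt$ in $\Std_s^0(\nu\setminus\lambda)$, i.e.\ so that every intermediate shape is a genuine partition, no $(-\varepsilon_0,+\varepsilon_0)$ step appears, and the number of $-\varepsilon_i$ steps is bounded by $\lambda_i$ for each $i\ge 1$. This is a short case analysis on which of $\lambda_{i^*-1},\nu_{i^*-1}$ realises the minimum and which of $\lambda_{i^*},\nu_{i^*}$ realises the maximum, with a separate treatment of the edge case $d=0$ (where $\tilde\eta_{i^*-1}=\tilde\eta_{i^*}$ causes the swap to fail already at the first removal of $B$).
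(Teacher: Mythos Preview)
Your strategy — deduce that (C1) fails whenever (coP) fails — cannot succeed, because the implication is simply false. Take $\lambda=\nu=(4,1)$ and $s=4$. Here $s_0=0$ and $d=\minmax(\lambda,\nu)=3$, so (coP) demands $s\le 3$ and is violated. Nevertheless (C1) holds: a short case analysis shows that every $\stt\in\Std_4^0((4,1)\setminus(4,1))$ admits all swaps $\stt_{k\leftrightarrow k+1}$. Indeed, any such $\stt$ has at most one $-\varepsilon_2$ and at most one $+\varepsilon_2$ half-step, and the first row never drops below $2$ at integral points; running through the finitely many possibilities for two consecutive integral steps one finds no swap that produces a non-partition. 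Your construction visibly breaks here on step-counting grounds: tightening the row-gap from $3$ down to $1$ and back, together with the critical pair $A,B$, costs strictly more than the $s-s_0=4$ available integral steps; and the alternative of tightening by $m{\downarrow}(1,2)$ spends the unique allowed $+\varepsilon_2$, leaving none for $B$.

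This is precisely the regime the paper isolates as Case~III, namely when the minimal index $c$ realising $\minmax(\lambda,\nu)$ equals $\max\{\ell(\lambda),\ell(\nu)\}$ and $s_0+d+1\le s\le s_0+2d$. In that range the paper does \emph{not} violate (C1); instead it shows (C2) fails, by exhibiting a tableau $\overline\stu\in\Std_s^0(\nu\setminus\lambda)$ outside the image of the bijection $\varphi_s$ of \cref{combbijection} and then comparing $|\Std_s^0(\nu\setminus\lambda)|$ with $\dim\Delta_s^0(\nu\setminus\lambda)$ via Dvir's recursion. Any correct argument for this proposition must therefore engage (C2) (or an equivalent dimension obstruction) in this range; an approach that only ever targets (C1) is insufficient.
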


 \begin{proof}
Using Lemma  \ref{diffcolumn} we   can assume that neither  of the skew partitions  $ \nu \ominus (\lambda\cap \nu)$ or  $ \lambda \ominus (\lambda\cap \nu)$   contain two nodes in the same column, i.e. $\minmax(\lambda,\nu)\geq 0$.

 Throughout the proof, we let 
$s'=\max\{|\lambda\ominus (\lambda \cap \nu)|,|\nu\ominus(\lambda \cap \nu)|\} $. 

We will prove this result by contrapositive. Suppose that $(\lambda, \nu,s)$ does not satisfy (coP). Then $s>1$, $\max\{\ell(\lambda), \ell(\nu)\} \geq 2$ and $ s'+\minmax(\lambda,\nu) +1\leq s\leq |\lambda| + |\nu|$.
We pick $c\geq 2$ minimal such that $\minmax(\lambda , \nu) = \min \{ \lambda_{c-1},\nu_{c-1}\} - \max\{\lambda_{c},\nu_{c}\}$.

\textbf{Case I.} $\minmax(\lambda,\nu)=0$.  
By the minimality of $c$ we can find $\stu\in \Std_{s'}^0(\nu\setminus \lambda)$ 
and $0\leq  k \leq s'$ such that the $(c-1)$th and $c$th rows of 
either $\stu(  k  )$ or $\stu (k+1/2)$ have the same length. We choose $k$ minimal with this property. 
 Let $s=s'+1$.  
By   the minimality of $c$, we have that 
$$
\stv=\stu[0,k] \circ
 \underbrace{ (-i_{k+1},+(c-1), -(c-1), + j_{k+1})}_{\text{important}} \circ \stu[k+1,s']
$$
belongs to $\Std_s^0(\nu\setminus\lambda)$.  If we swap the two important integral steps   of $\stv$ we obtain a sequence which does not belong to 
 $\Std_s(\nu\setminus\lambda)$. This violates condition $(C1)$. One can inflate the  tableau $\stv$ as in \cref{firsttbar} or \cref{secondtbar} 
 and/or by concatenating with a path  of the form in \cref{72348987924197821478923489724789279824117982437918241792179842798429278,723489879241978214789234897247892798241179824379182417921798427984292783} to obtain  an element of $ \Std^0_t(\nu\setminus\lambda)$ for any $s \leq t \leq |\lambda|+|\nu|$ which violates (C1).

\textbf{Cases II and III.} For the remainder of the proof  we  set  $k=\max\{0,
 \lambda_{c-1}- \nu_{c-1},    \nu_{c} -  \lambda_{c}  
\}.$   
We let $\stu\in \Std_{s'}^0(\nu\setminus\lambda)$ denote any 
path in which all steps of the form 
$-\varepsilon_{c-1}$ or $+\varepsilon_{c}$ 
occur in the first $k$ integral steps   and all steps of the form
$+\varepsilon_{c-1}$ or $-\varepsilon_{c}$ 
occur in the final  $s'-k$ integral steps.  
 That such a tableau exists follows  from our assumption that $s'$ is minimal such that $\Std_{s'}(\nu\setminus\lambda)\neq \emptyset$ (so no step can be added and removed in the same row).

\textbf{Case II.} $\minmax(\lambda,\nu)>0$ and $c<\max\{\ell(\lambda), \ell(\nu)\}$.  
  Let 
  $s=s'+\minmax(\lambda,\nu)+1$.                   For $\minmax(\lambda,\nu)$ even, we let $\stv$ denote the following tableau
$$
   \stu [0,k] \circ
 \underbrace{
 m{\downarrow}(c-1,c)\circ \dots \circ 
 m{\downarrow}(c-1,c) }_{\minmax(\lambda,\nu)/2-1}
 \circ 
 \underbrace{ d(c-1)
 \circ 
 m{\downarrow}(c-1,c) }_{\text{important}}
 \circ 
 \underbrace{
   m{\uparrow}(c,c-1) \circ \dots  \circ m{\uparrow}(c,c-1)   }_{\minmax(\lambda,\nu)/2}\circ \stu[k,s'] . $$
We have that
  $\stv \in \Std_s^0(\nu\setminus\lambda)$. For $\minmax(\lambda,\nu)$  odd, we let $\stv$ denote the following tableau
 $$
   \stu [0,k] \circ
 \underbrace{
 m{\downarrow}(c-1,c)\circ \dots 
 \circ 
 m{\downarrow}(c-1,c) }_{(\minmax(\lambda,\nu)-1)/2}
 \circ 
 \underbrace{m{\uparrow}(c,c-1)
 \circ  m{\downarrow}(c-1,c) }_{\text{important}}\circ 
 \underbrace{
   m{\uparrow}(c,c-1) \circ \dots  \circ m{\uparrow}(c,c-1)   }_{(\minmax(\lambda,\nu)-1)/2} 
  \circ  \stu [k,s'] 
 $$
 We have that
  $\stv \in \Std_s^0(\nu\setminus\lambda)$. 
In both cases, if we swap the two important integral steps in the tableau $\stv$ we obtain a sequence which does not belong to $\Std_s(\nu\setminus \lambda)$. This violates (C1).
Again, we can inflate $\stv$ as in Case I to get an element of $ \Std^0_t(\nu\setminus\lambda)$ for any $s \leq t \leq |\lambda|+|\nu|$ which also violates (C1).

\textbf{Case III.} $\minmax(\lambda,\nu)>0$ and $c=\max\{\ell(\lambda), \ell(\nu)\}$.  For
  $s=s'+ 2\minmax(\lambda,\nu)+1$. 
 We let $\stv$ denote the following tableau
$$
   \stu [0,k] \circ
 \underbrace{
r(c-1)\circ \dots \circ 
 r(c-1) }_{\minmax(\lambda,\nu) -1}
 \circ 
 \underbrace{ d(c-1)
 \circ 
r(c-1) }_{\text{important}}
 \circ 
 \underbrace{
 a(c-1)\circ \dots  \circ a (c-1)  }_{\minmax(\lambda,\nu) }\circ \stu[k,s'] .$$
We have that
  $\stv \in \Std_s^0(\nu\setminus\lambda)$. 
If we swap the two important integral steps in the tableau  
  $\stv$,
 we obtain a sequence which does not belong to 
 $\Std_s(\nu\setminus\lambda)$.  This violates condition $(C1)$. Moreover we can inflate $\stv$ as in Case I to show that $(\lambda, \nu,s)$ does not satisfy (C1) for any $s'+2\minmax(\lambda, \nu)+1 \leq s\leq |\lambda|+|\nu|$.

It remains to  consider the case 
 $s' +\minmax(\lambda,\nu) +1\leq  s \leq s'+2\minmax(\lambda,\nu)$.  We will show that $(\lambda, \nu, s)$ does not satisfy (C2). 
We begin with the case    $s=s'+ \minmax(\lambda,\nu)+ 1$. 
We shall see that the map of \cref{anequationweshalluse} is well-defined and  injective, but no longer surjective.  

  Let  $\alpha \subset \lambda_{[n]}\cap\nu_{[n]}$ with $\alpha \vdash n-s$. Let $\sts\in \Std_s(\nu_{[n]}\setminus \alpha)$ and $\stt\in \Std_s(\alpha \setminus \lambda_{[n]})$ and write $\stv = \varphi_s(\sts, \stt)$ defined as in \cref{anequationweshalluse}. We need to show that $\stv\in \Std_s(\nu\setminus \lambda)$. Using the same notation as in Lemma 4.19, following its proof, and using the fact that 
$s\leq s'+\min\{\lambda_{i-1}, \nu_{i-1}\}-\max\{\lambda_i,\nu_i\}$ for all $i \neq c$ (by minimality of $c $) we obtain that
$\alpha_i\geq \beta_{i+1}$ for all $i\neq c$ and $\alpha_c\geq \beta_{c+1}$. Now following the proof of Lemma 4.20 this implies that $\stv(l)_{i-1} \geq \stv(l)_i$ for all $i\neq c$  and $\stv(l)_{c-1}\geq \stv(l)_c -1$ for all $1\leq l\leq s$.  Now suppose, for a contradiction that $\stv(k)_{c-1} = \stv(k)_{c}-1$. Then we must have $\stv(k)_{c-1} = \alpha_c = \sts(k)_c$ and $\stv(l)_{c} = \beta_{c+1} = \sts(l)_{c+1}$, contradicting the fact that $\sts$ is a standard tableau.
Thus $\varphi_s$ is well-defined. Injectivity is obvious by definition.We now show that there is some $\bar{\stu}\in \Std_s^0(\nu\setminus \lambda)$ which is not in the image of $\varphi_s$. 
Recall, we  picked $\stu\in \Std_{s'}^0(\nu\setminus\lambda)$ such that  all steps of the form 
$-\varepsilon_{c-1}$ or $+\varepsilon_{c}$ 
occur in the first $k$ integral steps   and all steps of the form
$+\varepsilon_{c-1}$ or $-\varepsilon_{c}$ 
occur in the final  $s'-k$ integral steps; 
 this ensures that $\stu(k)_{c-1} - \stu(k)_c = \minmax(\lambda, \nu)$. Now consider the tableau
$$
  \bar{\stu} =
  \stu [0,k] 
  \circ
\underbrace{  d(c-1)\circ \dots \circ d(c-1)
}_{\minmax(\lambda,\nu)+1} \circ 
   \stu[k,s']
$$
which belongs to $\Std_s^0(\nu\setminus\lambda)$.   
Suppose for a contradiction that $\bar{\stu} = \varphi_s(\sts, \stt)$ for some standard tableaux $\sts$ and $\stt$.
Now if $\lambda_c = \nu_c$ then $\alpha = \min_n(\stu)$ is not a partition so $\stu$ cannot be in the image of $\varphi_s$. If $\lambda_c>\nu_c$ then $\stt(k+\minmax(\lambda,\nu)+1)$ is not a partition and if $\nu_c>\lambda_c$ then $\sts(k)$ is not a partition. In all cases we see that $\bar{\stu}\in \Std_s^0(\nu\setminus \lambda)$ is not in the image of $\varphi_s$.
 Now we can decompose 
$$\bigoplus_{\begin{subarray}c \alpha  \vdash n-s \\ \alpha \subseteq  \lambda_{[n]} \cap \nu_{[n]}
\end{subarray}} \Delta_s(\nu_{[n]}\setminus \alpha ) \otimes  \Delta_s (\alpha \setminus \lambda_{[n]})  = \bigoplus_{0\leq m\leq s} \Vec_s^m$$
as in (4.11). The fact that the map $\varphi_s$ is not surjective implies that $|\Std_s^0(\nu\setminus \lambda)| > \dim V_s^0$. Now if we follow (4.16) -- (4.22), noting that $(\lambda, \nu, s-m)$ satisfies (coP) for $m>0$, we obtain 
$$|\Std_s^0(\nu\setminus \lambda)|>\dim V_s^0 = \dim \Delta_s^0(\nu\setminus \lambda).$$
This implies that (C2) is not satisfied, as required.

More precisely, we know that there must be some element $\sum_{\stt \in \Std^0_s(\nu\setminus\lambda)} r_\stt u_\stt  \in {\sf DR}_s(\nu\setminus\lambda)$ for $r_\stt \in \CC$.   
 
   We now consider $(\lambda,\nu,s)$ for
    $s' +\minmax(\lambda,\nu)+1+k =  s \leq s'+ 2\minmax(\lambda,\nu)$.  
Let  $\nu'=\nu - k\varepsilon_{c-1}$ and 
$\lambda'=\lambda -k\varepsilon_{c-1}$. Notice that 
 $s-k=
 \max\{|\lambda'\ominus (\lambda' \cap \nu')|,|\nu'\ominus(\lambda' \cap \nu')|\}+
 \minmax(\lambda',\nu')+1$.  
By the above, there exists $a \in P_{s-k}(n) p_{r-k} P_{s-k}(n)$ and 
 $\sts \in \Std^0_{s-k}(\nu'\setminus\lambda')$  such that 
 $$
u_\sts a = \sum_{\stt \in \Std^0_{s-k}(\nu'\setminus\lambda')} r_\stt u_\stt
 \in {\sf DR}_{s-k}(\nu'\setminus\lambda') 
 $$
 with some $r_\stt\neq 0$.  
Now, 
for  any   tableau  $\stv \in  \Std_{s-k}(\nu'\setminus\lambda') $   
  we can inflate the tableau $\stv$ to obtain   $$
\overline{\stv}= \underbrace{r(c-1) \circ \dots \circ r(c-1) }_{k}\circ  
\stv \circ 
\underbrace{a(c-1) \circ \dots \circ a(c-1) }_{k} \in \Std_{s+k}^0(\nu\setminus\lambda).$$   
Similarly, given 
 $a \in P_{s-k}(n)  $ we let 
 $\bar{a}\in P_{s+k}(n)  $ denote the image of $a$ under the embedding 
 $P_{s-k}(n)   \to P_k(n)\times  P_{s-k}(n) \times  P_k(n)$.  
  By \cite[Corollary 3.12]{BE} we have that 
$$
u_{\overline{\sts}} \overline{a} = 
 \sum_{
\begin{subarray}c
\stt \in \Std^0_s(\nu'\setminus\lambda)
\end{subarray}} r_\stt u_{\overline{\stt}} 
+
\sum_{
\begin{subarray}c
\stw \in \Std_{s+k}(\nu\setminus\lambda)
\\
\stw{(s)} \rhd \nu'  
\end{subarray}
} q_\stw u_\stw
 \in {\sf DR}_{s+k}(\nu\setminus\lambda)$$
 which again violates $(C2)$. This completes the proof.  
    \end{proof}

\section{Semistandard Kronecker tableaux}\label{sec:semistandard}

Recall from \cref{depthlayer2} that for any $(\lambda,\nu,s) \in \mptn{r-s}\times  \mptn{\leq r} \times  \ZZ_{> 0} $ and any $\mu \vdash s$ we have
 $$
\overline g( \lambda, \nu,\mu) = \dim_\CC \Hom_{P_s(n)}(\Delta_s(\mu), \Delta_s^0(\nu\setminus\lambda) ) = \dim_{\CC} \Hom_{\CC \mathfrak{S}_s}({\sf S}(\mu), \Delta_s^0(\nu \setminus \lambda)),
$$
where $\CC \mathfrak{S}_s$ is viewed as the quotient of $P_s(n)$ by the ideal generated by $p_r$.  
Now for each   $\mu = (\mu_1, \mu_2, \ldots , \mu_l) \vdash s$ we have an associated   Young permutation module,
 $${\sf M}  (\mu) = \CC\otimes_{\mathfrak{S}_\mu} \CC\mathfrak{S}_s$$
 where $\mathfrak{S}_\mu = \mathfrak{S}_{\mu_1} \times \mathfrak{S}_{\mu_2}\times \dots\times \mathfrak{S}_{\mu_l} \subseteq  \mathfrak{S}_s$.  
It is well known that there is a surjective homomorphism ${\sf M}(\mu)\rightarrow {\sf S}(\mu)$ and moreover, for any $\tau \vdash s$, the multiplicity of ${\sf S}(\tau)$ as a composition factor of ${\sf M}(\mu)$ is given by the number of semistandard Young tableaux of shape $\tau$ and weight $\mu$.
So,  as a first step towards understanding the stable Kronecker coefficients,     it is natural to  consider
$$
\dim_{\CC} \Hom_{\mathfrak{S}_s}({\sf M} (\mu), \Delta^0_s(\nu \setminus \lambda)  ). 
$$ 
In the case of triples of maximal depth, this dimension is given by the number of semistandard Young tableaux of shape $\nu\setminus \lambda$ and weight $\mu$.  
We now extend this result by defining semistandard Kronecker tableaux and show that in the case of co-Pieri triples the number of such tableaux give the required dimension. 
 In fact, we explicitly construct these homomorphisms directly from the associated tableaux.  
 
 We start with a definition of semistandard Kronecker tableaux, generalising the classical definition of semistandard Young  tableaux.

\begin{defn}\label{semistandard}
Let $\mu = (\mu_1, \mu_2, \ldots , \mu_l)\vDash s$, $\lambda \in \mathscr{P}_{r-s}$, $\nu \in \mathscr{P}_{\leq r}$ and
let $\sts, \stt \in \Std^0_s(\nu \setminus \lambda)$.
\begin{enumerate}
\item For $1\leq k <s$ we write $\sts \overset{k}{\sim} \stt$ if $\sts = \stt_{k\leftrightarrow k+1}$.
\item We write $\sts \overset{\mu}{\sim}  \stt$ if there exists a sequence of standard Kronecker tableaux $\stt_1, \stt_2, \ldots , \stt_d \in \Std^0_s(\nu\setminus\lambda)$ 
such that 
$$\sts =  \stt_{1}  \overset{k_1}{\sim}   \stt_{2}  ,  \
 \stt_{2}  \overset{k_2}{\sim}    \stt_{3}  ,   \ \dots \ , 
\stt_{d-1}\overset{k_{d-1}}{\sim}   \stt_{d} 
=\stt  $$  
for some $k_1,\dots, k_{d-1}\in  \{1, \ldots , s-1\} \setminus 
 \{    [\mu]_c \mid  c = 1, \ldots , l-1 \}$.   
We define a {\sf tableau of weight} $\mu$ to be an equivalence class of tableau under $\overset{\mu}{\sim}  $, denoted $[\stt]_\mu = \{ \sts \in \Std^0_s(\nu\setminus \lambda) \, |\, \sts \overset{\mu}{\sim} \stt\}$.
\item We say that a    Kronecker tableau, $[\stt]_\mu$,  of shape $\nu\setminus \lambda$ and weight $\mu$ is {\sf semistandard} if, for all $1\leq c \leq l$, the skew partitions $\stt([\mu]_c) \ominus (\stt([\mu]_{c-1}) \cap \stt([\mu]_c))$ and $\stt([\mu]_{c-1}) \ominus (\stt([\mu]_{c-1}) \cap \stt([\mu]_c))$ have no two boxes in the same column.\\
We denote the set of all semistandard Kronecker tableaux of shape $\nu\setminus \lambda$ and weight $\mu$ by $\SStd_s^0(\nu\setminus \lambda, \mu)$.
\end{enumerate}
\end{defn}

 \begin{rmk} \label{sscopieri2}
 Note that if $\sts, \stt\in \Std_s^0(\nu\setminus \lambda)$ with $\sts \in [\stt]_\mu$ then $\sts([\mu]_c)=\stt([\mu]_c)$ for all $1\leq c\leq l$ hence part (3) is independent of the choice of representative in $[\stt]_\mu$ and hence the notion of semistandard   Kronecker tableau is well-defined.
 \end{rmk}
\begin{rmk}\label{sscopieri}
 If $(\lambda, \nu,\mu)$ is a co-Pieri triple, it follows from \cref{LemmaB} that for any $\stt \in \Std_s^0(\nu\setminus \lambda)$ the class $[\stt]_\mu$ is a semistandard Kronecker tableau.
 \end{rmk} 

\begin{rmk}  If $(\lambda, \nu,\mu)$ is a triple of maximal depth then $\SStd_s^0(\nu\setminus \lambda, \mu)$ coincide with the classical notion of semistandard Young tableaux of shape $\nu\setminus \lambda$ and weight $\mu$ (and similarly for the non-semistandard tableaux of a given weight).  
 \end{rmk}

To represent these semistandard Kronecker tableaux graphically, we will add \lq frames' corresponding to the composition $\mu$ on  the set of paths $\Std_s^0(\nu \setminus \lambda)$ in the branching graph. 
For $\stt =(-\varepsilon_{i_1}, + \varepsilon_{j_1}, \ldots , -\varepsilon_{i_s}, + \varepsilon_{j_s})$ we say that the integral step $(-\varepsilon_{i_k}, + \varepsilon_{j_k})$ belongs to the $c$th frame if $[\mu]_{c-1} < k\leq  [\mu]_c$.
Thus for $\sts, \stt \in \Std_s^0(\nu\setminus \lambda)$ we have that $\sts \overset{\mu}{\sim} \stt$ if and only if $\sts$ is obtained from $\stt$ by permuting integral steps within each frame.

 \begin{eg}\label{semiexam1}
  Let  
  $\lambda =(4,2) $, $\nu =( 5,3,1)$ and $s=3$. Then $(\lambda, \nu,s)$ is a triple of maximal depth. Take $\mu=(2,1) \vDash {3}$.   
  We have three semistandard  tableaux of shape $\nu\setminus \lambda$  and weight $\mu$ given by
\begin{align*}
&  \SSTS_1= \{a(2) \circ a(3) \circ a(1) \ , \ a(3) \circ a(2) \circ a(1) \}
\\
 & \SSTS_2= \{a(1) \circ a(3) \circ a(2) \ , \ a(3) \circ a(1) \circ a(2) \}
\\ & \SSTS_3= \{a(1) \circ a(2) \circ a(3) \ , \ a(2) \circ a(1) \circ a(3) \}.
\end{align*}
 They are depicted in \cref{smiinpartition} and  
 ordered so that one can compare them directly with the 
  tableaux in \cref{copierieg}.

 \begin{figure}[ht!]
 \scalefont{0.8} 
$$ \begin{tikzpicture}[scale=0.6]
         \draw[white] (-2,0.35) rectangle (4.5,-16.3);  
                   \fill[white] (0,0) circle (20pt);   
                                   \fill[white] (0,-15) circle (18pt);                                    
      \draw[dashed] (-2,0) rectangle (4.5,-15);  
                \draw[dashed] (-2,-10) -- (4.5,-10);  
                 \fill[white] (0,0) circle (20pt);   
                                  \fill[white] (0,-10) circle (22pt);   
 \fill[white] (0,-15) circle (24pt);   
                 \begin{scope}   
      \draw (0,0) node {$  \Yboxdim{5pt}\gyoung(;;;;,;;) $  };   
   \draw (0,-2.5) node{$ \Yboxdim{5pt}\gyoung(;;;;,;;)$  };
  \draw (0,-5) node{$ \Yboxdim{5pt}\gyoung(;;;;,;;;)$  };
  \draw (3,-5) node{$    \Yboxdim{5pt}\gyoung(;;;;,;;,;) $  };
  \draw (0,-7.5) node{ \Yboxdim{5pt}\gyoung(;;;;,;;;) };
   \draw (3,-7.5) node{$   \Yboxdim{5pt}\gyoung(;;;;,;;,;)$  };
  \draw (0,-10) node{$   \Yboxdim{5pt}\gyoung(;;;;,;;;,;)$};
   \draw (0,-12.5) node{$  \Yboxdim{5pt}\gyoung(;;;;,;;;,;)$  };
  \draw (0,-15) node{$ \Yboxdim{5pt}\gyoung(;;;;;,;;;,;) $};
     \path   (0,-0.6)edge[decorate]  node[left] {$-0$}  (0,-1.9);
          \path   (0.6,-3.1)edge[decorate]  node[auto] {$+3$}  (3,-4.2);
                    \path   (0,-3.1)edge[decorate]  node[left] {$+2$}  (0,-4.4);
                    \path   (0,-5.6)edge[decorate]  node[left] {$-0$}  (0,-6.9);
                                        \path   (3,-5.6)edge[decorate]  node[auto] {$-0  $}  (3,-6.8);
          \path   (0,-8.1)edge[decorate]  node[left] {$+3$}  (0,-9.3);
                    \path   (2.3,-8.2)edge[decorate]  node[above] {$+2$}  (0.6,-9.3);
                              \path   (0,-10.6)edge[decorate]  node[left] {$-0$}  (0,-11.8);
                                                            \path   (0,-13.1)edge[decorate]  node[left] {$+1$}  (0,-14.3);
   \end{scope}
   \end{tikzpicture}
   \quad   \quad
   \begin{tikzpicture}[scale=0.6]
         \draw[white] (-2,0.35) rectangle (4.5,-16.3);  
                   \fill[white] (0,0) circle (20pt);   
                                   \fill[white] (0,-15) circle (18pt);                                    
      \draw[dashed] (-2,0) rectangle (4.5,-15);  
                \draw[dashed] (-2,-10) -- (4.5,-10);  
                 \fill[white] (0,0) circle (20pt);   
                                  \fill[white] (0,-10) circle (22pt);   
 \fill[white] (0,-15) circle (24pt);   
                 \begin{scope}   
      \draw (0,0) node {$  \Yboxdim{5pt}\gyoung(;;;;,;;) $  };   
   \draw (0,-2.5) node{$ \Yboxdim{5pt}\gyoung(;;;;,;;)$  };
  \draw (0,-5) node{$ \Yboxdim{5pt}\gyoung(;;;;;,;;)$  };
  \draw (3,-5) node{$    \Yboxdim{5pt}\gyoung(;;;;,;;,;) $  };
  \draw (0,-7.5) node{ \Yboxdim{5pt}\gyoung(;;;;;,;;) };
   \draw (3,-7.5) node{$   \Yboxdim{5pt}\gyoung(;;;;,;;,;)$  };
  \draw (0,-10) node{$   \Yboxdim{5pt}\gyoung(;;;;;,;;,;)$};
   \draw (0,-12.5) node{$  \Yboxdim{5pt}\gyoung(;;;;;,;;,;)$  };
  \draw (0,-15) node{$ \Yboxdim{5pt}\gyoung(;;;;;,;;;,;) $};
     \path   (0,-0.6)edge[decorate]  node[left] {$-0$}  (0,-1.9);
          \path   (0.6,-3.1)edge[decorate]  node[auto] {$+3$}  (3,-4.2);
                    \path   (0,-3.1)edge[decorate]  node[left] {$+1$}  (0,-4.4);
                    \path   (0,-5.6)edge[decorate]  node[left] {$-0$}  (0,-6.9);
                                        \path   (3,-5.6)edge[decorate]  node[auto] {$-0  $}  (3,-6.8);
          \path   (0,-8.1)edge[decorate]  node[left] {$+3$}  (0,-9.3);
                    \path   (2.3,-8.2)edge[decorate]  node[above] {$+1$}  (0.6,-9.3);
                               \path   (0,-10.6)edge[decorate]  node[left] {$-0$}  (0,-11.8);
                                                            \path   (0,-13.1)edge[decorate]  node[left] {$+2$}  (0,-14.3);
   \end{scope}
   \end{tikzpicture}
  \quad   \quad
   \begin{tikzpicture}[scale=0.6]
         \draw[white] (-2,0.35) rectangle (4.5,-16.3);  
                   \fill[white] (0,0) circle (20pt);   
                                   \fill[white] (0,-15) circle (18pt);                                    
      \draw[dashed] (-2,0) rectangle (4.5,-15);  
                \draw[dashed] (-2,-10) -- (4.5,-10);  
                 \fill[white] (0,0) circle (20pt);   
                                  \fill[white] (0,-10) circle (22pt);   
 \fill[white] (0,-15) circle (24pt);   
                 \begin{scope}   
      \draw (0,0) node {$  \Yboxdim{5pt}\gyoung(;;;;,;;) $  };   
   \draw (0,-2.5) node{$ \Yboxdim{5pt}\gyoung(;;;;,;;)$  };
  \draw (0,-5) node{$ \Yboxdim{5pt}\gyoung(;;;;;,;;)$  };
  \draw (3,-5) node{$    \Yboxdim{5pt}\gyoung(;;;;,;;;) $  };
  \draw (0,-7.5) node{ \Yboxdim{5pt}\gyoung(;;;;;,;;) };
   \draw (3,-7.5) node{$   \Yboxdim{5pt}\gyoung(;;;;,;;;)$  };
  \draw (0,-10) node{$   \Yboxdim{5pt}\gyoung(;;;;;,;;;)$};
   \draw (0,-12.5) node{$  \Yboxdim{5pt}\gyoung(;;;;;,;;;)$  };
  \draw (0,-15) node{$ \Yboxdim{5pt}\gyoung(;;;;;,;;;,;) $};
     \path   (0,-0.6)edge[decorate]  node[left] {$-0$}  (0,-1.9);
          \path   (0.6,-3.1)edge[decorate]  node[auto] {$+2$}  (3,-4.2);
                    \path   (0,-3.1)edge[decorate]  node[left] {$+1$}  (0,-4.4);
                    \path   (0,-5.6)edge[decorate]  node[left] {$-0$}  (0,-6.9);
                                        \path   (3,-5.6)edge[decorate]  node[auto] {$-0  $}  (3,-6.8);
          \path   (0,-8.1)edge[decorate]  node[left] {$+2$}  (0,-9.3);
                    \path   (2.3,-8.2)edge[decorate]  node[above] {$+1$}  (0.6,-9.3);
                               \path   (0,-10.6)edge[decorate]  node[left] {$-0$}  (0,-11.8);
                                                            \path   (0,-13.1)edge[decorate]  node[left] {$+3$}  (0,-14.3);
   \end{scope}
   \end{tikzpicture}
   $$
   \caption{The three elements of $\SStd^0_3((5,3,1)\setminus(4,2),(2,1))$.  These tableaux are ordered to facilitate comparison with \cref{anexampleofLRclassic,anexampleofLRclassic2}. }
   \label{smiinpartition}
   \end{figure}
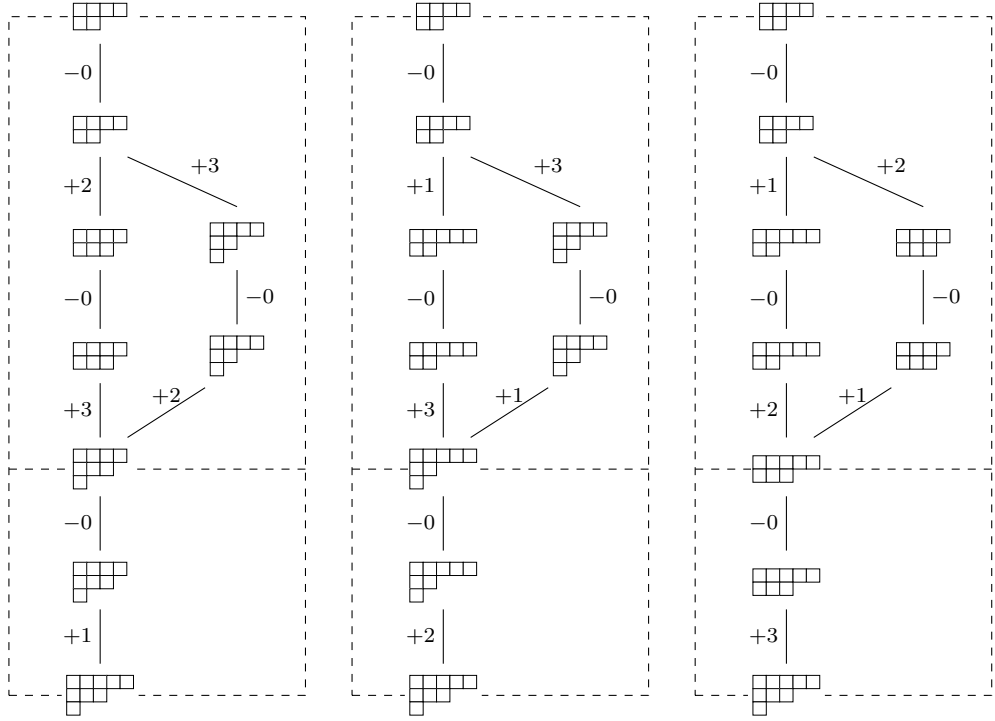
 \end{eg}

\begin{eg}\label{onerowss}
Let $\lambda = (7)$, $\nu = (6)$ and $s=6$. Then $(\lambda, \nu, 6)$ is a co-Pieri triple. We have $|\SStd_6^0(\nu \setminus \lambda, (6))| = 3$ and a representative for each of these semistandard tableaux is given by
\begin{eqnarray*}
&&\stt_1 = r(1)\circ r(1) \circ r(1) \circ d(1) \circ a(1) \circ a(1)\\
&& \stt_2 =   r(1)\circ r(1) \circ d(1) \circ d(1) \circ d(1) \circ a(1)\\
&& \stt_3 =  r(1)\circ d(1) \circ d(1) \circ d(1) \circ d(1) \circ d(1)
\end{eqnarray*}
We have $|\SStd_6^0(\nu\setminus \lambda, (3,2,1))| = 27$. To see this, observe that $[\stt_1]_{(6)}$ and $[\stt_2]_{(6)}$ each splits into 12 semistandard Kronecker tableaux of weight $(3,2,1)$, whereas $[\stt_3]_{(6)}$ splits into 3 semistandard Kronecker tableaux of weight $(3,2,1)$.
\end{eg}


\begin{thm} \label{YOUNGSRULE}
Let $(\lambda, \nu, s)$ be a co-Pieri triple and $\mu\vdash s$. Then we define 
$$\varphi_\SSTT(u_{\stt^\mu}) = \sum_{\sts\in  \SSTT}u_\sts.  $$
for $\SSTT \in \SStd_s^0(\nu \setminus \lambda, \mu)$.  We have that 
$$\{\varphi_\SSTT \mid \SSTT\in \SStd_s^0(\nu \setminus \lambda, \mu)\}$$
is a $\ZZ$-basis for 
$   \Hom_{\mathfrak{S}_s}({\sf M}(\mu), \Delta_s^0(\nu\setminus \lambda))$.    
 \end{thm}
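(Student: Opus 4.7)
The plan is to reduce the statement to a computation of $\mathfrak{S}_\mu$-invariants in the permutation module $\Delta_s^0(\nu\setminus\lambda){\downarrow}_{\mathfrak{S}_\mu}$. The starting point is that the Young permutation module ${\sf M}(\mu) = \mathbb{Q}\uparrow_{\mathfrak{S}_\mu}^{\mathfrak{S}_s}$ is cyclic with generator $u_{\stt^\mu}$ (corresponding to $1\otimes 1$), and this generator is $\mathfrak{S}_\mu$-invariant. By Frobenius reciprocity, evaluation at this generator gives a $\mathbb{Q}$-linear isomorphism
\begin{equation*}
\mathrm{ev}\colon \Hom_{\mathfrak{S}_s}({\sf M}(\mu), \Delta_s^0(\nu\setminus\lambda)) \xrightarrow{\,\sim\,} \left(\Delta_s^0(\nu\setminus\lambda)\right)^{\mathfrak{S}_\mu},
\end{equation*}
so it suffices to exhibit an explicit basis of the right-hand side indexed by $\SStd_s^0(\nu\setminus\lambda, \mu)$.

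The next step is to identify $\Delta_s^0(\nu\setminus\lambda)$ as a permutation module for $\mathfrak{S}_\mu$. Since $(\lambda,\nu,s)$ is a co-Pieri triple, Theorem~\ref{{co-Pieri}triple} together with its corollary show that $\Delta_s^0(\nu\setminus\lambda)$ has $\mathbb{Q}$-basis $\{m_\stt + {\sf DR}_s(\nu\setminus\lambda) \mid \stt \in \Std_s^0(\nu\setminus\lambda)\}$, and that for every $1\leq k < s$ the generator $s_{k,k+1}$ acts on this basis by the permutation $\stt \leftrightarrow \stt_{k\leftrightarrow k+1}$ (in particular, $\stt_{k\leftrightarrow k+1}$ always exists and lies in $\Std_s^0(\nu\setminus\lambda)$). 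The generators of $\mathfrak{S}_\mu \subseteq \mathfrak{S}_s$ are precisely those $s_{k,k+1}$ with $k \in \{1,\ldots,s-1\}\setminus\{[\mu]_c \mid 1\leq c \leq \ell-1\}$, and so by Definition~\ref{semistandard}(2) the orbits of $\mathfrak{S}_\mu$ on $\Std_s^0(\nu\setminus\lambda)$ are exactly the equivalence classes $[\stt]_\mu$.

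It then remains to observe that for co-Pieri triples, Remark~\ref{sscopieri} guarantees that \emph{every} such class $[\stt]_\mu$ is semistandard; thus the set of $\mathfrak{S}_\mu$-orbits on $\Std_s^0(\nu\setminus\lambda)$ is in natural bijection with $\SStd_s^0(\nu\setminus\lambda, \mu)$. Since over $\mathbb{Q}$ the fixed-point subspace of a permutation module is spanned by its orbit sums, we conclude that
\begin{equation*}
\left\{\sum_{\sts \in \SSTT} \bigl(m_\sts + {\sf DR}_s(\nu\setminus\lambda)\bigr) \;\Bigl|\; \SSTT \in \SStd_s^0(\nu\setminus\lambda,\mu)\right\}
\end{equation*}
is a $\mathbb{Q}$-basis for $(\Delta_s^0(\nu\setminus\lambda))^{\mathfrak{S}_\mu}$. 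Pulling this basis back along $\mathrm{ev}^{-1}$ produces exactly the homomorphisms $\varphi_\SSTT$ described in the theorem, establishing the claim.

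The main obstacle in this argument has already been discharged: it is the content of Sections 3 and 4, namely the explicit permutation-style action of each $s_{k,k+1}$ on the Murphy basis of $\Delta_s^0(\nu\setminus\lambda)$ for a co-Pieri triple, which crucially requires that all error terms $u_{{\sf e}_k(\stt)}$ of Theorem~\ref{MURPHYPrN} are killed in passing to the Dvir quotient. Everything else is a routine application of Frobenius reciprocity and the classical description of invariants in a permutation module.
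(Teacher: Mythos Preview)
Your proposal is correct and follows essentially the same approach as the paper's own proof: both invoke Frobenius reciprocity to reduce to computing $\mathfrak{S}_\mu$-invariants in $\Delta_s^0(\nu\setminus\lambda)$, then use the permutation action from the Corollary to Theorem~\ref{{co-Pieri}triple} together with Remark~\ref{sscopieri} to identify the orbit sums with the semistandard Kronecker tableaux. The only cosmetic difference is that the paper phrases the last step as a decomposition $\Delta_s^0(\nu\setminus\lambda){\downarrow}_{\mathfrak{S}_\mu}=\bigoplus_{\SSTT}V(\SSTT)$ into transitive permutation modules, each contributing a one-dimensional space of invariants, whereas you argue directly that orbit sums span the fixed-point space; these are equivalent.
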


\begin{proof} 
    By Frobenius reciprocity,  
\begin{align*}
     \Hom_{\mathfrak{S}_s}({\sf M}(\mu), \Delta_s^0(\nu\setminus\lambda))
   &  \cong
          \Hom_{\mathfrak{S}_\mu}(\CC, \Delta_s^0(\nu\setminus\lambda)\downarrow_{\mathfrak{S}_\mu})
\end{align*} 
 It is clear from \cref{co-case} and \cref{sscopieri2,sscopieri}  that $\Delta_s^0(\nu\setminus\lambda)\downarrow_{\mathfrak{S}_\mu}$ decomposes as
$$\Delta_s^0(\nu\setminus\lambda)\downarrow_{\mathfrak{S}_\mu} = \bigoplus_{\SSTT \in \SStd_s^0(\nu\setminus \lambda,\mu)}V(\SSTT)$$
where $V(\SSTT) = {\rm Span}_\CC \{ m_{\stt} + {\sf DR}(\nu \setminus \lambda) \, | \, [\stt ]_{\mu} = \SSTT \}$.
Moreover, each $V(\SSTT)$ is itself a permutation module of the form $\CC\uparrow_{\mathfrak{S}_\tau}^{\mathfrak{S}_\mu}$ for some composition $\tau \vDash s$ which is a refinement of $\mu$. Thus we have that $\dim_\CC \Hom_{\mathfrak{S}_{\mu}}(\CC, V(\SSTT)) = 1$ for each $\SSTT\in \SStd_s^0(\nu \setminus \lambda, \mu)$ and the result follows.
      \end{proof}

 \begin{eg}\label{semiexam}
Let $\lambda =(8,5,3) $,
 $\nu=(6,5,3,2)$ and $s=3$. Then $(\lambda, \nu,3)$ is a co-Pieri triple.
 We have that  $|\SStd_3^0(\nu\setminus \lambda,(3))|=6$. A representative for each of these semistandard tableaux is given as follows,
$$\begin{array}{cccc}
&d(1) \circ \down(1,4) \circ  \down(1,4)
 & d(2) \circ \down(1,4) \circ  \down(1,4) 
& \down(1,2) \circ \down(1,4) \circ \down(2,4)   \\
&d(3)\circ \down(1,4) \circ  \down(1,4)
& 
 r(1) \circ \down(1,4) \circ a(4)
&
\down(1,3) \circ \down(1,4) \circ  \down(3,4).
\end{array}$$
The semistandard tableau corresponding to the first  of these 
  tableaux is depicted in  \cref{semiexamfig}.
   We have that   $|\SStd_3^0(\nu\setminus \lambda,(2,1))|=15$. 
Two  examples  of  such tableaux  are  
  depicted in \cref{semiexamfig}.

\begin{figure}[ht!]
$$\scalefont{0.8} \begin{tikzpicture}[scale=0.6]
        \draw[white] (-1.8,0.35) rectangle (4.4,-16.3);  
        \draw[dashed] (-1.8,0) rectangle (4.4,-15);  
                  \fill[white] (0,0) circle (20pt);   
                                  \fill[white] (0,-15) circle (18pt);    \begin{scope}   
 \fill[white] (0,0) circle (37pt);
     \draw (0,0) node {$  \Yboxdim{5pt}\gyoung(;;;;;;;;,;;;;;,;;;) $  };   
\fill[white] (0,-2.5) circle (37pt);    \draw (0,-2.5) node{$  \Yboxdim{5pt}\gyoung(;;;;;;;,;;;;;,;;;) $  };
\fill[white] (0,-5) circle (37pt);  \draw (0,-5) node{$  \Yboxdim{5pt}\gyoung(;;;;;;;;,;;;;;,;;;) $  };
\fill[white] (3,-5) circle (37pt);  \draw (3,-5) node{$  \Yboxdim{5pt}\gyoung(;;;;;;;,;;;;;,;;;,;) $  };
  \fill[white] (0,-7.5) circle (37pt);  \draw (0,-7.5) node{$  \Yboxdim{5pt}\gyoung(;;;;;;;,;;;;;,;;;) $  };
\fill[white] (3,-7.5) circle (37pt);  \draw (3,-7.5) node{$  \Yboxdim{5pt}\gyoung(;;;;;;,;;;;;,;;;,;) $  };
  \fill[white] (0,-10) circle (37pt);  \draw (0,-10) node{$  \Yboxdim{5pt}\gyoung(;;;;;;;,;;;;;,;;;,;) $};
\fill[white] (3,-10) circle (37pt);  \draw (3,-10) node{$  \Yboxdim{5pt}\gyoung(;;;;;;,;;;;;,;;;,;;) $  };
  \fill[white] (0,-12.5) circle (37pt);  \draw (0,-12.5) node{$  \Yboxdim{5pt}\gyoung(;;;;;;,;;;;;,;;;,;) $};
  \fill[white] (3,-12.5) circle (37pt);  \draw (3,-12.5) node{$  \Yboxdim{5pt}\gyoung(;;;;;,;;;;;,;;;,;;) $  };
  \fill[white] (0,-15) circle (37pt);  \draw (0,-15) node{$  \Yboxdim{5pt}\gyoung(;;;;;;,;;;;;,;;;,;;) $};
     \path   (0,-0.6)edge[decorate]  node[left] {$-1$}  (0,-1.9);
          \path   (0.6,-3.1)edge[decorate]  node[auto] {$+4$}  (3,-4.2);
                    \path   (0,-3.1)edge[decorate]  node[left] {$+1$}  (0,-4.4);
                    \path   (0,-5.6)edge[decorate]  node[left] {$-1$}  (0,-6.9);
                                        \path   (3,-5.6)edge[decorate]  node[auto] {$-1  $}  (3,-6.8);
          \path   (0,-8.1)edge[decorate]  node[left] {$+4$}  (0,-9.3);
                    \path   (2.3,-8.2)edge[decorate]  node[above] {$+1$}  (0.6,-9.3);
                                        \path   (2.3,-8.2-5)edge[decorate]  node[above] {$+1$}  (0.6,-9.3-5);
                    \path   (3,-8.1)edge[decorate]  node[right] {$+4$}  (3,-9.3);
                                        \path   (3,-8.1-2.5)edge[decorate]  node[right] {$-1$}  (3,-9.3-2.5);
                              \path   (0,-10.6)edge[decorate]  node[left] {$-1$}  (0,-11.8);
                                                            \path   (0,-13.1)edge[decorate]  node[left] {$+4$}  (0,-14.3);
   \end{scope}
   \end{tikzpicture} 
  \qquad
    \begin{tikzpicture}[scale=0.6]
        \draw[white] (-1.8,0.35) rectangle (4.4,-16.3);  
            \draw[dashed] (-1.8,0) rectangle (4.4,-10); 
                \draw[dashed] (-1.8,-10)--(-1.8,-15)--(4.4,-15)--(4.4,-10) ;  
                  \fill[white] (0,0) circle (20pt);   
                                  \fill[white] (0,-15) circle (18pt);    \begin{scope}   
 \fill[white] (0,0) circle (37pt);
     \draw (0,0) node {$  \Yboxdim{5pt}\gyoung(;;;;;;;;,;;;;;,;;;) $  };   
\fill[white] (0,-2.5) circle (37pt);    \draw (0,-2.5) node{$  \Yboxdim{5pt}\gyoung(;;;;;;;,;;;;;,;;;) $  };
\fill[white] (0,-5) circle (37pt);  \draw (0,-5) node{$  \Yboxdim{5pt}\gyoung(;;;;;;;;,;;;;;,;;;) $  };
\fill[white] (3,-5) circle (37pt);  \draw (3,-5) node{$  \Yboxdim{5pt}\gyoung(;;;;;;;,;;;;;,;;;,;) $  };
  \fill[white] (0,-7.5) circle (37pt);  \draw (0,-7.5) node{$  \Yboxdim{5pt}\gyoung(;;;;;;;,;;;;;,;;;) $  };
\fill[white] (3,-7.5) circle (37pt);  \draw (3,-7.5) node{$  \Yboxdim{5pt}\gyoung(;;;;;;,;;;;;,;;;,;) $  };
  \fill[white] (0,-10) circle (37pt);  \draw (0,-10) node{$  \Yboxdim{5pt}\gyoung(;;;;;;;,;;;;;,;;;,;) $};
  \fill[white] (0,-12.5) circle (37pt);  \draw (0,-12.5) node{$  \Yboxdim{5pt}\gyoung(;;;;;;,;;;;;,;;;,;) $};
    \fill[white] (0,-15) circle (37pt);  \draw (0,-15) node{$  \Yboxdim{5pt}\gyoung(;;;;;;,;;;;;,;;;,;;) $};
     \path   (0,-0.6)edge[decorate]  node[left] {$-1$}  (0,-1.9);
          \path   (0.6,-3.1)edge[decorate]  node[auto] {$+4$}  (3,-4.2);
                    \path   (0,-3.1)edge[decorate]  node[left] {$+1$}  (0,-4.4);
                    \path   (0,-5.6)edge[decorate]  node[left] {$-1$}  (0,-6.9);
                                        \path   (3,-5.6)edge[decorate]  node[auto] {$-1  $}  (3,-6.8);
          \path   (0,-8.1)edge[decorate]  node[left] {$+4$}  (0,-9.3);
                    \path   (2.3,-8.2)edge[decorate]  node[above] {$+1$}  (0.6,-9.3);
                               \path   (0,-10.6)edge[decorate]  node[left] {$-1$}  (0,-11.8);
    \path   (0,-13.1)edge[decorate]  node[left] {$+4$}  (0,-14.3);
   \end{scope}
   \end{tikzpicture} 
     \qquad
    \begin{tikzpicture}[scale=0.6]
        \draw[white] (-2,0.35) rectangle (2,-16.3);  
            \draw[dashed] (-2,0) rectangle (2,-10); 
                \draw[dashed] (-2,-10)--(-2,-15)--(2,-15)--(2,-10) ;  
                  \fill[white] (0,0) circle (20pt);   
                                  \fill[white] (0,-15) circle (18pt);    \begin{scope}   
 \fill[white] (0,0) circle (37pt);
     \draw (0,0) node {$  \Yboxdim{5pt}\gyoung(;;;;;;;;,;;;;;,;;;) $  };   
\fill[white] (0,-2.5) circle (37pt);    \draw (0,-2.5) node{$  \Yboxdim{5pt}\gyoung(;;;;;;;,;;;;;,;;;) $  };
\fill[white] (0,-5) circle (37pt);  \draw (0,-5) node
{$  \Yboxdim{5pt}\gyoung(;;;;;;;,;;;;;,;;;,;) $  };
  \fill[white] (0,-7.5) circle (37pt);  \draw (0,-7.5) node{$  \Yboxdim{5pt}\gyoung(;;;;;;,;;;;;,;;;,;) $  };
  \fill[white] (0,-10) circle (37pt);  \draw (0,-10) node{$  \Yboxdim{5pt}\gyoung(;;;;;;,;;;;;,;;;,;;) $};
  \fill[white] (0,-12.5) circle (37pt);  \draw (0,-12.5) node{$  \Yboxdim{5pt}\gyoung(;;;;;,;;;;;,;;;,;;) $};
    \fill[white] (0,-15) circle (37pt);  \draw (0,-15) node{$  \Yboxdim{5pt}\gyoung(;;;;;;,;;;;;,;;;,;;) $};
     \path   (0,-0.6)edge[decorate]  node[left] {$-1$}  (0,-1.9);
                     \path   (0,-3.1)edge[decorate]  node[left] {$+4$}  (0,-4.4);
                    \path   (0,-5.6)edge[decorate]  node[left] {$-1$}  (0,-6.9);
          \path   (0,-8.1)edge[decorate]  node[left] {$+4$}  (0,-9.3);
                                \path   (0,-10.6)edge[decorate]  node[left] {$-1$}  (0,-11.8);
    \path   (0,-13.1)edge[decorate]  node[left] {$+1$}  (0,-14.3);
   \end{scope}
   \end{tikzpicture} 
     \qquad
    \begin{tikzpicture}[scale=0.6]
        \draw[white] (-2,0.35) rectangle (4.5,-16.3);  
            \draw[dashed] (-2,0) rectangle (4.8,-10); 
                \draw[dashed] (-2,-10)--(-2,-15)--(4.8,-15)--(4.8,-10) ;  
                  \fill[white] (-0.1,0) circle (20pt);   
                                  \fill[white] (-0.1,-15) circle (18pt);    \begin{scope}   
 \fill[white] (-0.1,0) circle (37pt);
     \draw (-0.1,0) node {$  \Yboxdim{5pt}\gyoung(;;;;;;;;;,;;;;;;,;;;) $  };   
\fill[white] (-0.1,-2.5) circle (37pt);    
\draw (-0.1,-2.5) node{$  \Yboxdim{5pt}\gyoung(;;;;;;;;;,;;;;;;,;;) $  };
\fill[white] (3,-2.5) circle (37pt);    
\draw (3,-2.5) node{$  \Yboxdim{5pt}\gyoung(;;;;;;;;;,;;;;;,;;;) $  };
\fill[white] (-0.1,-5) circle (37pt);  \draw (-0.1,-5) node{$  \Yboxdim{5pt}\gyoung(;;;;;;;;;,;;;;;;;,;;)$  };
\fill[white] (3,-5) circle (37pt);  \draw (3,-5) node{$  \Yboxdim{5pt}\gyoung(;;;;;;;;;;,;;;;;,;;;)$  };
          \path   (0.6,-0.6)edge[decorate]  node[auto] {$-2$}  (3,-1.9);
  \fill[white] (-0.1,-7.5) circle (37pt);  \draw (-0.1,-7.5) node{$  \Yboxdim{5pt}\gyoung(;;;;;;;;;,;;;;;;,;;)$  };
\fill[white] (3,-7.5) circle (37pt);  \draw (3,-7.5) node{$  \Yboxdim{5pt}\gyoung(;;;;;;;;;;,;;;;;,;;) $  };
  \fill[white] (0.1,-10) circle (37pt);  \draw (0.1,-10) node{$  \Yboxdim{5pt}\gyoung(;;;;;;;;;;,;;;;;;,;;) $};
  \fill[white] (-0.1,-12.5) circle (37pt);  \draw (-0.1,-12.5) node{$  \Yboxdim{5pt}\gyoung(;;;;;;;;;,;;;;;;,;;)  $};
    \fill[white] (-0.1,-15) circle (37pt);  \draw (-0.1,-15) node{$  \Yboxdim{5pt}\gyoung(;;;;;;;;;,;;;;;;,;;;)  $};
     \path   (-0.1,-0.6)edge[decorate]  node[left] {$-3$}  (-0.1,-1.9);
          \path   (3,-3.1)edge[decorate]  node[auto] {$+1$}  (3,-4.2);
                    \path   (-0.1,-3.1)edge[decorate]  node[left] {$+2$}  (-0.1,-4.4);
                    \path   (-0.1,-5.6)edge[decorate]  node[left] {$-2$}  (-0.1,-6.9);
                                        \path   (3,-5.6)edge[decorate]  node[auto] {$-3  $}  (3,-6.8);
          \path   (-0.1,-8.1)edge[decorate]  node[left] {$+1$}  (-0.1,-9.3);
                    \path   (2.3,-8.2)edge[decorate]  node[below] {$+2$}  (0.6,-9.3);
                               \path   (-0.1,-10.6)edge[decorate]  node[left] {$-1$}  (-0.1,-11.8);
    \path   (-0.1,-13.1)edge[decorate]  node[left] {$+3$}  (-0.1,-14.3);
   \end{scope}
   \end{tikzpicture} $$
   \caption{Three  semistandard Kronecker tableaux of shape
   $(6,5,3,2) \setminus (8,5,3)  $ and one of shape 
   $(9,6,3)\setminus(9,6,3)$.  The leftmost 
 is   of weight $( 3)$ and 
   the latter three are of weight $( 2,1)$. 
 }   \label{semiexamfig}
\end{figure}
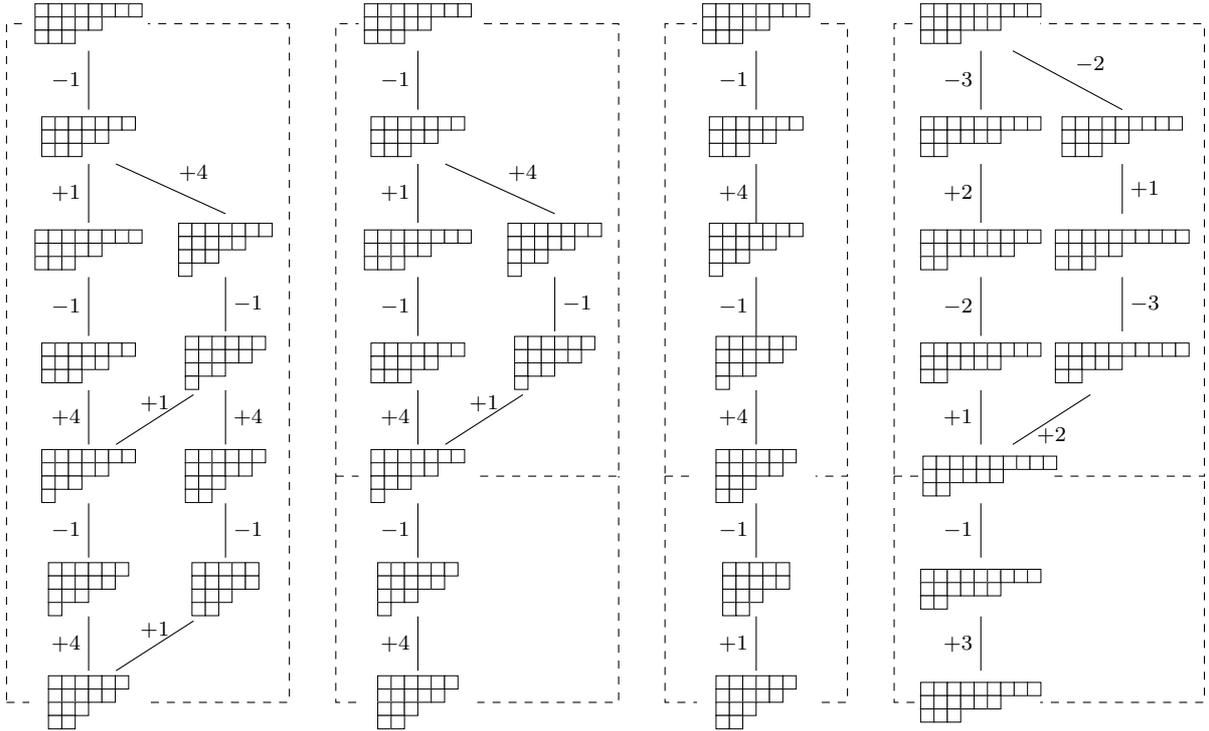
 \end{eg}

  \section{Latticed Kronecker tableaux}\label{sec:latticed}

In this section we prove the main result of the paper, namely we find a combinatorial description for 
$$\overline{g}(\lambda, \nu, \mu) = \dim \Hom_{\mathfrak{S}_s}({\sf S}(\mu), \Delta_s^0(\nu\setminus \lambda))$$
for all co-Pieri triples $(\lambda, \nu, s)$ and all $\mu \vdash s$ which naturally extends the Littlewood--Richardson rule.

In the previous section we saw that the semistandard Kronecker tableaux of shape $\nu \setminus \lambda$ and weight $\mu$ index a basis for $\Hom_{\mathfrak{S}_s}({\sf M}(\mu), \Delta_s^0(\nu \setminus \lambda))$. We will now find which of these index a basis for \newline $\Hom_{\mathfrak{S}_s}({\sf S}(\mu), \Delta_s^0(\nu \setminus \lambda))$.
We follow James' approach \cite{JAM} and extend his notion of latticed semistandard tableaux.

We start with any standard tableau $\sts \in \Std_s^0(\nu \setminus \lambda)$ and any $\mu = (\mu_1, \mu_2, \ldots , \mu_l)\vDash s$. Write 
$$\sts =  (-\varepsilon_{i_1}, 
+\varepsilon_{j_1},
-\varepsilon_{i_2},
+\varepsilon_{j_2},
\dots
, -\varepsilon_{i_s},
+\varepsilon_{j_s}). $$
Recall from the previous section that, to each integral step $(-\varepsilon_{i_k}, + \varepsilon_{j_k})$ in $\sts$, we associate its frame  $c$, that is the unique positive integer such that 
$$[\mu]_{c-1} < k \leq [\mu]_c.$$
 
  Now we encode the integral steps of $\sts$ and their frames in a $2\times s$ array, denoted by $\omega_\mu (\sts)$ and called the $\mu$-reverse reading word of $\sts$ as follows. The first row of $\omega_\mu(\sts)$ contains all the integral steps of $\sts$ and the second row contains their corresponding frames. We order the columns of $\omega_\mu(\sts)$ increasingly using the ordering on integral steps given in Definition 2.5 (and we place a vertical lines between any two integral steps which are not equal). For two equal integral steps we order the columns so that the frame numbers are weakly decreasing (and so between any two vertical lines,    the entries in  $\omega_2(\SSTS)$  are weakly decreasing).   

Note that if $\stt \in [\sts]_\mu$ then $\omega_\mu(\stt) = \omega_\mu(\sts)$. So it makes sense to define the reverse reading word $\omega(\SSTS)$ of a semistandard Kronecker tableau $\SSTS \in \SStd_s^0(\nu\setminus \lambda , \mu)$ by setting $\omega(\SSTS)=\omega_\mu(\sts)$ for some $\sts\in \SSTS$.

\begin{eg}\label{reverse}
We begin with an example of a 
 triple of maximal depth. Let $ \nu  =(9,8,6,3) $, $\lambda=  ( 6,4,3)$ and $s=13$.  
 Let  $\sts \in \Std_s^0( \nu \setminus \lambda )$ be the path       
$$
a(1)\circ a(1) \circ a(4)\circ a(4)\circ a(4)
\circ
 a(1)\circ a(2)\circ a(2)\circ a(2)\circ a(3)
\circ
 a(2)\circ a(3)\circ a(3).
$$ 
Let $\mu = (5,5,3)$, then in classical notation, the semistandard tableau $\SSTS = [\sts]_\mu$ is the leftmost tableaux depicted in \cref{hello mister3}.
The reverse reading word of $\SSTS$ is as follows:
$$ \left( \begin{array}{ccc|cccc|ccc|ccc}
a(1) & a(1) & a(1) & a(2) & a(2) & a(2) & a(2) 
& a(3) & a(3) & a(3) 
& a(4) & a(4) & a(4) \\
2& 1 & 1 & 3 & 2 & 2 & 2 & 3&3&2&1&1&1 \end{array}\right) .$$
 Compare the second row of the above array 
 with the corresponding word given in \cref{readerexample,followreaderexample}.
\end{eg}

\begin{rmk}Let  $(\lambda,\nu,\mu)$  be  of  
maximal depth and $\SSTS\in \SStd_s^0(\nu\setminus\lambda,\mu)$.  
 The second row of the reverse reading word of  $\SSTS$    coincides with the  classical   reverse reading word given in  \cref{Classical row reading}.  
 \end{rmk}

For $\SSTS\in \SStd_s^0(\nu\setminus \lambda, \mu)$ we write 
$$\omega(\SSTS) = (\omega_1(\SSTS), \omega_2(\SSTS))$$
where $\omega_1(\SSTS)$ (respectively  $\omega_2(\SSTS)$) is the first (respectively  second) row of $\omega(\SSTS)$.
Note that $\omega_2(\SSTS)$ is a sequence of type $\mu$, that is a sequence of positive integers such that $i$ appears precisely $\mu_i$ times, for all $i\geq 1$.

\begin{defn}\label{goodbad}
Given a finite sequence of positive integers we define the quality (good/bad) of each term as follows.
\begin{enumerate}
\item All $1$'s are good.
\item An $i+1$ is good if and only if the number of previous good $i$'s is strictly greater than the number of previous good $i+1$'s.
\end{enumerate}
A sequence of positive integers is called a lattice permutation if every term in the sequence is good.
\end{defn}

\begin{defn}
For $\SSTS\in \SStd_s^0(\nu\setminus \lambda, \mu)$ we say that its reverse reading word $\omega(\SSTS)$ is a lattice permutation if $\omega_2(\SSTS)$ is a lattice permutation. We define $\Latt_s^0(\nu \setminus \lambda, \mu)$ to be the set of all $\SSTS\in \SStd_s^0(\nu\setminus \lambda, \mu)$ such that $\omega(\SSTS)$ is a lattice permutation.
\end{defn}

 \begin{eg}
Continuing from \cref{reverse}, the quality of each term (or step) in the reverse reading word of  $\SSTS$ is as follows
$$ \left( \begin{array}{ccc|cccc|ccc|ccc}
a(1) & a(1) & a(1) & a(2) & a(2) & a(2) & a(2) 
& a(3) & a(3) & a(3) 
& a(4) & a(4) & a(4) \\
\times 
&
\checkmark
&
\checkmark
&
\times 
&
\checkmark
&
\checkmark
&
\times 
&
\checkmark
&
\checkmark
&
\times 
&\checkmark
&
\checkmark&
\checkmark
\\
2& 1 & 1 & 3 & 2 & 2 & 2 & 3&3&2&1&1&1 \end{array}\right) .$$
We have indicated good steps with a $\checkmark$ and each bad step with a $\times$. We see that $\SSTS \not \in \Latt_s^0(
 (9,8,6,3) \setminus  ( 6,4,3), ( 5,5,3) 
)$.   
\end{eg}

\begin{eg}
Of the three semistandard Kronecker tableaux depicted in \cref{smiinpartition}, the reverse reading words of final two are lattice permutations, whereas the first one is not.  
\end{eg}

\begin{eg}\label{semiexamfig2}
Of the two  elements of  $\SStd_3((6,5,3,2) \setminus (8,5,3),(2,1))$ depicted in \cref{semiexamfig}, 
the reverse reading word of the former is a lattice permutation, whereas the latter is not.

\end{eg}

  \begin{eg}\label{a2parteg}
 We continue with \cref{onerowss}. So we take $\lambda = (7), \nu = (6)$ and $s=6$. Let $\SSTS \in \SStd_6^0(\nu\setminus \lambda, \mu)$ for any $\mu \vdash 6$. Then $\omega_1(\SSTS)$ must be one of the following
$$\begin{array}{cccc}
( r(1) \, r(1) \, r(1)\, d(1) \, a(1) \, a(1)), & 
( r(1) \, r(1) \, d(1)\, d(1) \, d(1) \, a(1)) & \mbox{or} &
( r(1) \, d(1) \, d(1)\, d(1) \, d(1) \, d(1)).
\end{array}$$
It is easy to check that for $\mu = (3,2,1)$ we have $\SSTS\in \Latt_6^0(\nu\setminus \lambda , \mu)$ if and only if $\omega(\SSTS)$ is one of the following
$$\begin{array}{ccc}
 \left( \begin{array}{ccc|c|cc}
r(1) & r(1) & r(1) & d(1) & a(1) & a(1) \\
1 & 1 & 1 & 2 & 3 & 2 \end{array}\right) &  \mbox{or} & 
 \left( \begin{array}{cc|ccc|c}
r(1) & r(1) & d(1) & d(1) & d(1) & a(1) \\
1 & 1 & 2 & 2 & 1 & 3 \end{array}\right).
\end{array}$$
Thus $|\Latt_6^0(\nu\setminus \lambda, \mu )|=2$. 
Similarly, for $\tau = (4,2)$ we have that $\SSTS \in \Latt_6^0(\nu\setminus \lambda , \tau)$   if and only if $\omega(\SSTS)$ is one of the following
$$\begin{array}{ccc}
 \left( \begin{array}{ccc|c|cc}
r(1) & r(1) & r(1) & d(1) & a(1) & a(1) \\
1 & 1 & 1 & 1 & 2 & 2 \end{array}\right) &  \mbox{,} & 
 \left( \begin{array}{ccc|c|cc}
r(1) & r(1) & r(1) & d(1) & a(1) & a(1) \\
1 & 1 & 1 & 2 & 2 & 1 \end{array}\right), \\
\left( \begin{array}{cc|ccc|c}
r(1) & r(1) & d(1) & d(1) & d(1) & a(1) \\
1 & 1 & 2 & 1 & 1 & 2 \end{array}\right) &  \mbox{or} & 
 \left( \begin{array}{cc|ccc|c}
r(1) & r(1) & d(1) & d(1) & d(1) & a(1) \\
1 & 1 & 2 & 2 & 1 & 1 \end{array}\right).
\end{array}$$
So we get $|\Latt_6^0(\nu\setminus \lambda, \tau)|=4$.

  \end{eg}

\begin{thm}\label{mainresult}
For any co-Pieri triple $(\lambda, \nu, s)$ and any $\mu\vdash s$ we have 
  that 
  $$\overline{g}(\lambda, \nu, \mu) = \dim_{\CC}\Hom_{\mathfrak{S}_s}({\sf S}(\mu), \Delta_s^0(\nu\setminus \lambda)) = |\Latt_s^0(\nu\setminus \lambda, \mu)|.$$
\end{thm}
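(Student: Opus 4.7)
The first equality is immediate from equation \eqref{depthlayer2} recalled at the start of the present section, so the substance of the proof is the second equality $\dim_{\CC}\Hom_{\mathfrak{S}_s}({\sf S}(\mu), \Delta_s^0(\nu\setminus \lambda)) = |\Latt_s^0(\nu\setminus \lambda, \mu)|$. The plan is to proceed by induction on the dominance order on partitions of $s$. For the base case $\mu = (s)$, the module ${\sf S}((s)) = {\sf M}((s))$ is the trivial $\mathfrak{S}_s$-module, so by \cref{YOUNGSRULE} the dimension in question equals $|\SStd_s^0(\nu\setminus\lambda,(s))|$; since every semistandard Kronecker tableau of weight $(s)$ has $\omega_2 = (1,1,\dots,1)$, which is trivially a lattice permutation, $\Latt_s^0(\nu\setminus\lambda,(s)) = \SStd_s^0(\nu\setminus\lambda,(s))$ and the base case follows.

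For the inductive step I would assume that $\overline{g}(\lambda,\nu,\tau) = |\Latt_s^0(\nu\setminus\lambda,\tau)|$ for every $\tau \vartriangleright \mu$. Applying the classical Young rule ${\sf M}(\mu) \cong \bigoplus_{\tau \trianglerighteq \mu} K_{\tau\mu}\,{\sf S}(\tau)$ (with $K_{\mu\mu}=1$) together with \cref{YOUNGSRULE} yields
\[
|\SStd_s^0(\nu\setminus\lambda,\mu)| \;=\; \overline{g}(\lambda,\nu,\mu) \;+\; \sum_{\tau \vartriangleright \mu} K_{\tau\mu}\,\overline{g}(\lambda,\nu,\tau).
\]
Substituting the inductive hypothesis into the sum on the right reduces the theorem to the purely combinatorial identity
\[
|\SStd_s^0(\nu\setminus\lambda,\mu)| \;=\; \sum_{\tau \trianglerighteq \mu} K_{\tau\mu}\,|\Latt_s^0(\nu\setminus\lambda,\tau)|. \tag{$\star$}
\]

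To establish $(\star)$ I would construct an explicit bijection
\[
\Phi \colon \SStd_s^0(\nu\setminus\lambda,\mu) \;\longrightarrow\; \bigsqcup_{\tau \trianglerighteq \mu} \Latt_s^0(\nu\setminus\lambda,\tau) \times \{\text{SSYT of shape } \tau \text{ and weight } \mu\},
\]
where the size of the right-hand factor is $K_{\tau\mu}$. Given $\SSTT \in \SStd_s^0(\nu\setminus\lambda,\mu)$, one scans $\omega_2(\SSTT)$ from left to right, detects each bad entry via \cref{goodbad}, and performs a local rearrangement of frame labels modelled on Robinson--Schensted insertion on words. Each such move preserves the multiset of integral steps recorded in $\omega_1(\SSTT)$ and records one step of the classical straightening, so that the terminal word is a lattice permutation of some type $\tau \trianglerighteq \mu$, defining a latticed Kronecker tableau $\SSTT' \in \Latt_s^0(\nu\setminus\lambda,\tau)$, while the recording tableau $Q$ of shape $\tau$ and weight $\mu$ encodes the history of swaps.

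The main obstacle is verifying that every intermediate tableau produced by the local moves remains an element of $\SStd_s^0(\nu\setminus\lambda,\cdot)$ in the sense of \cref{semistandard}, i.e.\ that the reassigned frame partition continues to satisfy the no-two-nodes-in-the-same-column condition. This is precisely where the co-Pieri hypothesis enters: the bound $\minmax(\lambda,\nu)$ from the definition of a co-Pieri triple supplies the slack needed for such frame-label rearrangements to respect the intermediate shapes $\SSTT([\mu]_c)$, in analogy with how (coP) guaranteed in \cref{LemmaB} that the shapes along $\varphi_s(\sts,\stt)$ stay partitions. Once this preservation is checked, reversibility of the straightening procedure (mirroring the classical inverse correspondence on words) delivers the bijection $\Phi$, establishing $(\star)$ and closing the induction.
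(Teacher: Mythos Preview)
Your inductive framework, the reduction to the combinatorial identity $(\star)$, and the recognition that one needs a bijection between $\SStd_s^0(\nu\setminus\lambda,\mu)$ and $\bigsqcup_{\tau}\SStd_s(\tau,\mu)\times\Latt_s^0(\nu\setminus\lambda,\tau)$ all match the paper's proof exactly; this is precisely \cref{mainbijection}. The paper builds that bijection not via an RSK-style insertion but via James' pairs-of-partitions method: one iteratively applies the map $R_c$ of \cref{James} (turn every bad $c$ into a $c-1$) along the edges of a rooted tree $\mathscr{T}(\mu)$, and then identifies the terminal vertices of $\mathscr{T}(\mu)$ with $\SStd_s(\tau,\mu)$ via \cref{bijectionTV}. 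Your RSK sketch is a plausible alternative packaging of the same classical word bijection, but it is not spelled out concretely enough to verify.

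The real gap is in your last paragraph: you locate the obstacle in the wrong place. For a co-Pieri triple, preservation of semistandardness under arbitrary reassignment of integral steps to frames is \emph{automatic}: by condition (C1) of \cref{hjkldhjklfadshlkjfdshfasd} the swap $\stt_{k\leftrightarrow k+1}$ always exists, and \cref{sscopieri} (via \cref{LemmaB}) then says every class $[\stt]_\mu$ is semistandard. No further appeal to the $\minmax$ bound is needed at this stage. What is genuinely delicate, and what the paper checks in \cref{bijectionSS}, is a purely word-level issue: one must show that after applying $R_c$ to $\omega_2(\SSTS)$, the pair $(\omega_1(\SSTS),R_c(\omega_2(\SSTS)))$ is still a legitimate reverse reading word, i.e.\ that frame labels remain weakly decreasing within each block of equal integral steps. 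That check uses only the definition of good and bad entries and is independent of the shape $(\lambda,\nu)$; your proposal does not address it.
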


In the rest of this section we will prove this result. The main technique we will  use is James' {\sf pairs of partitions} method which describes how to `turn bad steps into good ones'.  

\begin{defn}
Let $ \mu \vDash s$  and let   $ \mu^\sharp \in \mathscr{P}_{\leq s}  $  
 be such that 
 $  \mu ^{\sharp}_{c}\leq \mu _c,$ 
for all $c\geq 1$.  Then $( \mu ^\sharp, \mu )$ is called a {\sf pair of partitions} for $s$.  
\end{defn}

 We record a pair of partitions diagrammatically by drawing the Young diagram for $ \mu$ and filling all boxes corresponding to $ \mu ^\sharp$ with a  $\times$, for example  we have that $( \mu ^\sharp, \mu )=((2^2,1),(2^4))$    is represented 
 as in the leftmost  diagram in  \cref{updownaroundmoves}.

\begin{defn}
Let $(\mu^{\sharp},\mu)$ be a pair of partitions of $s$. We denote by $s(\mu)$ the set of all sequences of type $\mu$ and by $s(\mu^{\sharp}, \mu)\subseteq s(\mu)$ the set of all sequences of type $\mu$ having at least $\mu_i^{\sharp}$ good $i$'s for all $i$.
\end{defn}

By definition we have  
$$s(\emptyset , \mu) = s((\mu_1), \mu) = s(\mu)$$
and if $\tau^{\sharp} \subseteq \mu^{\sharp}$ then
$$s(\mu^{\sharp}, \mu) \subseteq s(\tau^{\sharp}, \mu).$$

   \begin{defn} \label{movies}
Let $( \mu , \mu ^\sharp)$ be a pair of partitions for $s$ and let  
 $1< c \leq  \ell( \mu )$ be the smallest integer such that $\mu_c^{\sharp} <\mu_c$.
 \begin{itemize}[leftmargin=*]
 \item We let $r^{\mu_c-\mu^{\sharp}_c}_{c}(\mu )$ denote the composition of $s $ obtained by removing the $\mu_c - \mu^{\sharp}_c$ boxes at the end of row $c$ 
  and  adding  them at the end of row $c-1$. 
  \item
We let  $a_{c}(\mu ^\sharp)$ denote the partition obtained by adding  a single box to the end of $\mu ^\sharp_{c}$ if the result is a partition.  If the result is not a partition, then set $(\mu ,a_{c}(\mu ^\sharp))=(\varnothing,\varnothing)$. 
 \end{itemize}
  \end{defn}

\begin{eg}  For example, let  $(\mu^\sharp,\mu)=(( 2^2,1),(2^4))$.  Some of 
the pairs of partitions obtained by applying the  moves in \cref{movies}
to $(\mu^\sharp,\mu)$  are depicted in \cref{updownaroundmoves}.
\begin{figure}[ht!]
$$
(\mu  ,\mu  ^\sharp)={   \Yboxdim{12pt}\scalefont{0.9} \Yvcentermath1 \young(\times\times,\times\times,\times\ ,\ \ ) } \quad 
(r^1_{ 3}(\mu  ),\mu  ^\sharp)= {   \Yboxdim{12pt}\scalefont{0.9} \Yvcentermath1\young(\times\times,\times\times\ ,\times,\ \ ) }
\quad 
(\mu  ,a_{3}( \mu  ^\sharp))= {   \Yboxdim{12pt}\scalefont{0.9} \Yvcentermath1\young(\times\times,\times\times ,\times\times,\ \ ) }
$$

\!\!\!\!\!\caption{Examples for \cref{movies}}
\label{updownaroundmoves}
\end{figure}
\end{eg}

\begin{defn}
Fix $\mu\vdash s$ and define a rooted tree $\mathscr{T}(\mu) = (V(\mathscr{T}(\mu)), E)$ with vertices labelled by pairs of partitions.  Its root vertex is   labelled by  $ ((\mu_1),\mu) $ and given
 a vertex, $(\tau^{\sharp}, \tau) \in V(\mathscr{T}(\mu))$  its descendants   
 are labelled by the pairs of partitions   
$$ (\tau^\sharp, r_c^{\tau_c-\tau_c^\sharp}(\tau)) \quad \text{  and  } \quad  (a_c(\tau^\sharp), \tau) $$ 
where   $1< c \leq \ell(\mu)$ is  minimal  such that  and $\tau_{c}^\sharp < \tau_c$.
If there is no such $1< c \leq \ell(\mu) $, then $\tau^{\sharp}=\tau$ and $(\tau, \tau)$ is a terminal vertex. 
(Note that we identify the labels $(\tau^\sharp, \tau)$ and $(\eta^\sharp, \tau)$ if $\tau^\sharp$ and $\eta^\sharp$ only differ in the first row and usually choose to write the label $(\tau^\sharp , \tau)$ with $\tau^\sharp _1 = \tau_1$.)

   \end{defn}
   
We decorate the edges of the tree with the appropriate operators, $r_c^k$ and $a_c$,  for $k\geq 1$ and $c\geq 2$.  
We let $V_T(\mathscr{T}(\mu))$ denote the set of terminal vertices in $V(\mathscr{T}(\mu))$   
 which are not labeled by pairs  of the form $  (\varnothing, \varnothing)$. 
Given $\mathfrak{t} \in V_T(\mathscr{T}(\mu))$, we associate 
 the ordered sequence of  operators, $r_{\mathfrak{t}}$,  labelling   the edges  path from the root vertex to   the vertex  $\mathfrak{t}$.  
 A pair of partitions $(\tau,\tau)$ will not (in general) label a unique terminal vertex (see for example,   \cref{procedure}).

  \begin{eg}
The tree $\mathscr{T}(\mu)$ for $\mu= ( 3,2,1 )$ is given  in \cref{procedure}.
There are $8$ vertices in $V_T(\mathscr{T}(\mu))$. The rightmost terminal vertex is labelled by $((6),(6))$ and it correspondes to the path $r_2r_3r_2^2 = r_2^1 \circ r_3^1 \circ r_2^2$. Note that we write the composition of operators from right to left and write $r_j$ for $r_j^1$ to simplify the notation.
The sequences of operators labelling terminal vertices are as follows, 
 $$ 
a_3a_2 a_2\quad  
a_2 r_3a_2 a_2 \quad  
  r_2  r_3a_2 a_2\quad  
  a_3 r_2   a_2\quad 
a_2   r_3r_2 a_2 \quad 
r_2  r_3r_2 a_2\quad 
   a_2 r_3r^2_2\quad 
    r_2r_3r^2_2 
 $$
where each of  the paths   above  can be identified from left to right with the terminal nodes in the graph in \cref{procedure}.   

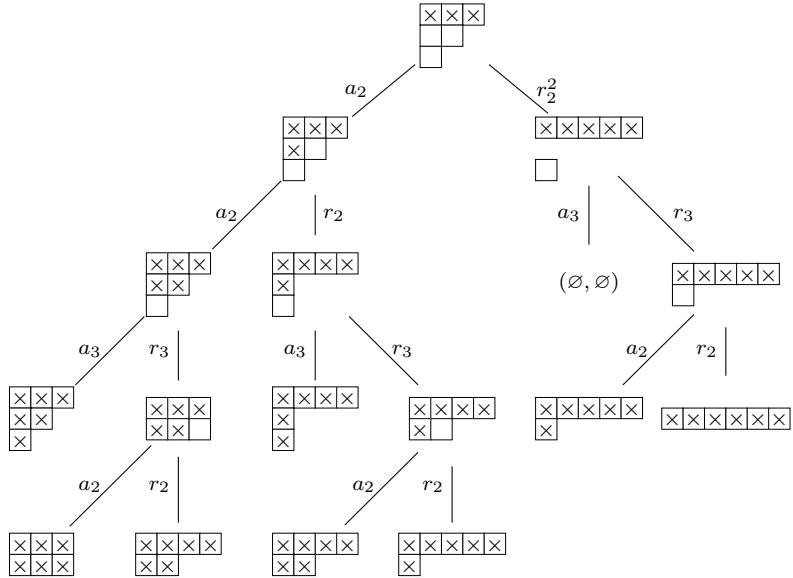
\begin{figure}[ht!]
 $$ \scalefont{0.8} 
 \begin{tikzpicture}[scale=0.6]
 \draw(3,4.5)edge[decorate]  node[left] {$a_2 \ $}(0,2);
 \draw(3,4.5)edge[decorate]  node[right] {$\ r^2_2  $}(6,2);
 \draw(0,2)edge[decorate]  node[left] {$ a_2  $}(-3,-1);
 \draw(0,2)edge[decorate]  node[right] {$ r_2  $}(-0,-1);
  \draw(6,2)edge[decorate]  node[right] {$\ r_3 $}(9,-1) ;
    \draw(6,2)edge[decorate]  node[left] {$a_3  $}(6,-1) ;
\fill[white] (3,4.5) circle (30pt);    
\fill[white] (0,2) circle (30pt);      
    \draw(9,-1)edge[decorate]  node[left] {$a_2  $}(6,-4) ;   
    \draw(9,-1)edge[decorate]  node[left] {$r_2 $}(9,-4) ;   

\fill[white] (9,-1) circle (28pt);      
\fill[white] (6,2.2) circle (30pt); 
    \draw(-3,-1)edge[decorate]  node[left] {$a_3  $}(-6,-4) ;   
    \draw(-3,-1)edge[decorate]  node[left] {$r_3  $}(-3,-4) ;   
    \draw(0,-1)edge[decorate]  node[left] {$a_3  $}(0,-4) ;   
    \draw(0,-1)edge[decorate]  node[right] {$r_3  $}(3,-4) ;

    \draw(-3,-4)edge[decorate]  node[left] {$a_2  $}(-6,-7) ;   
    \draw(-3,-4)edge[decorate]  node[left] {$r_2  $}(-3,-7) ;   

\fill[white] (-6,-7) circle (25pt);      
\fill[white] (-3,-7) circle (20pt);

  \draw(3,-4)edge[decorate]  node[left] {$a_2  $}(0,-7) ;   
    \draw(3,-4)edge[decorate]  node[left] {$r_2  $}(3,-7) ;   

\fill[white] (3,-7) circle (20pt);      
\fill[white] (0,-7) circle (26pt);     
  
 \fill[white] (-3,-1) circle (30pt);    
  \fill[white] (0,-1) circle (30pt);    
\fill[white] (3,-1) circle (30pt);   
\fill[white] (6,-1) circle (24pt);    
      \fill[white] (-6,-4) circle (30pt);    
            \fill[white] (-3,-4) circle (24pt);   
        \fill[white] (0,-4) circle (24pt);    
         \fill[white] (3,-4) circle (30pt);    
         \fill[white] (6,-4) circle (30pt);   
                  \fill[white] (9,-4) circle (27pt);   
      \draw (-6,-7) node {$  \Yboxdim{8pt}\gyoung(;\times;\times;\times,;\times;\times;\times) $  };   
\draw (3,4.5) node {$  \Yboxdim{8pt}\gyoung(;\times;\times;\times,;;,;) $  };   
\draw (0,2) node {$  \Yboxdim{8pt}\gyoung(;\times;\times;\times,;\times;,;) $  };   
\draw (6,2) node {$  \Yboxdim{8pt}\gyoung(;\times;\times;\times;\times;\times,,;) $  };   
 \draw (-3,-1) node {$ \Yboxdim{8pt}\gyoung(;\times;\times;\times,;\times;\times,;)  $  };   
  \draw (0,-1) node {$  \Yboxdim{8pt}\gyoung(;\times;\times;\times;\times,;\times,;) $  };   
\draw (6,-1) node {$ (\varnothing,\varnothing) $  };   
\draw (9,-1) node {$  \Yboxdim{8pt}\gyoung(;\times;\times;\times;\times;\times,;) $  };   
      \draw (-6,-4) node {$  \Yboxdim{8pt}\gyoung(;\times;\times;\times,;\times;\times,;\times) $  };   
            \draw (-3,-4) node {$  \Yboxdim{8pt}\gyoung(;\times;\times;\times,;\times;\times;) $  };   
        \draw (0,-4) node {$  \Yboxdim{8pt}\gyoung(;\times;\times;\times;\times,;\times,;\times) $  };   
         \draw (3,-4) node {$  \Yboxdim{8pt}\gyoung(;\times;\times;\times;\times,;\times;) $  };   
         \draw (6,-4) node {$  \Yboxdim{8pt}\gyoung(;\times;\times;\times;\times;\times,;\times) $  };   
                  \draw (9,-4) node {$  \Yboxdim{8pt}\gyoung(;\times;\times;\times;\times;\times;\times) $  }; 
                   \draw (-3,-7) node {$  \Yboxdim{8pt}\gyoung(;\times;\times;\times;\times,;\times;\times) $  };   
    \draw (3,-7) node {$  \Yboxdim{8pt}\gyoung(;\times;\times;\times;\times;\times,;\times) $  };          
   \draw (0,-7) node {$  \Yboxdim{8pt}\gyoung(;\times;\times;\times;\times,;\times;\times) $  };   
               \end{tikzpicture}
$$
  \caption{
 The tree, $\mathscr{T}(\mu)$,  for $ \mu= ( 3,2,1)$. 
 }
\label{procedure} \end{figure}

\end{eg}

James proved the following result, see \cite{JAMBOOK}(15.14 Theorem).

\begin{thm}\label{James}
Let $(\mu^{\sharp}, \mu)$ be a pair of partitions of $s$ and let $c>1$ be minimal such that $\mu^{\sharp}_c < \mu_c$. There is a bijection 
$$R_c\, : \, s(\mu^{\sharp}, \mu) \setminus s((a_c(\mu^{\sharp}),\mu))\rightarrow s(\mu^\sharp , r_c^{\mu_c-\mu_c^\sharp}(\mu))$$
defined by changing all bad $c$'s into $c-1$'s.
\end{thm}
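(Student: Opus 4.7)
The plan is to construct an explicit inverse to $R_c$ via bracket matching on the sub-sequence of $(c-1)$- and $c$-letters in a given $w\in s(\mu^\sharp,\mu)\setminus s(a_c(\mu^\sharp),\mu)$, encoding $(c-1)\mapsto ($ and $c\mapsto )$. Since $c$ is minimal with $\mu_c^\sharp<\mu_c$, one has $\mu_j^\sharp=\mu_j$ for every $j<c$ and hence every $j$-letter with $j<c$ is good in $w$; in particular every $(c-1)$ is good. Consequently a $c$ in $w$ is good iff the standard left-to-right stack matching pairs it with an earlier unmatched $($, and the bad $c$'s are precisely the unmatched right-brackets.

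First I would verify that $R_c$ is well defined. A sequence $w$ in the domain has exactly $\mu_c^\sharp$ good $c$'s and hence exactly $\mu_c-\mu_c^\sharp$ bad $c$'s, so $w':=R_c(w)$ has type $r_c^{\mu_c-\mu_c^\sharp}(\mu)$. Goodness of $j$-letters for $j<c-1$ and for $j>c$ is immediate because the running counts on which it depends are untouched. The two delicate checks are that each good $c$ of $w$ stays good in $w'$ and that $w'$ still has at least $\mu_{c-1}$ good $(c-1)$'s.

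The key technical tool is the non-negative walk $f(k):=\#\text{good }(c-1)\text{'s in }w[1..k]-\#\text{good }c\text{'s in }w[1..k]$: a $c$ at position $r$ is bad iff $f(r-1)=0$, while an unmatched $(c-1)$ at position $p$ satisfies $f(q)\geq f(p)\geq 1$ for all $q\geq p$, so every bad-$c$ position precedes every unmatched-$(c-1)$ position and no bad-$c$ position sits strictly inside a matched pair $(p,k)$. Hence every match of $w$ persists verbatim in $w'$, so each good $c$ of $w$ remains good in $w'$, and the unmatched $($'s of $w'$ are the disjoint union of the $\mu_c-\mu_c^\sharp$ newly created ones (at bad-$c$ positions of $w$) and the old unmatched $($'s of $w$, with the former strictly to the left of the latter. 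For the $(c-1)$-count I would run the analogous walk $g:=\#\text{good }(c-2)\text{'s}-\#\text{good }(c-1)\text{'s}$ in parallel in $w$ and $w'$ and verify by a case-by-case check on each letter that $g'(k)-g(k)$ can only decrease (at bad-$c$ positions where $R_c$ creates a good $(c-1)$ in $w'$) or increase back towards $0$ while staying non-positive (at old $(c-1)$ positions that have been flipped from good to bad in $w'$); this gives $g'\leq g$ throughout, and comparing the final values yields $\mu_{c-2}-G\leq\mu_{c-2}-\mu_{c-1}$, i.e.\ $G\geq\mu_{c-1}$ where $G$ is the number of good $(c-1)$'s in $w'$.

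The inverse is then forced: given $w'$ in the target, I would run the bracket matching and change the leftmost $\mu_c-\mu_c^\sharp$ unmatched $(c-1)$'s back to $c$'s. Each such newly minted $c$ sits at a position where $f=0$ just before, so is bad, producing a pre-image in $s(\mu^\sharp,\mu)\setminus s(a_c(\mu^\sharp),\mu)$; since the two constructions swap exactly the same distinguished set of $\mu_c-\mu_c^\sharp$ positions they are mutual inverses. The main obstacle throughout is the quality bookkeeping for $(c-1)$-letters in $w'$, since a naive left-to-right induction fails: relabelling a bad $c$ as $(c-1)$ can flip the goodness of later $(c-1)$'s, and only the global bracket-matching picture together with the walk comparison $g'\leq g$ keeps the counts transparent.
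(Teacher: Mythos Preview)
The paper does not prove this theorem: it is quoted verbatim as a result of James, with the line ``James proved the following result, see [JAMBOOK](15.14 Theorem)'', and no argument is supplied. So there is no proof in the paper to compare your attempt against; what you have written is essentially the classical bracket-matching proof that James himself gives.

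Your forward direction is clean: the walk $f$ identifies bad $c$'s with unmatched right-brackets (using crucially that every $(c{-}1)$ in $w$ is good), and the comparison $g'\le g$ correctly handles the only delicate point, namely that $R_c(w)$ retains at least $\mu_{c-1}$ good $(c{-}1)$'s.

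There is one place where your inverse argument is too quick. You assert that $S_c(w')$ lands in $s(\mu^\sharp,\mu)\setminus s(a_c(\mu^\sharp),\mu)$ and that the newly minted $c$'s are bad because ``$f=0$ just before''. But your $f$ was defined using \emph{good} $(c{-}1)$'s, and in $w'$ not every $(c{-}1)$ is good, so ``unmatched in the bracket sense'' and ``the $f$-walk is at $0$'' no longer coincide a priori; nor is it automatic that the remaining $\mu_{c-1}$ letters $(c{-}1)$ in $S_c(w')$ are all good. Concretely, a bad $(c{-}1)$ of $w'$ need not be among the leftmost $\mu_c-\mu_c^\sharp$ unmatched ones (e.g.\ $c=3$, $w'=1\,2\,2\,3$: the bad $2$ is matched, the good $2$ is the unmatched one), so your ``same distinguished positions'' remark does not by itself close the loop. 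What is needed is a second prefix-count estimate, for instance showing that for every $k$ the number of the chosen positions $p_i\le k$ is at least the number of bad $(c{-}1)$'s of $w'$ in $[1,k]$, which then forces $h_w(k)=\#(c{-}2)\text{'s}-\#(c{-}1)\text{'s}\ge 0$ in $S_c(w')$; alternatively one can run your $g$-walk comparison in reverse. Once that is supplied, your sketch is complete and matches the argument in James' book.
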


The next Lemma shows that we can extend this bijection to sets of semistandard Kronecker tableaux for co-Pieri triples. 
The corresponding result for triples of maximal depth is given in \cite{JAMBOOK}(16.3 Lemma). 

Define $\SStd_s^0(\nu\setminus \lambda ,(\mu^{\sharp} ,\mu))\subseteq \SStd_s^0(\nu\setminus \lambda, \mu)$ to be the subset of all semistandard Kronecker tableaux $\SSTS$ whose reverse reading word satisfies $\omega_2(\SSTS)\in s(\mu^\sharp , \mu)$. 

\begin{lem}\label{bijectionSS} Let $(\lambda, \nu, s)$ be a co-Pieri triple and let $(\mu^{\sharp}, \mu)$ be a pair of partitions of $s$.
Take $c>1$ to be minimal such that $\mu^\sharp _c < \mu_c$. The map 
$$\mathscr{R}_c \, : \, \SStd_s^0(\nu\setminus \lambda, (\mu^\sharp, \mu))\setminus \SStd_s^0(\nu \setminus \lambda , (a_c(\mu^\sharp), \mu)) \rightarrow \SStd_s^0(\nu\setminus \lambda , (\mu^\sharp , r_c^{\mu_c - \mu_c^\sharp}(\mu))$$
defined by taking
$$
 \omega_1(\mathscr{R}_c(\SSTS)) = \omega_1(\SSTS) \quad \mbox{and} \quad \omega_2(\mathscr{R}_c(\SSTS)) = R_c(\omega_2(\SSTS))$$
for all $\SSTS\in \SStd_s^0(\nu\setminus \lambda, (\mu^\sharp, \mu))\setminus \SStd_s^0(\nu \setminus \lambda , (a_c(\mu^\sharp), \mu))$ (where the map $R_c$ is given in Theorem \ref{James}) is a bijection.

\end{lem}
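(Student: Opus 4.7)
The plan is to lift James' sequence-level bijection $R_c$ (Theorem \ref{James}) to a bijection on semistandard Kronecker tableaux, exploiting the fact that in the co-Pieri setting a semistandard Kronecker tableau $\SSTS$ is completely determined by its reverse reading word $\omega(\SSTS)$: the pair $(\omega_1(\SSTS), \omega_2(\SSTS))$ records, for every integral step $I$ and every frame index $c'$, the number of copies of $I$ occurring in the $c'$-th frame of any representative of $\SSTS$. Fix $\SSTS$ in the domain and set $k = \mu_c - \mu_c^\sharp$. Since $\omega_2(\SSTS) \notin s(a_c(\mu^\sharp),\mu)$, the sequence $\omega_2(\SSTS)$ contains exactly $\mu_c^\sharp$ good $c$'s, so exactly $k$ bad $c$'s; in particular $R_c(\omega_2(\SSTS))$ is a sequence of type $r_c^{k}(\mu)$ lying in $s(\mu^\sharp, r_c^{k}(\mu))$ by Theorem \ref{James}. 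A direct check shows that within each block of $\omega(\SSTS)$ (columns sharing a fixed integral step) once a $c$ is bad all subsequent $c$'s in that block are bad as well, so after applying $R_c$ each block remains weakly decreasing and $(\omega_1(\SSTS), R_c(\omega_2(\SSTS)))$ is a legitimate sorted pair.

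The crux is then to show that this pair is the reverse reading word of some semistandard Kronecker tableau of shape $\nu\setminus\lambda$ and weight $r_c^{k}(\mu)$. This is the main technical step. Starting from any standard representative $\stt \in \SSTS$, I would use condition (C1) of \cref{hjkldhjklfadshlkjfdshfasd}: the adjacent swap $\sts \mapsto \sts_{k'\leftrightarrow k'+1}$ is defined for every $\sts \in \Std_s^0(\nu\setminus\lambda)$ and every $1 \le k' \le s-1$, and each such swap preserves the multiset of integral steps of the path and hence keeps the tableau inside $\Std_s^0(\nu\setminus\lambda)$. Iterating these adjacent swaps realises every permutation of the integral steps of $\stt$, and therefore permits me to produce a tableau $\stt' \in \Std_s^0(\nu\setminus\lambda)$ in which the integral steps grouped by the frames of $r_c^{k}(\mu)$ realise exactly the distribution prescribed by $R_c(\omega_2(\SSTS))$; concretely, the $k$ integral steps corresponding to the bad $c$'s (originally inhabiting the $c$-th frame of $\mu$) populate the enlarged $(c-1)$-th frame of $r_c^{k}(\mu)$, while all other step-to-frame assignments are unchanged. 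By \cref{sscopieri} the class $[\stt']_{r_c^{k}(\mu)}$ is automatically semistandard, and by construction its reverse reading word is $(\omega_1(\SSTS), R_c(\omega_2(\SSTS)))$; I take this class to be $\mathscr{R}_c(\SSTS)$.

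It remains to check membership in the target and bijectivity. The first is immediate from Theorem \ref{James}: $R_c(\omega_2(\SSTS)) \in s(\mu^\sharp, r_c^{k}(\mu))$ gives $\mathscr{R}_c(\SSTS) \in \SStd_s^0(\nu\setminus\lambda, (\mu^\sharp, r_c^{k}(\mu)))$. For bijectivity, the uniqueness of a semistandard Kronecker tableau from its reverse reading word combined with the sequence-level bijectivity of $R_c$ forces $\mathscr{R}_c$ to be injective; surjectivity comes from the identical construction with $R_c^{-1}$ in place of $R_c$, sending $\SSTT$ in the target to the semistandard Kronecker tableau with reverse reading word $(\omega_1(\SSTT), R_c^{-1}(\omega_2(\SSTT)))$, again realised by a standard representative via (C1). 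The principal obstacle throughout is the existence of the representative $\stt'$, which is precisely where the co-Pieri hypothesis is indispensable: without (C1), one cannot freely permute integral steps to redistribute them among the new frames, and the lift of $R_c$ from sequences to tableaux breaks down.
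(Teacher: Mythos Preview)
Your proposal is correct and follows essentially the same route as the paper: both arguments rest on (i) the observation that in the co-Pieri setting a semistandard Kronecker tableau is determined by its reverse reading word, (ii) the use of condition (C1) from \cref{hjkldhjklfadshlkjfdshfasd} to freely redistribute integral steps among frames, and (iii) lifting James' bijection $R_c$ from sequences to tableaux once one checks that the ``sorted within each block'' condition is preserved.

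The one place where your write-up is thinner than the paper's is the sortedness check for the inverse direction. You verify that within a block of equal integral steps, once a $c$ is bad all later $c$'s are bad, so applying $R_c$ keeps the block weakly decreasing. For surjectivity you then invoke the ``identical construction with $R_c^{-1}$'', but you do not actually verify that $R_c^{-1}$ sends a sorted pair to a sorted pair; this requires a separate (symmetric but not identical) argument. The paper handles both directions at once by proving the equivalence: for $x_j=x_{j+1}$, one has $u_j\geq u_{j+1}$ if and only if $v_j\geq v_{j+1}$, where $v=R_c(u)$. The ``conversely'' case there (ruling out $u_j=c-1$, $u_{j+1}=c$ with $u_{j+1}$ bad) is exactly what your surjectivity step needs. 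It is a two-line check, but you should make it explicit rather than leave it to ``identical construction''.
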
 

\begin{proof}
Note that each semistandard Kronecker tableau $\SSTS$ is completely determined by the multisets $X_i(\SSTS)$ containing the integral steps in frame $i$ for each $i$. Hence, the reverse reading word $\omega(\SSTS)$ completely determines $\SSTS$. Moreover, as $\stt_{k\leftrightarrow k+1}\in \Std_s^0(\nu \setminus \lambda)$ for all $\stt \in \Std_s^0(\nu \setminus \lambda)$ and all $k$, if we move some integral steps from one frame of $\SSTS$ to another the result will still be a semistandard tableau of the same shape and the appropriate weight. 
So, using Theorem \ref{James}, the only thing we need to prove the bijection is that $(\omega_1(\SSTS), R_c(\omega_2(\SSTS))$ is the reverse reading word of a semistandard tableau if and only if so is $(\omega_1(\SSTS), \omega_2(\SSTS))$.

Write $\omega_1(\SSTS) = (x_1, x_2, \ldots , x_s)$ where the $x_i$'s are integral steps, $\omega_2(\SSTS)=(u_1, u_2, \ldots , u_s)$ and $R_c(\omega_2(\SSTS))=(v_1, v_2, \ldots , v_s)$. We need to show that for $x_j = x_{j+1}$ we have $u_j \geq u_{j+1}$ if and only if $v_j \geq v_{j+1}$. Assume first that $u_j \geq u_{j+1}$ and $v_j < v_{j+1}$. By definition of the map $R_c$ we must have $u_j = u_{j+1}=c$, $v_j = c-1$ and $v_{j+1} = c$. This means that $u_j$ is a bad $c$ and $u_{j+1}$ is a good $c$ but this is impossible by \cref{goodbad}.

Conversely, assume that $v_j \geq v_{j+1}$ and $u_j < u_{j+1}$. By definition of $R_c$ we must have $u_j = c-1$, $u_{j+1} = c$ and $v_j = v_{j-1} = c-1$. This means that  $u_{j+1}$ is a bad $c$ but it is preceeded by $u_j = c-1$ so $u_{j+1}$ has to be a good $c$ by \cref{goodbad}. So again this case cannot occur. 
\end{proof}

Starting at the root vertex of $\mathscr{T}(\mu)$ and working our way down the edges, \cref{bijectionSS} allows us to partition the set $\SStd_s^0(\nu\setminus \lambda , \mu)$ into subsets corresponding to $\Latt_s^0(\nu\setminus \lambda , \tau)$ for each terminal vertex labelled by $(\tau,\tau)$ for $\tau \vdash s$.
The next lemma describes the terminal vertices of the $\mathscr{T}(\mu)$. 

\begin{lem}\label{bijectionTV}
Let $\mu, \tau \vdash s$. There is a bijective correspondence between the set of  terminal vertices in $\mathscr{T}(\mu)$ labelled by $(\tau, \tau)$ and the set $\SStd_s(\tau, \mu)$ of semistandard Young tableaux of shape $\tau$ and weight $\mu$.
\end{lem}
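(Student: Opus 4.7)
The plan is to construct an explicit bijection between paths from the root of $\mathscr{T}(\mu)$ to a terminal vertex labelled $(\tau,\tau)$ and the set $\SStd_s(\tau,\mu)$. To describe the map, I would start with the ``super-standard'' filling of shape $\mu$ (with row $c$ containing $\mu_c$ copies of the entry $c$) and walk along the path, applying each operation in the order in which it occurs. Each operation $a_c$ leaves the tableau unchanged and serves as bookkeeping, while each operation $r_c^k$ physically moves the last $k$ entries of row $c$ to the end of row $c-1$. The terminal vertex being labelled $(\tau,\tau)$ guarantees that the resulting tableau has shape $\tau$, and since the operations preserve the multiset of entries, the weight is $\mu$.

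The main step is to verify that the resulting filling is semistandard. The key is the tree's rule: at each vertex $(\tau^\sharp,\tau)$ one acts only at the minimal $c$ with $\tau^\sharp_c<\tau_c$. This forces us to completely settle the contents of each row $b<c$ before performing any move involving row $c$, so that when $r_c^k$ is applied, row $c-1$ already contains its committed copies of $c-1$ on the left, plus (possibly) larger entries that arrived from prior $r$-operations on rows above. The entries moved down from row $c$ are themselves $\geq c$, and they are appended on the far right of row $c-1$, preserving weak row increase. For strict column increase, an entry newly arriving in row $c-1$ via $r_c^k$ sits below (in the same column) either nothing or an entry that was subsequently moved from row $c+1$ or higher into row $c$; by the inductive invariant such an entry is strictly larger. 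Making this precise amounts to an induction on the length of the path carrying the invariant ``every entry currently in row $b$ beyond position $\tau^\sharp_b$ is $>b$''.

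For the inverse direction, given $T\in\SStd_s(\tau,\mu)$, I would recover the path row-by-row. Processing $c=2,3,\ldots$ in turn, the number of $c$'s in row $c$ of $T$ dictates how many $a_c$ operations appear (before the accompanying $r_c$), and the number of entries equal to $c$ sitting in rows strictly above row $c$, together with the final shape $\tau$, determines the sequence of $r_c^{\,\cdot}$ moves needed. The minimality rule in $\mathscr{T}(\mu)$ makes this decoding unambiguous, and a short check shows that encoding and decoding are mutually inverse, yielding the desired bijection.

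The main obstacle I anticipate is the semistandardness verification, in particular the column-strictness condition, because several $r$-operations at different levels interleave along a single path. The cleanest way to handle this is to isolate the invariant mentioned above and prove it by induction on the path length; once that invariant is in hand, both semistandardness and the well-definedness of the inverse map follow without further effort.
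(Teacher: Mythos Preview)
Your approach is essentially the paper's: start from the superstandard tableau $\SSTT^\mu\in\SStd_s(\mu,\mu)$, let each $a_c$ act trivially and each $r_c^k$ move the last $k$ boxes of row $c$ (with their contents) to the end of row $c-1$, and recover the path from a given $\SSTT$ by reading off, for each content $c$, how many $c$'s land in each row.

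One detail needs repair. The invariant you propose, ``every entry currently in row $b$ beyond position $\tau^\sharp_b$ is $>b$'', is false at the root: for $b\geq 2$ one has $\tau^\sharp_b=0$ while row $b$ consists entirely of $b$'s. The same issue persists at the current active row throughout. Some of the surrounding directional language (``rows above'', ``subsequently moved \ldots\ into row $c$'') is also at odds with the mechanics of the $r$-moves, which carry entries from row $c$ up to row $c-1$.

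The paper sidesteps these by a cleaner observation: the minimality rule forces any root-to-leaf path to factor as $p^{(\ell(\mu))}\circ\cdots\circ p^{(2)}$, where $p^{(c)}$ is a string of $a$'s and $r$'s that moves \emph{only} the content-$c$ boxes, carrying them from row $c$ successively into rows $c-1,c-2,\ldots$. Thus content-$c$ entries are appended to the right of all smaller contents in each row (row weak increase), and the requirement that each $a_d(\tau^\sharp)$ remain a partition guarantees that after $p^{(c)}$ the cells with content $\leq c$ form a partition shape (column strictness). This factorisation also makes injectivity and the inverse map transparent; the paper's explicit formula for the inverse path is exactly the product $r^{(\ell(\mu))}\cdots r^{(2)}$ built from the multiplicities $k_d^c$ of content-$c$ entries in row $d$.
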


\begin{proof}
For this proof, it is easier to view the set $\SStd_s(\tau, \mu)$ in the classical way, as Young diagrams of shape $\tau$ with boxes filled with $\mu_1$ $1$'s, $\mu_2$ $2$'s , $\ldots$.
For each edge $a_c$ and $r_c^{\tau_c - \tau_c^\sharp}$ in the tree $\mathscr{T}(\mu)$, we define corresponding maps
\begin{eqnarray*}
a_c &:& \SStd_s(\tau, \mu) \rightarrow \SStd_s(\tau ,\mu) \, : \, \SSTT \mapsto a_c(\SSTT)=\SSTT, \\
r_c^{\tau_c-\tau_c^\sharp} &:& \SStd_s(\tau, \mu) \rightarrow \SStd_s(r_c^{\tau_c - \tau_c^\sharp}(\tau), \mu) \, : \, \SSTT \mapsto r_c^{\tau_c - \tau_c^\sharp}(\SSTT) 
\end{eqnarray*}
where $r_c^{\tau_c-\tau_c^\sharp}(\SSTT)$ is obtained from $\SSTT$ by moving the last $\tau_c-\tau_c^\sharp$ boxes at the end of row $c$ to the end of row $c-1$ together with their content.

Now each terminal vertex in $\mathscr{T}(\mu)$ correspond to a unique path $\mathfrak{t}$ starting at the root vertex and ending at a vertex labelled by $(\tau, \tau)$ for some $\tau \vdash s$. Let $\SSTT^\mu$ be the unique element in $\SStd_s(\mu,\mu)$ and denote by $r_{\mathfrak{t}}(\SSTT^\mu)$ the  tableau obtained by applying the operators along the edges of $\mathfrak{t}$ to $\SSTT^\mu$. We claim that the map $\mathfrak{t}\mapsto r_\mathfrak{t}(\SSTT^\mu)$ for each terminal vertex labelled by $(\tau, \tau)$ gives a bijection between these terminal vertices and $\SStd_s(\tau, \mu)$. 

As the operator $r_{\mathfrak{t}}$  moves up the boxes of content $2$ first, then the boxes of content $3$, then $4$, and so on, it is clear that the result will be a semistandard tableau of shape $\tau$ and weight $\mu$, and moreover, different paths will lead to different semistandard tableaux.

It remains to show that this map is surjective. First note that if $\SStd_s(\tau, \mu)\neq \emptyset$  then $\tau \trianglerighteq \mu$. Now let $\SSTT\in \SStd_s(\tau, \mu)$ for some $\tau \vartriangleright \mu$. Assume that $\SSTT$ has precisely $k_d^c$ boxes of content $c$ in row $d$. (Note that of $k_d^c\neq 0$ then $d\leq c$.) For each $2\leq c\leq \ell (\mu)$ define
$$r^{(c)} = r_2^{k_1^c} \circ \ldots \circ (a_{c-2})^{k_{c-2}^c} \circ r_{c-1}^{\sum_{d=1}^{c-2} k_d^c} \circ (a_c)^{k_{c-1}^c} \circ r_c^{\sum_{d=1}^{c-1}k_d^c} \circ (a_c)^{k_c^c}.$$
By construction, we have $r^{(\ell (\mu))} \ldots r^{(3)} r^{(2)} (\SSTT^{\mu}) = \SSTT$ and $r^{(\ell (\mu))} \ldots r^{(3)} r^{(2)}$ is a path in $\mathscr{T}(\mu)$ starting at the root vertex and ending at a vertex labelled with $(\tau, \tau)$. Thus the map is surjective as required.
\end{proof}

\begin{eg}\label{a2partegcontuinued further} 
Given   $ \mu = ( 3,2,1)$, we have that the sequences 
 \begin{align}\label{procsdfjhkfsdajkhsafd}
 a_3a_2 a_2\quad  
a_2 r_3a_2 a_2 \quad  
  r_2  r_3a_2 a_2\quad  
  a_3 r_2   a_2\quad 
a_2   r_3r_2 a_2 \quad 
r_2  r_3r_2 a_2\quad 
   a_2 r_3r^2_2\quad 
    r_2r_3r^2_2 
\end{align}
label the terminal vertices in $\mathscr{T}(\mu)$.  Applying these operators to $\SSTT^{\mu}$ we obtain all semistandard Young tableaux of weight $\mu$. 
This procedure is illustrated in \cref{helothe!}.

\begin{figure}[ht!]
 $$ \scalefont{0.8} 
 \begin{tikzpicture}[scale=0.6]
 \draw(3,4.5)edge[decorate]  node[left] {$a_2 \ $}(0,2);
 \draw(3,4.5)edge[decorate]  node[right] {$\ r^2_2  $}(6,2);
 \draw(0,2)edge[decorate]  node[left] {$ a_2  $}(-3,-1);
 \draw(0,2)edge[decorate]  node[right] {$ r^1_2  $}(-0,-1);
  \draw(6,2)edge[decorate]  node[right] {$\ r_3^1 $}(9,-1) ;
    \draw(6,2)edge[decorate]  node[left] {$a_3  $}(6,-1) ;
\fill[white] (3,4.5) circle (30pt);    
\fill[white] (0,2) circle (30pt);      
    \draw(9,-1)edge[decorate]  node[left] {$a_2  $}(6,-4) ;   
    \draw(9,-1)edge[decorate]  node[left] {$r_2 $}(9,-4) ;   

\fill[white] (9,-1) circle (28pt);      
\fill[white] (6,2.2) circle (30pt); 
    \draw(-3,-1)edge[decorate]  node[left] {$a_3  $}(-6,-4) ;   
    \draw(-3,-1)edge[decorate]  node[left] {$r_3  $}(-3,-4) ;   
    \draw(0,-1)edge[decorate]  node[left] {$a_3  $}(0,-4) ;   
    \draw(0,-1)edge[decorate]  node[right] {$r_3  $}(3,-4) ;

    \draw(-3,-4)edge[decorate]  node[left] {$a_2  $}(-6,-7) ;   
    \draw(-3,-4)edge[decorate]  node[left] {$r_2  $}(-3,-7) ;   

\fill[white] (-6,-7) circle (25pt);      
\fill[white] (-3,-7) circle (20pt);

  \draw(3,-4)edge[decorate]  node[left] {$a_2  $}(0,-7) ;   
    \draw(3,-4)edge[decorate]  node[left] {$r_2  $}(3,-7) ;   

\fill[white] (3,-7) circle (20pt);      
\fill[white] (0,-7) circle (26pt);     
  
 \fill[white] (-3,-1) circle (30pt);    
  \fill[white] (0,-1) circle (30pt);    
\fill[white] (3,-1) circle (30pt);   
\fill[white] (6,-1) circle (24pt);    
      \fill[white] (-6,-4) circle (30pt);    
            \fill[white] (-3,-4) circle (24pt);   
        \fill[white] (0,-4) circle (24pt);    
         \fill[white] (3,-4) circle (30pt);    
         \fill[white] (6,-4) circle (30pt);   
                  \fill[white] (9,-4) circle (27pt);   
         \draw (-6,-7) node {$  \Yboxdim{8pt}\gyoung(;1;1;1,;2;2;3) $  };   
  \draw (3,4.5) node {$  \Yboxdim{8pt}\gyoung(;1;1;1,;2;2,;3) $  };   
 \draw (0,2) node {$  \Yboxdim{8pt}\gyoung(;1;1;1,;2;2,;3)$  };   
\draw (6,2) node {$  \Yboxdim{8pt}\gyoung(;1;1;1;2;2,,;3) $  };   
  \draw (-3,-1) node {$ \Yboxdim{8pt}\gyoung(;1;1;1,;2;2,;3)  $  };   
   \draw (0,-1) node {$  \Yboxdim{8pt}\gyoung(;1;1;1;2,;2,;3) $  };   
\draw (6,-1) node {$ (\varnothing,\varnothing) $  };   
\draw (9,-1) node {$  \Yboxdim{8pt}\gyoung(;1;1;1;2;2,;3) $  };   
      \draw (-6,-4) node {$  \Yboxdim{8pt}\gyoung(;1;1;1,;2;2,;3)$  };   
             \draw (-3,-4) node {$  \Yboxdim{8pt}\gyoung(;1;1;1,;2;2;3) $  };   
      \draw (0,-4) node {$  \Yboxdim{8pt}\gyoung(;1;1;1;2,;2,;3) $  };   
         \draw (3,-4) node {$  \Yboxdim{8pt}\gyoung(;1;1;1;2,;2;3) $  };   
         \draw (6,-4) node {$  \Yboxdim{8pt}\gyoung(;1;1;1;2;2,;3) $  };   
                  \draw (9,-4) node {$  \Yboxdim{8pt}\gyoung(;1;1;1;2;2;3) $  }; 
                   \draw (-3,-7) node {$  \Yboxdim{8pt}\gyoung(;1;1;1;3,;2;2) $  };   
    \draw (3,-7) node {$  \Yboxdim{8pt}\gyoung(;1;1;1;2;3,;2) $  };          
   \draw (0,-7) node {$  \Yboxdim{8pt}\gyoung(;1;1;1;2,;2;3) $  };   
               \end{tikzpicture}
$$
  \caption{
The set of terminal vertices of this graph gives precisely the set of all semistandard Young tableaux of weight $(3,2,1)$. 
 }
\label{helothe!} \end{figure}
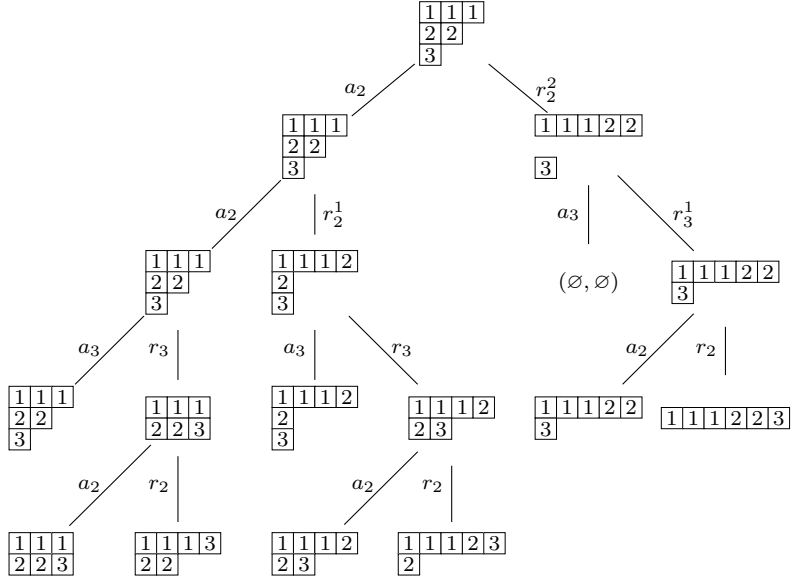

\end{eg}

\begin{cor}\label{mainbijection}
Let $(\lambda, \nu, s)$ be a co-Pieri triple and $\mu \vdash s$. There is one-to-one correspondence 
$$\SStd_s^0(\nu \setminus \lambda, \mu) \overset{1-1}\longleftrightarrow \bigsqcup_{\tau \vdash s} \SStd_s(\tau, \mu) \times \Latt_s^0(\nu \setminus \lambda , \tau).$$
\end{cor}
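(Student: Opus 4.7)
The plan is to induct on the structure of the tree $\mathscr{T}(\mu)$, iteratively applying Lemma \ref{bijectionSS} to decompose $\SStd_s^0(\nu\setminus\lambda, \mu)$ according to paths from the root to terminal vertices. Two simple observations frame the argument. First, at the root vertex labelled $((\mu_1),\mu)$, the condition $\omega_2(\SSTS)\in s((\mu_1),\mu)$ is automatic by Definition \ref{goodbad} (all $1$'s are good, and there are exactly $\mu_1$ of them), so $\SStd_s^0(\nu\setminus\lambda, ((\mu_1),\mu)) = \SStd_s^0(\nu\setminus\lambda, \mu)$. Second, at a terminal vertex labelled $(\tau,\tau)$, the requirement that $\omega_2(\SSTS)$ contain at least $\tau_i$ good $i$'s for every $i$, combined with the fact that $\omega_2(\SSTS)$ contains exactly $\tau_i$ entries equal to $i$, forces every entry to be good; hence $\SStd_s^0(\nu\setminus\lambda, (\tau,\tau)) = \Latt_s^0(\nu\setminus\lambda, \tau)$.

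Now consider an internal vertex labelled $(\mu^\sharp,\mu^*)$, and let $c>1$ be minimal with $\mu^\sharp_c<\mu^*_c$. Applying Lemma \ref{bijectionSS} at this vertex gives a disjoint union decomposition
$$\SStd_s^0(\nu\setminus\lambda, (\mu^\sharp,\mu^*)) \;=\; \SStd_s^0(\nu\setminus\lambda, (a_c(\mu^\sharp),\mu^*)) \;\sqcup\; \mathscr{R}_c^{-1}\!\bigl(\SStd_s^0(\nu\setminus\lambda, (\mu^\sharp, r_c^{\mu^*_c-\mu^\sharp_c}(\mu^*)))\bigr),$$
where the first summand is interpreted as empty if $a_c(\mu^\sharp)$ is not a partition (so that its descendant is labelled $(\varnothing,\varnothing)$). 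Iterating this decomposition along every edge of $\mathscr{T}(\mu)$, starting from the root and proceeding until every branch reaches either a terminal vertex of the form $(\tau,\tau)$ or a vertex $(\varnothing,\varnothing)$ (which contributes the empty set), partitions $\SStd_s^0(\nu\setminus\lambda, \mu)$ according to the terminal vertex reached. Composing the bijections $\mathscr{R}_c$ along each such path $\mathfrak{t}\in V_T(\mathscr{T}(\mu))$ yields a bijection
$$\SStd_s^0(\nu\setminus\lambda, \mu) \;\longleftrightarrow\; \bigsqcup_{\mathfrak{t}\in V_T(\mathscr{T}(\mu))} \Latt_s^0(\nu\setminus\lambda, \tau(\mathfrak{t})),$$
where $(\tau(\mathfrak{t}),\tau(\mathfrak{t}))$ is the label of the terminal vertex $\mathfrak{t}$.

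Finally, Lemma \ref{bijectionTV} identifies the set of terminal vertices of $\mathscr{T}(\mu)$ labelled by $(\tau,\tau)$ with $\SStd_s(\tau,\mu)$. Grouping the disjoint union above by terminal label and substituting this bijection produces
$$\SStd_s^0(\nu\setminus\lambda, \mu) \;\longleftrightarrow\; \bigsqcup_{\tau\vdash s} \SStd_s(\tau,\mu) \times \Latt_s^0(\nu\setminus\lambda, \tau),$$
as required. The heart of the argument has already been carried out in Lemmas \ref{bijectionSS} and \ref{bijectionTV}; the only obstacle left is bookkeeping, namely tracking how the iterative decomposition along $\mathscr{T}(\mu)$ interacts with the vanishing branches and terminates after finitely many steps. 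A straightforward induction on the depth of the vertex in $\mathscr{T}(\mu)$ handles this cleanly.
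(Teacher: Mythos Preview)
Your proof is correct and follows essentially the same approach as the paper: repeatedly apply Lemma~\ref{bijectionSS} along the tree $\mathscr{T}(\mu)$ to obtain a bijection with the disjoint union over terminal vertices of the sets $\Latt_s^0(\nu\setminus\lambda,\tau)$, then invoke Lemma~\ref{bijectionTV} to count the terminal vertices with each label. Your version spells out a few details (why the root vertex gives the whole set, why terminal vertices give latticed tableaux) that the paper leaves implicit, but the argument is the same.
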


\begin{proof}
By repeated applications of  Lemma \ref{bijectionSS} we have a bijection between $\SStd_s^0(\nu \setminus \lambda , \mu)$ and the disjoint union over all terminal vertices of $\mathscr{T}(\mu)$ of the sets $\SStd_s^0(\nu \setminus \lambda , (\tau, \tau))$ where $(\tau,\tau)$ is the label of the corresponding terminal vertex.
Now, by Lemma \ref{bijectionTV}, we have that for each $\tau\vdash s$, the number of terminal vertices labelled by $(\tau,\tau)$ is precisely the cardinality of $\SStd_s(\tau, \mu)$. Moreover, by definition we have that $\SStd_s^0(\nu\setminus \lambda, (\tau,\tau))=\Latt_s^0(\nu\setminus \lambda , \tau)$. Hence the result follows.
\end{proof}

   \begin{eg}
    Let $\lambda=(7)$, $\nu=(6)$, $\mu=(3,2,1)$ and   $\tau=(4,2)$. 
We have that   $$|{\rm Latt}_6^0(\nu\setminus\lambda,\tau)\times \SStd_s( \tau,\mu)|= 4\times 2 = 8$$  
and the tableaux
 are listed explicitly in  \cref{a2parteg,a2partegcontuinued further}.   
 We shall now list the 8 elements of $
 \SStd_6^0(\nu\setminus\lambda,\mu)$ which correspond to these pairs of  tableaux under  the bijection given in \cref{mainbijection}.  
 The two terminal vertices  labelled by
 $(\tau,\tau)$ 
 are determined by the paths  $  r_2  r_3a_2 a_2$ and $  a_2   r_3r_2 a_2 $.  

 First consider the  path 
 $  r_2  r_3a_2 a_2$.  Using \cref{bijectionSS} we apply  
$\mathscr{R}_3^{-1} \circ \mathscr{R}_2^{-1}$ to the tableaux in \newline $\Latt_6^0(\nu\setminus \lambda , \tau)$ to get
$$\begin{array}{ccc}
 \left( \begin{array}{ccc|c|cc}
r(1) & r(1) & r(1) & d(1) & a(1) & a(1) \\
1 & 1 & 1 & 1 & 2 & 2 \end{array}\right) 
& \overset{\mathscr{R}_3^{-1}\circ \mathscr{R}_2^{-1}}\longrightarrow &
 \left( \begin{array}{ccc|c|cc}
r(1) & r(1) & r(1) & d(1) & a(1) & a(1) \\
3 & 1 & 1 & 1 & 2 & 2 \end{array}\right) 
\end{array}$$
$$\begin{array}{ccc}
 \left( \begin{array}{ccc|c|cc}
r(1) & r(1) & r(1) & d(1) & a(1) & a(1) \\
1 & 1 & 1 & 2 & 2 & 1 \end{array}\right) 
& \overset{\mathscr{R}_3^{-1}\circ \mathscr{R}_2^{-1}}\longrightarrow &
 \left( \begin{array}{ccc|c|cc}
r(1) & r(1) & r(1) & d(1) & a(1) & a(1) \\
3 & 1 & 1 & 2 & 2 & 1 \end{array}\right) 
\end{array}$$
$$\begin{array}{ccc}
 \left( \begin{array}{cc|ccc|c}
r(1) & r(1) & d(1) & d(1) & d(1) & a(1) \\
1 & 1 & 2 & 1 & 1 & 2 \end{array}\right) 
& \overset{\mathscr{R}_3^{-1}\circ \mathscr{R}_2^{-1}}\longrightarrow &
 \left( \begin{array}{cc|ccc|c}
r(1) & r(1) & d(1) & d(1) & d(1) & a(1) \\
3 & 1 & 2 & 1 & 1 & 2 \end{array}\right) 
\end{array}$$
$$\begin{array}{ccc}
 \left( \begin{array}{cc|ccc|c}
r(1) & r(1) & d(1) & d(1) & d(1) & a(1) \\
1 & 1 & 2 & 2 & 1 & 1 \end{array}\right) 
& \overset{\mathscr{R}_3^{-1}\circ \mathscr{R}_2^{-1}}\longrightarrow &
 \left( \begin{array}{cc|ccc|c}
r(1) & r(1) & d(1) & d(1) & d(1) & a(1) \\
1 & 1 & 3 & 2 & 2 & 1 \end{array}\right) 
\end{array}$$

Now consider the path 
$  a_2   r_3r_2 a_2 $.  
Using \cref{bijectionSS} we apply $\mathscr{R}_2^{-1}\circ \mathscr{R}_3^{-1}$ to the tableaux in $\Latt_6^0(\nu\setminus \lambda , \tau)$ to get  the following four elements of $\SStd_6^0(\nu\setminus\lambda,\mu)$.
$$\begin{array}{ccc}
 \left( \begin{array}{ccc|c|cc}
r(1) & r(1) & r(1) & d(1) & a(1) & a(1) \\
1 & 1 & 1 & 1 & 2 & 2 \end{array}\right) 
& \overset{\mathscr{R}_2^{-1}\circ \mathscr{R}_3^{-1}}\longrightarrow &
 \left( \begin{array}{ccc|c|cc}
r(1) & r(1) & r(1) & d(1) & a(1) & a(1) \\
2 & 1 & 1 & 1 & 3 & 2 \end{array}\right) 
\end{array}$$
$$\begin{array}{ccc}
 \left( \begin{array}{ccc|c|cc}
r(1) & r(1) & r(1) & d(1) & a(1) & a(1) \\
1 & 1 & 1 & 2 & 2 & 1 \end{array}\right) 
& \overset{\mathscr{R}_2^{-1}\circ \mathscr{R}_3^{-1}}\longrightarrow &
 \left( \begin{array}{ccc|c|cc}
r(1) & r(1) & r(1) & d(1) & a(1) & a(1) \\
2 & 1 & 1 & 3 & 2 & 1 \end{array}\right) 
\end{array}$$
$$\begin{array}{ccc}
 \left( \begin{array}{cc|ccc|c}
r(1) & r(1) & d(1) & d(1) & d(1) & a(1) \\
1 & 1 & 2 & 1 & 1 & 2 \end{array}\right) 
& \overset{\mathscr{R}_2^{-1}\circ \mathscr{R}_3^{-1}}\longrightarrow &
 \left( \begin{array}{cc|ccc|c}
r(1) & r(1) & d(1) & d(1) & d(1) & a(1) \\
2 & 1 & 3 & 1 & 1 & 2 \end{array}\right) 
\end{array}$$
$$\begin{array}{ccc}
 \left( \begin{array}{cc|ccc|c}
r(1) & r(1) & d(1) & d(1) & d(1) & a(1) \\
1 & 1 & 2 & 2 & 1 & 1 \end{array}\right) 
& \overset{\mathscr{R}_2^{-1}\circ \mathscr{R}_3^{-1}}\longrightarrow &
 \left( \begin{array}{cc|ccc|c}
r(1) & r(1) & d(1) & d(1) & d(1) & a(1) \\
2 & 1 & 3 & 2 & 1 & 1 \end{array}\right) 
\end{array}$$

   \end{eg}

 We are now ready to prove our main theorem.

    \begin{proof}[Proof of \cref{mainresult}]
Recall that 
$$\overline{g}(\lambda, \nu, \mu) = \dim_{\CC} \Hom_{\mathfrak{S}_s}({\sf S}(\mu), \Delta_s^0(\nu\setminus \lambda)).$$
We prove the result by downwards induction on $\mu$ (using the dominance order $\trianglerighteq$). If $\mu$ is maximal then $\mu = (s)$ and ${\sf S}(\mu) = {\sf M}(\mu)$. Moreover $\Latt_s^0(\nu\setminus \lambda , \mu) = \SStd_s^0(\nu \setminus \lambda, \mu)$. Thus the result follows from \cref{YOUNGSRULE}. 
 We now assume that the result holds for all partitions $\tau \rhd \mu$.  
 We have
\begin{equation}\label{decompositionperm}
{\sf M}(\mu) = \bigoplus_{\tau \trianglerighteq s} |\SStd_s(\tau, \mu)| {\sf S}(\tau).
\end{equation}
By induction we have
\begin{equation}\label{mainthmind}
\dim_\CC \Hom_{\mathfrak{S}_s}({\sf S}(\tau), \Delta_s^0(\nu\setminus\lambda))=|\Latt_s^0(\nu\setminus \lambda, \tau)| \quad \forall \tau \vartriangleright \mu.
\end{equation}
By \cref{YOUNGSRULE},  (\ref{decompositionperm}) and (\ref{mainthmind}) we have
\begin{eqnarray*}
|\SStd_s^0(\nu\setminus \lambda, \mu) | &=& \dim_\CC \Hom_{\mathfrak{S}_s}({\sf M}(\mu), \Delta_s^0(\nu \setminus \lambda)) \\
&=& \dim_{\CC}\Hom_{\mathfrak{S}_s}({\sf S}(\mu), \Delta_s^0(\nu\setminus \lambda)) + \sum_{\tau \vartriangleright \mu} |\SStd_s(\tau, \mu)| \dim_\CC \Hom_{\mathfrak{S}_s} ({\sf S}(\tau), \Delta_s^0(\nu\setminus \lambda))\\
&=& \dim_{\CC}\Hom_{\mathfrak{S}_s}({\sf S}(\mu), \Delta_s^0(\nu\setminus \lambda)) + \sum_{\tau \vartriangleright \mu} |\SStd_s(\tau, \mu)| \, |\Latt_s^0(\nu\setminus \lambda, \tau)|.
\end{eqnarray*}
Comparing this equality with \cref{mainbijection} and noting that $|\SStd_s(\mu,\mu)|=1$ we get
$$\overline{g}(\lambda, \nu, \mu) =\dim_\CC \Hom_{\mathfrak{S}_s}({\sf S}(\mu), \Delta_s^0(\nu\setminus \lambda)) = |\Latt_s^0(\nu\setminus \lambda, \mu)|$$
as required.
      \end{proof}

\section{Examples}\label{examplesattheend}

In this section we provide several illustrative examples of
how to calculate  Kronecker coefficients in terms of latticed Kronecker tableaux.  
%
%
As a warm up exercise, we first consider the decomposition of tensor products of the form ${\sf S}(\lambda_{[n]})\otimes {\sf S}(n-1,1)$. 
 These coefficients are  trivial to calculate (even for advanced undergraduates) but they provided our initial motivation for this paper and they illustrate some of the basic ideas  very well.  
We have
\begin{align*}
g(\nu_{[n]},\lambda_{[n]}, (n-1,1))
&=
\dim_\CC(\Hom_{\mathfrak{S}_n}
({\sf S}(\lambda_{[n]}) \otimes {\sf S}(n-1,1) , {\sf S}(\nu_{[n]})))\\
&=
\dim_\CC(\Hom_{\mathfrak{S}_1}
(  {\sf S}((1)) , \Delta^0_1(\nu\setminus \lambda)))\\
&= \dim_\CC(\Hom_{\mathfrak{S}_1}
(  {\sf M}((1)) , \Delta^0_1(\nu\setminus \lambda)))\\
&= |\SStd_1^0(\nu\setminus\lambda , (1))|.
\end{align*}
Note that as $s=1$ we have $\SStd_1^0(\nu\setminus \lambda ,(1)) = \Std_1^0(\nu\setminus \lambda)$. Moreover we have $\Std_1^0(\nu\setminus \lambda) = \Std_1(\nu\setminus \lambda)$ unless $\lambda = \nu$ in which case we have $\Std_1^0 (\lambda \setminus \lambda ) = \Std_1(\lambda \setminus \lambda) \setminus \{ (-\varepsilon_0 , + \varepsilon_0)\}$.
The coefficient $g(\nu_{[n]},\lambda_{[n]}, (n-1,1))$ is therefore equal to the number of paths of length 1 from $\lambda$ to $\nu$ for $\lambda\neq \nu$ and 
is equal to one fewer for $\lambda=\nu$.  
In the former (respectively latter) case the number of such paths is  equal to 1 (respectively  equal to the number of removable nodes of $\lambda$).  
 Compare  with  \cite[Exercise 7.81]{Sta99}.

 \begin{eg}
For example,  the coefficients stabilise for $n\geq 7$ and we have  that
\begin{align*}\mathbf{S}(n-3,2,1) \otimes \mathbf{S}(n-1,1) = &\mathbf{S}(n-2,2) \oplus \mathbf{S}(n-2,1^2) \oplus 
 \mathbf{S}(n-3,3) \oplus 
 2 \mathbf{S}(n-3,2,1) \oplus 
  \mathbf{S}(n-3,1^3) \\  &\oplus  
   \mathbf{S}(n-4,3,1) \oplus 
      \mathbf{S}(n-4,2^2) \oplus 
         \mathbf{S}(n-4,2,1^2). 
\end{align*}
 The only coefficient not equal to 0 or 1 is $g( (n-3,2,1), (n-3,2,1), (n-1,1))=2$ for $n\geq 7$.    
 See     \cref{sdfasafasfdasdfa11111} for the paths from $(2,1) \in \mathcal{Y}_3$ to points in $\mathcal{Y}_4$.  
 
    \end{eg}

\begin{figure}[ht!]$$
  \scalefont{0.8}
    \begin{tikzpicture}[scale=0.36]
          \begin{scope}

       \draw[->] (0,5)--(-9,1);
       \draw(0,5)--(-6,1);       
              \draw(0,5)--(0,1);
              \draw(-9,1)--(-9,-3);       
              \draw(-6,1)--(-6,-3);              
              \draw(-6,1)--(0,-3);              
              \draw(-6,1)--(3,-3);                            
              \draw(-9,1)--(0,-3);                            
              \draw(-9,1)--(-3,-3);     
              \draw(0,1)--(9,-3);     
              \draw(0,1)--(6,-3);                   \draw(0,1)--(0,-3);     
                            \draw(0,1)--(12,-3);     
            \fill[white] (0,5) circle (25pt);	 
      \fill[white] (-9,1) circle (25pt);	 
      \fill[white] (-6,1) circle (25pt);	 
                \fill[white] (0,1) circle (25pt);	 
     \fill[white] (-9,-3) circle (25pt);	 
     \fill[white] (-6,-3) circle (25pt);	 
     \fill[white] (-3,-3) circle (25pt);	 
     \fill[white] (0,-3) circle (25pt);	 
      \fill[white] (+3,-3) circle (25pt);	 
      \fill[white] (+6,-3) circle (25pt);	 
      \fill[white] (+9,-3) circle (25pt);	 
           \fill[white] (+12,-3)    circle (25pt);

           \draw (0,5) node {  $\Yboxdim{6pt}\gyoung(;;,;)$  };   
     \draw (-9,1) node { $\Yboxdim{6pt}\gyoung(;;)$ };
     \draw (-6,1) node { $\Yboxdim{6pt}\gyoung(;,;)$ };
               \draw (0,1) node { $\Yboxdim{6pt}\gyoung(;;,;)$ };   
    \draw (-9,-3) node   {$\Yboxdim{6pt}\gyoung(;;)$ }	;   
    \draw (-6,-3) node   {$\Yboxdim{6pt}\gyoung(;,;)$ }	;   
    \draw (-3,-3) node   {$\Yboxdim{6pt}\gyoung(;;;)$ }	;   
    \draw (0,-3) node   {$\Yboxdim{6pt}\gyoung(;;,;)$ }	;   
     \draw (+3,-3) node   { $\Yboxdim{6pt}\gyoung(;,;,;)$    }		;
     \draw (+6,-3) node   { $\Yboxdim{6pt}\gyoung(;;;,;)$    }		;     
     \draw (+9,-3) node   { $\Yboxdim{6pt}\gyoung(;;,;;)$    }		;          
          \draw (+12,-3) node   { $\Yboxdim{6pt}\gyoung(;;,;,;)$    }		;          

    \end{scope}\end{tikzpicture}$$

\!\!\!\!
    \caption{Paths of degree 1 beginning at $(2,1)$ in $\mathcal{Y}_3$.}
\label{sdfasafasfdasdfa11111}
\end{figure}
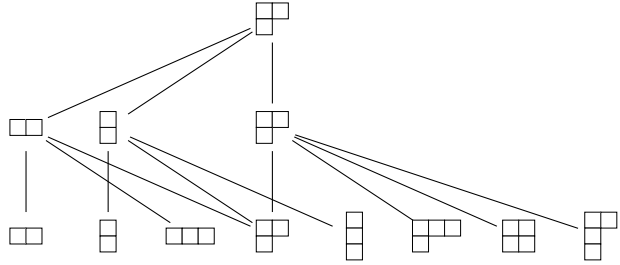

We now revisit some of the earlier examples in the paper.

\begin{eg}
Consider the rightmost example in \cref{anewfigforintro} from the introduction.  We have that 
$((5,3,3), (7,5,1,1), \mu)$ is a co-Pieri triple for $\mu \vdash 5$.  
  We have that
$$
g ((n-11,5,3,3), (n-14,7,5,1,1) , (n-5,2,2,1))=
\overline{g} ((5,3,3), (7,5,1,1) , (2,2,1))
=11
$$ for all $n\geq 21$ and an example of an element of $\Latt^0_5(((5,3,3) \setminus (7,5,1,1), (2,2,1)))$ is depicted in \cref{anewfigforintro}. 
\end{eg}

\begin{eg}
We have that $(( 6,5,3,2), (12,8,5,3), \mu)$ is a co-Pieri triple for $\mu \vdash 3$.  Some of the corresponding semistandard and latticed tableaux 
are depicted  in \cref{semiexamfig} and discussed in \cref{semiexamfig2}.  
We have that
$$
\overline{g}(( 6,5,3,2), (12,8,5,3), (3))=6
\quad
\overline{g}(( 6,5,3,2), (12,8,5,3), (2,1))=9
\quad
\overline{g}(( 6,5,3,2), (12,8,5,3), (1^3))=3
$$
the six latticed tableaux of weight $(3)$ are given in \cref{semiexam}.  We leave constructing those of weight $(2,1)$ and $(1^3)$ as an exercise for the reader.  
\end{eg}

\begin{eg}
We have that $((9,6,3),(9,6,3),(2,1))$ is a co-Pieri triple and that 
$$
\overline{g}((9,6,3),(9,6,3),(2,1))=
\Latt^0_3(((9,6,3)\setminus(9,6,3),(2,1)))
=60.
$$
The dedicated reader might wish to attempt this calculation themselves once they have digested the other examples in this section. 
The rightmost tableau in \cref{semiexamfig} is an example of a latticed tableau for this triple.  
\end{eg}

\subsection{Kronecker coefficients labelled by two 2-row partitions}
In this section we provide examples of our  tableaux combinatorics for coefficients $g(\lambda_{[n]},\nu_{[n]},\mu_{[n]}) $ in which $\lambda_{[n]}$ and $\nu_{[n]}$ are two-part partitions but $\mu_{[n]}$ is arbitrary.
   These coefficients have been described in many ways and received the attention of many authors \cite{MR3338303,RW94,Rosas01,ROSAANDCO,BWZ10,MR2550164};   Hilbert series related to these coefficients have been linked to problems in quantum information theory \cite{1,2}. 
  The advantage of our description over previous work is that it covers these coefficients as a simple  example 
   in a far broader class of Kronecker coefficients.

    \begin{prop}\label{2partersmallthings}
    If  $\lambda_{[n]}$ and $\nu_{[n]}$ are 2-part partitions and
     $ {g}(  \lambda_{[n]},\nu_{[n]}, \mu_{[n]})  \neq 0 $, then 
  $\ell(\mu_{[n]})\leq 4$.
     \end{prop}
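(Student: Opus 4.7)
The plan is to combine Brion-type monotonicity of Kronecker coefficients with our main theorem, and then run a short combinatorial argument on the reverse reading word. Since it suffices to show $\ell(\mu)\leq 3$, I will aim to exclude any frame index $\geq 4$ from the word of a latticed Kronecker tableau of weight $\mu$.

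First I would observe that the hypothesis forces $\lambda$ and $\nu$ to be one-row partitions (possibly empty), so $\max\{\ell(\lambda),\ell(\nu)\}\leq 1$ and the nontrivial condition in \cref{{co-Pieri}triple} is vacuous; hence $(\lambda,\nu,s)$ is automatically a co-Pieri triple for every admissible $s$. By the weak monotonicity of the sequence $\{g(\lambda_{[n]},\nu_{[n]},\mu_{[n]})\}_{n\in\mathbb{N}}$ (Brion), the assumption $g(\lambda_{[n]},\nu_{[n]},\mu_{[n]})\neq 0$ propagates to the stable range, so $\overline{g}(\lambda,\nu,\mu)\neq 0$. By \cref{mainresult} there is at least one $\SSTS\in \Latt_s^0(\nu\setminus\lambda,\mu)$.

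Next, I would describe the integral steps available to any $\sts\in\Std_s^0(\nu\setminus\lambda)$. Because $\lambda_i=\nu_i=0$ for all $i\geq 2$, the definitions of $\mathrm{DR}^i\mhyphen\Std_s(\nu\setminus\lambda)$ exclude every step of the form $\pm\varepsilon_i$ with $i\geq 2$, while $\mathrm{DR}^0\mhyphen\Std_s(\nu\setminus\lambda)$ excludes $(-\varepsilon_0,+\varepsilon_0)$. The remaining possibilities are the three integral steps $r(1),d(1),a(1)$, which, in the dominance ordering of \cref{ordering}, satisfy $r(1)<d(1)<a(1)$. Therefore the reverse reading word $\omega(\SSTS)$ decomposes into three consecutive blocks in this order, and within each block the second row (the frame indices) is weakly decreasing.

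The main (and essentially the only) step of the argument is then a direct inspection of the lattice permutation condition on $\omega_2(\SSTS)$. I claim in turn that (i) the $r(1)$-block contains only $1$'s, (ii) the $d(1)$-block contains only $1$'s and $2$'s, and (iii) the $a(1)$-block contains only $1$'s, $2$'s, and $3$'s. Each claim is proved by looking at the leading (and hence largest) entry $k$ of the block: by (i)--(iii) for earlier blocks and weak decrease within the current block, no entry strictly between $1$ and $k-1$ occurs before this leading $k$, so $\#(k-1)=0$ at the position of the leading $k$; the lattice condition $\#(k-1)\geq \#k$ then forces $k$ to equal the largest value already permitted plus one, giving $k\leq 1$, $k\leq 2$, and $k\leq 3$ respectively. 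In particular no index $\geq 4$ appears in $\omega_2(\SSTS)$, so $\mu$ has at most three nonzero parts and $\ell(\mu_{[n]})\leq 4$. The only real obstacle is the bookkeeping for the three blocks; the argument itself is a one-line application of the lattice condition.
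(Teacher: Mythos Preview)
Your proposal is correct and follows essentially the same route as the paper: pass from the nonzero Kronecker coefficient to the nonzero stable one (the paper does this tersely, you cite Brion), apply \cref{mainresult} for the co-Pieri triple $(\lambda,\nu,s)$ with $\ell(\lambda),\ell(\nu)\leq 1$, observe that the only admissible integral steps in $\Std_s^0(\nu\setminus\lambda)$ are $r(1)<d(1)<a(1)$, and then use the lattice permutation condition block by block to cap the frame indices at $1,2,3$ respectively. Your write-up is simply a more detailed version of the paper's argument, with one minor slip: the lattice condition is the strict inequality $\#(k-1)>\#k$ (previous good $i$'s strictly exceed previous good $(i{+}1)$'s), not $\geq$, but this only strengthens your contradiction when $\#(k-1)=0$.
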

    \begin{proof}
First note that $g(\lambda_{[n]},\nu_{[n]}, \mu_{[n]})\neq 0$ implies that $g(\lambda , \nu, \mu)=|\Latt_s^0(\nu\setminus \lambda, \mu)| \neq 0$.
  Now the only possible steps in semistandard Kornecker tableaux in $\Latt_s^0(\nu\setminus \lambda, \mu)$ are  
 $r(1)$, $d(1)$, or $a(1)$ and the ordering on these steps  is 
 $ 
 r(1) < d(1) < a(1)
 $.  
 Now by definition of a lattice permutation, for any $\SSTS\in \Latt_s^0(\nu\setminus\lambda, \mu)$, the frame number of a step of type $r(1)$ in $\SSTS$ is equal to 1, the frame number of a step of type $d(1)$ is less or equal to $2$ and the frame number of a step of type $a(1)$ is less or equal to $3$. Thus if $\Latt_s^0(\nu\setminus \lambda , \mu)\neq \emptyset$ then  $\ell (\mu) \leq 3$ and hence $\ell (\mu_{[n]}) \leq 4$ as required.
\end{proof}

\begin{prop}\label{fourterms}
Let $\lambda_{[n]}$ and $\nu_{[n]}$ be 2-part partitions. Let $\mu_{[n]}$ be an arbitrary partition.  Then we have that
$$g({\lambda_{[n]},\nu_{[n]}},{\mu_{[n]}}) = \sum_{i=0}^3  (-1)^i|\Latt_{s_i}^0(\nu\setminus \lambda, \mu^{(i)})|$$  
where $\mu^{(0)} = \mu$ and for $i\geq 1$ the partition $\mu^{(i)}$ is obtained from $\mu^{(i-1)}$ by adding a single row of boxes in the $i$th row, the last of which having content $n-|\mu^{(i-1)}|$, and $s_i = |\mu^{(i)}|$.
 \end{prop}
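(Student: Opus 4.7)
The plan is to derive this formula by iterated application of Dvir's recursive formula (\cref{sadfadsffdssdffsdfsdsfdsfafdsfdsasfdfdasdfsdfsafsadfsadfdsafsda}) together with the main theorem (\cref{mainresult}). First I would observe that because $\lambda_{[n]}$ and $\nu_{[n]}$ are 2-part, both $\lambda$ and $\nu$ have $\ell(\lambda),\ell(\nu)\leq 1$, so the co-Pieri condition ${\rm (coP)}$ is vacuous for each triple $(\lambda,\nu,\mu^{(i)})$. Hence \cref{mainresult} applies and gives
\[
\overline{g}(\lambda,\nu,\mu^{(i)}) = |\Latt_{s_i}^0(\nu\setminus\lambda,\mu^{(i)})|
\]
for every $i$. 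Thus the claimed formula becomes an inclusion--exclusion identity among stable Kronecker coefficients, which is what I would prove.

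The core of the argument is an iterated application of Dvir's formula to $g(\lambda_{[n]},\nu_{[n]},\mu_{[n]})$. At the first stage this yields a ``first sum'' $T_0 := \sum_{\alpha} g(\lambda_{[n]}\ominus\alpha,\nu_{[n]}\ominus\alpha,\mu)$ and a correction sum indexed by $P(n,\mu)\setminus\{\mu_{[n]}\}$. Appealing to \cref{2partersmallthings}, only partitions $\beta$ with $\ell(\beta)\leq 4$ survive in the correction sum; writing each such $\beta = \gamma_{[n]}$ for a partition $\gamma$ with $\ell(\gamma)\leq 3$, I would then apply Dvir again to each $g(\lambda_{[n]},\nu_{[n]},\gamma_{[n]})$. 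Because the 2-part hypothesis forces the length bound $\ell(\beta)\leq 4$ at every stage, this iteration must terminate after at most three applications of Dvir, producing the four alternating signs.

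To match the combinatorial form of the statement I would show that the first sum at stage $i$ equals $\overline{g}(\lambda,\nu,\mu^{(i)})$, where $\mu^{(i)}$ is built from $\mu^{(i-1)}$ as described. This is where the partition-algebra perspective of Section 4 enters: equations (4.11)--(4.22) identify the first Dvir sum with $[\Delta^+_{s_i}(\nu\setminus\lambda):\mathbf{S}(\mu^{(i)})]$, which, combined with \cref{mainresult}, yields the required lattice count. The recursive content condition ``last box of content $n-|\mu^{(i-1)}|$'' is exactly the bookkeeping needed to track the way the partition in the Dvir recursion is forced to grow at each iteration under the horizontal-strip constraint and the length bound.

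The main obstacle is verifying that at each stage the correction sum collapses to a single new term $\overline{g}(\lambda,\nu,\mu^{(i)})$ rather than a multi-term sum (as in the general case of Dvir), and that the unique surviving partition at stage $i$ matches $\mu^{(i)}$ with precisely the prescribed content. This essentially amounts to showing, using the 2-part hypothesis and \cref{2partersmallthings}, that at each iteration all but one element of the relevant $P(n,\cdot)\setminus\{(\cdot)_{[n]}\}$ gives a vanishing Kronecker coefficient, and that the surviving one corresponds to adding a row in the next available position with the specified final content.
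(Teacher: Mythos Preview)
Your reduction to the stable coefficients is correct: since $\ell(\lambda),\ell(\nu)\le 1$, every $(\lambda,\nu,\mu^{(i)})$ is a co-Pieri triple and \cref{mainresult} gives $\overline g(\lambda,\nu,\mu^{(i)})=|\Latt^0_{s_i}(\nu\setminus\lambda,\mu^{(i)})|$. The issue is the alternating-sum identity
\[
g(\lambda_{[n]},\nu_{[n]},\mu_{[n]})=\sum_{i\ge 0}(-1)^i\,\overline g(\lambda,\nu,\mu^{(i)}),
\]
which the paper does \emph{not} derive from iterated applications of Dvir's formula: it quotes it directly from \cite[Theorem~3.7]{BDO15}, then uses \cref{2partersmallthings} to truncate the sum at $i=3$ (since $\ell(\mu^{(i)})>3$ forces $\overline g(\lambda,\nu,\mu^{(i)})=0$).

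Your plan to obtain this identity from Dvir has a genuine gap. The first Dvir sum $T_0=\sum_\alpha g(\lambda_{[n]}\ominus\alpha,\nu_{[n]}\ominus\alpha,\mu)$ equals $[\Delta_s^+(\nu\setminus\lambda):\mathbf S(\mu)]$, not $\overline g(\lambda,\nu,\mu)$; these differ by the contributions of $V_s^m$ for $m>0$ (see (4.13)--(4.22)). More seriously, the correction sum in \eqref{ThDvir2} does \emph{not} collapse to a single term under the 2-part hypothesis. For instance, with $\mu=(2,1)$ one has $P(n,\mu)\setminus\{\mu_{[n]}\}=\{(n-1,1),(n-2,2),(n-2,1,1)\}$, and all three of $g(\lambda_{[n]},\nu_{[n]},\beta)$ can be nonzero for 2-part $\lambda_{[n]},\nu_{[n]}$. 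So the claim that ``all but one element of the relevant $P(n,\cdot)\setminus\{(\cdot)_{[n]}\}$ gives a vanishing Kronecker coefficient'' is false, and the recursion does not unwind into the clean four-term formula you want. The identity from \cite{BDO15} is of a different nature (an inversion of the stabilisation map, not a consequence of iterating Dvir), and you should invoke it as the paper does.
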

 \begin{proof}
By \cite[Theorem 3.7]{BDO15} we can write
$$  g(\lambda_{[n]},\nu_{[n]},\mu_{[n]}) = \sum_{i\geq 0}  (-1)^{i} \overline{g}(\lambda ,\nu, \mu^{(i)}).$$  
Using \cref{2partersmallthings} we have that if $\ell(\mu^{(i)})>3$ then $\overline{g}(\lambda, \nu, \mu^{(i)}) = 0$. Now the result follows from \cref{mainresult}. 
  \end{proof}

\begin{rmk} 
Note also that if $|\mu^{(i)}|> |\lambda| + |\nu|$ then $\Latt_{s_i}^0(\nu\setminus\lambda, \mu^{(i)}) = \emptyset$. Thus as $n$ gets larger the sum in \cref{fourterms} has fewer than 4 terms. In fact when $n>|\lambda|+|\nu| +\mu_1 -1$ then we have $|\mu^{(1)}|>|\lambda|+|\nu|$ and so (letting  $s = |\mu|$) we have that 
$$g(\lambda_{[n]}, \nu_{[n]}, \mu_{[n]}) = |\Latt_s^0(\nu\setminus\lambda , \mu)|.$$
\end{rmk}

 \begin{eg}
Let $\lambda =(  7)$ and $\nu =(  6)$ and $\mu =(4,3,1)$.  
 Then $\omega_1(\SSTS)$ must be one of the following
$$
(r(1) \; r(1) \; r(1) \; | \; d(1) \; d(1) \; d(1) \; | \; a(1) \; a(1)  ) \quad 
(r(1) \; r(1) \; r(1) \; r(1) \; | \; d(1) \; | \; a(1) \; a(1) \; a(1)).
$$
Is is easy to check that $\SSTS \in \Latt_8^0(\nu\setminus \lambda, \mu)$ if and only if $\omega(\SSTS)$ is one of the following
$$ 
 \left( \begin{array}{ccc|ccc|cc}
r(1) & r(1) & r(1) & d(1)& d(1) & d(1) & a(1)& a(1) \\
1 & 1 & 1 		& 2 	&	2	&	2	& 3 & 1 \end{array}\right)   
$$
$$ \left( \begin{array}{ccc|ccc|cc}
r(1) & r(1) & r(1) & d(1)& d(1) & d(1) & a(1)& a(1) \\
1 & 1 & 1 		& 2 	&	2	&	1	& 3 & 2 \end{array}\right)   $$
$$ \left( \begin{array}{cccc|c|ccc}
r(1) & r(1) & r(1) & r(1)& d(1) & a(1) & a(1)& a(1) \\
1 & 1 & 1 		& 1 	&	2	&	3	& 2 & 2 \end{array}\right)  
$$ 
%
Therefore $g(( n-7,7), ( n-6,6), (n-8,4,3,1))=3$  for $n\geq 15$.  
We leave it as an exercise for the reader to verify that these semistandard Kronecker tableaux are orbits of size 12, 3, and 1 respectively.

 \end{eg}

\subsection{A Kronecker product labelled by two three-row partitions}
 \label{yetanother}
We now consider the next simplest case: namely a pair of 3-row partitions.  
 Let $\lambda=( 6,1)$ and  $\nu=( 4,3)$, we have $|\SStd_3^0(\nu\setminus \lambda, ( 3))|=
|\Latt_3^0(\nu\setminus \lambda, ( 3))|=3$. 
The corresponding reading words are as follows, 
$$ \left( \begin{array}{c|cc} d(1)  &\down(1,2) &  \down(1,2) \\ 1&1&1
\end{array}
\right)
\quad
 \left( \begin{array}{c|cc} d(2)  &\down(1,2) &  \down(1,2) \\ 1&1&1
\end{array}\right)
\quad
\left( \begin{array}{c|c|c}  r(1)  &\down(1,2) & a(2) \\ 1&1&1
\end{array}\right)
$$
It is easy to check that any 
$\SSTS \in \SStd_3^0(\nu\setminus \lambda, ( 2,1)) $ must have $\omega_1(\SSTS)$ as follows,
$$
 (d(1) \; |\; \down(1,2) \;   \down(1,2)) \quad\quad
 (d(2) \; |\; \down(1,2) \;   \down(1,2)) \quad\quad
 (r(1) \; |\; \down(1,2) \; |\;  a(2)) 
$$
and  $|\SStd_3^0(\nu\setminus \lambda, ( 2,1))|=7$.  
We have  $\SSTS \in \Latt_3^0(\nu\setminus \lambda, ( 2,1))$ if and only if 
 $\omega(\SSTS)$ is one of the following,
 $$ \left( \begin{array}{c|cc} d(1) &  \down(1,2) &   \down(1,2) \\ 1&2&1
\end{array}
\right)
\quad
 \left( \begin{array}{c|c|c} r(1) &  \down(1,2) &   a(2) \\ 1&2&1
\end{array}
\right)
 $$ 
 $$
 \left( \begin{array}{c|cc} d(2) &  \down(1,2) &   \down(1,2) \\ 1&2&1
\end{array}
\right)
\quad
 \left( \begin{array}{c|c|c} r(1) &  \down(1,2) &   a(2) \\ 1&1&2
\end{array}
\right)
$$ 
 We have that $|\SStd_3^0(\nu\setminus \lambda, ( 1^3))|=12$.  
The  unique element  $\SSTS \in \Latt_3^0(\nu\setminus \lambda, ( 1^3))$ has $\omega(\SSTS)$ equal to
 $$
  \left( \begin{array}{c|c|c}  r(1)   &   \down(1,2) &  a(2) \\ 1&2&3
\end{array} \right)
 $$
 We therefore conclude that 
 $$
 \overline{g} ((6,1),(4,3),(3))=3 \quad \quad
 \overline{g}((6,1),(4,3),(2,1))=4 \quad \quad
 \overline{g}((6,1),(4,3),(1^3))=1.
 $$
The Kronecker coefficients quickly stabilise in this case, for example
$$
g((6^2,1),(6,4,3),(10,3))=3
\qquad
g((7,6,1),(7,4,3),(11,3))=4
$$
and $ g((n-7,6,1),(n-7,4,3),(n-3,3))=4$ for $n\geq 14$.

\subsection{A larger example}
 
 Let $\lambda=( 6,2)$, $\nu=(  7,4)$.  
We have that $(\lambda,\nu,s)$ is a co-Pieri triple for $s\leq 5$.   
Let $s=4$ and $\mu \vdash s$.  Given 
$\SSTS \in \SStd_4^0(\nu\setminus \lambda,\mu)$, we have that $\omega_1(\SSTS)$ is equal to one of 
$$
(d(1) \;  |\;  a(1) \;   |\;  a(2) \;   a(2) )
\quad
(d(2) \;  |\;   a(1) \;  |\;   a(2) \;   a(2) )
\quad
(\down(1,2) 
 \;   |\; 
a(1)
 \;   
a(1) 
 \;   |\; 
a(2))
\quad
(
\up (2,1) 
 \;   |\; 
a(2)
 \;  
a(2)
 \;   
a(2) 
).
$$
 We now consider the 
 semistandard and latticed tableaux for each weight $\mu$ for $\mu \vdash 4$. 
    We have that $|\SStd_4^0(\nu\setminus \lambda), ( 4))|=|\Latt_4^0(\nu\setminus \lambda, ( 4))|=4$.  
  The corresponding $\omega_1(\SSTS)$ are as follows: 
$$ 
  \left( \begin{array}{c|c|cc} 
d(1) 
&
a(1)
&
a(2) 
&
a(2)
\\
1 &1&1&1
\end{array}
\right)
\quad
  \left( \begin{array}{c|cc|c} 
\down(1,2) 
& a(1)
& 
a(1) 
&
a(2)
\\
1 &1&1&1
\end{array}
\right)
 $$
   $$\left( \begin{array}{c|c|cc} 
  d(2) 
&
 a(1) 
&
a(2)
&  
a(2)\\
1 &1&1&1
\end{array}
\right)\quad 
  \left( \begin{array}{c|ccc}  
\up (2,1)   &
a(2)
&
a(2)
&  
a(2)
\\
1 &1&1&1
\end{array}
\right)
.$$
 Given $\SSTS \in \Latt_4^0(\nu\setminus \lambda, ( 3,1)) $, we have that $\omega_1(\SSTS)$ is one of the following,
 $$
   \left( \begin{array}{c|c|cc} 
 d(1) &    a(1) &    a(2) &    a(2) \\
  1&1&2&1
\end{array}\right)
\quad
 \left( \begin{array}{c|cc|c} 
 \down(1,2)  &    a(1) &    a(1) &    a(2) 
 \\
   1&2&1&1
\end{array}\right)
\quad
  \left( \begin{array}{c|c|cc} 
 d(2) &    a(1) &    a(2) &    a(2) 
 \\
   1&2&1&1
\end{array}\right)
 $$
$$
   \left( \begin{array}{c|c|cc} 
 d(1) &    a(1) &    a(2) &    a(2) \\
  1&2&1&1
\end{array}\right)
\quad
 \left( \begin{array}{c|cc|c} 
 \down(1,2)  &    a(1) &    a(1) &    a(2) 
 \\
   1&1&1&2
\end{array}\right)\quad
 \left( \begin{array}{c|c|cc} 
 d(2) &    a(1) &    a(2) &    a(2) 
 \\
   1&1&2&1
\end{array}\right)
$$
$$
 \left( \begin{array}{c|ccc} 
 \up(2,1)  &    a(2) &    a(2) &    a(2) 
 \\
   1&2&1&1
\end{array}\right)$$   
%
 Given $\SSTS \in  \Latt_4^0(\nu\setminus \lambda, ( 2,2)) $, we have that $\omega_1(\SSTS)$ is one of the following,
 $$
   \left( \begin{array}{c|c|cc} 
 d(1) &    a(1) &    a(2) &    a(2) \\
  1&1&2&2
\end{array}\right)
\quad
 \left( \begin{array}{c|cc|c} 
 \down(1,2)  &    a(1) &    a(1) &    a(2) 
 \\
   1&2&1&2
\end{array}\right)
\quad
  \left( \begin{array}{c|c|cc} 
 d(2) &    a(1) &    a(2) &    a(2) 
 \\
   1&1&2&2
\end{array}\right) 
 $$
%
%
%
  Given $\SSTS \in  \Latt_4^0(\nu\setminus \lambda, ( 2,1^2)) $, we have that $\omega_1(\SSTS)$ is one of the following,
 $$
   \left( \begin{array}{c|c|cc} 
 d(1) &    a(1) &    a(2) &    a(2) \\
  1&2&1&3
\end{array}\right)
\quad
 \left( \begin{array}{c|cc|c} 
 \down(1,2)  &    a(1) &    a(1) &    a(2) 
 \\
   1&2&1&3
\end{array}\right)
\quad
  \left( \begin{array}{c|c|cc} 
 d(2) &    a(1) &    a(2) &    a(2) 
 \\
   1&2&1&3
\end{array}\right) 
 $$
 Finally, we have that $ |\Latt_4^0(\nu\setminus \lambda, ( 1^4))|=0$ and 
therefore 
$$\begin{array}{rlllll}
&\overline{g}((6,2),(7,4),(4))=4
&
\overline{g}((6,2),(7,4),(2,2))=3 
&
\overline{g}((6,2),(7,4),(1^4))=0
\\
&\overline{g}((6,2),(7,4),(3,1))=7 
&\overline{g}((6,2),(7,4),(2,1^2))=3 
\end{array}.$$
 We do not calculate all the coefficients $\overline{g}(\lambda,\nu,\mu)$ for $\mu\vdash 5$ and instead  only calculate the $\mu=(2^2,1)$ case. 
 Given $\SSTS \in  \Latt_5^0(\nu\setminus \lambda, ( 2^2,1)) $, we have that $\omega_1(\SSTS)$ is one of the following,
\begin{align*}
   \left( \begin{array}{c|cc|cc} 
 r(1) &    a(1)   &a(1)  	& a(2) &    a(2) \\
  1&2&1&3	&2
\end{array}\right) \quad
&
   \left( \begin{array}{cc|c|cc} 
 d(1) &    d(1)   &a(1)  	& a(2) &    a(2) \\
  1&1&2&3	&2
\end{array}\right)		\\
   \left( \begin{array}{c|c|cc|c} 
 d(1) &   \down(1,2)   &a(1)  	& a(1) &    a(2) \\
  1&1&2&2	&3
\end{array}\right) \quad
&
   \left( \begin{array}{c|c|cc|c} 
 d(1) &   \down(1,2)   &a(1)  	& a(1) &    a(2) \\
  1&2&3&1	&2
\end{array}\right)		\\
   \left( \begin{array}{c|c|c|cc} 
 d(2) &   d(1)   &a(1)  	& a(2) &    a(2) \\
  1&1&2&3	&2
\end{array}\right) \quad
&
   \left( \begin{array}{c|c|cc|c} 
 d(2) &   \down(1,2)   &a(1)  	& a(1) &    a(2) \\
  1&2&3&1	&2
\end{array}\right)		\\
   \left( \begin{array}{cc|c|cc} 
 d(2) &   d(2)   &a(1)  	& a(2) &    a(2) \\
  1&1&2&3	&2
\end{array}\right) \quad
&
   \left( \begin{array}{c|c|c|cc} 
\up(2,1) &   \down(1,2)   &a(1)  	& a(2) &    a(2) \\
  1&2&1&3	&2
\end{array}\right)		\\
   \left( \begin{array}{c|c|cc|c} 
 d(2) &   \down(1,2)   &a(1)  	& a(1) &    a(2) \\
  1&1&2&2	&3
\end{array}\right) \quad
&
   \left( \begin{array}{c|c|c|cc} 
 \up(2,1) &   \down(1,2)   &a(1)  	& a(2) &    a(2) \\
  1&1&2&3	&2
\end{array}\right)		\\
   \left( \begin{array}{c|c|c|cc} 
 d(2) &   d(1)   &a(1)  	& a(2) &    a(2) \\
  1&2&1&3	&2
\end{array}\right) \quad
\end{align*}
and therefore $\overline{g}(\lambda,\nu, (2,2,1) 	)   = 11$.

\begin{eg}
We now consider an example  which is not a co-Pieri triple.  
 We let $\lambda=\nu=(1^2)$.  We have that
 $ 
 {\sf DR}_2(\Delta_2(\nu\setminus\lambda)) $  is 5-dimensional and  
  is isomorphic to $\Delta_2(1)\oplus \Delta_2(\varnothing)$.  
The former summand is spanned by the basis elements 
indexed by the Kronecker tableaux $$
  d(2)\circ d(2)
   \qquad
 d(0)\circ d(2)
    \qquad
   d(2)\circ d(0)
  $$
 and the latter summand is spanned by the basis elements indexed by the Kronecker tableaux
$$
 a(3)\circ r(3)
   \qquad
 d(0)\circ d(0).
$$
One can show that the quotient $\Delta_2^0(\nu\setminus\lambda)$ decomposes as a direct sum of two transitive permutation modules
$$
\CC\{u_\stt \mid \stt =m{\uparrow}(2,1) \circ m{\downarrow}(1,2)\}
\oplus 
\CC\{u_\sts \mid \sts \in \{a(1)\circ r(1), r(2)\circ a(2)\}\}.
$$ 
Note that   $\stt_{1\leftrightarrow  2}$ is not a standard Kronecker tableau and hence we cannot use the results of this paper to understand $\Delta_2^0(\nu\setminus\lambda)$.  
However, one can see that the former summand is isomorphic to
 $\Delta_2(2)$ via the isomorphism  $\Delta_2(2) \cong \Delta_2(1^2)\otimes \Delta_2(1^2)$.  
The latter summand is isomorphic to  $\Delta_2(2)\oplus   \Delta_2(1^2)$ as one might expect.

\end{eg}

\begin{Acknowledgements*}
The authors are grateful for the financial support received from the Royal Commission for the Exhibition of 1851  and  EPSRC grant EP/L01078X/1.
\end{Acknowledgements*}

\bibliographystyle{amsplain}   \bibliography{bib}

 \end{document}